\documentclass{article}

\usepackage{graphicx}
\usepackage{tikz}
\usepackage{amsmath}
\usepackage{amsthm}
\usepackage{amssymb}
\usepackage{amsfonts}
\usepackage{float} 
\usepackage{tabularx,booktabs}
\usepackage{array}
\usepackage{diagbox}
\usepackage{multirow}
\usepackage[T1]{fontenc}
\usepackage{fix-cm}
\usepackage{mathrsfs}

\usepackage{booktabs}
\usepackage{tabularx}
\usepackage{array}
\usepackage[section]{placeins}
\usepackage[abbrev,nobysame]{amsrefs}

\usepackage[a4paper, left=2cm, right=2cm, top=3cm, bottom=3cm]{geometry}
\usepackage{hyperref}

\newenvironment{restated}[1]{%
  \par\noindent\textbf{Theorem #1.}\itshape\ }%
  {\par}  

\theoremstyle{plain}
\theoremstyle{definition}
\newtheorem{theorem}{Theorem}[section]
\newtheorem{lemma}[theorem]{Lemma}
\newtheorem{definition}[theorem]{Definition}
\newtheorem{example}[theorem]{Example}
\newtheorem{remark}[theorem]{Remark}
\newtheorem{corollary}[theorem]{Corollary}

\newtheorem{proposition}[theorem]{Proposition}
\newtheorem{conjecture}[theorem]{Conjecture}
\newtheorem{assumption}[theorem]{Assumption}

\newcommand{\dist}{\mathrm{d}}
\newcommand{\diam}{\mathrm{diam}}
\newcommand{\card}{\mathrm{card}}

\newcommand{\Reff}{\mathscr{R}}

\newcommand{\dimbox}{\dim_\mathrm{B}}
\newcommand{\dimdeg}{\dim_\mathrm{D}}
\newcommand{\dimhaus}{\dim_\mathrm{H}}

\newcommand{\dimwalk}{\dim_\mathrm{W}}
\newcommand{\dimresis}{\dim_\mathrm{R}}
\newcommand{\dimspec}{\dim_\mathrm{S}}

\begin{document}

\title{Iterated Graph Systems (I): random walks and diffusion limits}

\author{Ziyu Neroli \\
\small{Department of Mathematics, Imperial College London, United Kingdom}
}
\date{}

\maketitle

\begin{abstract}
This paper investigates random walks and diffusion limits on a broad class of fractals generated by Edge Iterated Graph Systems (EIGS), a generalisation of hierarchical lattices.
We prove that the rescaled simple random walks converge in the Gromov--Hausdorff--Prokhorov--Skorokhod topology to the limiting diffusion, which coincides with Brownian motion when the resistance dimension is positive.
The graph analysis underlying this convergence identifies the degree dimension as the natural correction term for on-diagonal heat-kernel estimates, yielding a unified formulation in the locally finite and locally infinite (scale-free) regimes.
Using this framework, we solve the open problem on the diamond hierarchical lattice (DHL) percolation cluster posed by Hambly and Kumagai [Commun. Math. Phys. 295 (2010), 29--69]; this suggests that the Alexander--Orbach-type conjecture fails for the natural Brownian motion on the cluster.
Sections~2 to~4 have been formalised in Lean.

\end{abstract}



MSC2020 subject classifications. 60J65, 05C81, 28A80.

\tableofcontents

\section{Introduction}\label{sec:intro}

\begin{figure}
    \centering
    \includegraphics[width=\linewidth]{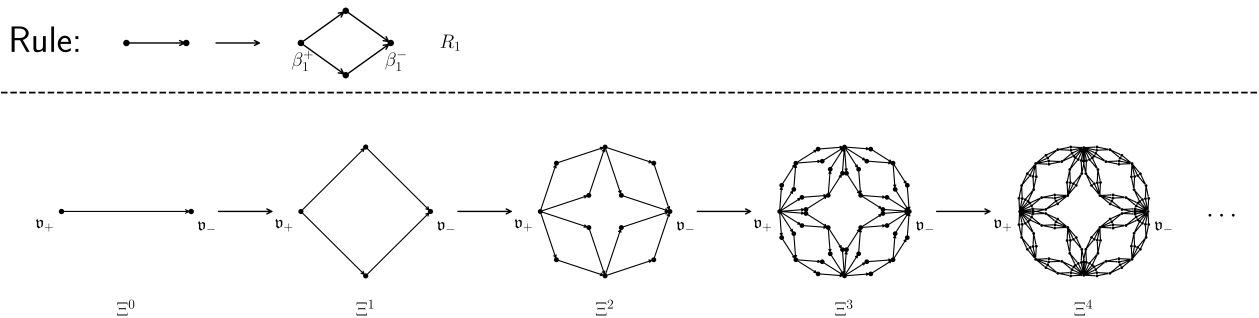}
    \caption{Example of EIGS: diamond hierarchical lattice (DHL)}
    \label{fig:DHL}
\end{figure}

In 1982, Alexander and Orbach conjectured that the simple random walk on the incipient infinite cluster of critical Bernoulli percolation on $\mathbb Z^d$ has spectral dimension $\frac43$ in every dimension $d\ge 2$ \cite{alexander1982density}.
Although Kozma--Nachmias and Fitzner--van der Hofstad proved the conjecture for nearest-neighbour percolation in dimension $d\ge 11$ \cites{kozma2009alexander,fitzner2017meanfield}, it remains open to this day.

Hambly and Kumagai \cite{hambly2010diffusion} looked at the question from the opposite direction: on what kind of spaces does the conjecture fail, and by how much?
A natural testing ground is a model on which critical percolation renormalises exactly, and they therefore turned to the diamond hierarchical lattice (DHL), an exactly renormalisable model rooted in the statistical physics of the 1980s, see Figure~\ref{fig:DHL}.
At that time, however, heat-kernel estimates for the percolation cluster on the DHL were far out of reach, since not even the diffusion on the deterministic DHL had been constructed.
They therefore began with the DHL itself, and obtained on-diagonal heat-kernel estimates for its canonical diffusion and for the scaling limit of the corresponding critical bond-percolation cluster.
More precisely, there exist $\varepsilon>0$, constants $c_1,\dots,c_8>0$, and a positive function $T$ such that
\begin{align*}
c_1 t^{-1} |\log t|^{-2-\varepsilon}\le &\, p_t(x,x)\le c_2 t^{-1},
\qquad \text{for a.e. }x\in \mathrm{DHL}, \quad \forall t<T(x), \\
c_3 t^{-1/2}\le &\, p_t(0,0)\le c_4 t^{-1/2},
\qquad \forall t<1.
\end{align*}
For the critical percolation cluster on the DHL, they proved that its scaling limit is a graph-directed random recursive fractal and that, in the effective resistance metric with $\theta\approx 1.8993$, the on-diagonal heat-kernel bounds are
\begin{align*}
c_5 t^{-\theta/(\theta+1)} |\log t|^{-2(2\theta+3)(\theta+2)-\varepsilon}
\le &\, q_t^\omega(x,x)\le
c_6 t^{-\theta/(\theta+1)} \bigl|\log |\log t| \bigr|^{(\theta-1)/(\theta+1)}, \\
&\mu^\omega\text{-a.e.\ }x\in C(\omega),\ \mathbb P\text{-a.s.},\ \forall t<1, \\
c_7 t^{-\theta^*/(\theta^*+1)}\le &\, q_t^\omega(0,0)\le c_8 t^{-\theta^*/(\theta^*+1)},
\qquad
\mathbb P\text{-a.e.\ }\omega,\ \forall t<1.
\end{align*}
Here $p_t$ and $q_t^\omega$ denote the on-diagonal heat kernels of the diffusions on the DHL and on the cluster, $C(\omega)$ is the cluster with its natural measure $\mu^\omega$, $\theta\approx 1.8993$ is the almost sure growth exponent of the cluster mass, and $\theta^*$ is the corresponding exponent at the distinguished boundary point $0$.

Owing to technical limitations, the exponent $\theta$ was obtained under a special resistance normalisation, but this did not prevent them from showing that the Alexander--Orbach-type conjecture fails for the critical percolation cluster on the DHL, where $\frac{2\theta}{\theta+1}\approx 1.3102\neq\frac43$.
They further conjectured that this special normalisation should not be necessary: in the unit-resistance setting, the growth exponent of the two-terminal effective resistance should exist almost surely.
This open problem is the gateway to the natural setting: once the exponent exists, the natural Brownian motion, namely the one built in the unit-resistance setting, should also exist and should likewise fail the Alexander--Orbach-type conjecture.

\bigskip

Hambly and Kumagai's analysis of the DHL is a landmark result for diffusions on hierarchical lattices.
It constructs a diffusion on the scaling limit and derives on-diagonal heat-kernel bounds in a setting where local volume behaviour is highly non-uniform and infinite degree appears.
At the level of scaling exponents, however, the DHL sits at a marginal point:
the Hausdorff and Minkowski dimensions of the limit space both equal $2$, the two-terminal effective resistance stays of order one across levels, and hence $\dimresis(\Xi)=0$ (the resistance dimension, Definition~\ref{def:resistance_dimension}).
In this regime the scaling-limit diffusion is defined directly from the renormalised Dirichlet energy rather than from a resistance form.
Consequently, the typical spectral dimension equals $2$, while the distinguished finite-born point carries spectral dimension $1$.

The purpose of the present paper is to develop an analytic framework for a broad class of fractal graphs including the DHL and hierarchical lattices.
Edge Iterated Graph Systems (EIGS) provide such a class, encompassing many classical examples, including the DHL, Vicsek-type graphs, Cayley graph of $F_2$ and $(u,v)$-flowers.
However, this breadth comes at genuine analytic costs and technical difficulties.

We shall next review the background, in order to better explain and position the contributions of this paper.

\subsection{Background}\label{subsec:background}

\begin{table}[ht]
\centering
\caption{Three classes of fractal spaces for diffusion processes and Dirichlet forms.}
\begingroup
\footnotesize
\setlength{\tabcolsep}{3.2pt}
\renewcommand{\arraystretch}{1.30}
\begin{tabularx}{\textwidth}{
>{\raggedright\arraybackslash}p{0.155\textwidth}
>{\raggedright\arraybackslash}X
>{\raggedright\arraybackslash}X
>{\raggedright\arraybackslash}X}
\toprule
\textbf{Fractal type}
&
\textbf{Finitely ramified / p.c.f.\ fractals}
&
\textbf{Infinitely ramified / non-p.c.f.\ fractals}
&
\textbf{Iterated graph systems including hierarchical lattices}
\\
\midrule
\textbf{Ambient space}
&
{Euclidean space}
&
{Euclidean space}
&
{graph / metric space}
\\
\textbf{Classical example(s)}
&
Sierpi\'nski gasket
&
Sierpi\'nski carpet
&
hierarchical lattice; EIGS
\\
\textbf{Bounded degree}
&
\checkmark
&
\checkmark
&
\checkmark\ \& \texttimes
\\
\textbf{Usually recurrent}
&
\checkmark
&
\checkmark\ \& \texttimes
&
\checkmark\ \& \texttimes
\\
\textbf{Exact renormalisation}
&
\checkmark
&
\texttimes
&
\checkmark
\\
\textbf{Non-trivial percolation}
&
\texttimes
&
\checkmark
&
\checkmark\ \& \texttimes
\\
\textbf{Representative literature}
&
Barlow--Perkins \cite{barlow1988brownian};
Kigami \cite{kigami1993harmonic,kigami2001analysis};
Kusuoka \cite{kusuoka1989dirichlet,kusuoka1993lecture};
Kumagai \cite{kumagai1993estimates,kumagai2004heat};
Hambly--Kumagai \cite{hambly1999transition}
&
Barlow--Bass \cite{barlow1989the,barlow1992transition,barlow1999brownian};
Barlow--Hambly \cite{barlow1997diffusion};
Croydon \cite{croydon2007heat,croydon2008volume,croydon2018scaling};
Grigor'yan--Telcs \cite{grigoryan2002harnack};
Cao--Qiu \cite{cao2021sierpinski};
Halberstam--Hutchcroft \cite{halberstam2022what};
&
Hambly--Kumagai \cite{hambly2010diffusion};
Ruiz \cite{ruiz2018explicit,ruiz2021heat}
\\
\textbf{Role in this paper}
&
parallel results for Brownian motion and heat kernel
&
parallel results for diffusion limit and heat kernel
&
main class of this paper
\\
\bottomrule
\end{tabularx}
\endgroup
\label{tab:three_fractal_diffusion_classes}
\end{table}

Table~\ref{tab:three_fractal_diffusion_classes} summarises the comparison that will guide the discussion below.
The rigorous construction of diffusion processes on fractal spaces goes back to the late 1980s, starting with Brownian motion on the Sierpi\'nski gasket and the development of resistance forms and Dirichlet-form methods on self-similar sets
\cite{barlow1988brownian,kusuoka1989dirichlet,kigami1993harmonic,kigami2001analysis}. 
For the purposes of this paper, it is useful to organise the fractals into three classes.

The first class consists of finitely ramified, and in particular p.c.f., self-similar fractals such as the Sierpi\'nski gasket. 
In this setting, the finite boundary structure makes the renormalisation possible. 
Consequently, one can often construct resistance forms, Laplacians, Brownian motions and heat kernels by an exact renormalisation procedure. 
This leads to many sharp and explicit results. 
At the same time, the finite ramification imposes strong geometric restrictions. 
In the classical examples the associated diffusion is recurrent, and often point recurrent, and the usual Bernoulli percolation problem has no non-trivial phase transition: the critical probability is $p_c=1$.

The second class consists of infinitely ramified, non-p.c.f. fractals such as the Sierpi\'nski carpet and related carpet-like spaces. 
These spaces have much richer large-scale geometry and may support either recurrent or transient behaviour, depending on the underlying dimension and scaling. 
Diffusions on such spaces are usually constructed by analytic Dirichlet-form methods, invariance principles, or heat-kernel estimates rather than by a finite-dimensional resistance-renormalisation scheme. 
Compared with the p.c.f. case, exact formulae for quantities such as the spectral dimension or the relevant renormalisation constants are generally unavailable. 
However, non-trivial percolation phase transitions may occur, but the corresponding critical parameter $p_c\in(0,1)$ is typically not available in closed form.

The third class consists of hierarchical and iterated systems, including Edge Iterated Graph Systems (including hierarchical lattices) considered in this paper. 
These models retain an exact recursive structure, while allowing much greater flexibility than classical p.c.f. fractals: they may be finitely or infinitely ramified, recurrent or transient, and their graphs may have either bounded or unbounded degrees. 
This class has been studied much less systematically from the viewpoint of probability. 
The closest benchmark for the present work is the landmark paper of Hambly and Kumagai on the diamond hierarchical lattice and its critical percolation cluster \cite{hambly2010diffusion}. 
A major additional difficulty in the EIGS setting is that the graphs need not have uniformly bounded vertex degree; finite-born vertices may acquire degrees growing with the level. 
Nevertheless, hierarchical and iterated systems also have decisive advantages. 
Their exact replacement structure makes it possible, in most cases, to compute scaling exponents explicitly, to distinguish recurrent and transient regimes, and to obtain exact renormalisation formulae. 
As this paper shows, the tools that are standard in classes 1 and 2 do not transfer easily to class 3, and the corresponding results are accordingly harder to obtain.

\subsection{Edge iterated graph systems}\label{subsec:eigs}

Iterated graph systems are a natural extension of what physicists call hierarchical lattices; they
are fractal graphs generated by recursive iteration.
We do not claim that the basic idea underlying EIGS is new.
Closely related constructions can be traced back at least to hierarchical lattices in statistical physics in the 1980s, where they have long served as important model spaces for renormalisation and related problems. Nevertheless, much of the existing literature is model-specific and heuristic, and it does not provide a sufficiently general mathematical framework for the questions studied in this paper. 
There are several concrete reasons for this.

\begin{itemize}
    \item Hierarchical lattices have not been formulated using directed rule graphs. 
    This is harmless in many classical examples, since the rule graphs are symmetric with respect to the two terminals. However, without directed rule graphs, a general edge-substitution procedure is not well-defined.

    \item As shown in \cite{li2024on}, if the rule graphs are not directed, then the substitution rule does not determine a unique degree growth. Consequently, fundamental quantities such as degree growth and degree dimension cannot be studied unambiguously in the general setting.

    \item The existing literature rarely separates the rule graphs from the iterative procedure itself, in the sense of treating the construction as a discrete dynamical system generated by a substitution operator. 
    This multicolour formalism is not merely cosmetic: it provides exactly the bookkeeping required by the critical percolation cluster.
\end{itemize}

Thus, while the substitution intuition behind EIGS is classical, a precise definition of Edge Iterated Graph Systems is both natural and necessary. 
It turns a family of previously heuristic or model-specific constructions into a well-defined framework in which distances, degrees, scaling limits, and random models such as percolation can be studied consistently.

Specifically, closely related edge-replacement constructions have appeared in the statistical-physics and complex-networks literature.
In particular, a line of work by Xi studied such models first as self-similar networks and later under the name substitution networks, with emphasis on scale-free behaviour, fractality, and average-distance asymptotics \cite{chen2014class,xi2017fractality,ye2019average}.
Useful tools for the rigorous study of these models, most notably the degree dimension, were generalised to a multiple-colour random setting in \cite{li2024on}.
This point of view was then formalised in the language of iterated graph systems in \cite{neroli2024fractal}, where the IGS framework was developed as a mathematical setting for studying fractal dimensions on such graph limits.
The follow-up works \cite{anttila2024constructions,anttila2025combinatorial,anttila2025construction,li2025reducible} already demonstrate the remarkable breadth and strength of the IGS programme, ranging from rich replacement constructions and counterexamples to Kleiner's conjecture, through broad results on the combinatorial Loewner property and super-multiplicativity, to the construction of self-similar energy forms and the discovery of singular Sobolev phenomena on Laakso-type IGS fractals.

For the sake of rigour, we briefly restate the definition of EIGS here.

\begin{figure}[ht]
    \centering
    \includegraphics[width=0.9\linewidth]{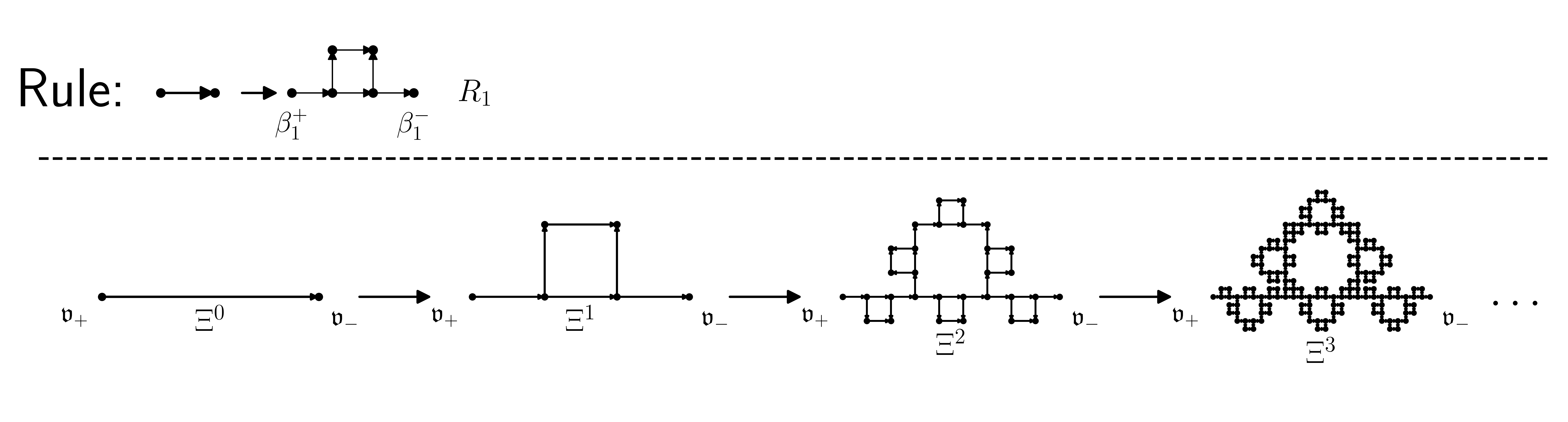}
    \caption{An example: Canonical Xi graph (in $\mathbb Z^2$)}
    \label{fig:canonical_xi}
\end{figure}

\begin{definition}\label{definition:EIGS}
See Figure~\ref{fig:canonical_xi} and Figure~\ref{fig:two_colour} for examples.

An \emph{Edge Iterated Graph System} is a triple
\[
   \mathscr I=(\Xi^{0},\mathcal R,\mathcal S)
\]
specified as follows.
Let $K\in\mathbb N$ be the number of edge colours.
For every graph $G$ that appears in the construction, let
$
   \mathscr{C} : E(G) \longrightarrow[K]
$
be a colour map.

\begin{enumerate}
\item \textbf{Initial graph.}
      $\Xi^{0}$ is an edge-coloured finite directed graph.
      If $\Xi^{0}$ is just an edge, we denote its two vertices by $\mathfrak v_+$ and $\mathfrak v_-$.
      In particular, $\Xi_{\iota}^0$ denotes the special case that the initial graph is just one $\iota$-coloured directed edge.
      In this paper, $\iota$ always represents the colour of the initial graph if it only contains one edge or one vertex.

\item \textbf{Rule graphs.}
      $\mathcal{R} = \{ R_i \}_{i=1}^{K}$ is a family of finite directed graphs.
      Each $R_i$ contains two \emph{planting vertices}
      \[
         \beta_i^{+},\beta_i^{-} \in  V(R_i)
         \quad\&\quad
         \beta_i^{+}\neq\beta_i^{-}.
      \]
      Every edge of $R_i$ is coloured by the same map $\mathscr{C}$.

\item \textbf{Substitution operator.}
      For an edge $e=(a,b)\in E(G)$, the operator
      $
         \mathcal S\colon e\longmapsto R_{\mathscr{C}(e)}  \ \big|\
         a\equiv\beta_i^{+},\ b\equiv\beta_i^{-}
      $
      replaces $e$ by a \emph{fresh copy} of the rule graph $R_i$ where \(i=\mathscr C(e)\), identifying
      $
         a\equiv\beta_i^{+}
      $ and
      $
         b\equiv\beta_i^{-}.
      $
      All arcs are substituted \emph{simultaneously in parallel} at each step.
      Vertices are identified only at the planting vertices of their respective copies, and no further identifications are made.

\item \textbf{Iteration.}
      Set $\Xi^{n+1}:=\mathcal S(\Xi^{n})$ for $n\ge 0$; explicitly
      \[
         \Xi^{n+1}
         =
         \bigcup_{e\in E(\Xi^{n})}\mathcal S(e),
      \]
      where the union is taken with the vertex identifications described above.
      The sequence $(\Xi^{n})_{n\in\mathbb N}$ is called the \emph{EIGS substitution networks}.
\end{enumerate}
\end{definition}

\begin{figure}[ht]
    \centering
    \includegraphics[width=0.8\linewidth]{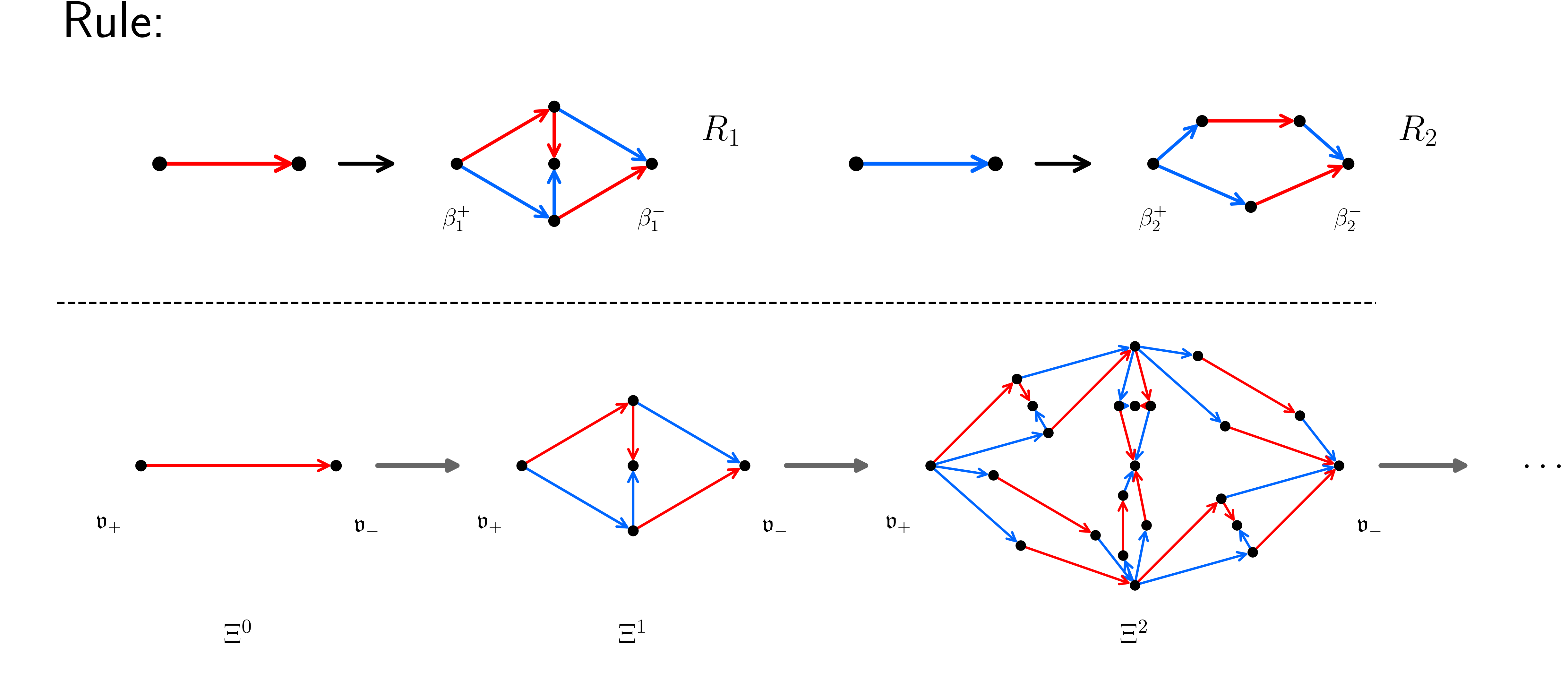}
    \caption{An example of two-coloured EIGS}
    \label{fig:two_colour}
\end{figure}

In this paper, we must distinguish two classes of objects in order to avoid ambiguity.

\textbf{Combinatorial limit.}
Under our construction, the combinatorial limit graph $\Xi$ is a natural object, and the simple random walk on it is usually defined at the intuitive level.
However, when we work combinatorially we are often studying asymptotic properties of the graph sequence, since
\[
\mathscr V:=V(\Xi):=\bigcup_{n\in\mathbb N} V(\Xi^n)
\]
is simply the union of the finite vertex sets.
Nevertheless, understanding the combinatorial limit and its asymptotics is an essential prerequisite for what follows.

\textbf{Metric scaling limit.}
To give the limiting object a fully explicit definition, we embed the rescaled graphs into a compact metric measure space
\[
(V(\Xi^n),\hat d_{\Xi^n},\mu_{\Xi^n})
\longrightarrow
(\Xi_{\mathrm M}=\mathscr V \cup \mathscr V^\infty, d_{\Xi_{\mathrm M}},\mu_{\Xi_{\mathrm M}}),
\]
which we call the Gromov--Hausdorff--Prokhorov scaling limit.
Here $\mu_{\Xi^n}$ is the normalised counting measure on $V(\Xi^n)$, and $\mu_{\Xi_{\mathrm M}}$ is its weak limit.
In this setting, the limit is a well-defined compact space, and we can clearly distinguish the embedded finite-born points
$
\mathscr V\subset \Xi_{\mathrm M}
$
from the points added by the closure,
$
\mathscr V^\infty:=\Xi_{\mathrm M}\setminus \mathscr V.
$
As we will see later, these two classes of points may exhibit different local mass and resistance behaviour, but surprisingly they share the same walk dimension.

\paragraph{Growth matrices and scaling exponents.}
We briefly recall the three growth quantities that govern the large-scale geometry of an EIGS.
Let \(K\) be the number of colours and, for a coloured directed graph \(G\), define
\[
    [\chi(G)]_j
    :=
    \bigl|\{e\in E(G): \mathscr C(e)=j\}\bigr|,
    \qquad j\in[K].
\]
The \emph{mass matrix} \(\mathbf M\in\mathbb N^{K\times K}\) is defined by
\[
    [\mathbf M]_{ij}
    :=
    [\chi(R_i)]_j
    =
    \bigl|\{e\in E(R_i): \mathscr C(e)=j\}\bigr|.
\]
Thus the \(i\)-th row of \(\mathbf M\) records the colour composition of the rule graph \(R_i\). 
Consequently, in the primitive deterministic setting,
\[
    |E(\Xi_\iota^n)| \asymp |V(\Xi_\iota^n)|
    \asymp \rho(\mathbf M)^n ,
\]
so \(\rho(\mathbf M)\), the spectral radius of $\mathbf M$, is the exponential mass-growth factor \cite{neroli2024fractal}.

The \emph{degree matrix} \(\mathbf N\in\mathbb N^{2K\times 2K}\) records how coloured degrees at planting vertices are amplified under substitution. For a vertex \(v\) in a coloured directed graph \(G\), define
\[
    [\boldsymbol \kappa_G(v)]_{2j-1}
    :=
    \bigl|\{e=(v,w)\in E(G): \mathscr C(e)=j\}\bigr|,
    \qquad
    [\boldsymbol \kappa_G(v)]_{2j}
    :=
    \bigl|\{e=(w,v)\in E(G): \mathscr C(e)=j\}\bigr|.
\]
Then
\[
    \mathbf N
    :=
    \begin{pmatrix}
        \boldsymbol \kappa_{R_1}(\beta_1^+)\\
        \boldsymbol \kappa_{R_1}(\beta_1^-)\\
        \vdots\\
        \boldsymbol \kappa_{R_K}(\beta_K^+)\\
        \boldsymbol \kappa_{R_K}(\beta_K^-)
    \end{pmatrix}.
\]
If a vertex \(v\) is born at level \(m\) (that is, \(v\in V(\Xi^m)\setminus V(\Xi^{m-1})\), then its coloured incidence vector at level \(n\ge m\) is governed by
\[
    \boldsymbol \kappa_{\Xi^n}(v)=\boldsymbol \kappa_{\Xi^m}(v)\mathbf N^{\,n-m}.
\]
Thus \(\rho(\mathbf N)\) is the exponential degree-amplification factor. When \(\rho(\mathbf N)>1\), the corresponding degree dimension in the primitive deterministic case is, by \cite{li2024on},
\[
    \dimdeg(\Xi)
    =
    \frac{\log \rho(\mathbf M)}{\log \rho(\mathbf N)}.
\]
Equivalently, \(\mathbf M\) measures how many vertices and edges are produced globally, whereas \(\mathbf N\) measures how fast the degrees of finite-born vertices grow.

Finally, the distance scale is encoded by the path-selection family \(\mathcal D\). The full construction of \(\mathcal D\) is given in \cite{neroli2024fractal}; for the present overview, we only need the resulting quantity
\[
    \rho_{\min}(\mathcal D),
\]
which is the minimal exponential growth rate of graph distances under substitution. More precisely,
\[
    d_{\Xi^n}(\mathfrak v_+,\mathfrak v_-)
    \asymp
    \operatorname{diam}(\Xi^n)
    \asymp
    \rho_{\min}(\mathcal D)^n .
\]
Therefore, in the primitive deterministic setting,
\[
    \dimbox(\Xi)
    =
    \dimhaus(\Xi)
    =
    \frac{\log \rho(\mathbf M)}
         {\log \rho_{\min}(\mathcal D)} .
\]
Here $\dimhaus(\Xi)$ denotes the Hausdorff dimension of the limit metric space, and the equality is Theorem~2.1 of \cite{neroli2024fractal}.
In short, \(\mathbf M\) gives the mass scale, \(\mathcal D\) gives the metric scale, and \(\mathbf N\) gives the degree scale. This is why the fractal dimension is a mass--distance ratio, while the degree dimension is a mass--degree ratio. 
In the random coloured setting, the same interpretation holds with the spectral radii of the expectations of random matrices \cite{hambly2010diffusion}.

\bigskip

\begin{assumption}\label{ass:standing}
Although our main objects arise from EIGSs, several later arguments only use a more general graph-theoretic framework.
We therefore fix the following standing convention.

Throughout this paper, unless explicitly stated otherwise, when we write $G$ for a graph in a general graph-theoretic setting, $G$ is finite, simple, connected, and undirected.
If $(G^n)_{n\in\mathbb N}$ is a vertex-nested sequence of such graphs, we write $G^\infty$ for its combinatorial limit and assume that $\diam(G^n)\to\infty$ as $n\to\infty$.

\begin{enumerate}
\item \textbf{Canonicality.}
We say that $\mathscr I$ is \textbf{canonical} if for every $i\in[K]$ and every edge $e\in E(R_i)$, the edge $e$ belongs to at least one simple path in the underlying undirected graph of $R_i$ connecting $\beta_i^+$ and $\beta_i^-$.
We assume that the underlying Edge Iterated Graph System is canonical.

\item \textbf{Distance-positivity and degree-primitivity.}
The Edge Iterated Graph System is distance-positive.
That is, all matrices in $\mathcal D$ are positive, see Definition~2.8 in \cite{neroli2024fractal}.
If $K=1$, we require $d_{R_1}(\beta_1^+,\beta_1^-) \geq 2$.
Note that, under this assumption, $\diam(\Xi^n)\to\infty$.
We assume $\mathbf N$ is primitive, see Definition~5.3 in \cite{li2024on}.

\item \textbf{Single-edge initial graph.}
The initial graph $\Xi^{0}$ consists of a single directed edge whose endpoints are denoted by $\mathfrak v_{+}$ and $\mathfrak v_{-}$.
\end{enumerate}
\end{assumption}

The canonicality assumption is primarily used to exclude electrically irrelevant decorations, namely edges and hence vertex sets which do not lie on any simple path connecting the planting vertices.
Such decorations do not affect the two-terminal renormalisation map $\Psi$, but they can alter global resistance geometry after renormalisation.
Although the canonicality assumption is strong, it is not fundamentally unmanageable.
We adopt it mainly because it makes the presentation substantially cleaner and more readable.
In fact, in the regime $\dimresis(\Xi)>0$, whether or not the system is canonical makes no difference to most of our conclusions at the level of scaling exponents.
When $\dimresis(\Xi)\le 0$, one needs more delicate resistance estimates, and genuinely technical phenomena may occur, for instance uniform renormalisation may break down.
However, these issues do not affect the overarching structure of the theory or its main ideas.

\subsection{Main results}\label{subsec:main_results}

The foundational pillar of this paper is that every scaling exponent of an EIGS admits a closed form.
The mass, distance and degree growth rates $\rho(\mathbf M)$, $\rho_{\min}(\mathcal D)$ and $\rho(\mathbf N)$ come from the substitution structure itself \cite{neroli2024fractal,li2024on}.
These generate the box and degree dimensions through $\dimbox(\Xi)=\frac{\log\rho(\mathbf M)}{\log\rho_{\min}(\mathcal D)}$ and $\dimdeg(\Xi)=\frac{\log\rho(\mathbf M)}{\log\rho(\mathbf N)}$, whose precise definitions are recalled in Section~\ref{sec:random_walks}.
One feature deserves emphasis before we state the results.
When $\rho(\mathbf N)>1$, a vertex born at level $k$ has degree of order $\rho(\mathbf N)^{n-k}$ in $\Xi^n$, so degrees at a fixed level are highly non-uniform, and in the combinatorial limit every finite-born vertex has infinite degree.
As a consequence even the volume-doubling property fails, a phenomenon already observed for the DHL in \cite{hambly2010diffusion}.

The resistance growth rate $\rho(\Psi)$, in turn, is the nonlinear Perron eigenvalue of the resistance renormalisation map $\Psi$ introduced in Section~\ref{sec:random_walks} (Lemma~\ref{thm:sdprimitive_eigenpair}).
From this we prove that the resistance dimension exists and satisfies $\dimresis(\Xi)=\frac{\log\rho(\Psi)}{\log\rho_{\min}(\mathcal D)}$ (Theorem~\ref{thm:resistance_dimension_formula}), and we derive the local mass dimension, the local resistance dimension and the walk dimension on the combinatorial limit.
The behaviour of an EIGS is organised by the position of $\rho(\Psi)$ relative to $1$, equivalently by the sign of $\dimresis(\Xi)$.

\begin{itemize}
\item \textbf{Subcritical: superconducting} ($\rho(\Psi)<1$, equivalently $\dimresis(\Xi)<0$).
The two-terminal effective resistance tends to $0$ while every single edge keeps resistance $1$, so the graph is asymptotically superconducting although no single wire is; after renormalisation, each microscopic edge becomes an insulator.
The random walk is transient and, counter-intuitively, typically reaches distant hubs before nearby low-degree targets.
The $(u,v)$-flowers with $u>v$ are of this type.
\item \textbf{Critical: Hambly--Kumagai} ($\rho(\Psi)=1$).
The two-terminal effective resistance converges to a positive constant and no renormalisation is needed, so crossing a cell costs the same order-one resistance at every scale.
The random walk is therefore insensitive to distance in the resistance sense.
This is precisely the setting of \cite{hambly2010diffusion}, whose classical representative is the DHL.
\item \textbf{Supercritical: Brownian} ($\rho(\Psi)>1$).
The two-terminal effective resistance diverges while the renormalised resistance of a microscopic edge vanishes, so resistance grows with distance, exactly as physical intuition suggests.
The simple random walk is recurrent, and the diffusion limit upgrades to the Brownian motion of a compatible resistance form.
The Vicsek, Laakso and Xi graphs are of this type.
\end{itemize}

The three types could hardly behave more differently.
Nevertheless, we prove that all vertices, whether finite-born with exploding degrees or late-born with bounded degrees, and all three types, share one and the same walk dimension following the Einstein relation.

\begin{restated}{\ref{thm:Einstein}}
All vertices in $\Xi$ share the same walk dimension at every scale, even with local infinity.
\[
\dimwalk(\Xi)=\dimbox(\Xi)+\dimresis(\Xi).
\]
\end{restated}

Two cancellations lie behind this uniformity.
Locally, the degree amplification and the decay of the local effective resistance cancel each other in the exit time, so the walk dimension does not feel the birth level of the base vertex (Theorem~\ref{thm:exit_time_scale}).
Across the three types, the common value is then dictated by the Einstein relation alone.
In particular, the Einstein relation yields $\dimwalk(\Xi)\ge 2$ (Proposition~\ref{prop:walk_dimension_ge_2}), so the random walk on an EIGS is always diffusive or subdiffusive, and never superdiffusive.

A walk dimension shared by every vertex at every scale is exactly what a diffusion limit requires, because it allows a single time acceleration $a_n$ for the whole graph.
In Section~\ref{sec:diffusion} we therefore realise the combinatorial limit inside a compact metric space $\Xi_{\mathrm M}=\mathscr V\cup\mathscr V^\infty$, equip it with the weak limit $\mu_{\Xi_{\mathrm M}}$ of the normalised counting measures, and construct the limiting Dirichlet form.
Below, $\mathscr V^\infty:=\Xi_{\mathrm M}\setminus\mathscr V$ is the set of infinite-born limit points, $\hat d_{\Xi^n}$ and $\mu_{\Xi^n}$ are the diameter-rescaled graph metric and the normalised counting measure, $v_n$ is a distinguished root, $X^{\Xi^n}$ is the simple random walk, and GHPS abbreviates the Gromov--Hausdorff--Prokhorov--Skorokhod convergence.

\begin{restated}{\ref{thm:sec31_ghps}}
For every $T>0$,
\[
\Bigl(V(\Xi^n),\hat d_{\Xi^n},\mu_{\Xi^n},v_n,\bigl(X^{\Xi^n}_{\lfloor a_nt\rfloor}\bigr)_{0\le t\le T}\Bigr)
\xrightarrow{\ \mathrm{GHPS}\ }
\Bigl(\Xi_{\mathrm M},d_{\Xi_{\mathrm M}},\mu_{\Xi_{\mathrm M}},x,(W_t)_{0\le t\le T}\Bigr).
\]
\end{restated}

In the Brownian type, the limiting process $W$ coincides with the Brownian motion of a compatible resistance form (Theorem~\ref{thm:sec32_brownian_motion}).
With the basic construction in hand, the uniform exit-time scaling becomes a tool for on-diagonal heat kernel estimates in the natural metric.
Below, $p_t^\Xi$ denotes the jointly continuous heat kernel of the limiting diffusion with respect to $\mu_{\Xi_{\mathrm M}}$, and $t_0>0$ is a fixed small time.
The key point is that $\dimdeg(\Xi)$ enters as a correction term at finite-born points, while the correction disappears at $\mu_{\Xi_{\mathrm M}}$-almost every point of $\mathscr V^\infty$.

\begin{restated}{\ref{thm:diffusion-heat}}
For every $x\in\mathscr V$ there exist $c_{h,\mathscr V},C_{h,\mathscr V}>0$ depending on $x$ such that for all $t\in(0,t_0)$,
\[
c_{h,\mathscr V}\, t^{-\frac{\dimbox(\Xi)\bigl(1-\frac{1}{\dimdeg(\Xi)}\bigr)}{\dimwalk(\Xi)}}
\le
p_t^{\Xi}(x,x)
\le
C_{h,\mathscr V}\, t^{-\frac{\dimbox(\Xi)\bigl(1-\frac{1}{\dimdeg(\Xi)}\bigr)}{\dimwalk(\Xi)}}.
\]
For every $\varepsilon>0$ and for $\mu_{\Xi_{\mathrm M}}$-almost every $x\in\mathscr V^\infty$, there exist $c_{h,\mathscr V^\infty},C_{h,\mathscr V^\infty}>0$ depending on $x$ and $\varepsilon$ such that for all $t\in(0,t_0)$,
\[
c_{h,\mathscr V^\infty}\,
|\log t|^{-\frac{1}{\dimdeg(\Xi)-1}-\varepsilon}\,
t^{-\frac{\dimbox(\Xi)}{\dimwalk(\Xi)}}
\le
p_t^{\Xi}(x,x)
\le
C_{h,\mathscr V^\infty}\,
t^{-\frac{\dimbox(\Xi)}{\dimwalk(\Xi)}}.
\]
\end{restated}

For the DHL these bounds recover the polynomial order of \cite{hambly2010diffusion} and improve the logarithmic correction in their typical-point lower bound from $|\log t|^{-2-\varepsilon}$ to $|\log t|^{-1-\varepsilon}$.
Though the statement above is admittedly heavy,
rewritten as a spectral dimension, it becomes transparent.
Here $\dimspec^{(\mathrm N)}(\Xi_{\mathrm M}:x):=-\lim_{t\downarrow 0}\frac{2\log p_t^\Xi(x,x)}{\log t}$ is the local spectral dimension; see Section~\ref{sec:diffusion} for the formal definition.

\begin{restated}{\ref{thm:diffusion-spectral}}
With the convention that $\dimdeg(\Xi)=\infty$ whenever $\Xi$ is not scale-free, we have
\[
\dimspec^{(\mathrm N)}\bigl(\Xi_{\mathrm M}:x\bigr)
=
\begin{cases}
\dfrac{2\dimbox(\Xi)\bigl(1-\frac{1}{\dimdeg(\Xi)}\bigr)}{\dimwalk(\Xi)}, & x\in\mathscr V,\\[2ex]
\dfrac{2\dimbox(\Xi)}{\dimwalk(\Xi)}, & x\in\mathscr V^\infty\ (\mu_{\Xi_{\mathrm M}}\text{-a.e.}).
\end{cases}
\]
\end{restated}

The formula reveals a dichotomy: points of one EIGS fractal may diffuse at two different speeds, and the discrepancy is caused purely by local infinity.
The degree dimension is exactly the correction term that unifies the two, and under the convention $\dimdeg(\Xi)=\infty$ in the locally finite case, a unified formula covers all EIGS.
In the Brownian type the diffusion and Brownian spectral dimensions moreover coincide (Theorem~\ref{thm:brownian-spectral}).
Table~\ref{tab:dimensions} collects deterministic and random examples: once $\dimbox(\Xi)$, $\dimdeg(\Xi)$ and $\dimresis(\Xi)$ are known, the walk and spectral dimensions follow automatically, and the last column records the diffusion type of each example.

\begin{table}[t]
\caption{Dimensions of common fractal graphs (* conjectural values, see Section~\ref{subsec:dhl_cluster}).}
\label{tab:dimensions}
\makebox[\linewidth][c]{%
\begin{tabular}{|>{\centering\arraybackslash}m{2.2cm}|*{7}{>{\centering\arraybackslash}m{1.6cm}|}}
\hline
\multirow{2}{*}{\diagbox[width=2.64cm,height=1.27cm]{\emph{Graphs}}{\emph{Dimensions}}}
& $\dimbox$ & $\dimdeg$ & $\dimresis$ & $\dimwalk$ & $\dimspec(\mathscr V)$ & $\dimspec(\mathscr V^\infty)$
& Diffusion type \\
\cline{2-8}
& Thm 2.1 \cite{neroli2024fractal}
& Thm 5.5 \cite{li2024on}
& Thm \ref{thm:resistance_dimension_formula}
& Thm \ref{thm:Einstein}
& Thm \ref{thm:diffusion-spectral}
& Thm \ref{thm:diffusion-spectral}
& Rem \ref{rem:three_resistance_regimes} \\
\hline
DHL
& $2$ & $2$ & $0$ & $2$ & $1$ & $2$ & HK \\
\hline
$(u,v)$-flower
& $1+\frac{\log u}{\log v}$
& $1+\frac{\log v}{\log u}$
& $1-\frac{\log u}{\log v}$
& $2$
& $1$
& $1+\frac{\log u}{\log v}$
& {\small SC/HK/BM} \\
\hline
Laakso graph
& $\frac{\log 6}{\log 4}$
& $\infty$
& $\frac{\log 3}{\log 4}$
& $\frac{\log 18}{\log 4}$
& $\frac{2\log 6}{\log 18}$
& $\frac{2\log 6}{\log 18}$
& BM \\
\hline
Canonical Xi graph
& $\frac{\log 6}{\log 3}$
& $\infty$
& $\frac{\log (11/4)}{\log 3}$
& $\frac{\log (33/2)}{\log 3}$
& $\frac{2\log 6}{\log (33/2)}$
& $\frac{2\log 6}{\log (33/2)}$
& BM \\
\hline
Figure~\ref{fig:two_colour}
& $2.4461$
& $2.4461$
& $0.1455$
& $2.5916$
& $1.1160$
& $1.8877$
& BM \\
\hline
\hline
Percolation cluster DHL
& $1.8993$
& $1.8993$
& $0.8124$
& *$2.7117$
& *$0.6633$
& *$1.4008$
& *BM \\
\hline
\end{tabular}%
}
\end{table}

After all this, we have not forgotten one of the motivations behind Hambly and Kumagai's study of the DHL.
This is an Alexander--Orbach-type question \cite{alexander1982density}: does the critical percolation cluster carry the spectral dimension $\frac43$?
Hambly and Kumagai constructed the diffusion on the scaling limit of the two-terminal critical cluster in a setting where the edge weights are renormalised so that the total resistance across the cluster stays equal to one.
In the natural unit-resistance setting, everything hinges on the exponential growth rate of the two-terminal effective resistance, and the existence of this rate was left open in \cite{hambly2010diffusion}.
We solve this problem; here $R_n$ denotes the effective resistance between the two boundary vertices of the level-$n$ cluster.

\begin{restated}{\ref{thm:reduced_quenched_exponent}}
For every $p\in(0,1)$, there exists a constant $\alpha(p)\in[0,\log 2]$ such that
\[
\lim_{n\to\infty}\frac1n\log R_n=\lim_{n\to\infty}\frac1n\log\mathbb E[R_n]=\alpha(p)
\qquad\text{almost surely}.
\]
Moreover, at the critical parameter $p_c=\frac{\sqrt5-1}{2}$, $\alpha(p_c)\ge\log\frac{14-4\sqrt5}{3}>0$ (Proposition~\ref{prop:alpha_pc_positive}).
\end{restated}

Numerically $\alpha(p_c)\approx 0.5631$, hence $\dimresis\approx 0.8124>0$.
In other words, critical percolation drives the DHL out of the Hambly--Kumagai regime into the Brownian regime: the lattice itself is critical for the resistance flow, while its critical cluster becomes supercritical.
Accordingly, we conjecture in Section~\ref{sec:percolation} that the cluster carries a Brownian motion in the natural metric, with typical spectral dimension approximately $1.4008$.
This value is strictly larger than the Alexander--Orbach value $\frac43\approx 1.3333$, so the Alexander--Orbach-type conjecture is expected to fail for this hierarchical model.
With this, the story returns to where it began.

\subsection{Formalisation by Lean}\label{subsec:formalisation}
Sections~\ref{sec:random_walks} to~\ref{sec:percolation} of this paper have been formalised in Lean~4, with proofs produced by Aristotle (Harmonic) \cite{achim2025aristotle}, and the code is available at \url{https://github.com/Nero-17/IGS-RWDL-LEAN}.
The projects compile against pinned revisions of \texttt{mathlib} and contain no \texttt{sorry}, no \texttt{admit} and no additional \texttt{axiom}, so every formalised statement depends only on the standard axioms \texttt{propext}, \texttt{Classical.choice} and \texttt{Quot.sound}.
The scope is summarised in Table~\ref{tab:lean}, and we stress that the formalisation is partial.

Results quoted from the literature enter the formalisation as explicit hypotheses.
We emphasise for Section~\ref{sec:diffusion} that Lean verifies the implications from these hypotheses to the conclusions, not the hypotheses themselves.
The machine-checked guarantee therefore falls short of the unconditional correctness of the mathematical statements exactly by the correctness of the quoted results.
Aristotle is used to generate Lean code only.

\begin{table}[ht]
\caption{Scope of the Lean formalisation.}
\label{tab:lean}
\makebox[\linewidth][c]{%
\small
\begin{tabular}{|>{\centering\arraybackslash}m{1.8cm}|m{6cm}|m{6.8cm}|}
\hline
& \emph{Results cited as hypotheses} & \emph{Formalised results} \\
\hline
Section~\ref{sec:random_walks}
& The eigenpair-existence input from nonlinear Perron--Frobenius theory in the proof of Lemma~\ref{thm:sdprimitive_eigenpair}; the distance growth in Lemma~\ref{lem:basic_prop}, quoted from \cite{neroli2024fractal}.
& Lemmas \ref{lem:psi-props} and \ref{lem:renorm}; the uniqueness part of Lemma~\ref{thm:sdprimitive_eigenpair}; Corollaries \ref{lem:Psi_range} and \ref{cor:Psi_strong_convergence}; Lemma~\ref{lem:Reff-asymp-rho}; Theorem~\ref{thm:resistance_dimension_formula}; the mass and degree growth in Lemma~\ref{lem:basic_prop}; Lemmas \ref{lem:ball_volume_k_born} and \ref{lem:mesoscopic_Reff_k_born}; Propositions \ref{prop:local_mass_dimension_constant}, \ref{prop:local_resistance_dimension_k_born} and \ref{prop:asymp_res_recurrence_rhoPsi}; Lemma~\ref{lem:star_domain_exit_time_scale}; Theorem~\ref{thm:exit_time_scale}; Propositions \ref{lem:commute-time-eigs-asymp}, \ref{prop:MPsiD} and \ref{prop:walk_dimension_ge_2}; Theorem~\ref{thm:Einstein}. \\
\hline
Section~\ref{sec:diffusion}
& The GHP limit space; the closure into a regular Dirichlet form and the Hunt-process correspondence; the Kuwae--Shioya process convergence and the Skorokhod representation; Azuma--Hoeffding and Aldous' criterion; the resistance-form theory of Kigami; the multiscale Nash iteration; the inputs of Section~\ref{sec:random_walks}.
& Every lemma, proposition, theorem, corollary and worked example of Section~\ref{sec:diffusion}, except Lemma~\ref{lem:GH_limit_metric} and Theorem~\ref{thm:sec31_ghps}, which are quoted from the literature; the oscillation contraction in Lemma~\ref{lem:sec31_continuous_harmonics} is proved from Assumption~\ref{ass:standing}. \\
\hline
Section~\ref{sec:percolation}
& None.
& Proposition~\ref{prop:dhl_reduction}, as equality of the joint laws of the full resistance sequences; the mean-matrix eigenvalues and the exponent $1.8993$ of Subsection~\ref{subsec:dhl_cluster}; Theorem~\ref{thm:reduced_quenched_exponent} and Proposition~\ref{prop:alpha_pc_positive}; Proposition~\ref{prop:no_deterministic_limit}; Lemmas \ref{lem:annealed_exponent}, \ref{lem:logistic_product} and \ref{lem:uniform_product}. \\
\hline
\end{tabular}%
}
\end{table}

\section{Random walks on graphs}\label{sec:random_walks}

In this section we study various dimensions associated with random walks on the combinatorial direct-limit graph $\Xi$.
As discussed above, we first need asymptotic geometric and electrical information about $\Xi$, such as global effective resistance, local mass growth, and local resistance growth, in order to describe the random walk.
Ultimately, however, the main goal of this section is to obtain sharp exit-time estimates and to identify the walk dimension.

For $N\in\mathbb N$, write $[N]:=\{1,\dots,N\}$.
For a vector $\mathbf x$, write $[\mathbf x]_i$ for its $i$-th coordinate.
For a matrix $\mathbf X$, write $[\mathbf X]_{i,j}$ for its $(i,j)$-entry.
If $\mathbf X$ is square, write $\rho(\mathbf X)$ for its spectral radius.
We write $\mathbf x\le \mathbf y$ when the inequality holds coordinatewise.
We write $f\asymp g$ when there exist constants $c,C>0$ such that $cg\le f\le Cg$.

For $v\in V(G)$ and $r>0$, write
\[
B_G(v,r):=\{u\in V(G): d_G(v,u)<r\}
\]
for the open ball of radius $r$ centred at $v$.
For a finite graph $G$ and a function $f:V(G)\to\mathbb R$, define the Dirichlet energy by
\[
\mathcal E_G(f,f):=\sum_{\{u,w\}\in E(G)} (f(u)-f(w))^2.
\]
For $u,w\in V(G)$, write $\Reff_G(u,w)$ for the effective resistance between $u$ and $w$ in the unit-resistance network on $G$.
For $A\subseteq V(G)$, write
\[
\Reff_G(v,A^{\mathrm c})
:=
\Bigl(
\inf\bigl\{
\mathcal E_G(f,f): f:V(G)\to\mathbb R,\ f(v)=1,\ f\equiv 0 \text{ on } A^{\mathrm c}
\bigr\}
\Bigr)^{-1}.
\]

\subsection{Resistance renormalisation}\label{subsec:resistance}

\begin{definition}\label{def:psi_Psi}
For any finite undirected $K$-coloured graph $G$ and any two vertices $a,b\in V(G)$,
let $\psi_{G:(a,b)}(\mathbf x)\in[0,\infty]$ denote the effective resistance between $a$ and $b$
in the electrical network obtained by assigning resistance $[\mathbf x]_j$ to every $j$-coloured edge.
If $a$ and $b$ are disconnected in $G$, we set $\psi_{G:(a,b)}(\mathbf x)=\infty$.

Define the resistance renormalisation map of any given EIGS $\mathscr I$ by
\[
\Psi(\mathbf x):=\big(\psi_{R_1:(\beta_1^+,\beta_1^-)}(\mathbf x),\dots,
\psi_{R_K:(\beta_K^+,\beta_K^-)}(\mathbf x)\big)\in\mathbb R_{\ge 0}^K.
\]
\end{definition}

\begin{example}\label{ex:Psi_two_colours}
Consider the two-colour EIGS shown in Figure~\ref{fig:two_colour}.
We label the red colour by $1$ and the blue colour by $2$.
Let $\mathbf x=(x_1,x_2)\in(0,\infty)^2$.
A direct calculation gives
\[
\Psi(x_1,x_2)
=
\Big(
\frac{x_1^2+4x_1x_2+x_2^2}{3(x_1+x_2)},
\frac{(x_1+2x_2)(x_1+x_2)}{2x_1+3x_2}
\Big).
\]
\end{example}

\begin{lemma}\label{lem:psi-props}
$\Psi:\mathbb R_{\ge 0}^K\to \mathbb R_{\ge 0}^K$ satisfies:
\begin{enumerate}
\item \textbf{(Monotonicity).}
If $\mathbf x\le \mathbf y$ coordinatewise, then
$
\Psi(\mathbf x)\le \Psi(\mathbf y).
$

\item \textbf{(Homogeneity).}
For all $t\ge 0$,
$
\Psi(t\mathbf x)=t\Psi(\mathbf x).
$

\item \textbf{(Continuity).}
The map $\mathbf x\mapsto \Psi(\mathbf x)$ is continuous on $(0,\infty)^K$.
Moreover, for $\mathbf x\in\mathbb R_{\ge 0}^K$ the limit
$
\lim_{\varepsilon\downarrow 0}\Psi(\mathbf x+\varepsilon\mathbf 1)
$
exists and defines an extension of $\Psi$ to $\mathbb R_{\ge 0}^K$.

\item \textbf{(Concavity).}
For all $\lambda\in[0,1]$ and all $\mathbf x,\mathbf y\in\mathbb R_{\ge 0}^K$,
\[
\Psi(\lambda\mathbf x+(1-\lambda)\mathbf y)
\ge
\lambda \Psi(\mathbf x)+(1-\lambda)\Psi(\mathbf y).
\]

\item \textbf{(Superadditivity).}
For all $\mathbf x,\mathbf y\in\mathbb R_{\ge 0}^K$,
$
\Psi(\mathbf x+\mathbf y)\ge \Psi(\mathbf x)+\Psi(\mathbf y).
$
\end{enumerate}
\end{lemma}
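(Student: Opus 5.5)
The plan is to reduce everything to the variational description of effective resistance. For a finite network on $R_i$ with edge resistances $r_e=[\mathbf x]_{\mathscr C(e)}>0$, Thomson's principle gives
\[
\psi_{R_i:(\beta_i^+,\beta_i^-)}(\mathbf x)=\inf\Big\{\sum_{e\in E(R_i)} r_e\,\theta(e)^2:\ \theta \text{ a unit flow from }\beta_i^+\text{ to }\beta_i^-\Big\}.
\]
We treat $R_i$ as undirected for this purpose, since orientation only fixes the planting vertices. The key observation is that, for a fixed flow $\theta$, the energy $\sum_j [\mathbf x]_j\sum_{e:\mathscr C(e)=j}\theta(e)^2$ is a \emph{linear} function of $\mathbf x$ with nonnegative coefficients. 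Hence each coordinate of $\Psi$ is an infimum of a family of linear maps $\mathbf x\mapsto \langle \mathbf c_\theta,\mathbf x\rangle$ with $\mathbf c_\theta\in\mathbb R_{\ge 0}^K$. The family is indexed by unit flows, a set independent of $\mathbf x$. The flow set is nonempty because $R_i$ is connected between the planting vertices, by canonicality. So $\psi<\infty$, and $\Psi$ maps into $\mathbb R_{\ge0}^K$.

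First we establish the properties on $(0,\infty)^K$, where Thomson's principle applies directly.

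\textbf{Monotonicity.} This follows because each linear map has nonnegative coefficients, and an infimum of nondecreasing maps is nondecreasing.

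\textbf{Homogeneity.} For $t>0$ this follows from linearity in $\mathbf x$. The case $t=0$ is handled after the extension is defined, using $\Psi(\mathbf 0)=\lim_\varepsilon \varepsilon\Psi(\mathbf 1)=\mathbf 0$.

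\textbf{Concavity.} An infimum of affine (here linear) functions is concave, coordinatewise.

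\textbf{Superadditivity.} This follows from the inequality $\inf_\theta(a_\theta+b_\theta)\ge\inf a+\inf b$. Alternatively, it comes from concavity plus homogeneity: $\Psi(\mathbf x+\mathbf y)=2\Psi(\tfrac12\mathbf x+\tfrac12\mathbf y)\ge\Psi(\mathbf x)+\Psi(\mathbf y)$.

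\textbf{Continuity.} A finite concave function on the open convex set $(0,\infty)^K$ is continuous. One can also argue more directly: monotonicity and homogeneity give $\Psi(\mathbf x)\le\Psi(\mathbf y)\le\lambda\Psi(\mathbf x)$ whenever $\mathbf x\le\mathbf y\le\lambda\mathbf x$. This yields local Lipschitz-type control, since $\mathbf y$ close to $\mathbf x$ lies in $[\lambda^{-1}\mathbf x,\lambda\mathbf x]$ with $\lambda\to1$.

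Next comes the boundary extension. For $\mathbf x\in\mathbb R_{\ge0}^K$, the map $\varepsilon\mapsto\Psi(\mathbf x+\varepsilon\mathbf 1)$ is nonincreasing as $\varepsilon\downarrow0$ by monotonicity, and bounded below by $\mathbf 0$. So the limit exists coordinatewise. It agrees with $\Psi$ on $(0,\infty)^K$ by continuity. We note that the extended value equals the infimum over flows of $\langle\mathbf c_\theta,\mathbf x\rangle$, which is the natural resistance with zero-resistance edges (short circuits). This follows by exchanging two infima: the limit of $\inf_\theta\langle \mathbf c_\theta,\mathbf x+\varepsilon\mathbf 1\rangle$ is $\inf_\theta\inf_\varepsilon$, which equals $\inf_\theta\langle\mathbf c_\theta,\mathbf x\rangle$. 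With this representation on all of $\mathbb R_{\ge0}^K$, properties 1, 2, 4 and 5 transfer verbatim. Alternatively, one can pass to the limit in each inequality, using $\lambda\mathbf x+(1-\lambda)\mathbf y+\varepsilon\mathbf 1=\lambda(\mathbf x+\varepsilon\mathbf 1)+(1-\lambda)(\mathbf y+\varepsilon\mathbf 1)$ and the analogous identity for sums, with $2\varepsilon$ in place of $\varepsilon$.

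The main obstacle is being careful that the infimum representation is legitimate uniformly. Thomson's principle holds for strictly positive resistances. The only delicate point is the interchange of limits at the boundary, and this is resolved by the monotone-infimum argument above: there, both infima are monotone in the same direction, so the interchange is free.
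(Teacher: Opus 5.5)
Your proposal is correct and follows essentially the same route as the paper: Thomson's principle writes each coordinate of $\Psi$ as an infimum of linear functions of $\mathbf x$, which gives monotonicity, homogeneity and concavity, superadditivity comes from $2\Psi\bigl(\tfrac{\mathbf x+\mathbf y}{2}\bigr)$, and the extension is a monotone limit. The only differences are minor: you obtain continuity from concavity (or the sandwich $\mathbf x\le\mathbf y\le\lambda\mathbf x$) rather than from the finite linear system, and you explicitly check, via the interchange of infima, that the properties carry over to the boundary extension, which the paper leaves implicit.
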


\begin{proof}
Fix $i\in[K]$.
Write $\mathcal F_i$ for the set of unit flows on $R_i$ from $\beta_i^+$ to $\beta_i^-$.
By Thomson's principle,
\[
[\Psi(\mathbf x)]_i
=
\inf_{\theta\in \mathcal F_i}
\sum_{e\in E(R_i)} [\mathbf x]_{\mathscr C(e)}\theta(e)^2.
\]
For fixed $\theta$, the displayed expression is linear in $\mathbf x$.
Therefore monotonicity and homogeneity are immediate, and concavity follows because the infimum of linear functions is concave.

Continuity on $(0,\infty)^K$ is standard for finite electrical networks, since the effective resistance is obtained by solving a finite linear system whose coefficients depend continuously on the edge conductances.
For $\mathbf x\in\mathbb R_{\ge 0}^K$, monotonicity implies that $\Psi(\mathbf x+\varepsilon\mathbf 1)$ decreases coordinatewise as $\varepsilon\downarrow 0$, so the limit exists in $\mathbb R_{\ge 0}^K$.

Finally, homogeneity and concavity give
\[
\Psi(\mathbf x+\mathbf y)
=
2\Psi\Bigl(\frac{\mathbf x+\mathbf y}{2}\Bigr)
\ge
\Psi(\mathbf x)+\Psi(\mathbf y),
\]
which is the required superadditivity.
\end{proof}

\begin{lemma}\label{lem:renorm}
For every $n\ge 0$ and every $\mathbf x\in\mathbb R_{\ge 0}^K$,
\[
\psi_{\Xi^{n}:(\mathfrak v_+,\mathfrak v_-)}(\mathbf x)
=
\psi_{\Xi^{0}:(\mathfrak v_+,\mathfrak v_-)}\big(\Psi^{n}(\mathbf x)\big).
\]
In particular,
\[
\Reff_{\Xi^n}(\mathfrak v_+,\mathfrak v_-)
=
\psi_{\Xi^{n}:(\mathfrak v_+,\mathfrak v_-)}(\mathbf 1)
=
\psi_{\Xi^{0}:(\mathfrak v_+,\mathfrak v_-)}\big(\Psi^{n}(\mathbf 1)\big).
\]
\end{lemma}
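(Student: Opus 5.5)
We argue by induction on $n$, with the single-step identity
\[
\psi_{\mathcal S(G):(a,b)}(\mathbf x)=\psi_{G:(a,b)}\bigl(\Psi(\mathbf x)\bigr)
\]
as the key ingredient. Here $G$ is any finite coloured directed graph, $a,b\in V(G)$, and $\mathcal S(G)$ is the one-step substitution. Granting this identity, we apply it with $G=\Xi^{n}$ to get
\[
\psi_{\Xi^{n+1}}(\mathbf x)=\psi_{\Xi^n}(\Psi(\mathbf x)).
\]
The induction hypothesis, applied at the point $\Psi(\mathbf x)$, then gives $\psi_{\Xi^0}(\Psi^{n}(\Psi(\mathbf x)))=\psi_{\Xi^0}(\Psi^{n+1}(\mathbf x))$. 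The case $n=0$ is trivial. Taking $\mathbf x=\mathbf 1$ gives the displayed formula for $\Reff_{\Xi^n}(\mathfrak v_+,\mathfrak v_-)$, since unit resistances are just $\psi(\mathbf 1)$.

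To prove the single-step identity, first take $\mathbf x\in(0,\infty)^K$. In $\mathcal S(G)$, each copy $\mathcal S(e)$ of $R_{\mathscr C(e)}$ meets the rest of the network only at its two planting vertices, which are identified with the endpoints of $e$. This is exactly the clause in Definition~\ref{definition:EIGS} that no further identifications are made.

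A two-terminal subnetwork attached at only two nodes can be replaced by a single resistor equal to its effective resistance without changing the effective resistance between any pair of outer vertices. We will justify this through the Dirichlet principle. Minimising the energy of $f$ with prescribed values $f(a)=1$, $f(b)=0$ can be done in two stages. First, fix the values of $f$ on $V(G)$ and minimise over the interior vertices of each copy separately; the copies are disjoint away from the planting vertices, so these minimisations decouple. A copy whose planting vertices carry values $u,w$ contributes minimal energy $(u-w)^2/[\Psi(\mathbf x)]_{\mathscr C(e)}$. This is the defining variational characterisation of $\psi_{R_i:(\beta_i^+,\beta_i^-)}$, and it is where the orientation convention $a\equiv\beta_i^+$, $b\equiv\beta_i^-$ enters; by symmetry of effective resistance the orientation is in fact irrelevant. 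Second, minimise over $V(G)$. The resulting energy is that of $G$ with edge resistances $\Psi(\mathbf x)$, so the effective resistances agree.

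Boundary cases need separate care. If some $[\Psi(\mathbf x)]_i=\infty$, the corresponding copies have disconnected terminals and are equivalent to deleted edges. Canonicality, indeed connectivity of the rule graphs between the planting vertices, rules this out, so every coordinate of $\Psi(\mathbf x)$ is finite for finite $\mathbf x$.

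The extension to $\mathbf x\in\mathbb R_{\ge0}^K$ is the main obstacle, since zero resistances are short circuits and conductances become infinite. We plan to use the definition in Lemma~\ref{lem:psi-props}(3): both sides are defined as limits along $\mathbf x+\varepsilon\mathbf 1$. The left side is $\lim_{\varepsilon\downarrow0}\psi_{\Xi^{n}}(\mathbf x+\varepsilon\mathbf 1)=\lim_{\varepsilon\downarrow0}\psi_{\Xi^0}(\Psi^n(\mathbf x+\varepsilon\mathbf 1))$. For the right side, monotone convergence $\Psi^n(\mathbf x+\varepsilon\mathbf 1)\downarrow\Psi^n(\mathbf x)$ follows inductively from monotonicity and the defining monotone limit. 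We then need $\psi_{\Xi^0}$ to be continuous along decreasing sequences. This holds because $\psi_{\Xi^0}$ is the infimum of linear functions by Thomson's principle, hence upper semicontinuous, and it is also monotone, so it is continuous from above. The same argument handles the intermediate iterates of $\Psi$. This closes the induction on the whole closed orthant.
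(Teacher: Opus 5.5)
Your proof is correct and is essentially the paper's argument: both collapse each two-terminal substituted copy, attached only at its planting vertices, into a super-edge carrying its effective resistance. The paper does this with Thomson's principle on the $n$-step copies hanging off $\Xi^0$ and then recurses via $\mathbf y_{n+1}=\Psi(\mathbf y_n)$, whereas you collapse the last generation of rule-graph copies with the Dirichlet principle and apply the induction hypothesis at the shifted point $\Psi(\mathbf x)$, which is the route sketched in the remark after the lemma. Your treatment of the closed orthant is more careful than the paper's one-line appeal to the monotone limit: you use upper semicontinuity of the Thomson infimum plus monotonicity to get continuity from above along arbitrary decreasing sequences, which is what the iterates $\Psi^n(\mathbf x+\varepsilon\mathbf 1)\downarrow\Psi^n(\mathbf x)$ actually require.
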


\begin{proof}
The case $n=0$ is immediate.
Fix $n\ge 1$ and $\mathbf x\in\mathbb R_{>0}^K$.
The extension to $\mathbf x\in\mathbb R_{\ge 0}^K$ follows from the monotone limit
$
\mathbf x=\lim_{\varepsilon\downarrow 0}(\mathbf x+\varepsilon\mathbf 1)
$
together with Lemma~\ref{lem:psi-props}.

For every edge $e\in E(\Xi^0)$ of colour $i=\mathscr C(e)$, the substitution process produces inside $\Xi^n$ a subgraph $H_e^{(n)}$ which is a fresh copy of the $n$-step substitution network starting from a single $i$-coloured edge.
The subgraph $H_e^{(n)}$ is attached to the rest of $\Xi^n$ only through the two endpoints of $e$.
Moreover,
\[
\Xi^n=\bigcup_{e\in E(\Xi^0)} H_e^{(n)},
\]
and different $H_e^{(n)}$ are edge-disjoint and intersect only at vertices in $V(\Xi^0)$.

Consider any unit flow $\theta$ on $\Xi^n$ from $\mathfrak v_+$ to $\mathfrak v_-$.
For each $e\in E(\Xi^0)$, let $I_e$ be the net current that $\theta$ sends through the two terminals of $H_e^{(n)}$.
Then $I=(I_e)_{e\in E(\Xi^0)}$ is a unit flow on $\Xi^0$ from $\mathfrak v_+$ to $\mathfrak v_-$.
Decompose the energy as
\[
\mathcal E_{\Xi^n,\mathbf x}(\theta)
=
\sum_{e\in E(\Xi^0)} \mathcal E_{H_e^{(n)},\mathbf x}(\theta|_{H_e^{(n)}}).
\]
For fixed $e\in E(\Xi^0)$, the minimal energy among flows on $H_e^{(n)}$ that send net current $I_e$ equals
$
I_e^2 \psi_{H_e^{(n)}}(\mathbf x).
$
Hence
\[
\mathcal E_{\Xi^n,\mathbf x}(\theta)
\ge
\sum_{e\in E(\Xi^0)} I_e^2 \psi_{H_e^{(n)}}(\mathbf x).
\]
Minimising over $\theta$ shows that $\psi_{\Xi^n:(\mathfrak v_+,\mathfrak v_-)}(\mathbf x)$ equals the effective resistance in $\Xi^0$ where each edge $e$ is assigned resistance $\psi_{H_e^{(n)}}(\mathbf x)$.

For each colour $i\in[K]$, pick one $i$-coloured edge $e_i\in E(\Xi^0)$ and define $\mathbf y_n(\mathbf x)\in\mathbb R_{\ge 0}^K$ by
\[
[\mathbf y_n(\mathbf x)]_i:=\psi_{H_{e_i}^{(n)}}(\mathbf x).
\]
Since $\psi_{H_e^{(n)}}(\mathbf x)$ depends only on the colour of $e$, we obtain
\[
\psi_{\Xi^n:(\mathfrak v_+,\mathfrak v_-)}(\mathbf x)
=
\psi_{\Xi^0:(\mathfrak v_+,\mathfrak v_-)}\bigl(\mathbf y_n(\mathbf x)\bigr).
\]

Now $\mathbf y_1(\mathbf x)=\Psi(\mathbf x)$ by definition.
Applying the same reduction inside each substituted copy shows that
$
\mathbf y_{n+1}(\mathbf x)=\Psi(\mathbf y_n(\mathbf x)).
$
Thus $\mathbf y_n(\mathbf x)=\Psi^n(\mathbf x)$ for every $n$, which proves the claim.
\end{proof}

\begin{remark}
Lemma~\ref{lem:renorm} can also be proved directly from Kirchhoff's laws.
Each substituted copy of a rule graph is a two-terminal subnetwork attached only at planting vertices.
Since we make no further identifications beyond planting vertices, each copy may be replaced by a single super-edge whose resistance equals the effective resistance between the terminals.
\end{remark}

Naturally, we next investigate the dynamical properties of the iterations of $\Psi$.
In particular, the structural properties listed in Lemma~\ref{lem:psi-props} place the problem in a particularly tractable setting.
The work \cite{gaubert2004perronfrobenius} gives uniqueness of the eigenvalue under monotonicity, homogeneity, and Gaubert--Gunawardena irreducibility, although eigenvectors need not be unique.
Under the stronger hypotheses of \cite{nussbaum1986convexity,lemmens2012nonlinear}, both the eigenvalue and the eigenvector are unique.

\begin{lemma}\label{thm:sdprimitive_eigenpair}
Recall that the underlying EIGS in Assumption~\ref{ass:standing} is distance-positive.
Then there exist unique $\rho(\Psi)>0$ and unique $\mathbf v\in(0,\infty)^K$ up to scalar multiples such that
\[
\Psi(\mathbf v)=\rho(\Psi)\mathbf v.
\]
\end{lemma}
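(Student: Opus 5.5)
Existence will come from nonlinear Perron--Frobenius theory for monotone, homogeneous, concave maps on the cone $\mathbb R_{\ge 0}^K$ (Lemma~\ref{lem:psi-props}). Uniqueness will come from a Hilbert-metric contraction argument. The key structural input is distance-positivity. Every rule graph $R_i$ contains, for every colour $j$, a $j$-coloured edge on a simple $\beta_i^+$--$\beta_i^-$ path, and canonicality makes this hold for every edge. This should give a two-sided comparison
\[
c\,\min_j [\mathbf x]_j \;\le\; [\Psi(\mathbf x)]_i \;\le\; C\,\max_j [\mathbf x]_j ,
\]
and, more importantly, a positivity statement: $[\Psi(\mathbf x)]_i>0$ whenever $\mathbf x\neq 0$. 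Positivity should follow because a positive path matrix forces every colour to appear on every crossing path.

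\textbf{Step 1 (properness on the interior).} I will show that $\Psi$ maps $(0,\infty)^K$ into itself and extends continuously to $\mathbb R_{\ge0}^K\setminus\{0\}$ with values in $(0,\infty)^K$. The upper bound is the series resistance of a single simple path. For the lower bound I plan to use Nash--Williams: every $\beta_i^+$--$\beta_i^-$ path must use an edge of colour $j$, so the $j$-coloured edges form a cutset in the appropriate sense. Then $[\Psi(\mathbf x)]_i \ge [\mathbf x]_j/|E(R_i)|$ for each $j$, which gives $[\Psi(\mathbf x)]_i\ge \max_j[\mathbf x]_j/|E(R_i)|$. This is the step where I translate ``all matrices in $\mathcal D$ positive'' into the cutset property; the precise definition in \cite{neroli2024fractal} may only give that every colour appears on every minimal-length path. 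In that case I will instead take the first iterate $\Psi^m$ and argue through Lemma~\ref{lem:renorm}, where substituted levels force every colour on every crossing path.

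\textbf{Step 2 (existence).} Normalise on the simplex $\Delta=\{\mathbf x\ge0:\sum_j x_j=1\}$ and apply Brouwer to $\mathbf x\mapsto \Psi(\mathbf x)/\sum_j[\Psi(\mathbf x)]_j$. This map is continuous because the denominator is bounded below by Step~1. The resulting fixed point $\mathbf v$ satisfies $\Psi(\mathbf v)=\rho\mathbf v$ with $\rho>0$. Moreover $\mathbf v\in(0,\infty)^K$ by positivity of $\Psi$ off the origin. This is the input cited from nonlinear Perron--Frobenius theory \cite{nussbaum1986convexity,lemmens2012nonlinear}.

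\textbf{Step 3 (uniqueness).} Suppose $\Psi(\mathbf w)=\sigma\mathbf w$ with $\mathbf w\in(0,\infty)^K$. Let $\alpha=\max_j w_j/v_j$, so that $\mathbf w\le\alpha\mathbf v$ with equality in some coordinate $j_0$. Monotonicity and homogeneity give $\sigma\mathbf w=\Psi(\mathbf w)\le\alpha\rho\mathbf v$, and reading coordinate $j_0$ yields $\sigma\le\rho$. The symmetric argument gives $\sigma\ge\rho$, so the eigenvalue is unique. For the eigenvector, I expect the main obstacle to be a strict inequality. Suppose $\mathbf w\ne\alpha\mathbf v$, so that $\alpha\mathbf v-\mathbf w\ge0$ is nonzero. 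Superadditivity (Lemma~\ref{lem:psi-props}) gives
\[
\alpha\rho\mathbf v=\Psi(\alpha\mathbf v)\ge\Psi(\mathbf w)+\Psi(\alpha\mathbf v-\mathbf w).
\]
By Step~1, $\Psi(\alpha\mathbf v-\mathbf w)$ is strictly positive in every coordinate. Hence $\rho\mathbf w<\alpha\rho\mathbf v$ coordinatewise, which contradicts equality at $j_0$. Therefore $\mathbf w=\alpha\mathbf v$. So uniqueness reduces entirely to the positivity of $\Psi$ on nonzero nonnegative vectors, which is why establishing Step~1 from distance-positivity is the crux.
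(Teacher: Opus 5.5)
Your proposal is correct, and its key input is the paper's. The paper likewise reads distance-positivity as saying that deleting the $j$-coloured edges of $R_i$ separates $\beta_i^+$ from $\beta_i^-$. Your Nash--Williams bound $[\Psi(\mathbf x)]_i\ge[\mathbf x]_j/|E(R_i)|$ is the paper's shorting estimate $[\Psi(\mathbf x)]_i\ge[\mathbf x]_j/|C_{ij}|$, where $C_{ij}$ is the edge boundary of the $\beta_i^+$-component after deleting colour $j$.

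You differ in what you do with this estimate:
\begin{itemize}
\item The paper packages it as \emph{power-bounded below} plus \emph{Nussbaum-irreducible} (with the all-ones incidence matrix), and cites Theorem~4.1 of Nussbaum for existence and uniqueness together.
\item You prove existence by Brouwer on the simplex. You prove uniqueness by an order argument: comparing with $\alpha=\max_j w_j/v_j$ gives $\sigma=\rho$, and then superadditivity plus strict positivity of $\Psi$ on $\mathbb R_{\ge0}^K\setminus\{0\}$ excludes $\mathbf w\neq\alpha\mathbf v$.
\end{itemize}
Your route is self-contained and shows that uniqueness rests only on strict positivity and superadditivity, which is what the all-ones incidence matrix encodes in Nussbaum's framework. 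The paper's route is shorter given the citation. It also sets up the Hilbert-metric framework whose companion Theorem~4.2 the paper needs anyway for Corollary~\ref{cor:Psi_strong_convergence}; your method does not give that convergence of iterates.

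Two small points.
\begin{itemize}
\item Lemma~\ref{lem:psi-props} only gives continuity on the open cone, so your Brouwer step needs one more line. Either show the extension is continuous on the closed cone: restrict Thomson's infimum to acyclic unit flows, which form a compact set, so $\Psi$ is a minimum of jointly continuous functions over a compact set. Or apply Brouwer on $\{\mathbf x\in\Delta:\min_j x_j\ge\delta\}$; your two-sided comparison shows the normalised map sends this set into itself for small $\delta>0$.
\item Your fallback via iterates $\Psi^m$ is unnecessary, since the paper uses exactly the cut reading of distance-positivity. It would also not obviously work if that reading failed: when every rule graph has a crossing path avoiding colour $j$, crossing paths avoiding $j$ persist at every level.
\end{itemize}
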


\begin{proof}
By Definition~\ref{def:psi_Psi}, for each $i\in[K]$ we have
$
[\Psi(\mathbf x)]_i=\psi_{R_i:(\beta_i^+,\beta_i^-)}(\mathbf x).
$
Therefore Lemma~\ref{lem:psi-props} implies that $\Psi$ is order-preserving, homogeneous of degree $1$, and superadditive on $(0,\infty)^K$.
Moreover, each coordinate admits the extension from $(0,\infty)^K$ to $\mathbb R_{\ge 0}^K$ given by the limit $\varepsilon\downarrow 0$ as in Lemma~\ref{lem:psi-props}, hence $\Psi$ admits the same extension.

In \cite{nussbaum1986convexity}, a map $F:\mathbb R_{\ge 0}^K\to\mathbb R_{\ge 0}^K$ is called power-bounded below if for each $i\in[K]$ there exist a constant $c_i>0$ and a stochastic vector $\mathbf u(i)\in\mathbb R_{\ge 0}^K$ with $\sum_{j=1}^K [\mathbf u(i)]_j=1$ such that
\[
[F(\mathbf x)]_i
\ge
c_i \prod_{k=1}^K [\mathbf x]_k^{[\mathbf u(i)]_k}
\]
for all $\mathbf x\in(0,\infty)^K$.
A nonnegative matrix $\mathbf A=(a_{ij})_{i,j\in[K]}$ is called an incidence matrix for $F$ if, whenever $a_{ij}>0$, there exist a constant $c_{ij}>0$ and a stochastic vector $\mathbf u(i,j)\in\mathbb R_{\ge 0}^K$ with $\sum_{k=1}^K [\mathbf u(i,j)]_k=1$ and $[\mathbf u(i,j)]_j>0$ such that
\[
[F(\mathbf x)]_i
\ge
c_{ij} \prod_{k=1}^K [\mathbf x]_k^{[\mathbf u(i,j)]_k}
\]
for all $\mathbf x\in(0,\infty)^K$.
We say that $F$ is Nussbaum-irreducible if it admits an incidence matrix $\mathbf A$ which is irreducible in the Perron--Frobenius sense, that is, for every $p,q\in[K]$ there exists $m\ge 1$ such that $[\mathbf A^m]_{p,q}>0$.

Fix $i,j\in[K]$.
Distance-positivity means that every simple path in the underlying undirected graph of $R_i$ connecting $\beta_i^+$ and $\beta_i^-$ contains at least one $j$-coloured edge.
Equivalently, if we delete all $j$-coloured edges from $R_i$, then $\beta_i^+$ and $\beta_i^-$ become disconnected.

Let $R_i^{(j)}$ be the graph obtained from the underlying undirected graph of $R_i$ by deleting all $j$-coloured edges.
Let $S_{ij}\subseteq V(R_i)$ be the set of vertices reachable from $\beta_i^+$ in $R_i^{(j)}$.
Then $\beta_i^-\notin S_{ij}$.
Define the edge boundary
\[
C_{ij}:=\big\{\{a,b\}\in E(R_i): a\in S_{ij},\ b\notin S_{ij}\big\},
\]
which is non-empty.
Furthermore, every edge in $C_{ij}$ must be $j$-coloured.
Indeed, if $\{a,b\}\in C_{ij}$ had colour different from $j$, then it would still be present in $R_i^{(j)}$, and since $a\in S_{ij}$ this would imply $b\in S_{ij}$, a contradiction.

Fix $\mathbf x\in(0,\infty)^K$.
Consider the resistance network on the underlying undirected graph of $R_i$ where each $k$-coloured edge has resistance $[\mathbf x]_k$.
Short all vertices in $S_{ij}$ to a single node, and short all vertices in $V(R_i)\setminus S_{ij}$ to a single node.
By Rayleigh monotonicity, shorting cannot increase the effective resistance between $\beta_i^+$ and $\beta_i^-$.
After shorting, the resulting two-terminal network consists of $|C_{ij}|$ parallel edges, each of resistance $[\mathbf x]_j$.
Hence
\[
[\Psi(\mathbf x)]_i
=
\Reff_{R_i}(\beta_i^+,\beta_i^-)
\ge
\frac{[\mathbf x]_j}{|C_{ij}|}.
\]

Let $\mathbf e_j$ denote the $j$-th standard basis vector in $\mathbb R^K$.
Then $\mathbf e_j$ is a stochastic vector and
$
[\mathbf x]_j=\prod_{k=1}^K [\mathbf x]_k^{[\mathbf e_j]_k}.
$
The previous estimate yields
\[
[\Psi(\mathbf x)]_i
\ge
\frac{1}{|C_{ij}|}\prod_{k=1}^K [\mathbf x]_k^{[\mathbf e_j]_k}
\]
for all $\mathbf x\in(0,\infty)^K$.
In particular, for each $i$ we may take $c_i=1/|C_{ii}|$ and $\mathbf u(i)=\mathbf e_i$, so $\Psi$ is power-bounded below.

Now define the matrix $\mathbf A=(a_{ij})$ by $a_{ij}=1$ for all $i,j\in[K]$.
For each $i,j$ the above inequality provides an admissible choice of $c_{ij}$ and $\mathbf u(i,j)=\mathbf e_j$ with $[\mathbf u(i,j)]_j=1>0$.
Therefore $\mathbf A$ is an incidence matrix for $\Psi$, and it is irreducible since $\mathbf A$ is positive.
Hence $\Psi$ is Nussbaum-irreducible.

We have shown that $\Psi$ is homogeneous of degree $1$, superadditive, power-bounded below, and Nussbaum-irreducible.
Therefore Theorem~4.1 in \cite{nussbaum1986convexity} yields a unique positive eigenvector $\mathbf v\in(0,\infty)^K$ up to scalar multiples and a unique eigenvalue $\rho(\Psi)>0$ such that
\[
\Psi(\mathbf v)=\rho(\Psi)\mathbf v.
\]
\end{proof}

\begin{corollary}\label{lem:Psi_range}
There exist constants $c_{\Psi},C_{\Psi}>0$ such that for every $\ell\in\mathbb N$ and every colour $i\in[K]$,
\[
c_{\Psi} \rho(\Psi)^\ell
\le
\big[\Psi^\ell(\mathbf 1)\big]_i
\le
C_{\Psi} \rho(\Psi)^\ell.
\]
\end{corollary}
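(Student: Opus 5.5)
The plan is to sandwich $\mathbf 1$ between two multiples of the positive eigenvector $\mathbf v$ from Lemma~\ref{thm:sdprimitive_eigenpair}, and then let monotonicity and homogeneity carry the sandwich through every iterate.

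First, since $\mathbf v\in(0,\infty)^K$, set
\[
a:=\frac{1}{\max_k [\mathbf v]_k},\qquad b:=\frac{1}{\min_k [\mathbf v]_k}.
\]
Both are finite and positive, and coordinatewise $a\mathbf v\le \mathbf 1\le b\mathbf v$.

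Next, apply $\Psi^\ell$ to this sandwich. Monotonicity (Lemma~\ref{lem:psi-props}(1)), iterated $\ell$ times, gives
\[
\Psi^\ell(a\mathbf v)\le \Psi^\ell(\mathbf 1)\le \Psi^\ell(b\mathbf v).
\]
Homogeneity (Lemma~\ref{lem:psi-props}(2)) together with the eigen-relation $\Psi(\mathbf v)=\rho(\Psi)\mathbf v$ gives, by a one-line induction on $\ell$, that $\Psi^\ell(t\mathbf v)=t\rho(\Psi)^\ell\mathbf v$ for every $t\ge 0$. Here one uses that $\Psi$ maps $(0,\infty)^K$ into itself, because $\rho(\Psi)>0$ and $\mathbf v$ is positive, so every iterate stays in the domain where these properties hold. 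Hence
\[
a\rho(\Psi)^\ell\mathbf v\le \Psi^\ell(\mathbf 1)\le b\rho(\Psi)^\ell\mathbf v .
\]

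Finally, read off coordinates. The $i$-th coordinate of $\Psi^\ell(\mathbf 1)$ lies between $a[\mathbf v]_i\rho(\Psi)^\ell$ and $b[\mathbf v]_i\rho(\Psi)^\ell$. So we may take
\[
c_\Psi=\frac{\min_k[\mathbf v]_k}{\max_k[\mathbf v]_k},\qquad C_\Psi=\frac{\max_k[\mathbf v]_k}{\min_k[\mathbf v]_k},
\]
which are independent of $\ell$ and $i$. The case $\ell=0$ is trivially included, since then $c_\Psi\le 1\le C_\Psi$.

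There is essentially no obstacle: all the weight rests on Lemma~\ref{thm:sdprimitive_eigenpair}. The one point to check is that its eigenvector is strictly positive in every coordinate, which is what makes $a$ and $b$ finite and positive. The lemma guarantees this, since it gives $\mathbf v\in(0,\infty)^K$.
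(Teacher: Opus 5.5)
Your proof is correct and is essentially the paper's argument: sandwich $\mathbf 1$ between $a\mathbf v$ and $b\mathbf v$, push the sandwich through $\Psi^\ell$ using monotonicity and homogeneity, and read off coordinates. With your explicit $a,b$, your constants coincide with the paper's choices $c_\Psi=a\min_j[\mathbf v]_j$ and $C_\Psi=b\max_j[\mathbf v]_j$; the aside about $\Psi$ preserving $(0,\infty)^K$ is unnecessary, since Lemma~\ref{lem:psi-props} already gives monotonicity and homogeneity on all of $\mathbb R_{\ge 0}^K$.
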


\begin{proof}
Fix a positive eigenpair $\Psi(\mathbf v)=\rho(\Psi)\mathbf v$ with $\mathbf v\in(0,\infty)^K$.
Choose $a,b>0$ such that $a\mathbf v\le \mathbf 1\le b\mathbf v$.
By monotonicity and homogeneity of $\Psi$, for every $\ell\in\mathbb N$,
$
a \rho(\Psi)^\ell \mathbf v
\le
\Psi^\ell(\mathbf 1)
\le
b \rho(\Psi)^\ell \mathbf v.
$
Taking coordinates and setting
$
c_{\Psi}:=a\min_{j\in[K]}[\mathbf v]_j,
$
and
$
C_{\Psi}:=b\max_{j\in[K]}[\mathbf v]_j
$
gives the claim.
\end{proof}

\begin{corollary}\label{cor:Psi_strong_convergence}
Let $(\rho(\Psi),\mathbf v)$ be the unique positive eigenpair with $\Psi(\mathbf v)=\rho(\Psi)\mathbf v$.
Fix the normalisation $\|\mathbf v\|_1=1$.
Then there exists a constant $c_{\Psi}^*>0$ such that
\[
\lim_{n\to\infty}\frac{\Psi^n(\mathbf 1)}{\rho(\Psi)^n}=c_{\Psi}^*\mathbf v.
\]
\end{corollary}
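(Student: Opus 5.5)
We plan to prove convergence of the normalised iterates $\mathbf z_n:=\Psi^n(\mathbf 1)/\rho(\Psi)^n$ in three steps: a monotone scalar bracketing in Hilbert's projective metric, a limit of the direction, and a limit of the scale. By homogeneity, $\mathbf z_{n+1}=T(\mathbf z_n)$ with $T:=\rho(\Psi)^{-1}\Psi$. This $T$ is monotone, homogeneous of degree one, and fixes $\mathbf v$.

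First we set up the bracketing. Define
\[
\alpha_n:=\max\{\alpha\ge 0:\alpha\mathbf v\le \mathbf z_n\},\qquad \beta_n:=\min\{\beta:\mathbf z_n\le \beta\mathbf v\}.
\]
Corollary~\ref{lem:Psi_range} gives $0<\alpha_n\le\beta_n<\infty$ uniformly in $n$. Applying $T$ to $\alpha_n\mathbf v\le\mathbf z_n\le\beta_n\mathbf v$ gives $\alpha_n\mathbf v\le\mathbf z_{n+1}\le\beta_n\mathbf v$. Hence $\alpha_n$ is nondecreasing and $\beta_n$ is nonincreasing, and both converge, to limits $\alpha_*\le\beta_*$. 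The statement then follows once we show $\alpha_*=\beta_*$: we would set $c_\Psi^*:=\alpha_*$, and $\mathbf z_n\to c_\Psi^*\mathbf v$ coordinatewise.

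The main obstacle is showing $\alpha_*=\beta_*$, which requires some strict contraction. Our tool is the strict lower bound from the proof of Lemma~\ref{thm:sdprimitive_eigenpair}: for all $i,j$, $[\Psi(\mathbf x)]_i\ge [\mathbf x]_j/|C_{ij}|$. Combined with superadditivity (Lemma~\ref{lem:psi-props}), this gives the following. If $\mathbf x\ge\alpha\mathbf v$, write $\mathbf x=\alpha\mathbf v+\mathbf w$ with $\mathbf w\ge 0$. Then
\[
T(\mathbf x)\ge \alpha\mathbf v+T(\mathbf w)\ge \alpha\mathbf v+\rho(\Psi)^{-1}\max_j\frac{[\mathbf w]_j}{\max_{i}|C_{ij}|}\mathbf 1 .
\]
Similarly, if $\mathbf x\le\beta\mathbf v$, then $\beta\mathbf v=\mathbf x+\mathbf w'$ with $\mathbf w'\ge 0$. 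Superadditivity gives $\beta\mathbf v\ge T(\mathbf x)+T(\mathbf w')$, so $T(\mathbf x)\le\beta\mathbf v-c\max_j[\mathbf w']_j\mathbf 1$.

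Now suppose $\alpha_*<\beta_*$ and argue by contradiction with compactness. The sequence $(\mathbf z_n)$ lies in the compact order interval $[\alpha_1\mathbf v,\beta_1\mathbf v]$, so it has a limit point $\mathbf z$. By continuity of $T$ on $(0,\infty)^K$ (Lemma~\ref{lem:psi-props}), $\mathbf z$ and $T(\mathbf z)$ are limit points of the orbit, and both satisfy $\alpha_*\mathbf v\le\cdot\le\beta_*\mathbf v$ with tight constants. Tightness here means $\max\{\alpha:\alpha\mathbf v\le\mathbf z\}=\alpha_*$, and likewise for $\beta_*$, because $\alpha_n,\beta_n$ are continuous functions of $\mathbf z_n$ and converge.

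Since $\alpha_*<\beta_*$, the difference $\mathbf w=\mathbf z-\alpha_*\mathbf v$ is nonzero. The displayed strict inequality then gives $T(\mathbf z)\ge(\alpha_*+\delta)\mathbf v$ for some $\delta>0$. This contradicts tightness for the limit point $T(\mathbf z)$. Therefore $\alpha_*=\beta_*$.

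The step requiring the most care is ensuring that the bracketing constants pass to limit points. This rests on the continuity of $\mathbf x\mapsto\min_i[\mathbf x]_i/[\mathbf v]_i$ and of $T$ on the positive cone, which holds because every $\mathbf z_n\ge\alpha_1\mathbf v>0$. Positivity of the constant $c_\Psi^*=\alpha_*\ge\alpha_1>0$ is then automatic. Finally, $c_\Psi^*$ is expressed relative to $\|\mathbf v\|_1=1$ simply by the choice of the eigenvector normalisation.
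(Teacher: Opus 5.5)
Your proof is correct, and it takes a different route from the paper's.

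\textbf{The paper's route.} The paper normalises in $\ell^1$. It cites Theorem~4.2 of Nussbaum for geometric convergence of the direction $\Psi^n(\mathbf 1)/\|\Psi^n(\mathbf 1)\|_1\to\mathbf v$ in Hilbert's metric. It then handles the scale $b_n=\|\Psi^n(\mathbf 1)\|_1/\rho(\Psi)^n$ separately, showing $|b_{n+1}/b_n-1|$ is summable via a Lipschitz bound for $\mathbf x\mapsto\|\Psi(\mathbf x)\|_1$.

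\textbf{Your route.} You keep the orbit unnormalised and use the sandwich $\alpha_n\mathbf v\le\mathbf z_n\le\beta_n\mathbf v$, which controls direction and scale at the same time. The strict contraction comes from the explicit cut bound $[\Psi(\mathbf x)]_i\ge[\mathbf x]_j/|C_{ij}|$ together with superadditivity. So the only input beyond Lemma~\ref{lem:psi-props} is what was already established inside the proof of Lemma~\ref{thm:sdprimitive_eigenpair}, and you never need to check the hypotheses of Nussbaum's convergence theorem. Positivity is also immediate, since $c_\Psi^*=\alpha_*\ge\alpha_0>0$.

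\textbf{Getting the rate without compactness.} The paper's route additionally yields a geometric rate. Your own two displayed inequalities give this too, and they make the limit-point argument and the appeal to continuity of $T$ unnecessary.
\begin{itemize}
\item Let $j^*$ be a coordinate where $\beta_n$ is attained. Then $[\mathbf z_n-\alpha_n\mathbf v]_{j^*}=(\beta_n-\alpha_n)[\mathbf v]_{j^*}$. Your first inequality then gives $\alpha_{n+1}\ge\alpha_n+\gamma(\beta_n-\alpha_n)$, where $\gamma:=\min_j[\mathbf v]_j\big/\bigl(\rho(\Psi)\max_{i,j}|C_{ij}|\max_j[\mathbf v]_j\bigr)$.
\item Symmetrically, using a coordinate where $\alpha_n$ is attained, your second inequality gives $\beta_{n+1}\le\beta_n-\gamma(\beta_n-\alpha_n)$.
\item Hence $\beta_{n+1}-\alpha_{n+1}\le(1-2\gamma)(\beta_n-\alpha_n)$.
\end{itemize}
This gives $\alpha_*=\beta_*$ directly, with an explicit geometric rate.
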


\begin{proof}
Set
$
\mathbf x_n:=\frac{\Psi^n(\mathbf 1)}{\|\Psi^n(\mathbf 1)\|_1}.
$
By Theorem~4.2 in \cite{nussbaum1986convexity}, the sequence $(\mathbf x_n)$ converges geometrically to $\mathbf v$ in Hilbert's projective metric.
By Corollary~\ref{lem:Psi_range}, the sequence $(\mathbf x_n)$ stays in a compact subset of the positive simplex, so on that set Hilbert's projective metric is equivalent to the Euclidean norm.
Hence there exist constants $C>0$ and $\eta\in(0,1)$ such that
$
\|\mathbf x_n-\mathbf v\|_1\le C\eta^n.
$

Now set
$
b_n:=\frac{\|\Psi^n(\mathbf 1)\|_1}{\rho(\Psi)^n}.
$
Then
$
\frac{b_{n+1}}{b_n}
=
\frac{\|\Psi(\mathbf x_n)\|_1}{\rho(\Psi)}.
$
Since $\Psi$ is continuous and concave on the positive simplex, the map $\mathbf x\mapsto \|\Psi(\mathbf x)\|_1$ is Lipschitz on the above compact set.
Therefore
\[
\left|\frac{b_{n+1}}{b_n}-1\right|
=
\frac{\bigl|\|\Psi(\mathbf x_n)\|_1-\|\Psi(\mathbf v)\|_1\bigr|}{\rho(\Psi)}
\le
C\eta^n.
\]
Thus
$
\sum_{n=0}^\infty \bigl|\log(b_{n+1}/b_n)\bigr|<\infty,
$
so $(\log b_n)$ converges and hence $b_n\to c_{\Psi}^*>0$ for some constant $c_{\Psi}^*$.

Finally,
\[
\frac{\Psi^n(\mathbf 1)}{\rho(\Psi)^n}
=
b_n\mathbf x_n
\longrightarrow
c_{\Psi}^*\mathbf v.
\]
\end{proof}

\begin{lemma}\label{lem:Reff-asymp-rho}
Let $\Xi^0$ be a single directed edge of colour $\iota\in[K]$ with endpoints $\mathfrak v_+$ and $\mathfrak v_-$ as in Assumption~\ref{ass:standing}.
Then
\[
c_{\Psi} \rho(\Psi)^n
\le
\Reff_{\Xi^n}(\mathfrak v_+,\mathfrak v_-)
\le
C_{\Psi} \rho(\Psi)^n.
\]
\end{lemma}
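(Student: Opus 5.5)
The plan is to combine Lemma~\ref{lem:renorm} with Corollary~\ref{lem:Psi_range}. Since $\Xi^0$ is a single edge of colour $\iota$ joining $\mathfrak v_+$ and $\mathfrak v_-$, the two-terminal network $\Xi^0$ with colour-$j$ edges of resistance $[\mathbf x]_j$ is just one resistor. Hence
\[
\psi_{\Xi^0:(\mathfrak v_+,\mathfrak v_-)}(\mathbf x)=[\mathbf x]_\iota
\]
for every $\mathbf x\in\mathbb R_{\ge 0}^K$. Lemma~\ref{lem:renorm} with $\mathbf x=\mathbf 1$ then gives the exact identity
\[
\Reff_{\Xi^n}(\mathfrak v_+,\mathfrak v_-)=\bigl[\Psi^n(\mathbf 1)\bigr]_\iota .
\]

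Next I would apply Corollary~\ref{lem:Psi_range} with $\ell=n$ and $i=\iota$. This gives
\[
c_\Psi\rho(\Psi)^n\le[\Psi^n(\mathbf 1)]_\iota\le C_\Psi\rho(\Psi)^n,
\]
which is the claim, with the same constants $c_\Psi,C_\Psi$. These constants depend only on the eigenvector $\mathbf v$ of Lemma~\ref{thm:sdprimitive_eigenpair} and on $\mathbf 1$, not on $\iota$ or $n$.

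There is essentially no obstacle here. The only point needing a check is that $\Psi^n(\mathbf 1)$ stays in $(0,\infty)^K$, so that Lemma~\ref{lem:renorm} is applied on the domain where no extension issue arises. This holds because each $R_i$ connects its planting vertices; the lower bound $[\Psi(\mathbf x)]_i\ge[\mathbf x]_j/|C_{ij}|$ from the proof of Lemma~\ref{thm:sdprimitive_eigenpair} gives positivity, and canonicity guarantees connectedness, so the values are also finite.
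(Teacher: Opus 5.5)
Your proof is correct and is exactly the paper's argument: you use $\psi_{\Xi^0:(\mathfrak v_+,\mathfrak v_-)}(\mathbf x)=[\mathbf x]_\iota$ and Lemma~\ref{lem:renorm} to get $\Reff_{\Xi^n}(\mathfrak v_+,\mathfrak v_-)=[\Psi^n(\mathbf 1)]_\iota$, and then apply Corollary~\ref{lem:Psi_range} with $i=\iota$. The final positivity check is harmless but unnecessary, since Lemma~\ref{lem:renorm} is already stated on all of $\mathbb R_{\ge 0}^K$ and the corollary itself gives $[\Psi^n(\mathbf 1)]_\iota\ge c_\Psi\rho(\Psi)^n>0$.
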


\begin{proof}
Since $\Xi^0$ is a single $\iota$-coloured edge, we have
$
\psi_{\Xi^0:(\mathfrak v_+,\mathfrak v_-)}(\mathbf x)=[\mathbf x]_\iota
$
for all $\mathbf x\in\mathbb R_{\ge 0}^K$.
By Lemma~\ref{lem:renorm},
\[
\Reff_{\Xi^n}(\mathfrak v_+,\mathfrak v_-)
=
\psi_{\Xi^n:(\mathfrak v_+,\mathfrak v_-)}(\mathbf 1)
=
\psi_{\Xi^0:(\mathfrak v_+,\mathfrak v_-)}\big(\Psi^n(\mathbf 1)\big)
=
[\Psi^n(\mathbf 1)]_\iota.
\]
The claim now follows directly from Corollary~\ref{lem:Psi_range}.
\end{proof}

Lemma~\ref{lem:Reff-asymp-rho} is the main technical input of our estimates.
We will show that the growth rate $\rho(\Psi)^n$ is not specific to the two distinguished endpoints, but in fact describes a uniform resistance growth speed shared across the whole graph.

\begin{definition}\label{def:resistance_dimension}
Assume that $(G^n)_{n\in\mathbb N}$ is vertex-nested and has a combinatorial limit $G^\infty$.
If for every pair of distinct vertices $u,v\in V(G^0)$ the limit
\[
\dimresis(G^\infty):=\lim_{n\to\infty}\frac{\log \Reff_{G^n}(u,v)}{\log d_{G^n}(u,v)}
\]
exists and is independent of the choice of $u,v$,
then $\dimresis(G^\infty)$ is called the \emph{resistance dimension}.
\end{definition}

\begin{theorem}\label{thm:resistance_dimension_formula}
The resistance dimension of $\Xi$ exists and satisfies
\[
\dimresis(\Xi)=\frac{\log \rho(\Psi)}{\log \rho_{\min}(\mathcal D)}.
\]
\end{theorem}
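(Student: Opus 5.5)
The plan is to treat every pair of distinct vertices $u,v\in V(\Xi^0)$ uniformly. Under Assumption~\ref{ass:standing}, $\Xi^0$ is a single edge, so the only pair is $\{\mathfrak v_+,\mathfrak v_-\}$ and independence of the choice is automatic. It therefore suffices to show
\[
\lim_{n\to\infty}\frac{\log \Reff_{\Xi^n}(\mathfrak v_+,\mathfrak v_-)}{\log d_{\Xi^n}(\mathfrak v_+,\mathfrak v_-)}=\frac{\log\rho(\Psi)}{\log\rho_{\min}(\mathcal D)}.
\]

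For the numerator, Lemma~\ref{lem:Reff-asymp-rho} gives $\log \Reff_{\Xi^n}(\mathfrak v_+,\mathfrak v_-)=n\log\rho(\Psi)+O(1)$, with the $O(1)$ bounded by $\max(|\log c_\Psi|,|\log C_\Psi|)$. For the denominator, I would quote the distance growth from \cite{neroli2024fractal} (as recorded in the introduction and later in Lemma~\ref{lem:basic_prop}): there are constants with $c\,\rho_{\min}(\mathcal D)^n\le d_{\Xi^n}(\mathfrak v_+,\mathfrak v_-)\le C\rho_{\min}(\mathcal D)^n$. Hence $\log d_{\Xi^n}(\mathfrak v_+,\mathfrak v_-)=n\log\rho_{\min}(\mathcal D)+O(1)$. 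Dividing numerator and denominator by $n$ and letting $n\to\infty$ gives the formula, provided $\log\rho_{\min}(\mathcal D)>0$.

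The only point needing care is this positivity, i.e.\ $\rho_{\min}(\mathcal D)>1$. Distance-positivity implies $\diam(\Xi^n)\to\infty$, and Assumption~\ref{ass:standing} says so explicitly. Since $d_{\Xi^n}(\mathfrak v_+,\mathfrak v_-)\asymp\diam(\Xi^n)\asymp\rho_{\min}(\mathcal D)^n$, the two-sided bound forces $\rho_{\min}(\mathcal D)>1$. I would also check directly that distances are nondecreasing and at least multiply by the minimal entry of the positive matrices in $\mathcal D$ at each step, which gives $\rho_{\min}(\mathcal D)\ge 2$ when $K=1$ and $d_{R_1}(\beta_1^+,\beta_1^-)\ge 2$. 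No sign assumption on $\log\rho(\Psi)$ is needed, since $\rho(\Psi)>0$ by Lemma~\ref{thm:sdprimitive_eigenpair}. The formula therefore also covers the superconducting and critical regimes, where $\dimresis(\Xi)\le 0$.

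I expect the main obstacle to be the distance-asymptotic input rather than the resistance side. The precise two-sided estimate $d_{\Xi^n}\asymp\rho_{\min}(\mathcal D)^n$, with constants and not merely at exponential order, is what \cite{neroli2024fractal} provides for distance-positive systems, and I would take it as a hypothesis. If only $\frac1n\log d_{\Xi^n}\to\log\rho_{\min}(\mathcal D)$ were available, the argument still goes through, because the ratio only requires first-order exponential asymptotics of both quantities.
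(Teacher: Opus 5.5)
Your proposal is correct and follows the paper's proof. Both arguments combine three inputs: the two-sided bound $\Reff_{\Xi^n}\asymp\rho(\Psi)^n$ obtained from the super-edge renormalisation and the Perron bounds on $\Psi^n(\mathbf 1)$; the distance bound $d_{\Xi^n}\asymp\rho_{\min}(\mathcal D)^n$ quoted from \cite{neroli2024fractal}; and the fact $\rho_{\min}(\mathcal D)>1$, which follows from distance-positivity. The only difference is cosmetic: the paper writes the argument for general terminals $u,v$ via $\psi_{\Xi^0:(u,v)}$ and remarks that it extends to $V(\Xi^k)$, whereas you use the single-edge assumption to reduce to the pair $\mathfrak v_\pm$ and invoke Lemma~\ref{lem:Reff-asymp-rho} directly.
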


\begin{proof}
Fix distinct vertices $u,v\in V(\Xi^0)$.
Exactly as in Lemma~\ref{lem:renorm}, but with terminals $u$ and $v$, collapsing each substituted copy in $\Xi^n$ to a super-edge yields
$
\Reff_{\Xi^n}(u,v)
=
\psi_{\Xi^0:(u,v)}\bigl(\Psi^n(\mathbf 1)\bigr).
$
By Corollary~\ref{lem:Psi_range},
$
c_{\Psi}\rho(\Psi)^n\mathbf 1
\le
\Psi^n(\mathbf 1)
\le
C_{\Psi}\rho(\Psi)^n\mathbf 1.
$
Using monotonicity and homogeneity of $\psi_{\Xi^0:(u,v)}$, we obtain
\[
c_{\Psi}\psi_{\Xi^0:(u,v)}(\mathbf 1)\rho(\Psi)^n
\le
\Reff_{\Xi^n}(u,v)
\le
C_{\Psi}\psi_{\Xi^0:(u,v)}(\mathbf 1)\rho(\Psi)^n.
\]
Hence there exist constants $c_1,c_2>0$ such that
$
c_1\rho(\Psi)^n
\le
\Reff_{\Xi^n}(u,v)
\le
c_2\rho(\Psi)^n.
$

By Theorem~3.3 in \cite{neroli2024fractal}, there exist constants $c_3,c_4>0$ such that
$
c_3\rho_{\min}(\mathcal D)^n
\le
d_{\Xi^n}(u,v)
\le
c_4\rho_{\min}(\mathcal D)^n.
$

Taking logarithms and dividing gives
\[
\frac{\log c_1+n\log\rho(\Psi)}{\log c_4+n\log\rho_{\min}(\mathcal D)}
\le
\frac{\log \Reff_{\Xi^n}(u,v)}{\log d_{\Xi^n}(u,v)}
\le
\frac{\log c_2+n\log\rho(\Psi)}{\log c_3+n\log\rho_{\min}(\mathcal D)}.
\]
Since distance-positivity implies $\rho_{\min}(\mathcal D)>1$, both denominators diverge as $n\to\infty$.
Letting $n\to\infty$ yields
\[
\lim_{n\to\infty}\frac{\log \Reff_{\Xi^n}(u,v)}{\log d_{\Xi^n}(u,v)}
=
\frac{\log \rho(\Psi)}{\log \rho_{\min}(\mathcal D)}.
\]
The same argument works for any fixed distinct vertices $u,v\in V(\Xi^k)$.
\end{proof}

Theorem~\ref{thm:resistance_dimension_formula} shows that any two vertices in an EIGS graph share the same resistance dimension with respect to the natural metric.
This result will play a crucial role later, as it provides the foundation for the Einstein relation, namely that the walk dimension equals the Minkowski dimension plus the resistance dimension.
Moreover, this theorem gives a closed-form expression for the resistance dimension of any EIGS.

Before going further, we present a concrete example.

\begin{example}\label{example:two_colour_1}
We now compute the Perron eigenpair of $\Psi$ for Figure~\ref{fig:two_colour}$.$
Since $\Psi$ is homogeneous of degree $1$, we may normalise an eigenvector by setting its second coordinate to be $1$.
Let $\mathbf v=(t,1)$ with $t>0$.
The eigen-relation $\Psi(\mathbf v)=\rho(\Psi)\mathbf v$ is equivalent to
\[
\Psi_2(t,1)=\rho(\Psi),
\qquad
\Psi_1(t,1)=t\Psi_2(t,1).
\]
Eliminating $\rho(\Psi)$ gives the polynomial equation
$
3t^4+10t^3+4t^2-8t-3=0.
$
This polynomial has a unique positive root
$
t\approx 0.8200.
$
Substituting this value into $\rho(\Psi)=\Psi_2(t,1)$ yields
\[
\rho(\Psi)
=
\frac{(t+2)(t+1)}{2t+3}
\approx
1.1061.
\]

Equivalently, eliminating $t$ yields a polynomial equation for $\rho(\Psi)$.
One finds that $\rho(\Psi)$ is the unique root in $(1,\infty)$ of
\[
27\lambda^4+6\lambda^3-50\lambda^2+6\lambda+6=0.
\]
For a numerical check, start from $\mathbf x^{(0)}=(1,1)$ and iterate $\mathbf x^{(n+1)}=\Psi(\mathbf x^{(n)})$.
Then
\[
\mathbf x^{(1)}=(1,6/5),
\qquad
\mathbf x^{(2)}\approx (1.09697,1.33571),
\qquad
\mathbf x^{(3)}\approx (1.21244,1.47834),
\]
and the ratios $[\mathbf x^{(n+1)}]_2/[\mathbf x^{(n)}]_2$ converge rapidly to $\rho(\Psi)\approx 1.10613$.

By Definition~2.7 in \cite{neroli2024fractal} and Definition~5.3 in \cite{li2024on},
\[
\mathbf M=
\begin{pmatrix}
3 & 3\\
2 & 3
\end{pmatrix},
\qquad
\mathbf N=
\begin{pmatrix}
1&0&1&0\\
0&1&0&1\\
0&0&2&0\\
0&1&0&1
\end{pmatrix},
\]
and
\[
\mathcal D=
\left\{
\begin{pmatrix}1&1\\ 1&1\end{pmatrix},
\begin{pmatrix}1&1\\ 1&2\end{pmatrix},
\begin{pmatrix}3&1\\ 1&1\end{pmatrix},
\begin{pmatrix}3&1\\ 1&2\end{pmatrix},
\begin{pmatrix}1&3\\ 1&1\end{pmatrix},
\begin{pmatrix}1&3\\ 1&2\end{pmatrix}
\right\}.
\]
We have
\[
\rho(\mathbf M)=3+\sqrt{6},
\qquad
\rho(\mathbf N)=2,
\qquad
\rho_{\min}(\mathcal D)=2.
\]

By Theorem~2.1 in \cite{neroli2024fractal} and Theorem~5.5 in \cite{li2024on},
\[
\dimbox(\Xi)
=
\frac{\log\rho(\mathbf M)}{\log\rho_{\min}(\mathcal D)}
=
\frac{\log(3+\sqrt{6})}{\log 2}
\approx 2.4461,
\]
and
\[
\dimdeg(\Xi)
=
\frac{\log\rho(\mathbf M)}{\log\rho(\mathbf N)}
=
\frac{\log(3+\sqrt{6})}{\log 2}
\approx 2.4461.
\]
By Theorem~\ref{thm:resistance_dimension_formula},
\[
\dimresis(\Xi)
=
\frac{\log \rho(\Psi)}{\log \rho_{\min}(\mathcal D)}
=
\frac{\log 1.1061}{\log 2}
\approx 0.14552.
\]
\end{example}

\subsection{Walk dimension}\label{subsec:walk_dimension}

In this subsection, our goal is to determine the walk dimension of the simple random walk on the combinatorial limit.
This will be an essential input for our subsequent discussion of diffusion limits.
To achieve this, we first establish local mass estimates and local effective resistance estimates, and then deduce the walk dimension.
In addition, we discuss a notion of recurrence for the combinatorial limit in the locally infinite setting.

\begin{lemma}[\cite{li2024on,neroli2024fractal}]\label{lem:basic_prop}
There exist constants $c_{\mathrm V},C_{\mathrm V},c_{\mathrm E},C_{\mathrm E}>0$ such that for every $\iota\in[K]$ and every $m\in\mathbb N$,
\[
c_{\mathrm V}\rho(\mathbf M)^m \le |V(\Xi_\iota^m)| \le C_{\mathrm V}\rho(\mathbf M)^m,
\qquad
c_{\mathrm E}\rho(\mathbf M)^m \le |E(\Xi_\iota^m)| \le C_{\mathrm E}\rho(\mathbf M)^m.
\]
Moreover, there exist constants $c_{\dist},C_{\dist}>0$ such that for every $\iota\in[K]$ and every $m\in\mathbb N$,
\[
c_{\dist}\rho_{\min}(\mathcal D)^m \le d_{\Xi_\iota^m}(\mathfrak v_+,\mathfrak v_-)
\le
\diam(\Xi_\iota^m)\le C_{\dist}\rho_{\min}(\mathcal D)^m.
\]
Write $V_k(\Xi^n):=V(\Xi^k)\setminus V(\Xi^{k-1})\subset V(\Xi^n)$.
Let $v_k^n$ be a $k$-born vertex in $\Xi^n$, that is, $v_k^n\in V_k(\Xi^n)$.
Finally, there exist constants $c_{\deg},C_{\deg}>0$, depending only on the birth data of $v_k^n$ but not on $n$ or $k$, such that for every $n\ge k\in\mathbb N$,
\[
c_{\deg}\rho(\mathbf N)^{n-k}
\le
\deg_{\Xi^{n}}(v_k^n)
\le
C_{\deg}\rho(\mathbf N)^{n-k}.
\]
\end{lemma}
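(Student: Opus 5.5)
The plan is to prove the three estimates separately, each by reducing to a linear recursion driven by a primitive nonnegative matrix and then applying Perron--Frobenius. \textbf{Mass.} Let $\chi(\Xi_\iota^m)$ be the colour-count vector of edges. Substitution replaces each $j$-coloured edge by a copy of $R_j$, so $\chi(\Xi_\iota^{m+1})=\chi(\Xi_\iota^m)\mathbf M$. Iterating gives
\[
|E(\Xi_\iota^m)|=\mathbf e_\iota\mathbf M^m\mathbf 1 .
\]
For primitive $\mathbf M$, Perron--Frobenius yields $\mathbf e_\iota\mathbf M^m\mathbf 1\asymp\rho(\mathbf M)^m$, with constants uniform in $\iota$ because there are finitely many colours. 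For vertices, each substituted $j$-edge creates exactly $|V(R_j)|-2$ new vertices. Hence
\[
|V(\Xi_\iota^{m+1})|-|V(\Xi_\iota^m)|=\chi(\Xi_\iota^m)\mathbf w, \qquad [\mathbf w]_j=|V(R_j)|-2 .
\]
Summing this geometric-type series gives the upper bound $O(\rho(\mathbf M)^m)$ whenever $\rho(\mathbf M)>1$. Since $\Xi_\iota^m$ is connected, $|V|\ge |E|/\max\deg$ fails in general, so for the lower bound I would instead use $|V|-1\ge$ (number of new vertices at the last step). That quantity is $\asymp\rho(\mathbf M)^{m-1}$ provided $\mathbf w\neq 0$, and distance-positivity forces some $R_j$ to have an interior vertex. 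If $\mathbf M$ is not primitive, I would appeal to \cite{neroli2024fractal} for the same two-sided bound, as the lemma is quoted from there.

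\textbf{Distance.} This step is quoted: Theorem~3.3 of \cite{neroli2024fractal} gives $d_{\Xi_\iota^m}(\mathfrak v_+,\mathfrak v_-)\asymp\rho_{\min}(\mathcal D)^m$ under distance-positivity. For the diameter bound, I would argue that every vertex of $\Xi_\iota^m$ lies in some level-$k$ cell. A cell of generation $k$ has terminal distance at most $C\rho_{\min}(\mathcal D)^{m-k}$. Going from a vertex up through nested cells to $\mathfrak v_\pm$ then costs at most $\sum_k C\rho_{\min}(\mathcal D)^{m-k}\cdot\max_j\diam(R_j)$, which is a geometric sum of order $\rho_{\min}(\mathcal D)^m$. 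The upper bound needed here is on the maximal path length; I would take the uniform bound over all product orders from the same theorem.

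\textbf{Degree.} A vertex $v$ born at level $k$ has coloured incidence vector $\boldsymbol\kappa_{\Xi^k}(v)$, and this vector is one of finitely many possibilities, namely the rows of the rule graphs at interior vertices. By the recursion recalled in the introduction,
\[
\boldsymbol\kappa_{\Xi^n}(v)=\boldsymbol\kappa_{\Xi^k}(v)\mathbf N^{n-k},
\]
so $\deg_{\Xi^n}(v)=\boldsymbol\kappa_{\Xi^k}(v)\mathbf N^{n-k}\mathbf 1$. Primitivity of $\mathbf N$ (Assumption~\ref{ass:standing}) and Perron--Frobenius give, for any nonzero nonnegative $\mathbf u$, the estimate $\mathbf u\mathbf N^{\ell}\mathbf 1\asymp\rho(\mathbf N)^\ell$, with constants depending only on $\mathbf u$. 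Since only finitely many birth vectors occur, the constants depend on the birth data alone.

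The main obstacle I expect is the vertex lower bound, together with uniformity when $\mathbf M$ is merely irreducible rather than primitive. In that case I would replace pointwise Perron asymptotics by the bound $\mathbf e_\iota\mathbf M^m\mathbf 1\asymp\rho(\mathbf M)^m$. For irreducible matrices this still holds after summing over a period, and the bound suffices because the vertex count is cumulative.
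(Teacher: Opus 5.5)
The paper gives no proof of this lemma. It is imported from \cite{li2024on,neroli2024fractal}, and according to Table~\ref{tab:lean} only the distance estimate enters the formalisation as an external hypothesis; the mass and degree estimates are proved there. Your proposal reconstructs exactly this split, and it is correct:
\begin{itemize}
\item the edge count $\mathbf e_\iota\mathbf M^m\mathbf 1$, the new-vertex count $\chi(\Xi_\iota^m)\mathbf w$ and the incidence recursion $\boldsymbol\kappa_{\Xi^n}(v)=\boldsymbol\kappa_{\Xi^k}(v)\mathbf N^{n-k}$ reduce mass and degree to Perron--Frobenius;
\item the only imported input is the terminal-distance estimate of \cite{neroli2024fractal};
\item the diameter bound then follows by summing $\max_j\diam(R_j)\,C\rho_{\min}(\mathcal D)^{m-b}$ over the strictly decreasing birth levels $b$ along the chain from a vertex, through the terminals of its successive birth cells, to $\mathfrak v_\pm$.
\end{itemize}
Compared with the bare citation, your route is self-contained and shows that the diameter needs nothing beyond the colour-uniform upper bound on terminal distances. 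The citation buys brevity and whatever generality the cited notion of primitivity carries.

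A few points need tightening.

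\textbf{Your main obstacle does not arise.} Distance-positivity means every $\beta_i^+$--$\beta_i^-$ path in $R_i$ uses every colour (and $d_{R_1}(\beta_1^+,\beta_1^-)\ge 2$ if $K=1$). Hence $\mathbf M$ is entrywise positive, every $R_j$ has an interior vertex (so $\mathbf w>0$), $\rho(\mathbf M)\ge 2$, and $\rho_{\min}(\mathcal D)>1$; the last fact is what makes your diameter sum geometric. The vertex upper bound is in any case immediate from $|V|\le|E|+1$.

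\textbf{Drop the remark about ``maximal path length'' and ``all product orders''.} Each edge of the path inside $R_j$ is crossed along a shortest terminal path of its own cell. So only $d_{\Xi_j^\ell}(\mathfrak v_+,\mathfrak v_-)\le C\rho_{\min}(\mathcal D)^\ell$, uniformly in $j$, is used. Bounding by arbitrary products from $\mathcal D$ would give the wrong rate.

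\textbf{The degree step.} You invoke matrix primitivity of $\mathbf N$, but the $\mathbf N$ displayed in Example~\ref{example:two_colour_1} is reducible: its first column is $\mathbf e_1$. So the paper's notion of primitivity cannot literally be the matrix one if that example is to be covered. The robust argument uses a positive right eigenvector instead. If $\mathbf N\mathbf r=\rho(\mathbf N)\mathbf r$ with $\mathbf r>0$ and $a\mathbf r\le\mathbf 1\le b\mathbf r$, then $a\rho(\mathbf N)^\ell(\mathbf u\cdot\mathbf r)\le\mathbf u\mathbf N^\ell\mathbf 1\le b\rho(\mathbf N)^\ell(\mathbf u\cdot\mathbf r)$ for every $\ell\ge 0$ and every nonzero $\mathbf u\ge 0$. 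This handles small $\ell$ without a limiting argument and covers that example, where $\mathbf N\mathbf 1=2\cdot\mathbf 1$.

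Some hypothesis of this kind is genuinely needed. A reducible $\mathbf N$ with two chained classes of spectral radius $\rho(\mathbf N)$, e.g.\ rows $(2,1)$ and $(0,2)$, gives $\deg_{\Xi^n}(\mathfrak v_+)$ of order $n\rho(\mathbf N)^n$.
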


\subsubsection*{Local mass dimension}

We first establish local mass estimates at each vertex.
These are among the most basic geometric features of the graphs and form the foundation for our later arguments.

Fix once and for all an integer $a_0\in\mathbb N$ so large that $C_{\dist}\rho_{\min}(\mathcal D)^{-a_0}\le c_{\dist}$.

\begin{lemma}\label{lem:ball_volume_k_born}
There exist constants $c_{\mathrm{lm}},C_{\mathrm{lm}}>0$, independent of $n,m,r$, such that
\[
c_{\mathrm{lm}} \rho(\mathbf N)^{n-k-m} \rho(\mathbf M)^m
\le
\big|V\big(B_{\Xi^n}(v_k^n,r)\big)\big|
\le
C_{\mathrm{lm}} \rho(\mathbf N)^{n-k-m} \rho(\mathbf M)^m,
\]
for $r>1$ satisfying
$
c_{\dist} \rho_{\min}(\mathcal D)^m
\le
r
\le
C_{\dist} \rho_{\min}(\mathcal D)^m
$
for some integer $m\le n-k{}-a_0$.
\end{lemma}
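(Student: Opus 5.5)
The plan is to use the self-similar cell structure of $\Xi^n$ around $v_k^n$. Since $v_k^n$ is born at level $k$, it is an endpoint of some edges of $\Xi^{k}$. Each edge $e$ of $\Xi^{n-m}$ incident to $v_k^n$ grows, after $m$ further substitutions, into a fresh copy $H_e$ of $\Xi_{\mathscr C(e)}^m$ inside $\Xi^n$. These copies are attached to the rest of the graph only through the two endpoints of $e$.

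By Lemma~\ref{lem:basic_prop} applied at level $n-m\ge k$, the number of such edges is $\deg_{\Xi^{n-m}}(v_k^n)\asymp\rho(\mathbf N)^{n-m-k}$. The hypothesis $m\le n-k-a_0$ guarantees $n-m\ge k+a_0$, so this level exists and the degree estimate applies. Each copy has $\asymp\rho(\mathbf M)^m$ vertices, again by Lemma~\ref{lem:basic_prop}. The target bound is therefore
\[
\#\{\text{level-}(n-m)\text{ cells at }v_k^n\}\times\#\{\text{vertices per cell}\},
\]
up to constants. The proof then reduces to sandwiching the ball between unions of cells at two comparable levels.

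\textbf{Lower bound.} Take the cells at level $n-m-a_0$ instead, i.e.\ copies of $\Xi^{m+a_0}$ with $m$ replaced appropriately. Rather, I would use cells of depth $m-a_0$ (copies of $\Xi^{m-a_0}$ hanging off edges of $\Xi^{n-m+a_0}$ at $v_k^n$). Each such cell has diameter at most $C_{\dist}\rho_{\min}(\mathcal D)^{m-a_0}\le c_{\dist}\rho_{\min}(\mathcal D)^m\le r$, by the choice of $a_0$. Since each cell contains $v_k^n$, its vertices at distance strictly less than $r$ all lie in the ball; to handle the strict inequality one may shift $a_0$ by one. Distinct edges give cells that share only $v_k^n$, so the ball contains at least
\[
c_{\deg}\rho(\mathbf N)^{n-k-m+a_0}\,c_{\mathrm V}\rho(\mathbf M)^{m-a_0}-(\text{overlap})
\]
vertices. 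Each cell has $\ge 2$ vertices beyond the shared endpoint, so the overlap is harmless. Absorbing $\rho(\cdot)^{\pm a_0}$ into constants gives the lower bound. If $m<a_0$, the ball still contains the neighbours of $v_k^n$, and the bound holds trivially with adjusted constants.

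\textbf{Upper bound.} Take cells of depth $m+a_0'$, hanging off edges of $\Xi^{n-m-a_0'}$ at $v_k^n$, where $a_0'$ is chosen so that $c_{\dist}\rho_{\min}(\mathcal D)^{m+a_0'}>C_{\dist}\rho_{\min}(\mathcal D)^m\ge r$. The key claim is that $B_{\Xi^n}(v_k^n,r)$ is contained in the union of these cells together with the cells at the other endpoints of those edges. The reason is that any path leaving the star of $v_k^n$ must pass through some far endpoint $w$ of an edge $\{v_k^n,w\}\in E(\Xi^{n-m-a_0'})$. By distance-positivity and the cell-boundary structure (Lemma~\ref{lem:renorm}), we have $d_{\Xi^n}(v_k^n,w)\ge c_{\dist}\rho_{\min}(\mathcal D)^{m+a_0'}>r$. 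Hence the ball is contained in the union of the star cells, and the union has at most
\[
C_{\deg}\rho(\mathbf N)^{n-k-m-a_0'}\,C_{\mathrm V}\rho(\mathbf M)^{m+a_0'}
\]
vertices.

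When $n-m-a_0'<k$, i.e.\ $v_k^n$ does not yet exist at that level, there is a complication. In that case I would instead take the unique cell of $\Xi^{k-1}$-level containing $v_k^n$ in its interior, which has $\asymp\rho(\mathbf M)^{n-k+1}$ vertices. Since $m\ge n-k-a_0'$ here, this is comparable to the target bound. Note, however, that the hypothesis $m\le n-k-a_0$ with $a_0'\le a_0$ essentially rules this case out.

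\textbf{Main obstacle.} The step I expect to be hardest is the distance lower bound in the upper estimate. I need $d_{\Xi^n}(v_k^n,w)\gtrsim\rho_{\min}(\mathcal D)^{m+a_0'}$ along \emph{any} path, including paths that exit through other cells. This uses that every path from $v_k^n$ to a non-star vertex must first cross some star cell from terminal to terminal, together with the two-terminal distance lower bound of Lemma~\ref{lem:basic_prop}. The uniformity of the constants over colours must also be checked.
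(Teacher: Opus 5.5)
Your proposal is correct and is essentially the paper's argument: you sandwich the ball between the star of $v_k^n$ at depth $m-a_0$, whose cells have diameter at most $r$ and so lie in the ball, and the star at depth $m+a_0$, whose far terminals lie at distance at least $c_{\dist}\rho_{\min}(\mathcal D)^{m+a_0}\ge r$. Both stars are then counted as the degree bound times the cell-mass bound from Lemma~\ref{lem:basic_prop}, and the case $m<a_0$ is handled via the neighbours of $v_k^n$.

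The differences are cosmetic. For the lower bound the paper takes one depth-$(m-a_0)$ sub-cell inside each level-$(n-m)$ cell, whereas you take all edges of $\Xi^{n-m+a_0}$ at $v_k^n$; the two counts agree up to a factor $\rho(\mathbf N)^{a_0}$. Your remarks on the open ball, and on why every path to a far terminal must first cross a whole star cell, fill in points the paper leaves implicit. The detour about the case $n-m-a_0'<k$ and the appeal to Lemma~\ref{lem:renorm} for a distance bound are unnecessary.
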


\begin{proof}
Because $m+a_0\le n-k$, we have $v_k^n\in V(\Xi^{n-m-a_0})\subseteq V(\Xi^{n-m})$.

\textbf{Upper bound.}
Consider the coarse graph $\Xi^{n-m-a_0}$ and view $v_k^n$ as the same vertex in $V(\Xi^{n-m-a_0})$.
For each edge $e$ of $\Xi^{n-m-a_0}$ incident to $v_k^n$, let $H_e^{(m+a_0)}$ be the $(m+a_0)$-step substituted copy that replaces $e$ inside $\Xi^n$.
Different copies $H_e^{(m+a_0)}$ intersect only at vertices of $\Xi^{n-m-a_0}$, and in particular two distinct copies incident to $v_k^n$ intersect only at $v_k^n$.

Let $b_e$ be the endpoint of $e$ different from $v_k^n$.
Any path in $\Xi^n$ from $v_k^n$ to a vertex outside $\bigcup_{e}V(H_e^{(m+a_0)})$ must hit some $b_e$.
Since
$
d_{\Xi^n}(v_k^n,b_e)\ge c_{\dist}\rho_{\min}(\mathcal D)^{m+a_0}\ge C_{\dist}\rho_{\min}(\mathcal D)^{m}\ge r
$
by the choice of $a_0$, we have $b_e\notin B_{\Xi^n}(v_k^n,r)$.
Hence
\[
V\big(B_{\Xi^n}(v_k^n,r)\big)
\subseteq
\bigcup_{e\in E_{\Xi^{n-m-a_0}}(v_k^n)} V(H_e^{(m+a_0)}),
\]
where $E_{\Xi^{n-m-a_0}}(v_k^n)$ denotes the set of edges of $\Xi^{n-m-a_0}$ incident to $v_k^n$.

Each $H_e^{(m+a_0)}$ is isomorphic to $\Xi_{\mathscr C(e)}^{m+a_0}$, so $|V(H_e^{(m+a_0)})|\le C_{\mathrm V}\rho(\mathbf M)^{m+a_0}$ by Lemma~\ref{lem:basic_prop}.
Using disjointness outside $v_k^n$ and the degree estimate at level $n-m-a_0$, we obtain
\[
\big|V\big(B_{\Xi^n}(v_k^n,r)\big)\big|
\le
C_{\mathrm V}\deg_{\Xi^{n-m-a_0}}(v_k^n)\rho(\mathbf M)^{m+a_0}
\le
C_{\mathrm V}C_{\deg}\rho(\mathbf M)^{a_0}\,\rho(\mathbf N)^{n-k-m}\rho(\mathbf M)^m,
\]
where we used $\rho(\mathbf N)^{n-k-m-a_0}\le\rho(\mathbf N)^{n-k-m}$.

\textbf{Lower bound.}
Recall the integer $a_0$ fixed before the statement of the lemma.

If $m\ge a_0$, then for each $e\in E_{\Xi^{n-m}}(v_k^n)$ we construct a subgraph $\Lambda_e\subset H_e^{(m)}$ as follows.
Inside the first substitution step of $H_e^{(m)}$, pick an edge incident to the endpoint corresponding to $v_k^n$, and iterate this choice for $a_0$ substitution steps.
This produces an $(m-a_0)$-step substituted copy of a single coloured edge attached at $v_k^n$.
By the diameter bound,
$
\diam(\Lambda_e)\le C_{\dist}\rho_{\min}(\mathcal D)^{m-a_0}\le r,
$
so $\Lambda_e\subseteq B_{\Xi^n}(v_k^n,r)$.
Moreover, by the vertex-count bound,
\[
|V(\Lambda_e)|\ge c_{\mathrm V}\rho(\mathbf M)^{m-a_0}=c_{\mathrm V}\rho(\mathbf M)^{-a_0}\rho(\mathbf M)^m.
\]
Since $|V(\Lambda_e)|\ge 2$, we have $|V(\Lambda_e)|-1\ge |V(\Lambda_e)|/2$.
As the sets $V(\Lambda_e)\setminus\{v_k^n\}$ are disjoint as $e$ varies, we obtain
\[
\big|V\big(B_{\Xi^n}(v_k^n,r)\big)\big|
\ge
1+\sum_{e\in E_{\Xi^{n-m}}(v_k^n)} \bigl(|V(\Lambda_e)|-1\bigr)
\ge
\frac{c_{\mathrm V}}{2}\rho(\mathbf M)^{-a_0}\deg_{\Xi^{n-m}}(v_k^n)\rho(\mathbf M)^m.
\]
Applying the degree estimate again yields
\[
\big|V\big(B_{\Xi^n}(v_k^n,r)\big)\big|
\ge
\frac{c_{\mathrm V}}{2}\rho(\mathbf M)^{-a_0}c_{\deg}\rho(\mathbf N)^{n-k-m}\rho(\mathbf M)^m.
\]

If $m<a_0$, then $m$ ranges over only finitely many values and $r>1$.
Hence each edge of $\Xi^{n-m}$ incident to $v_k^n$ contributes at least one distinct neighbour of $v_k^n$ inside $B_{\Xi^n}(v_k^n,r)$.
Therefore
\[
\big|V\big(B_{\Xi^n}(v_k^n,r)\big)\big|
\ge
1+\deg_{\Xi^{n-m}}(v_k^n)
\ge
c_{\deg}\rho(\mathbf N)^{n-k-m}.
\]
Since $m<a_0$, the factor $\rho(\mathbf M)^m$ is uniformly bounded above, so after decreasing the lower constant if necessary we again obtain
$
\big|V\big(B_{\Xi^n}(v_k^n,r)\big)\big|
\ge
c_{\mathrm{lm}} \rho(\mathbf N)^{n-k-m} \rho(\mathbf M)^m.
$

Combining the two cases proves the lemma.
\end{proof}

The local mass dimension conveys an important message: even for highly regular EIGS, scale-freeness can force the local and global dimensions to differ.
This phenomenon is already discussed in \cite{hambly2010diffusion}, where it is interpreted as a failure of the volume-doubling property.

\begin{definition}\label{def:local_mass_dimension_constant}
Fix a constant $r_*>1$.
We define the local mass dimension at $v\in V(G^\infty)$ by
\[
\dim_{\mathrm{M,loc}}(G^\infty:v)
:=
\lim_{n\to\infty}
\frac{\log\big|V\big(B_{G^n}(v,r_*)\big)\big|}{-\log \frac{r_*}{\diam(G^n)}},
\]
provided the limit exists and is independent of the choice of $r_*$.
\end{definition}

\begin{proposition}\label{prop:local_mass_dimension_constant}
For any $v\in V(\Xi)$, if $\rho(\mathbf N)>1$ then
\[
\dim_{\mathrm{M,loc}}(\Xi:v)=\frac{\log \rho(\mathbf N)}{\log \rho_{\min}(\mathcal D)}.
\]
\end{proposition}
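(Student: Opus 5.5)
The plan is to apply Lemma~\ref{lem:ball_volume_k_born} at the smallest admissible scale and read off the exponent. Fix $v\in V(\Xi)$. Then $v$ is $k$-born for some finite $k$, so $v=v_k^n$ for every $n\ge k$. Also fix $r_*>1$. The ball $B_{\Xi^n}(v,r_*)$ has fixed radius, so we want the scale parameter $m$ in Lemma~\ref{lem:ball_volume_k_born} to be bounded independently of $n$.

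\textbf{Choice of scale.} Choose the integer $m=m(r_*)\ge 0$ so that $c_{\dist}\rho_{\min}(\mathcal D)^m\le r_*\le C_{\dist}\rho_{\min}(\mathcal D)^m$. Such an $m$ exists because $\rho_{\min}(\mathcal D)>1$ and $C_{\dist}/c_{\dist}\ge\rho_{\min}(\mathcal D)^{a_0}\ge\rho_{\min}(\mathcal D)$, so consecutive intervals $[c_{\dist}\rho_{\min}^m,C_{\dist}\rho_{\min}^m]$ overlap and cover $[c_{\dist},\infty)$. If $r_*<c_{\dist}$, we instead use monotonicity of balls. The ball with radius $\min(r_*,2)$ contains $v$ and all neighbours of $v$ in $\Xi^n$, and by Lemma~\ref{lem:basic_prop} these number $\asymp\rho(\mathbf N)^{n-k}$. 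The ball of radius $\max(r_*,c_{\dist})$ is handled by the lemma with a fixed $m$. Either way, for all $n\ge k+m+a_0$,
\[
c\,\rho(\mathbf N)^{n-k-m}\rho(\mathbf M)^m\le |V(B_{\Xi^n}(v,r_*))|\le C\,\rho(\mathbf N)^{n-k-m}\rho(\mathbf M)^m .
\]
Since $k$ and $m$ are fixed, this says $|V(B_{\Xi^n}(v,r_*))|\asymp_{v,r_*}\rho(\mathbf N)^n$.

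\textbf{Taking the limit.} By Lemma~\ref{lem:basic_prop}, $\diam(\Xi^n)\asymp\rho_{\min}(\mathcal D)^n$. Hence
\[
-\log\frac{r_*}{\diam(\Xi^n)}=n\log\rho_{\min}(\mathcal D)+O(1),
\qquad
\log|V(B_{\Xi^n}(v,r_*))|=n\log\rho(\mathbf N)+O(1).
\]
Dividing and letting $n\to\infty$ gives the limit $\log\rho(\mathbf N)/\log\rho_{\min}(\mathcal D)$. This value is independent of $r_*$, since $r_*$ enters only the $O(1)$ terms. The hypothesis $\rho(\mathbf N)>1$ makes the numerator grow linearly and the limit positive. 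When $\rho(\mathbf N)=1$ the same computation gives $0$, but the proposition is stated only for the scale-free case.

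\textbf{Main obstacle.} The only delicate point is the range of validity of Lemma~\ref{lem:ball_volume_k_born}: it needs $r>1$ and $m\le n-k-a_0$. The second condition holds for large $n$ because $m$ is fixed. The first is exactly why $r_*>1$ is assumed. I would handle small radii through the direct degree count of neighbours, as in the $m<a_0$ case of that lemma's proof, so that a lower bound of order $\rho(\mathbf N)^{n-k}$ always holds. The upper bound for small $r_*$ follows from the upper bound at radius $c_{\dist}$.
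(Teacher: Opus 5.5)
Your argument is correct and is essentially the paper's proof: fix $m_*$ with $c_{\dist}\rho_{\min}(\mathcal D)^{m_*}\le r_*\le C_{\dist}\rho_{\min}(\mathcal D)^{m_*}$, apply Lemma~\ref{lem:ball_volume_k_born} to get $|V(B_{\Xi^n}(v,r_*))|\asymp\rho(\mathbf N)^{n-k}$, and divide by $n\log\rho_{\min}(\mathcal D)+O(1)$. Your separate treatment of small radii is a refinement the paper omits, but one justification is backwards: the choice of $a_0$ gives $C_{\dist}/c_{\dist}\le\rho_{\min}(\mathcal D)^{a_0}$, not $\ge$, so it does not show that consecutive intervals overlap. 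To repair it, either enlarge $C_{\dist}$ harmlessly, or use your monotonicity argument for every $r_*>1$: bound below by counting $v$ and its neighbours, and bound above by a ball of a larger admissible radius $C_{\dist}\rho_{\min}(\mathcal D)^{M}$.
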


\begin{proof}
Fix $v\in V(\Xi^k)$.
Choose an integer $m_*\in\mathbb N$ such that
$
c_{\dist}\rho_{\min}(\mathcal D)^{m_*}
\le
r_*
\le
C_{\dist}\rho_{\min}(\mathcal D)^{m_*}.
$
By Lemma~\ref{lem:ball_volume_k_born}, there exist constants $c_{\mathrm{lm}},C_{\mathrm{lm}}>0$, independent of $n$, such that for all $n\ge k+m_*{}+a_0$,
\[
c_{\mathrm{lm}}\rho(\mathbf N)^{n-k-m_*}\rho(\mathbf M)^{m_*}
\le
\big|V\big(B_{\Xi^n}(v,r_*)\big)\big|
\le
C_{\mathrm{lm}}\rho(\mathbf N)^{n-k-m_*}\rho(\mathbf M)^{m_*}.
\]
Since $m_*$ is fixed, this is equivalent to
$
c_{\mathrm V}\rho(\mathbf N)^{n-k}
\le
\big|V\big(B_{\Xi^n}(v,r_*)\big)\big|
\le
C_{\mathrm V}\rho(\mathbf N)^{n-k}
$
for suitable constants $c_{\mathrm V},C_{\mathrm V}>0$.

By the diameter bounds,
\[
\log\frac{\diam(\Xi^n)}{r_*}
=
n\log\rho_{\min}(\mathcal D)+O(1),
\]
where the $O(1)$ term is independent of $n$.
Moreover,
$
\log\big|V\big(B_{\Xi^n}(v,r_*)\big)\big|
=
(n-k)\log\rho(\mathbf N)+O(1),
$
where the $O(1)$ term is independent of $n$.
Dividing and letting $n\to\infty$ gives
\[
\dim_{\mathrm{M,loc}}(\Xi:v)
=
\frac{\log \rho(\mathbf N)}{\log \rho_{\min}(\mathcal D)}.
\]
\end{proof}

Here we observe an apparently paradoxical phenomenon.
We know that the Minkowski dimension and the Hausdorff dimension of $\Xi$ are both
\[
\frac{\log \rho(\mathbf M)}{\log \rho_{\min}(\mathcal D)}.
\]
However, since $\rho(\mathbf N)<\rho(\mathbf M)$, the local mass dimension at every finite-born vertex is strictly smaller than the Minkowski dimension, while the Minkowski dimension and the Hausdorff dimension of the whole space are larger.
This is again interpreted in parts of the literature as a failure of the volume-doubling property \cite{hambly2010diffusion}.
This is a classical feature of scale-free combinatorial limit graphs.

In a later section we will show that, in the Gromov--Hausdorff--Prokhorov scaling limit, $\mu$-almost every point has local mass dimension equal to the Minkowski dimension.

\subsubsection*{Local resistance dimension}

In what follows, we derive local resistance estimates around arbitrary vertices.
As with the local mass dimension, the local resistance dimension exhibits a similar non-uniformity on scale-free EIGS.

Fix once and for all an integer $c_0\in\mathbb N$ so large that
\[
C_{\dist}\rho_{\min}(\mathcal D)^{-c_0}\le \frac{c_{\dist}}{4\rho_{\min}(\mathcal D)}
\qquad\text{and}\qquad
c_{\dist}\rho_{\min}(\mathcal D)^{c_0}\ge 4C_{\dist}.
\]

\begin{lemma}\label{lem:mesoscopic_Reff_k_born}
There exist constants $c_{\mathscr R},C_{\mathscr R}>0$, depending on the birth data of $v_k^n$ but independent of $n,m,r$, such that
\[
c_{\mathscr R} \frac{\rho(\Psi)^m}{\rho(\mathbf N)^{n-k-m}}
\le
\Reff_{\Xi^n}\bigl(v_k^n,B_{\Xi^n}(v_k^n,r)^{\mathrm c}\bigr)
\le
C_{\mathscr R} \frac{\rho(\Psi)^m}{\rho(\mathbf N)^{n-k-m}},
\]
for $r>1$ satisfying
$
c_{\dist} \rho_{\min}(\mathcal D)^m
\le
r
\le
C_{\dist} \rho_{\min}(\mathcal D)^m
$
for some integer $m$ with $c_0\le m\le n-k-c_0$.
\end{lemma}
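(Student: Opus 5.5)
We will prove both bounds by comparing the ball with unions of substituted copies rooted at $v_k^n$, in the same spirit as Lemma~\ref{lem:ball_volume_k_born}. The key observation is that the copies hanging off $v_k^n$ meet only at $v_k^n$. Electrically, they therefore behave like parallel two-terminal networks, each of resistance of order $\rho(\Psi)^m$. The number of such networks is of order $\deg_{\Xi^{n-m}}(v_k^n)\asymp\rho(\mathbf N)^{n-k-m}$, by Lemma~\ref{lem:basic_prop}. The two-sided condition $c_0\le m\le n-k-c_0$ leaves room to shift the scale by $c_0$ levels in either direction.

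\textbf{Lower bound.} Consider the coarse graph $\Xi^{n-m-c_0}$, which contains $v_k^n$ because $m+c_0\le n-k$. For each incident edge $e$, let $H_e:=H_e^{(m+c_0)}$ be the corresponding substituted copy and $b_e$ its far endpoint. By the choice of $c_0$, we have $d_{\Xi^n}(v_k^n,b_e)\ge c_{\dist}\rho_{\min}(\mathcal D)^{m+c_0}\ge 4C_{\dist}\rho_{\min}(\mathcal D)^m\ge r$. Hence $B^{\mathrm c}$ contains every $b_e$, and also everything outside $\bigcup_e H_e$.

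We short $B^{\mathrm c}$ to a single node. By Rayleigh monotonicity, $\Reff(v_k^n,B^{\mathrm c})\ge\Reff(v_k^n,\{b_e\}_e\cup \text{outside})$. Since all paths leave through the $b_e$, this equals the parallel combination over $e$ of $\Reff_{H_e}(v_k^n,b_e\cup (H_e\cap B^{\mathrm c}))$.

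What remains is a lower bound for a single copy, namely $\Reff_{H_e}(v_k^n,B^{\mathrm c}\cap H_e)\gtrsim\rho(\Psi)^m$. For this we decompose $H_e$ at level $c_0$: it consists of $O(1)$ copies of $\Xi^{m}_j$, and the copies touching $v_k^n$ have far endpoints at distance at least $c_{\dist}\rho_{\min}(\mathcal D)^m$. A cutset argument should then give a bound of order $\rho(\Psi)^m\cdot c$. Concretely, one uses Lemma~\ref{lem:Reff-asymp-rho} inside these subcopies, combined with the observation that any $f$ equal to $1$ at $v_k^n$ and $0$ off the ball yields a two-terminal potential on a subcopy of generation $m-c_0$ or so. Summing conductances over $\asymp\rho(\mathbf N)^{n-k-m-c_0}$ parallel branches gives the lower bound, with constants absorbing $\rho(\mathbf N)^{c_0}$.

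\textbf{Upper bound.} Work at level $n-m+c_0$. Each edge $e$ of $\Xi^{n-m+c_0}$ incident to $v_k^n$ yields a copy $H_e^{(m-c_0)}$ of diameter at most $C_{\dist}\rho_{\min}(\mathcal D)^{m-c_0}\le c_{\dist}\rho_{\min}(\mathcal D)^{m}/(4\rho_{\min}(\mathcal D))<r$, so the copy lies inside the ball. We then test with a potential, via Dirichlet's principle. Such copies cover too little to produce the upper bound directly, so instead we construct a unit flow from $v_k^n$ to $B^{\mathrm c}$.

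The flow is built as follows. Split the current equally among the $\asymp\rho(\mathbf N)^{n-k-m}$ edges of $\Xi^{n-m}$ at $v_k^n$. Along each branch, route the current along a path of $O(1)$ edges of $\Xi^{n-m-c_0}$ ending outside $B$; this path exists because the far endpoint is at distance at least $r$, while one coarse edge has diameter at most $C_{\dist}\rho_{\min}(\mathcal D)^{m+c_0}$. Within each edge, use the optimal two-terminal flow, which has energy $[\Psi^{m+c_0}(\mathbf 1)]_j\asymp\rho(\Psi)^m$ by Corollary~\ref{lem:Psi_range}.

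Different branches share only $v_k^n$. The energy is therefore about $\rho(\mathbf N)^{n-k-m}\cdot\rho(\mathbf N)^{-2(n-k-m)}\rho(\Psi)^m$, and Thomson's principle gives the upper bound.

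\textbf{Main obstacle.} The hard step is the single-branch lower bound. The ball boundary cuts through the interior of copies, so we must show that no short-circuit path reaches $B^{\mathrm c}$ cheaply. The first attempt will be to use that $B^{\mathrm c}$ within $H_e$ is separated from $v_k^n$ by the level-$(m-c_0)$ subcopies adjacent to $v_k^n$, each of which must be crossed end to end. A potential that is linear in the two-terminal harmonic function on each such subcopy then gives resistance of order $\rho(\Psi)^{m-c_0}$, using Lemma~\ref{lem:Reff-asymp-rho}.
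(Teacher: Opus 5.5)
Your proposal is, in substance, the paper's proof, and none of the problems below is a missing idea. The lower bound comes from the $(m-c_0)$-step copies at $v_k^n$: they lie inside $B:=B_{\Xi^n}(v_k^n,r)$, and each must be crossed end to end. Your test potential is just the Dirichlet-principle dual of the paper's Thomson plus Cauchy--Schwarz step. The upper bound comes from a unit flow split equally over the $(m+c_0)$-step copies, whose far endpoints lie at distance at least $c_{\dist}\rho_{\min}(\mathcal D)^{m+c_0}\ge 4r$. The write-up does contain slips you should fix.

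\textbf{Lower bound.} The displayed Rayleigh inequality is reversed. Since $B^{\mathrm c}$ contains the $b_e$ and everything outside the branches, grounding it can only lower the resistance.

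What you actually use is the exact parallel decomposition of $\Reff_{\Xi^n}(v_k^n,B^{\mathrm c})$ over the branches. Simpler still, let $U_-\subseteq B$ be the union of the $(m-c_0)$-copies at $v_k^n$ minus their far endpoints. Then $\Reff_{\Xi^n}(v_k^n,B^{\mathrm c})\ge\Reff_{\Xi^n}(v_k^n,U_-^{\mathrm c})$ directly, which is the paper's route. This makes the $(m+c_0)$-frame and the level-$m$ cutset unnecessary. The level-$m$ copies would not have worked anyway: as $r$ ranges over $[c_{\dist}\rho_{\min}(\mathcal D)^m,C_{\dist}\rho_{\min}(\mathcal D)^m]$, they need neither lie in $B$ nor reach $B^{\mathrm c}$. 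With the $(m-c_0)$-copies, your \emph{main obstacle} disappears.

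\textbf{Upper bound.} Split the current over the edges of $\Xi^{n-m-c_0}$ at $v_k^n$, not those of $\Xi^{n-m}$. Send each share through a single $(m+c_0)$-copy with its optimal flow to $b_e$.

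As you wrote it, the $\Xi^{n-m}$-edges at $v_k^n$ sit inside shared $(m+c_0)$-copies, so your branches do not meet only at $v_k^n$. With the coarse split, the energy is at most $C_\Psi\rho(\Psi)^{m+c_0}/\deg_{\Xi^{n-m-c_0}}(v_k^n)$. No multi-edge coarse path is needed.
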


\begin{proof}
We work with the underlying undirected simple graph of $\Xi^n$ and assign unit resistances to all edges.

Set
$
t_-:=n-m+c_0
$
and
$
t_+:=n-m-c_0.
$
Since $c_0\le m\le n-k-c_0$, we have $k\le t_+\le t_-\le n$.
Let $E_\pm(v_k^n)$ be the set of edges of $\Xi^{t_\pm}$ incident to $v_k^n$.

For each $e\in E_-(v_k^n)$, the substitution from level $t_-$ to level $n$ replaces $e$ by a fresh copy of an $(m-c_0)$-step edge substitution network, denoted by $H_e^-$.
Write $w_e^-$ for the other endpoint of $e$ in $\Xi^{t_-}$, viewed as a vertex of $\Xi^n$.
For each $e\in E_+(v_k^n)$, the substitution from level $t_+$ to level $n$ replaces $e$ by a fresh copy of an $(m+c_0)$-step edge substitution network, denoted by $H_e^+$.
Write $w_e^+$ for the other endpoint of $e$ in $\Xi^{t_+}$, viewed as a vertex of $\Xi^n$.

By construction, the families $(H_e^-)_{e\in E_-(v_k^n)}$ and $(H_e^+)_{e\in E_+(v_k^n)}$ are pairwise edge-disjoint, and different members intersect only at the vertex $v_k^n$.
Moreover, each $H_e^\pm$ is attached to the rest of $\Xi^n$ only at its terminals $v_k^n$ and $w_e^\pm$.

\textbf{Lower bound.}
By the diameter bound and the choice of $c_0$, for every $e\in E_-(v_k^n)$,
\[
\sup_{x\in V(H_e^-)} d_{\Xi^n}(v_k^n,x)
\le
\diam(H_e^-)
\le
C_{\dist}\rho_{\min}(\mathcal D)^{m-c_0}
\le
r.
\]
Hence $V(H_e^-)\subseteq B_{\Xi^n}(v_k^n,r)$.
Define
\[
U_-:=
\Big(\bigcup_{e\in E_-(v_k^n)} V(H_e^-)\Big)
\setminus
\{w_e^-:e\in E_-(v_k^n)\}.
\]
Then $U_-\subseteq B_{\Xi^n}(v_k^n,r)$, so $U_-^{\mathrm c}\supseteq B_{\Xi^n}(v_k^n,r)^{\mathrm c}$.
By monotonicity in the grounded set,
$
\Reff_{\Xi^n}\bigl(v_k^n,B_{\Xi^n}(v_k^n,r)^{\mathrm c}\bigr)
\ge
\Reff_{\Xi^n}(v_k^n,U_-^{\mathrm c}).
$

Let $\theta$ be any unit flow from $v_k^n$ to $U_-^{\mathrm c}$.
For each $e\in E_-(v_k^n)$, let $I_e$ be the net current of $\theta$ entering $H_e^-$ from $v_k^n$.
Then $\sum_{e\in E_-(v_k^n)} I_e=1$.
Since $U_-^{\mathrm c}$ contains $w_e^-$ and $H_e^-$ is attached to $\Xi^n$ only at $v_k^n$ and $w_e^-$, the restriction of $\theta$ to $H_e^-$ has energy at least
$
I_e^2 \Reff_{H_e^-}(v_k^n,w_e^-).
$
Hence
$
\mathcal E_{\Xi^n}(\theta,\theta)
\ge
\sum_{e\in E_-(v_k^n)} I_e^2 \Reff_{H_e^-}(v_k^n,w_e^-).
$
By the renormalisation identity,
$
\Reff_{H_e^-}(v_k^n,w_e^-)=\big[\Psi^{m-c_0}(\mathbf 1)\big]_{\mathscr C(e)},
$
and by Corollary~\ref{lem:Psi_range},
$
\Reff_{H_e^-}(v_k^n,w_e^-)\ge c_{\Psi}\rho(\Psi)^{m-c_0}.
$
Thus
\[
\mathcal E_{\Xi^n}(\theta,\theta)
\ge
c_{\Psi}\rho(\Psi)^{m-c_0}\sum_{e\in E_-(v_k^n)} I_e^2.
\]
By Cauchy--Schwarz,
\[
\sum_{e\in E_-(v_k^n)} I_e^2
\ge
\frac{1}{|E_-(v_k^n)|}
=
\frac{1}{\deg_{\Xi^{t_-}}(v_k^n)}.
\]
Taking the infimum over all unit flows and applying Thomson's principle yields
\[
\Reff_{\Xi^n}\bigl(v_k^n,B_{\Xi^n}(v_k^n,r)^{\mathrm c}\bigr)
\ge
\Reff_{\Xi^n}(v_k^n,U_-^{\mathrm c})
\ge
c_{\Psi}\frac{\rho(\Psi)^{m-c_0}}{\deg_{\Xi^{t_-}}(v_k^n)}.
\]
Using the degree estimate gives
\[
\Reff_{\Xi^n}\bigl(v_k^n,B_{\Xi^n}(v_k^n,r)^{\mathrm c}\bigr)
\ge
c_{\Psi} C_{\deg}^{-1} \rho(\Psi)^{-c_0} \rho(\mathbf N)^{c_0}
\frac{\rho(\Psi)^m}{\rho(\mathbf N)^{n-k-m}}.
\]

\textbf{Upper bound.}
Fix $e\in E_+(v_k^n)$.
Consider any path in $\Xi^n$ from $v_k^n$ to $w_e^+$.
Contracting each $(m+c_0)$-step substitution copy from level $t_+$ to level $n$ to a single edge yields the graph $\Xi^{t_+}$.
Hence any such path projects to a walk in $\Xi^{t_+}$ from $v_k^n$ to $w_e^+$, which must have length at least $1$.
By the endpoint-distance lower bound, traversing one $(m+c_0)$-step copy costs at least $c_{\dist}\rho_{\min}(\mathcal D)^{m+c_0}$ edges, and therefore
\[
d_{\Xi^n}(v_k^n,w_e^+)\ge c_{\dist}\rho_{\min}(\mathcal D)^{m+c_0}\ge 2r.
\]
In particular, $w_e^+\notin B_{\Xi^n}(v_k^n,r)$.
Set
$
S_+:=\{w_e^+:e\in E_+(v_k^n)\}
$
and
$
U_+:=V(\Xi^n)\setminus S_+.
$
Then $U_+^{\mathrm c}=S_+\subseteq B_{\Xi^n}(v_k^n,r)^{\mathrm c}$.
Hence
$
\Reff_{\Xi^n}\bigl(v_k^n,B_{\Xi^n}(v_k^n,r)^{\mathrm c}\bigr)
\le
\Reff_{\Xi^n}(v_k^n,U_+^{\mathrm c})
$
by monotonicity in the grounded set.

For each $e\in E_+(v_k^n)$, let $\theta_e$ be an energy-minimising unit flow in $H_e^+$ from $v_k^n$ to $w_e^+$.
Define
\[
\theta:=\frac{1}{\deg_{\Xi^{t_+}}(v_k^n)}\sum_{e\in E_+(v_k^n)} \theta_e,
\]
extended by $0$ outside $\bigcup_{e\in E_+(v_k^n)} H_e^+$.
Then $\theta$ is a unit flow from $v_k^n$ to $U_+^{\mathrm c}$.
By edge-disjointness,
\[
\mathcal E_{\Xi^n}(\theta,\theta)
=
\sum_{e\in E_+(v_k^n)}
\frac{1}{\deg_{\Xi^{t_+}}(v_k^n)^2}
\mathcal E_{H_e^+}(\theta_e,\theta_e).
\]
Since $\theta_e$ is energy-minimising,
\[
\mathcal E_{H_e^+}(\theta_e,\theta_e)
=
\Reff_{H_e^+}(v_k^n,w_e^+)
=
\big[\Psi^{m+c_0}(\mathbf 1)\big]_{\mathscr C(e)}.
\]
By Corollary~\ref{lem:Psi_range},
$
\mathcal E_{\Xi^n}(\theta,\theta)
\le
\frac{C_{\Psi}\rho(\Psi)^{m+c_0}}{\deg_{\Xi^{t_+}}(v_k^n)}.
$
Thomson's principle implies
\[
\Reff_{\Xi^n}\bigl(v_k^n,B_{\Xi^n}(v_k^n,r)^{\mathrm c}\bigr)
\le
\Reff_{\Xi^n}(v_k^n,U_+^{\mathrm c})
\le
\frac{C_{\Psi}\rho(\Psi)^{m+c_0}}{\deg_{\Xi^{t_+}}(v_k^n)}.
\]
Using the degree estimate once more gives
\[
\Reff_{\Xi^n}\bigl(v_k^n,B_{\Xi^n}(v_k^n,r)^{\mathrm c}\bigr)
\le
C_{\Psi} c_{\deg}^{-1}\rho(\Psi)^{c_0}\rho(\mathbf N)^{-c_0}
\frac{\rho(\Psi)^m}{\rho(\mathbf N)^{n-k-m}}.
\]
This completes the proof.
\end{proof}

\begin{definition}\label{def:local_resistance_dimension}
Fix $r_*>1$.
We define the local resistance dimension at $v\in V(G^\infty)$ by
\[
\dim_{\mathrm{R,loc}}(G^\infty:v)
:=
\lim_{n\to\infty}
\frac{\log \Reff_{G^n}\bigl(v,B_{G^n}(v,r_*)^{\mathrm c}\bigr)}
{-\log\frac{r_*}{\diam(G^n)}},
\]
provided the limit exists and is independent of the choice of $r_*$.
\end{definition}

\begin{proposition}\label{prop:local_resistance_dimension_k_born}
Let $v\in V(\Xi)$.
If $\rho(\mathbf N)>1$ then the local resistance dimension exists and satisfies
\[
\dim_{\mathrm{R,loc}}(\Xi:v)
=
-\frac{\log \rho(\mathbf N)}{\log \rho_{\min}(\mathcal D)}.
\]
\end{proposition}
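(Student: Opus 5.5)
The plan mirrors the proof of Proposition~\ref{prop:local_mass_dimension_constant}, with Lemma~\ref{lem:mesoscopic_Reff_k_born} taking the place of Lemma~\ref{lem:ball_volume_k_born}. Fix $v\in V(\Xi^k)$ and $r_*>1$. Choose an integer $m_*$ with $c_0\le m_*$ and
\[
c_{\dist}\rho_{\min}(\mathcal D)^{m_*}\le r_*\le C_{\dist}\rho_{\min}(\mathcal D)^{m_*}.
\]
If $r_*$ is so small that no $m_*\ge c_0$ fits, enlarge the window first. The monotonicity of $r\mapsto\Reff_{\Xi^n}(v,B_{\Xi^n}(v,r)^{\mathrm c})$ in the grounded set then sandwiches the resistance at $r_*$ between its values at $r=1^+$ (just above $1$) and at some admissible radius $r'\ge r_*$ with index $m'=c_0$.

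For the small radius, $B_{\Xi^n}(v,r)$ with $r\in(1,2]$ is $\{v\}$ plus its neighbours. The exact value is then $\Reff=1/\deg_{\Xi^n}(v)$ when $r\le 1$ is avoided and $r>1$ excludes the neighbours; for $1<r\le2$ it is comparable to $1/\deg_{\Xi^n}(v)$ up to constants, by the same flow/Cauchy--Schwarz argument. This is also of order $\rho(\mathbf N)^{-(n-k)}$ by Lemma~\ref{lem:basic_prop}. So in every case we obtain, for all $n\ge k+m_*+c_0$,
\[
c\,\rho(\mathbf N)^{-(n-k)}\le \Reff_{\Xi^n}\bigl(v,B_{\Xi^n}(v,r_*)^{\mathrm c}\bigr)\le C\,\rho(\mathbf N)^{-(n-k)},
\]
where the fixed factors $\rho(\Psi)^{m_*}\rho(\mathbf N)^{m_*}$ from Lemma~\ref{lem:mesoscopic_Reff_k_born} are absorbed into $c,C$, which depend on $v$ and $r_*$ but not on $n$.

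Taking logarithms gives
\[
\log\Reff_{\Xi^n}\bigl(v,B_{\Xi^n}(v,r_*)^{\mathrm c}\bigr)=-(n-k)\log\rho(\mathbf N)+O(1).
\]
By the diameter bounds of Lemma~\ref{lem:basic_prop},
\[
-\log\frac{r_*}{\diam(\Xi^n)}=n\log\rho_{\min}(\mathcal D)+O(1),
\]
which diverges because $\rho_{\min}(\mathcal D)>1$ under distance-positivity. Dividing and letting $n\to\infty$ gives the limit $-\log\rho(\mathbf N)/\log\rho_{\min}(\mathcal D)$. This limit is independent of $r_*$, which proves existence. The hypothesis $\rho(\mathbf N)>1$ is not really needed for the limit; it only makes the value nonzero, matching Proposition~\ref{prop:local_mass_dimension_constant}.

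The main obstacle is the bookkeeping that turns an arbitrary fixed $r_*$ into an admissible index $m$, namely the constraint $c_0\le m$ in Lemma~\ref{lem:mesoscopic_Reff_k_born}. I expect the monotone-sandwich argument to handle it: the window of admissible radii at $m=c_0$ is bounded independently of $n$, and the degree-based bound at radius just above $1$ already has the correct exponential order.
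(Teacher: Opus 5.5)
Your argument is correct, but it is organised differently from the paper's proof.

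\textbf{The paper's route.} The paper does not use Lemma~\ref{lem:mesoscopic_Reff_k_born} here. For an arbitrary fixed $r_*>1$ it proves both bounds directly.
\begin{itemize}
\item The lower bound $\Reff_{\Xi^n}\bigl(v,B_{\Xi^n}(v,r_*)^{\mathrm c}\bigr)\ge\deg_{\Xi^n}(v)^{-1}$ comes from Cauchy--Schwarz on the currents through the edges at $v$; all neighbours lie in the ball because $r_*>1$.
\item The upper bound comes from sending the unit current uniformly through the $a_*$-step branches of the star of $v$ in $\Xi^{n-a_*}$. Here $a_*$ is chosen only so that $c_{\dist}\rho_{\min}(\mathcal D)^{a_*}\ge 2r_*$, which pushes the far terminals out of the ball.
\end{itemize}
This handles every $r_*>1$ at once. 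It needs no window bookkeeping and no constraint $m\ge c_0$, which is exactly the obstacle you flagged.

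\textbf{Your route.} You specialise the mesoscopic lemma to a fixed scale. This is legitimate, since for fixed $m$ the quantity $\rho(\Psi)^m/\rho(\mathbf N)^{n-k-m}$ is a constant multiple of $\rho(\mathbf N)^{-(n-k)}$. It mirrors Proposition~\ref{prop:local_mass_dimension_constant}, and it is economical once the lemma is available. The underlying mechanism is the same, because the lemma's upper bound is itself a star-flow construction.

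\textbf{Simplification.} Your two ingredients can be combined without any case distinction.
\begin{itemize}
\item The degree lower bound holds for every $r_*>1$.
\item Monotonicity in the grounded set, up to any fixed admissible radius $r'\ge r_*$ of index at least $c_0$, gives the upper bound for every $r_*$.
\end{itemize}
This also removes any worry about whether $r_*$ itself falls into a window.

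\textbf{One correction.} Cauchy--Schwarz at radius just above $1$ yields only the lower bound $\ge\deg_{\Xi^n}(v)^{-1}$, not two-sided comparability. The lower bound is all your sandwich uses, and comparability then follows from the sandwich itself.
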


\begin{proof}
Fix $v\in V(\Xi^k)$ and $r_*>1$.

\textbf{Lower bound.}
For each $n\ge k$, let $E_n(v)$ be the set of edges of $\Xi^n$ incident to $v$.
Since $r_*>1$, every neighbour of $v$ in $\Xi^n$ lies inside $B_{\Xi^n}(v,r_*)$.
Let $\theta$ be any unit flow from $v$ to $B_{\Xi^n}(v,r_*)^{\mathrm c}$.
If $I_e$ denotes the current through $e\in E_n(v)$, then
$
\sum_{e\in E_n(v)} I_e=1.
$
Hence
\[
\mathcal E_{\Xi^n}(\theta,\theta)
\ge
\sum_{e\in E_n(v)} I_e^2
\ge
\frac{1}{|E_n(v)|}
=
\frac{1}{\deg_{\Xi^n}(v)}.
\]
By Thomson's principle and Lemma~\ref{lem:basic_prop},
$
\Reff_{\Xi^n}\bigl(v,B_{\Xi^n}(v,r_*)^{\mathrm c}\bigr)
\ge
\deg_{\Xi^n}(v)^{-1}
\ge
C_{\deg}^{-1}\rho(\mathbf N)^{-(n-k)}.
$

\textbf{Upper bound.}
Choose an integer $a_*\in\mathbb N$ so large that
$
c_{\dist}\rho_{\min}(\mathcal D)^{a_*}\ge 2r_*.
$
For $n\ge k+a_*$, set $t:=n-a_*$.
Let $E_t(v)$ be the set of edges of $\Xi^t$ incident to $v$.
For each $e\in E_t(v)$, let $H_e$ be the $a_*$-step substituted copy that replaces $e$ from level $t$ to level $n$, and let $w_e$ be the other terminal of $H_e$.
Then
\[
d_{\Xi^n}(v,w_e)\ge c_{\dist}\rho_{\min}(\mathcal D)^{a_*}\ge 2r_*,
\]
so $w_e\notin B_{\Xi^n}(v,r_*)$.

For each $e\in E_t(v)$, let $\theta_e$ be an energy-minimising unit flow in $H_e$ from $v$ to $w_e$.
Define
\[
\theta:=\frac{1}{\deg_{\Xi^t}(v)}\sum_{e\in E_t(v)} \theta_e,
\]
extended by $0$ outside $\bigcup_{e\in E_t(v)} H_e$.
Then $\theta$ is a unit flow from $v$ to $B_{\Xi^n}(v,r_*)^{\mathrm c}$.
By edge-disjointness,
\[
\mathcal E_{\Xi^n}(\theta,\theta)
=
\sum_{e\in E_t(v)}
\frac{1}{\deg_{\Xi^t}(v)^2}\mathcal E_{H_e}(\theta_e,\theta_e).
\]
Now
\[
\mathcal E_{H_e}(\theta_e,\theta_e)
=
\Reff_{H_e}(v,w_e)
=
\big[\Psi^{a_*}(\mathbf 1)\big]_{\mathscr C(e)}
\le
C_{\Psi}\rho(\Psi)^{a_*}.
\]
Therefore
\[
\mathcal E_{\Xi^n}(\theta,\theta)
\le
\frac{C_{\Psi}\rho(\Psi)^{a_*}}{\deg_{\Xi^t}(v)}
\le
C_{\Psi}\rho(\Psi)^{a_*} c_{\deg}^{-1}\rho(\mathbf N)^{-(t-k)}.
\]
Since $t=n-a_*$, this gives
$
\Reff_{\Xi^n}\bigl(v,B_{\Xi^n}(v,r_*)^{\mathrm c}\bigr)
\le
C_1\rho(\mathbf N)^{-(n-k)}
$
for some constant $C_1>0$ depending on $v$ and $r_*$.

Combining the two bounds yields
$
c_1\rho(\mathbf N)^{-(n-k)}
\le
\Reff_{\Xi^n}\bigl(v,B_{\Xi^n}(v,r_*)^{\mathrm c}\bigr)
\le
C_1\rho(\mathbf N)^{-(n-k)}.
$
Equivalently,
$
\log \Reff_{\Xi^n}\bigl(v,B_{\Xi^n}(v,r_*)^{\mathrm c}\bigr)
=
-(n-k)\log\rho(\mathbf N)+O(1).
$
By the diameter bounds,
\[
-\log\frac{r_*}{\diam(\Xi^n)}
=
n\log\rho_{\min}(\mathcal D)+O(1).
\]
Dividing and letting $n\to\infty$ yields
$
\dim_{\mathrm{R,loc}}(\Xi:v)
=
-\frac{\log \rho(\mathbf N)}{\log \rho_{\min}(\mathcal D)}.
$
\end{proof}

In contrast with Theorem~\ref{thm:resistance_dimension_formula}, which gives
\[
\dimresis(\Xi)=\frac{\log \rho(\Psi)}{\log \rho_{\min}(\mathcal D)},
\]
we encounter a similar paradoxical phenomenon to that of the local mass dimension.
On the combinatorial limit graph $\Xi$, the local resistance dimension at every finite-born vertex is negative, yet this has no direct relation to the global resistance dimension.
This inconsistency is again a consequence of scale-freeness: degree explosion creates more and more parallel channels at larger scales, so the mass and resistance observed locally can vary in the opposite direction to the global scaling.
As in the mass case, after introducing the Gromov--Hausdorff--Prokhorov scaling limit we will show that $\mu$-almost every point has local resistance dimension equal to the global resistance dimension.

Before we formally study the walk dimension, we briefly discuss the classical question of recurrence for the simple random walk on $\Xi$.

\begin{definition}\label{def:asymp_res_recurrence}
We say that $G^\infty$ is recurrent if for every $v\in V(G^\infty)$ and every $r_*\in(0,1)$,
\[
\Reff_{G^\infty}(v\leftrightarrow \infty)
:=
\liminf_{n\to\infty}
\Reff_{G^n}\Bigl(
v,
B_{G^n}\Bigl(v,r_*\max_{w\in V(G^n)} d_{G^n}(v,w)\Bigr)^{\mathrm c}
\Bigr)
=
\infty.
\]
\end{definition}

\begin{proposition}\label{prop:asymp_res_recurrence_rhoPsi}
$\Xi$ is recurrent if and only if $\rho(\Psi)>1$.
\end{proposition}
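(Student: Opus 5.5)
The plan is to reduce the ball $B_{\Xi^n}\bigl(v,r_*\max_w d_{\Xi^n}(v,w)\bigr)$ to a mesoscopic radius comparable to $\rho_{\min}(\mathcal D)^{n}$, and to read off the resistance from the argument of Lemma~\ref{lem:mesoscopic_Reff_k_born}, now run at the top scale $m\approx n-k$. Fix $v\in V(\Xi^k)$ and $r_*\in(0,1)$. By Lemma~\ref{lem:basic_prop}, $\max_{w}d_{\Xi^n}(v,w)\asymp\rho_{\min}(\mathcal D)^n$, since it lies between $d_{\Xi^n}(\mathfrak v_+,\mathfrak v_-)/2$ and $\diam(\Xi^n)$. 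Hence the radius is $r_n\asymp r_*\rho_{\min}(\mathcal D)^n$. Choose $m=m(n)$ with $c_{\dist}\rho_{\min}(\mathcal D)^m\le r_n\le C_{\dist}\rho_{\min}(\mathcal D)^m$; then $n-m$ is bounded by a constant depending only on $r_*$.

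For the upper bound I would use an explicit flow, proving $\Reff\le C\rho(\Psi)^n$ for all $n$. Take the level $t=k$ (or $t=0$) and the edges $e$ of $\Xi^t$ incident to $v$. Each edge is replaced by a copy $H_e$ of an $(n-t)$-step network, and its far terminal $w_e$ satisfies $d_{\Xi^n}(v,w_e)\ge c_{\dist}\rho_{\min}(\mathcal D)^{n-t}$. The difficulty is that $w_e$ may lie inside the ball, since $r_*$ may be close to $1$. In that case I would continue the flow from $w_e$ through further top-level copies along a geodesic path in $\Xi^{k}$ to a vertex at distance at least $r_n$. Only boundedly many copies are used, each with resistance $[\Psi^{n-k}(\mathbf 1)]_i\le C_\Psi\rho(\Psi)^{n-k}$ by Corollary~\ref{lem:Psi_range}. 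A series concatenation then gives $\Reff\le C\rho(\Psi)^n$. If the ball already contains everything, the resistance is $+\infty$ by convention; this only helps the recurrent direction and must be excluded in the transient one. For $r_*<1$ the farthest vertex lies outside the ball, so the set is nonempty. Therefore, if $\rho(\Psi)\le1$, the $\liminf$ is finite and $\Xi$ is not recurrent. The case $\rho(\Psi)=1$ gives a bounded resistance, hence non-recurrence, which matches the claim.

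For the lower bound, assume $\rho(\Psi)>1$ and argue as in the lower-bound part of Lemma~\ref{lem:mesoscopic_Reff_k_born}. Choose $t_-=n-m'$ with $m'=m-c_0$, so that the copies $H_e^-$ attached at $v$ fit inside the ball. Any unit flow from $v$ to the complement must cross each $H_e^-$ between $v$ and $w_e^-$. Combining Thomson's principle with Cauchy--Schwarz gives
\[
\Reff_{\Xi^n}\bigl(v,B^{\mathrm c}\bigr)\ge c_\Psi\,\frac{\rho(\Psi)^{m'}}{\deg_{\Xi^{n-m'}}(v)}\ge c\,\frac{\rho(\Psi)^{m}}{\rho(\mathbf N)^{n-k-m}}.
\]
Since $n-m=O(1)$, the denominator is bounded. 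This requires $n-m'\ge k$, which holds for $n$ large when $r_*$ is small. For $r_*$ close to $1$ we may have $m\ge n-k-c_0$; then I would simply take $t_-=k$ and $m'=n-k$, which is permissible because any ball of radius at least $c\rho_{\min}(\mathcal D)^n$ with $c$ suitably small still contains the copies at level $k$ after shrinking by $c_0$ steps. In either case $\Reff\ge c\rho(\Psi)^n\to\infty$, so $\Xi$ is recurrent.

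The main obstacle is the uniformity in $r_*$ near $1$, where the ball can swallow the entire neighbourhood structure of $v$ at the coarse levels. Both bounds need care with the boundedly many top-level copies. I would handle this by fixing a bounded coarse level $k'$, depending on $r_*$, and working with the finite graph $\Xi^{k'}$, whose edges all carry resistances of order $\rho(\Psi)^{n-k'}$ by Lemma~\ref{lem:renorm} and Corollary~\ref{lem:Psi_range}. The resistance to the complement is then comparable, up to constants depending on $r_*$ and $v$, to $\rho(\Psi)^{n}$, with a finite positive constant.
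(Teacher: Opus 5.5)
Your two estimates are the paper's. For $\rho(\Psi)>1$, copies of bounded codepth at $v$ fit inside the ball, and Thomson's principle with Cauchy--Schwarz gives $\Reff\gtrsim\rho(\Psi)^n$. For $\rho(\Psi)\le 1$, the uniformly split flow through the $(n-k)$-step copies on the edges of $\Xi^k$ at $v$ gives $\Reff\le C_\Psi\rho(\Psi)^{n-k}/|E_k(v)|$.

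Where you differ is in treating uniformity in $r_*$ near $1$ as the main obstacle; it is not one. $\Reff_{\Xi^n}(v,B_{\Xi^n}(v,r)^{\mathrm c})$ is nondecreasing in $r$, and recurrence demands divergence for \emph{every} $r_*\in(0,1)$. So only small $r_*$ matter in either direction, and non-recurrence needs a single $r_*$. The paper takes $r_*=\min\{\tfrac12,\tfrac{c_{\dist}}{4C_v}\}$, for which $d_{\Xi^n}(v,w_e)\ge c_{\dist}\rho_{\min}(\mathcal D)^{n-k}\ge 4r_n$, so your first flow already ends outside the ball.

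This matters because your continuation ``along a geodesic path in $\Xi^k$ to a vertex at distance at least $r_n$'' fails as stated: for $r_*$ close to $1$, no vertex of $\Xi^k$ need lie outside the ball. Take the one-colour rule graph made of two internally disjoint paths of lengths $2$ and $3$ (oriented so that $\mathbf N$ is primitive), and let $v$ be the level-one vertex next to $\mathfrak v_+$ on the long path. Every vertex of $\Xi^1$ is within distance $2^n$ of $v$ in $\Xi^n$. However, the level-two vertex in the middle of the short branch of the cell joining the midpoint of the short path to $\mathfrak v_-$ is at distance $5\cdot 2^{n-2}$. So for $r_*>4/5$ all of $V(\Xi^1)$ lies inside the ball.

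Your $\Xi^{k'}$ repair can be completed. Choose $k'\ge k$ with $C_{\dist}\rho_{\min}(\mathcal D)^{-k'}\le (1-r_*)c_{\dist}/2$. Then the terminals $z$ of the level-$k'$ cell containing a farthest vertex lie outside the ball, and $\Reff_{\Xi^n}(v,z)=\psi_{\Xi^{k'}:(v,z)}(\Psi^{n-k'}(\mathbf 1))\lesssim\rho(\Psi)^n$. You did not supply this step, and the statement does not need it.

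In the recurrent direction, the paper likewise avoids tying the copy level to $r_n$. It fixes $a=a(r_*)$ with $C_{\dist}\rho_{\min}(\mathcal D)^{-a}\le r_*c_v$ and uses the edges of $\Xi^{k+a}$ at $v$, which gives $\Reff\ge c_\Psi\rho(\Psi)^{n-k-a}/|E_a(v)|\to\infty$ directly.

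Your case split there is misstated. The condition $t_-=n-m+c_0\ge k$ fails when $m>n-k+c_0$, not when $m\ge n-k-c_0$. The correct choice is $t_-:=\max\{k,\,n-m+c_0\}$. This works because when $n-k<m-c_0$, the $(n-k)$-step copies at $v$ have diameter at most $C_{\dist}\rho_{\min}(\mathcal D)^{m-c_0}\le r_n$, so they fit in the ball. With these adjustments your proof is complete.
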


\begin{proof}
Fix a finite-born vertex $v\in V(\Xi^k)$.

\textbf{Case 1: $\rho(\Psi)>1$.}
Fix any $r_*\in(0,1)$ and write
$
r_n:=r_*\max_{u\in V(\Xi^n)} d_{\Xi^n}(v,u).
$
By the distance scaling bounds for EIGS, there exist constants $c_v,C_v>0$, depending only on the birth data of $v$, such that
$
r_*c_v\rho_{\min}(\mathcal D)^{n-k}
\le
r_n
\le
r_*C_v\rho_{\min}(\mathcal D)^{n-k}
$
for all $n\ge k$.

Choose an integer $a\in\mathbb N$ so large that
$
C_{\dist}\rho_{\min}(\mathcal D)^{-a}\le r_*c_v.
$
For $n\ge k+a$, let $E_a(v)$ be the set of edges of $\Xi^{k+a}$ incident to $v$.
For each $e\in E_a(v)$, let $H_e^{(n)}$ be the $(n-k-a)$-step substituted copy replacing $e$ inside $\Xi^n$, and let $w_e$ be its other terminal.
Then
$
\diam(H_e^{(n)})
\le
C_{\dist}\rho_{\min}(\mathcal D)^{n-k-a}
\le
r_n,
$
so $H_e^{(n)}\subseteq B_{\Xi^n}(v,r_n)$.
Set
\[
U_a^{(n)}
:=
\Big(\bigcup_{e\in E_a(v)} V(H_e^{(n)})\Big)\setminus \{w_e:e\in E_a(v)\}.
\]
Then $U_a^{(n)}\subseteq B_{\Xi^n}(v,r_n)$, and therefore
$
\Reff_{\Xi^n}\bigl(v,B_{\Xi^n}(v,r_n)^{\mathrm c}\bigr)
\ge
\Reff_{\Xi^n}\bigl(v,(U_a^{(n)})^{\mathrm c}\bigr).
$

Let $\theta$ be any unit flow from $v$ to $(U_a^{(n)})^{\mathrm c}$.
For each $e\in E_a(v)$, let $I_e$ be the net current sent through $H_e^{(n)}$.
Then $\sum_{e\in E_a(v)} I_e=1$, and
$
\mathcal E_{\Xi^n}(\theta,\theta)
\ge
\sum_{e\in E_a(v)} I_e^2 \Reff_{H_e^{(n)}}(v,w_e).
$
By Corollary~\ref{lem:Psi_range},
$
\Reff_{H_e^{(n)}}(v,w_e)
=
\big[\Psi^{n-k-a}(\mathbf 1)\big]_{\mathscr C(e)}
\ge
c_{\Psi}\rho(\Psi)^{n-k-a}.
$
Hence
\[
\mathcal E_{\Xi^n}(\theta,\theta)
\ge
c_{\Psi}\rho(\Psi)^{n-k-a}\sum_{e\in E_a(v)} I_e^2
\ge
\frac{c_{\Psi}\rho(\Psi)^{n-k-a}}{|E_a(v)|}.
\]
By Thomson's principle,
\[
\Reff_{\Xi^n}\bigl(v,B_{\Xi^n}(v,r_n)^{\mathrm c}\bigr)
\ge
\frac{c_{\Psi}}{|E_a(v)|}\rho(\Psi)^{n-k-a}.
\]
Since $\rho(\Psi)>1$, the right-hand side tends to $\infty$.
As $r_*\in(0,1)$ was arbitrary, $\Xi$ is recurrent.

\textbf{Case 2: $\rho(\Psi)\le 1$.}
Choose
$
r_*:=\min\Bigl\{\frac12,\frac{c_{\dist}}{4C_v}\Bigr\}\in(0,1)
$
and
$
r_n:=r_*\max_{u\in V(\Xi^n)} d_{\Xi^n}(v,u).
$
Then
$
r_n\le \frac{c_{\dist}}{4}\rho_{\min}(\mathcal D)^{n-k}.
$
Let $E_k(v)$ be the set of edges of $\Xi^k$ incident to $v$.
For each $e\in E_k(v)$, let $H_e^{(n)}$ be the $(n-k)$-step substituted copy replacing $e$ inside $\Xi^n$, and let $w_e$ be its other terminal.
Then
\[
d_{\Xi^n}(v,w_e)\ge c_{\dist}\rho_{\min}(\mathcal D)^{n-k}\ge 4r_n,
\]
so every $w_e$ lies outside $B_{\Xi^n}(v,r_n)$.

For each $e\in E_k(v)$, let $\theta_e$ be an energy-minimising unit flow in $H_e^{(n)}$ from $v$ to $w_e$.
Define
$
\theta:=\frac{1}{|E_k(v)|}\sum_{e\in E_k(v)} \theta_e,
$
extended by $0$ outside $\bigcup_{e\in E_k(v)} H_e^{(n)}$.
Then $\theta$ is a unit flow from $v$ to $B_{\Xi^n}(v,r_n)^{\mathrm c}$.
By edge-disjointness,
\[
\mathcal E_{\Xi^n}(\theta,\theta)
=
\sum_{e\in E_k(v)}
\frac{1}{|E_k(v)|^2}\mathcal E_{H_e^{(n)}}(\theta_e,\theta_e).
\]
Again by Corollary~\ref{lem:Psi_range},
$
\mathcal E_{H_e^{(n)}}(\theta_e,\theta_e)
=
\Reff_{H_e^{(n)}}(v,w_e)
=
\big[\Psi^{n-k}(\mathbf 1)\big]_{\mathscr C(e)}
\le
C_{\Psi}\rho(\Psi)^{n-k}.
$
Therefore
$
\Reff_{\Xi^n}\bigl(v,B_{\Xi^n}(v,r_n)^{\mathrm c}\bigr)
\le
\mathcal E_{\Xi^n}(\theta,\theta)
\le
\frac{C_{\Psi}}{|E_k(v)|}\rho(\Psi)^{n-k}.
$
If $\rho(\Psi)<1$, the right-hand side tends to $0$.
If $\rho(\Psi)=1$, it stays bounded.
In either case the limit in Definition~\ref{def:asymp_res_recurrence} is not infinite for this choice of $r_*$.
Hence $\Xi$ is not recurrent.
\end{proof}

Proposition~\ref{prop:asymp_res_recurrence_rhoPsi} gives a direct criterion for deciding recurrence of the random walk on the combinatorial limit graph of an EIGS.
Later we will also show that recurrence is equivalent to a condition on the spectral dimension.
Note, however, that these conclusions rely on the standing assumption that the underlying EIGS is canonical.

\subsubsection*{Walk dimension}

With the local mass and local resistance estimates in hand, we can now estimate the local exit time.
We first show that, in contrast to the previous two quantities, the exit time not only does not depend on the birth level of the base vertex, but also does not exhibit the same kind of local inconsistency.
In other words, for the walk dimension, the local and global notions coincide and share the same value.

Let $(X_j^{\Xi^n})_{j\ge 0}$ be the discrete-time simple random walk on $\Xi^n$.
For $v\in V(\Xi^n)$ and $r>0$, define the exit time from the ball $B_{\Xi^n}(v,r)$ by
\[
\tau_{B_{\Xi^n}(v,r)^{\mathrm c}}^{\Xi^n}
:=
\inf\{j\ge 0:\ X_j^{\Xi^n}\notin B_{\Xi^n}(v,r)\}.
\]

We first need a technical tool to estimate exit times across different scales.
The purpose of the following lemma is to treat the good case, where the boundary aligns exactly with an entire block.
Once we obtain an estimate in this good case, we can use it as an input to handle the less favourable situations.

\begin{lemma}\label{lem:star_domain_exit_time_scale}
Fix $n\in\mathbb N$ and let $v\in V_k(\Xi^n)$.
Fix an integer $m$ with $1\le m\le n-k$ and set $t:=n-m$.
Let $E_{\Xi^{t}}(v)$ be the set of edges of $\Xi^{t}$ incident to $v$.

For each $e=\{v,w_e\}\in E_{\Xi^{t}}(v)$, let $H_e^{(m)}\subseteq \Xi^n$ be the $m$-step substituted copy that replaces $e$ from level $t$ to level $n$.
Set
\[
S_{v}^{(n,m)}:=\{w_e:\ e\in E_{\Xi^{t}}(v)\}\subseteq V(\Xi^n),
\qquad
\tau_{v}^{(n,m)}:=\inf\{j\ge 0:\ X_j^{\Xi^n}\in S_{v}^{(n,m)}\}.
\]
Then there exist constants $c_{\star},C_{\star}>0$, depending only on the birth data of $v$, such that for all admissible $n,m$,
\[
c_{\star} \rho(\mathbf M)^m\rho(\Psi)^m
\le
\mathbb E^v\big[\tau_{v}^{(n,m)}\big]
\le
C_{\star} \rho(\mathbf M)^m\rho(\Psi)^m.
\]
\end{lemma}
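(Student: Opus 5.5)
The plan is to use the Green function representation of the expected hitting time. The key observation is that the walk cannot leave the \emph{star} $\mathcal H:=\bigcup_{e\in E_{\Xi^t}(v)}H_e^{(m)}$ without hitting $S_v^{(n,m)}$, because each $H_e^{(m)}$ is attached to the rest of $\Xi^n$ only at $v$ and $w_e$. Writing $g_S(\cdot,\cdot)$ for the Green function of the walk killed on $S:=S_v^{(n,m)}$, we have
\[
\mathbb E^v[\tau_v^{(n,m)}]=\sum_{x\in V(\mathcal H)} g_S(v,x),
\qquad
\frac{g_S(v,x)}{\deg_{\Xi^n}(x)}=\Reff_{\Xi^n}(v,S)\,h(x),
\]
where $h(x):=\mathbb P^x(\text{hit } v \text{ before } S)$. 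Here we use the standard identity $g_S(v,v)/\deg(v)=\Reff(v,S)$ together with reversibility. Hence
\[
\mathbb E^v[\tau_v^{(n,m)}]=\Reff_{\Xi^n}(v,S)\sum_{x\in V(\mathcal H)}\deg_{\Xi^n}(x)\,h(x).
\]

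\textbf{Upper bound.} Since $0\le h\le 1$, the sum is at most $2|E(\mathcal H)|$. Each $H_e^{(m)}$ is a copy of $\Xi^m_{\mathscr C(e)}$, so Lemma~\ref{lem:basic_prop} gives $|E(\mathcal H)|\le C_{\mathrm E}\deg_{\Xi^t}(v)\rho(\mathbf M)^m$. The copies $H_e^{(m)}$ are parallel two-terminal networks between $v$ and the grounded set $S$. By Lemma~\ref{lem:renorm} and Corollary~\ref{lem:Psi_range}, each has resistance $[\Psi^m(\mathbf 1)]_{\mathscr C(e)}\asymp\rho(\Psi)^m$, so
\[
\Reff_{\Xi^n}(v,S)=\Bigl(\sum_e [\Psi^m(\mathbf 1)]_{\mathscr C(e)}^{-1}\Bigr)^{-1}\asymp \rho(\Psi)^m/\deg_{\Xi^t}(v).
\]
The factor $\deg_{\Xi^t}(v)$ cancels, which yields $\mathbb E^v[\tau]\le C\rho(\mathbf M)^m\rho(\Psi)^m$ with no dependence on the birth level.

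\textbf{Lower bound.} By the same cancellation, it suffices to show that for each $e$,
\[
\sum_{x\in V(H_e^{(m)})}\deg(x)h(x)\ge c\,\rho(\mathbf M)^m .
\]
Restricted to $H_e^{(m)}$, $h$ is exactly the two-terminal harmonic potential with $h(v)=1$ and $h(w_e)=0$. Fix a depth $L$, independent of $n,m$ and chosen using distance-positivity. Inside $H_e^{(m)}$, follow substitution steps from the endpoint $v$ for $L$ levels and pick a level-$L$ cell $Q$, a copy of $\Xi^{m-L}_j$, incident to $v$ and not containing $w_e$. At depth $L$, $h$ restricted to the level-$L$ vertices is the potential of a finite network: the $L$-step substitution graph of a single edge, with edge weights $[\Psi^{m-L}(\mathbf 1)]_j$. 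By homogeneity we may normalise these weights by $\rho(\Psi)^{m-L}$, after which Corollary~\ref{lem:Psi_range} confines them to a compact subset of $(0,\infty)^K$. Among finitely many graph configurations, with weights in this compact set, the potential at the far endpoint of $Q$ is strictly positive, because that vertex is not $w_e$ and is connected to $v$ avoiding $w_e$. By continuity and compactness it is therefore bounded below by some $c_L>0$. The maximum principle inside $Q$ then gives $h\ge c_L$ on $Q$, and $Q$ carries at least $c_{\mathrm E}\rho(\mathbf M)^{m-L}$ edges. This gives the required bound with $c=c_Lc_{\mathrm E}\rho(\mathbf M)^{-L}$. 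For $m<L$, finitely many cases remain, handled by adjusting constants (for instance, by using $h(v)\deg(v)$ alone).

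\textbf{Main obstacle.} The main difficulty is the uniform positive lower bound on the potential on a macroscopic portion of each $H_e^{(m)}$, uniform over $m$ and colours. I would first try the compactness argument via Corollary~\ref{lem:Psi_range} as above, or Corollary~\ref{cor:Psi_strong_convergence} if needed. I expect to need canonicality to ensure that the chosen cell genuinely lies on a current-carrying path, so that its endpoint potential is not degenerate.
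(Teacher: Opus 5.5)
Your argument is correct, and the framework coincides with the paper's. The Green-function identity $\mathbb E^v[\tau]=\Reff(v,S)\sum_x\deg(x)h(x)$, the parallel-branch estimate $\Reff(v,S)\asymp\rho(\Psi)^m/\deg_{\Xi^t}(v)$, and the resulting upper bound are exactly what the paper does.

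The lower bound is where you differ. The paper proves a uniform oscillation contraction: there is $\theta\in(0,1)$ such that a function harmonic inside a cell oscillates over each child cell by at most $\theta$ times its oscillation over the parent. Iterating this along a chain of $j_0$ cells at $v$, with $\theta^{j_0}\le\frac12$, gives $h\ge\frac12$ on a depth-$j_0$ cell $\Lambda_e$. You instead bound $h$ from below directly at the far terminal $u$ of one fixed-depth cell, using the traced network, and then spread the bound over the cell by the maximum principle.

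Your route is more local and more elementary:
\begin{itemize}
\item Since $u$ is adjacent to $v$ in the traced network, harmonicity at $u$ gives $h(u)\ge c(u,v)/\sum_y c(u,y)$. This is bounded below using only Corollary~\ref{lem:Psi_range}, with no continuity argument.
\item $L=1$ already suffices, because no rule graph has an edge joining its planting vertices.
\item Canonicality is not needed for positivity, so the worry in your last sentence is unfounded.
\end{itemize}
The paper's route gives the universal threshold $\frac12$, and it uses the same contraction that later yields continuity of harmonic extensions in Lemma~\ref{lem:sec31_continuous_harmonics}. The price is that the contraction factor must be uniform over the whole compact family of normalised conductances at every level. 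That is a stronger input than the one-level positivity you need.

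One bookkeeping point needs fixing. Suppose you prove the per-branch bound with the term $\deg_{\Xi^n}(v)h(v)$ included and then sum over $e$. That term is counted once per branch, and it can be of order $\rho(\mathbf N)^{n-k}$. So the sum of per-branch bounds does not bound $\sum_{x\in V(\mathcal H)}\deg(x)h(x)$. Since your bound really comes from the cell $Q$, simply exclude $v$. Each edge of $Q$ has an endpoint other than $v$, so $\sum_{x\in V(Q)\setminus\{v\}}\deg(x)\ge|E(Q)|$, and the sets $V(Q_e)\setminus\{v\}$ are pairwise disjoint. This is what the paper does with $V(\Lambda_e)\setminus\{v\}$.
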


\begin{proof}

Up to the hitting time of $S_v^{(n,m)}$, the walk cannot leave $F:=\bigcup_{e\in E_{\Xi^{t}}(v)} H_e^{(m)}\subseteq \Xi^n$, since each branch $H_e^{(m)}$ meets the rest of $\Xi^n$ only at $v$ and $w_e$, and different branches meet only at $v$.
We therefore work on the finite graph $F$ with unit conductances, and write $S:=S_v^{(n,m)}$, $D:=V(F)\setminus S$ and $\tau:=\tau_v^{(n,m)}$.

\textbf{Upper bound.}
For $y\in D$, define $G(y):=\deg_F(y)^{-1}\,\mathbb E^v\big[\#\{j<\tau:\ X_j^{\Xi^n}=y\}\big]$ and $h(y):=\mathbb P^y(\tau_v<\tau)$.
Summing the numbers of visits over $y\in D$ gives $\mathbb E^v[\tau]=\sum_{y\in D}G(y)\deg_F(y)$.
The number of visits to $v$ is geometric with success probability $\mathbb P^v(\tau<\tau_v^+)=\big(\deg_F(v)R_F(v,S)\big)^{-1}$, whence $G(v)=R_F(v,S)$.
By reversibility, the kernel $(x,y)\mapsto\deg_F(y)^{-1}\,\mathbb E^x\big[\#\{j<\tau:\ X_j^{\Xi^n}=y\}\big]$ is symmetric, and the strong Markov property at $\tau_v$ gives $G(y)=h(y)\,G(v)$.
Therefore
\[
\mathbb E^v\big[\tau_v^{(n,m)}\big]
=
R_F(v,S)\sum_{y\in D}h(y)\deg_F(y).
\]

The branches connect $v$ to $S$ in parallel, and $R_{H_e^{(m)}}(v,w_e)=\big[\Psi^{m}(\mathbf 1)\big]_{\mathscr C(e)}$ by the renormalisation identity of Lemma~\ref{lem:renorm}, so Corollary~\ref{lem:Psi_range} yields
\[
\frac{|E_{\Xi^{t}}(v)|}{C_{\Psi}\rho(\Psi)^m}
\le
\frac{1}{R_F(v,S)}
\le
\frac{|E_{\Xi^{t}}(v)|}{c_{\Psi}\rho(\Psi)^m}.
\]
Moreover, $\sum_{y\in D}\deg_F(y)\le 2|E(F)|\le 2C_{\mathrm E}|E_{\Xi^{t}}(v)|\rho(\mathbf M)^m$ by Lemma~\ref{lem:basic_prop}.

Since $0\le h\le 1$, combining the Green representation with the two displays above gives
\[
\mathbb E^v\big[\tau_v^{(n,m)}\big]
\le
R_F(v,S)\cdot 2|E(F)|
\le
2C_{\mathrm E}C_{\Psi}\,\rho(\mathbf M)^m\rho(\Psi)^m.
\]

By Corollary~\ref{cor:Psi_strong_convergence}, the conductance vectors obtained by tracing the unit-resistance network of a branch onto its level-$j$ skeletons lie, after normalisation, in a fixed compact subset of $(0,\infty)^K$.
Since only finitely many rule graphs occur, and since by the strong maximum principle every zero--one harmonic profile takes values in $(0,1)$ away from the planting vertices, there exists $\theta\in(0,1)$, depending only on the EIGS, such that every function harmonic in the interior of a cell has oscillation over each child cell at most $\theta$ times its oscillation over the cell.
Fix the integer $j_0:=\min\{j\in\mathbb N:\ \theta^{j}\le\tfrac12\}$.

\textbf{Lower bound.}
Assume first that $m\ge j_0$, and fix $e\in E_{\Xi^{t}}(v)$.
The function $h$ equals $1$ at $v$, vanishes at $w_e$, and is harmonic at every other vertex of $H_e^{(m)}$, so its restriction to $H_e^{(m)}$ is the harmonic profile of this cell.
Let $\Lambda_e$ be the level-$j_0$ descendant cell of $H_e^{(m)}$ obtained by selecting, at each of the first $j_0$ substitution steps, one child cell incident to $v$.
Iterating this oscillation contraction along the chain gives $\operatorname{osc}_{\Lambda_e}(h)\le\theta^{j_0}\le\tfrac12$, and since $h(v)=1$ we obtain $h\ge\tfrac12$ on $\Lambda_e$; in particular $\Lambda_e$ avoids $S$.
The sets $V(\Lambda_e)\setminus\{v\}$ are pairwise disjoint as $e$ varies, and $|V(\Lambda_e)|-1\ge\tfrac{c_{\mathrm V}}{2}\rho(\mathbf M)^{m-j_0}$ by Lemma~\ref{lem:basic_prop}.
Hence
\[
\sum_{y\in D}h(y)\deg_F(y)
\ge
\frac{1}{2}\sum_{e\in E_{\Xi^{t}}(v)}\big(|V(\Lambda_e)|-1\big)
\ge
\frac{c_{\mathrm V}}{4}\rho(\mathbf M)^{-j_0}\,|E_{\Xi^{t}}(v)|\,\rho(\mathbf M)^m,
\]
and combining with the lower bound for $R_F(v,S)$ displayed above cancels the factor $|E_{\Xi^{t}}(v)|$ and yields
\[
\mathbb E^v\big[\tau_v^{(n,m)}\big]
\ge
\frac{c_{\Psi}c_{\mathrm V}}{4}\rho(\mathbf M)^{-j_0}\,\rho(\mathbf M)^m\rho(\Psi)^m.
\]
For the finitely many values $1\le m<j_0$, we have $\mathbb E^v[\tau_v^{(n,m)}]\ge 1$ while $\rho(\mathbf M)^m\rho(\Psi)^m$ stays bounded, so the lower bound holds after decreasing $c_{\star}$.
In fact, all constants above depend only on the EIGS, and not on the birth data of $v$.%

\end{proof}

From the argument above, we already obtain an exit-time estimate in the good block-aligned case.
Our next step is to bound the exit time for a bad boundary from above and below by the corresponding good-boundary cases.

\begin{theorem}\label{thm:exit_time_scale}
Fix $n\in\mathbb N$ and a vertex $v\in V_k(\Xi^n)$.
Fix an integer $m$ with $c_0+1\le m\le n-k-c_0$.
Assume that the radius $r>0$ satisfies
$
c_{\dist}\rho_{\min}(\mathcal D)^m
\le
r
\le
C_{\dist}\rho_{\min}(\mathcal D)^m.
$
Then there exist constants $c_{\mathrm{exit}},C_{\mathrm{exit}}>0$, depending only on the EIGS, such that
\[
c_{\mathrm{exit}}\rho(\mathbf M)^m\rho(\Psi)^m
\le
\mathbb E^v\big[\tau_{B_{\Xi^n}(v,r)^{\mathrm c}}^{\Xi^n}\big]
\le
C_{\mathrm{exit}}\rho(\mathbf M)^m\rho(\Psi)^m.
\]
\end{theorem}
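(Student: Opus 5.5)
We sandwich the ball $B:=B_{\Xi^n}(v,r)$ between two block-aligned star domains from Lemma~\ref{lem:star_domain_exit_time_scale}, at levels $m-c_0$ and $m+c_0$, exactly as in the proof of Lemma~\ref{lem:mesoscopic_Reff_k_born}. Set $t_-:=n-m+c_0$ and $t_+:=n-m-c_0$. The hypothesis $c_0+1\le m\le n-k-c_0$ gives $k\le t_+$ and $m-c_0\ge 1$, so both star domains $S_v^{(n,m\mp c_0)}$ are admissible in Lemma~\ref{lem:star_domain_exit_time_scale}.

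\textbf{Upper bound.} By the argument in the upper bound of Lemma~\ref{lem:mesoscopic_Reff_k_born}, every terminal $w_e^+$ with $e\in E_{\Xi^{t_+}}(v)$ satisfies $d_{\Xi^n}(v,w_e^+)\ge c_{\dist}\rho_{\min}(\mathcal D)^{m+c_0}\ge 2r$, so $S_v^{(n,m+c_0)}\subseteq B^{\mathrm c}$. Starting from $v$, the walk must therefore leave $B$ before it can hit $S_v^{(n,m+c_0)}$, so pathwise $\tau_{B^{\mathrm c}}\le\tau_v^{(n,m+c_0)}$. Lemma~\ref{lem:star_domain_exit_time_scale} then gives
\[
\mathbb E^v[\tau_{B^{\mathrm c}}]\le C_\star\bigl(\rho(\mathbf M)\rho(\Psi)\bigr)^{c_0}\rho(\mathbf M)^m\rho(\Psi)^m .
\]

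\textbf{Lower bound.} By the choice of $c_0$, every branch $H_e^-$ from level $t_-$ has diameter at most $C_{\dist}\rho_{\min}(\mathcal D)^{m-c_0}\le r/4$. So the region $F_-=\bigcup_e H_e^-$ lies in $B$, and the walk started at $v$ cannot exit $B$ without first reaching some terminal $w_e^-$. Hence $\tau_{B^{\mathrm c}}\ge\tau_v^{(n,m-c_0)}$, and Lemma~\ref{lem:star_domain_exit_time_scale} gives
\[
\mathbb E^v[\tau_{B^{\mathrm c}}]\ge c_\star\bigl(\rho(\mathbf M)\rho(\Psi)\bigr)^{-c_0}\rho(\mathbf M)^m\rho(\Psi)^m .
\]

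Both comparisons are deterministic inclusions of hitting events, so the constants
\[
c_{\mathrm{exit}}:=c_\star\bigl(\rho(\mathbf M)\rho(\Psi)\bigr)^{-c_0},\qquad C_{\mathrm{exit}}:=C_\star\bigl(\rho(\mathbf M)\rho(\Psi)\bigr)^{c_0}
\]
depend only on the EIGS, as the last remark in the proof of Lemma~\ref{lem:star_domain_exit_time_scale} notes for $c_\star,C_\star$. In particular the degree $\deg_{\Xi^{t_\pm}}(v)$ plays no role here: it already cancelled inside the star-domain lemma.

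\textbf{Main obstacle.} The key step is the two geometric inclusions, and specifically the ordering of hitting times, which needs the sandwich
\[
V(F_-)\setminus\{w_e^-\}\;\subseteq\; B\;\subseteq\; V(\Xi^n)\setminus S_v^{(n,m+c_0)}
\]
together with the fact that any path out of $F_-$ passes through some $w_e^-$. The second fact uses that each branch is attached to the rest of $\Xi^n$ only at its two terminals, and the whole sandwich requires the two inequalities defining $c_0$. If the stated inequalities for $c_0$ turn out slightly too weak to give the strict inclusion of the ball, I would enlarge the shift to $c_0+1$. This only changes the constants by a factor $\bigl(\rho(\mathbf M)\rho(\Psi)\bigr)^{\pm1}$, so it does not affect the statement.
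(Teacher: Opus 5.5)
Your proposal is correct and follows the paper's proof essentially line for line. It sandwiches the ball between the star domains at depths $m-c_0$ and $m+c_0$, compares hitting times pathwise, and applies Lemma~\ref{lem:star_domain_exit_time_scale} to obtain the constants $c_\star(\rho(\mathbf M)\rho(\Psi))^{-c_0}$ and $C_\star(\rho(\mathbf M)\rho(\Psi))^{c_0}$. Your upper bound is slightly leaner than the paper's, because $S_v^{(n,m+c_0)}\subseteq B^{\mathrm c}$ already gives $\tau_{B^{\mathrm c}}\le\tau_v^{(n,m+c_0)}$ without the auxiliary inclusion $B\subseteq U_+$; and bounding the branch diameters by $r/4$ rather than $r$ cleanly respects that the ball is open.
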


\begin{proof}
Lemma~\ref{lem:star_domain_exit_time_scale} provides two-sided bounds for star-domain hitting times with constants depending only on the birth data of the centre.
Since there are only finitely many possible birth-data types, we may choose constants $c_{\star},C_{\star}>0$, depending only on the EIGS, such that the bounds in Lemma~\ref{lem:star_domain_exit_time_scale} hold for every $v$ and every admissible $(n,m)$.

\textbf{Lower bound.}
Set $m_-:=m-c_0$ and $t_-:=n-m_-=n-m+c_0$.
Then $t_-\ge k$, hence $v\in V(\Xi^{t_-})$.
Let $E_-(v)$ be the set of edges of $\Xi^{t_-}$ incident to $v$.
For each $e=\{v,w_e^-\}\in E_-(v)$, let $H_e^-\subseteq \Xi^n$ be the $m_-$-step substituted copy that replaces $e$ from level $t_-$ to level $n$.
Define
\[
S_-:=\{w_e^-:e\in E_-(v)\},
\qquad
U_-:=\Big(\bigcup_{e\in E_-(v)}V(H_e^-)\Big)\setminus S_-.
\]

By Lemma~\ref{lem:basic_prop} and the choice of $c_0$,
\[
\diam(H_e^-)
\le
C_{\dist}\rho_{\min}(\mathcal D)^{m_-}
=
C_{\dist}\rho_{\min}(\mathcal D)^{m-c_0}
\le
c_{\dist}\rho_{\min}(\mathcal D)^m
\le
r.
\]
So $V(H_e^-)\subseteq B_{\Xi^n}(v,r)$ for every $e$, and hence $U_-\subseteq B_{\Xi^n}(v,r)$.
Therefore
$
\tau_{B_{\Xi^n}(v,r)^{\mathrm c}}^{\Xi^n}\ge \tau_{U_-^{\mathrm c}}^{\Xi^n}.
$
By construction, every edge from $U_-$ to $U_-^{\mathrm c}$ enters $S_-$, and $S_-\subseteq U_-^{\mathrm c}$.
Hence $\tau_{U_-^{\mathrm c}}^{\Xi^n}=\tau_{S_-}^{\Xi^n}$ and
$
\mathbb E^v\big[\tau_{B_{\Xi^n}(v,r)^{\mathrm c}}^{\Xi^n}\big]
\ge
\mathbb E^v\big[\tau_{S_-}^{\Xi^n}\big].
$

Applying Lemma~\ref{lem:star_domain_exit_time_scale} with parameter $m_-$ yields
\[
\mathbb E^v\big[\tau_{S_-}^{\Xi^n}\big]
\ge
c_{\star}\rho(\mathbf M)^{m_-}\rho(\Psi)^{m_-}
=
c_{\star}\big(\rho(\mathbf M)\rho(\Psi)\big)^{-c_0}\rho(\mathbf M)^m\rho(\Psi)^m.
\]

\textbf{Upper bound.}
Set $m_+:=m+c_0$ and $t_+:=n-m_+=n-m-c_0$.
Then $t_+\ge k$, hence $v\in V(\Xi^{t_+})$.
Let $E_+(v)$ be the set of edges of $\Xi^{t_+}$ incident to $v$.
For each $e=\{v,w_e^+\}\in E_+(v)$, let $H_e^+\subseteq \Xi^n$ be the $m_+$-step substituted copy that replaces $e$ from level $t_+$ to level $n$.
Define
\[
S_+:=\{w_e^+:e\in E_+(v)\},
\qquad
U_+:=\Big(\bigcup_{e\in E_+(v)}V(H_e^+)\Big)\setminus S_+.
\]

Fix $e\in E_+(v)$.
Consider any path in $\Xi^n$ from $v$ to $w_e^+$.
Contract each $m_+$-step substitution copy from level $t_+$ to level $n$ to a single edge.
This produces a walk in $\Xi^{t_+}$ from $v$ to $w_e^+$.
Since $v$ and $w_e^+$ are adjacent in $\Xi^{t_+}$, every such walk has length at least $1$.
By Lemma~\ref{lem:basic_prop}, each contracted edge corresponds to an $m_+$-step copy whose endpoint distance is at least
$
c_{\dist}\rho_{\min}(\mathcal D)^{m_+}.
$
Therefore
\[
d_{\Xi^n}(v,w_e^+)
\ge
c_{\dist}\rho_{\min}(\mathcal D)^{m_+}
=
c_{\dist}\rho_{\min}(\mathcal D)^{m+c_0}
\ge
4C_{\dist}\rho_{\min}(\mathcal D)^m
\ge
4r,
\]
and hence $S_+\subseteq B_{\Xi^n}(v,r)^{\mathrm c}$.

We claim that $B_{\Xi^n}(v,r)\subseteq U_+$.
Indeed, any path in $\Xi^n$ starting at $v$ which reaches a vertex outside $\bigcup_{e\in E_+(v)}V(H_e^+)$ must hit some $w_e^+\in S_+$.
Since every $w_e^+$ has distance at least $4r$ from $v$, no vertex within distance $r$ of $v$ can lie outside the union.
Thus $B_{\Xi^n}(v,r)\subseteq U_+$.

Consequently,
$
\tau_{B_{\Xi^n}(v,r)^{\mathrm c}}^{\Xi^n}\le \tau_{U_+^{\mathrm c}}^{\Xi^n}.
$
By construction, every edge from $U_+$ to $U_+^{\mathrm c}$ enters $S_+$, and $S_+\subseteq U_+^{\mathrm c}$.
Hence $\tau_{U_+^{\mathrm c}}^{\Xi^n}=\tau_{S_+}^{\Xi^n}$ and
$
\mathbb E^v\big[\tau_{B_{\Xi^n}(v,r)^{\mathrm c}}^{\Xi^n}\big]
\le
\mathbb E^v\big[\tau_{S_+}^{\Xi^n}\big].
$

Applying Lemma~\ref{lem:star_domain_exit_time_scale} with parameter $m_+$ yields
\[
\mathbb E^v\big[\tau_{S_+}^{\Xi^n}\big]
\le
C_{\star}\rho(\mathbf M)^{m_+}\rho(\Psi)^{m_+}
=
C_{\star}\big(\rho(\mathbf M)\rho(\Psi)\big)^{c_0}\rho(\mathbf M)^m\rho(\Psi)^m.
\]
Combining the two bounds completes the proof.
\end{proof}

We see that the key point of the argument is that the $\rho(\mathbf N)^m$ terms arising in the local mass dimension and the local resistance dimension cancel out.
As a result, the exit time is no longer affected by the local scale-free irregularities, and it becomes consistent across all scales throughout the graph.
This will be crucial for our diffusion limit later: since every point shares the same exit-time scaling at every scale, we can choose a uniform time acceleration.

\begin{proposition}\label{lem:commute-time-eigs-asymp}
The commute time $T_{\Xi^n}^X(\mathfrak v_+,\mathfrak v_-)$ satisfies
\[
2 c_{\mathrm T} \rho(\mathbf M)^n \rho(\Psi)^n
\le
T_{\Xi^n}^X(\mathfrak v_+,\mathfrak v_-)
\le
2 C_{\mathrm T} \rho(\mathbf M)^n \rho(\Psi)^n.
\]
\end{proposition}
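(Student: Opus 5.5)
The plan is to use the commute time identity for simple random walk on a finite graph with unit conductances:
\[
T_{\Xi^n}^X(\mathfrak v_+,\mathfrak v_-)=2|E(\Xi^n)|\,\Reff_{\Xi^n}(\mathfrak v_+,\mathfrak v_-).
\]
This is the classical Chandra--Raghavan--Ruzzo--Smolensky--Tiwari formula. It holds because $\Xi^n$ is finite and connected: the underlying undirected simple graph is connected by the standing assumptions and the single-edge initial graph. Once the identity is in place, the statement reduces to two-sided estimates for the edge count and for the two-terminal resistance. Both of these are already available.

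For the edge count, I invoke Lemma~\ref{lem:basic_prop} with $\Xi^n=\Xi_\iota^n$, where $\iota$ is the colour of the single initial edge. This gives $c_{\mathrm E}\rho(\mathbf M)^n\le|E(\Xi^n)|\le C_{\mathrm E}\rho(\mathbf M)^n$.

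For the resistance, Lemma~\ref{lem:Reff-asymp-rho} gives $c_\Psi\rho(\Psi)^n\le\Reff_{\Xi^n}(\mathfrak v_+,\mathfrak v_-)\le C_\Psi\rho(\Psi)^n$.

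Multiplying the two estimates and setting $c_{\mathrm T}:=c_{\mathrm E}c_\Psi$ and $C_{\mathrm T}:=C_{\mathrm E}C_\Psi$ yields the claimed bounds, with the factor $2$ coming straight from the identity.

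The only point needing care is that $|E(\Xi^n)|$ in the commute time identity counts edges of the simple undirected graph, and must agree with the substitution count of Lemma~\ref{lem:basic_prop}. Under Assumption~\ref{ass:standing}, the edge sets produced by fresh copies are distinct, so in the generic case the counts match. If the rule graphs could create parallel edges after forgetting directions, I would instead apply the commute time identity to the multigraph. Its random walk is the one defined by the substitution, and its edge count is exactly the one in Lemma~\ref{lem:basic_prop}. Either way, only the constants change. I expect no genuine obstacle beyond this bookkeeping.
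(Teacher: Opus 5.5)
Your proposal is correct and is essentially the paper's proof: the commute-time identity $T_{\Xi^n}^X(\mathfrak v_+,\mathfrak v_-)=2|E(\Xi^n)|\,\Reff_{\Xi^n}(\mathfrak v_+,\mathfrak v_-)$, combined with the edge count from Lemma~\ref{lem:basic_prop} and the resistance bound from Lemma~\ref{lem:Reff-asymp-rho}, gives $c_{\mathrm T}=c_{\mathrm E}c_\Psi$ and $C_{\mathrm T}=C_{\mathrm E}C_\Psi$. Your remark about parallel edges is harmless bookkeeping that the paper does not raise.
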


\begin{proof}
Since $\Xi^n$ is finite and connected, the commute-time identity for simple random walk applies:
\[
T_{\Xi^n}^X(\mathfrak v_+,\mathfrak v_-)
=
2 \card\big(E(\Xi^n)\big) \Reff_{\Xi^n}(\mathfrak v_+,\mathfrak v_-).
\]
By Lemma~\ref{lem:basic_prop} and Lemma~\ref{lem:Reff-asymp-rho},
$
2c_{\mathrm E}c_{\Psi}\rho(\mathbf M)^n\rho(\Psi)^n
\le
T_{\Xi^n}^X(\mathfrak v_+,\mathfrak v_-)
\le
2C_{\mathrm E}C_{\Psi}\rho(\mathbf M)^n\rho(\Psi)^n.
$
\end{proof}

\begin{definition}\label{def:walk_dimension}
A sequence $(r_n)_{n\ge k}$ is called admissible for $v$ if $r_n\to\infty$ and there exist integers $m_n$ such that for all sufficiently large $n$,
\[
c_0\le m_n\le n-k-c_0
\qquad\text{and}\qquad
c_{\dist}\rho_{\min}(\mathcal D)^{m_n}\le r_n\le C_{\dist}\rho_{\min}(\mathcal D)^{m_n}.
\]
For every $v\in V(G^\infty)$ and every admissible sequence $(r_n)$ for $v$, define
\[
\dimwalk(G^\infty:v)
:=
\lim_{n\to\infty}
\frac{\log \mathbb E^v\big[\tau_{B_{G^n}(v,r_n)^{\mathrm c}}^{G^n}\big]}{\log r_n}.
\]
If all vertices in $G^\infty$ share the same walk dimension, then we say that $G^\infty$ possesses walk dimension $\dimwalk(G^\infty)$.
\end{definition}

\begin{theorem}[Einstein relation]\label{thm:Einstein}
All vertices in $\Xi$ share the same walk dimension at every scale.
Moreover, we obtain the Einstein relation
\[
\dimwalk(\Xi)=\dimbox(\Xi)+\dimresis(\Xi).
\]
\end{theorem}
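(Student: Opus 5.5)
The plan is to read the walk dimension off directly from Theorem~\ref{thm:exit_time_scale}. The constants there depend only on the EIGS, not on the vertex or its birth level, so the limit in Definition~\ref{def:walk_dimension} comes out the same at every vertex. Fix $v\in V_k(\Xi)$ and an admissible sequence $(r_n)$ for $v$, with integers $m_n$ satisfying $c_0\le m_n\le n-k-c_0$ and $c_{\dist}\rho_{\min}(\mathcal D)^{m_n}\le r_n\le C_{\dist}\rho_{\min}(\mathcal D)^{m_n}$. Since $r_n\to\infty$, these bounds force $m_n\to\infty$; in particular $m_n\ge c_0+1$ for large $n$, so Theorem~\ref{thm:exit_time_scale} applies with $m=m_n$.

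This gives
\[
\log \mathbb E^v\big[\tau_{B_{\Xi^n}(v,r_n)^{\mathrm c}}^{\Xi^n}\big]=m_n\log\big(\rho(\mathbf M)\rho(\Psi)\big)+O(1),
\qquad
\log r_n=m_n\log\rho_{\min}(\mathcal D)+O(1).
\]
Both $O(1)$ terms are bounded by constants depending only on the EIGS: the first by $\log c_{\mathrm{exit}}$ and $\log C_{\mathrm{exit}}$, the second by $\log c_{\dist}$ and $\log C_{\dist}$. Distance-positivity gives $\rho_{\min}(\mathcal D)>1$, and $m_n\to\infty$, so the denominator diverges. Dividing and letting $n\to\infty$ yields
\[
\dimwalk(\Xi:v)=\frac{\log\rho(\mathbf M)+\log\rho(\Psi)}{\log\rho_{\min}(\mathcal D)}.
\]
This value is independent of $v$, of $k$, and of the chosen admissible sequence. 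That proves the first assertion: every vertex has the same walk dimension at every admissible scale, including finite-born vertices of unbounded degree. The reason is that the $\rho(\mathbf N)$ factors from Lemma~\ref{lem:ball_volume_k_born} and Lemma~\ref{lem:mesoscopic_Reff_k_born} have already cancelled inside Lemma~\ref{lem:star_domain_exit_time_scale}.

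For the Einstein relation, split the displayed quotient into two terms. The first, $\log\rho(\mathbf M)/\log\rho_{\min}(\mathcal D)$, equals $\dimbox(\Xi)$ by Theorem~2.1 of \cite{neroli2024fractal}. The second, $\log\rho(\Psi)/\log\rho_{\min}(\mathcal D)$, equals $\dimresis(\Xi)$ by Theorem~\ref{thm:resistance_dimension_formula}. Adding them gives $\dimwalk(\Xi)=\dimbox(\Xi)+\dimresis(\Xi)$.

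I expect the only delicate step to be checking that admissibility lets us invoke Theorem~\ref{thm:exit_time_scale} uniformly. Definition~\ref{def:walk_dimension} allows $m_n=c_0$, whereas the theorem needs $m\ge c_0+1$. This is harmless because $m_n\to\infty$. It is also worth stressing that the constants are uniform over birth data; the paper ensures this by noting there are only finitely many birth-data types. Were they not uniform, the limit would still exist at each fixed $v$, since $v$ is fixed throughout the limit.
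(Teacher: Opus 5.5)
Your proposal is correct and follows the paper's proof essentially step for step: apply Theorem~\ref{thm:exit_time_scale} along an admissible sequence for all large $n$, take logarithms, use $r_n\to\infty$ to get $m_n\to\infty$, divide, and identify the two summands with $\dimbox(\Xi)$ and $\dimresis(\Xi)$. Your explicit remark about the edge case $m_n=c_0$ is a small point the paper handles only implicitly through ``for all sufficiently large $n$''.
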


\begin{proof}
\textbf{Pointwise existence.}
Fix a finite-born vertex $v\in V(\Xi^k)$.
Let $(r_n)_{n\ge k}$ be any admissible sequence for $v$, and let $(m_n)$ be the associated integers in Definition~\ref{def:walk_dimension}.
By Theorem~\ref{thm:exit_time_scale}, for all sufficiently large $n$,
\[
c_{\mathrm{exit}}\rho(\mathbf M)^{m_n}\rho(\Psi)^{m_n}
\le
\mathbb E^v\big[\tau_{B_{\Xi^n}(v,r_n)^{\mathrm c}}^{\Xi^n}\big]
\le
C_{\mathrm{exit}}\rho(\mathbf M)^{m_n}\rho(\Psi)^{m_n}.
\]
Taking logarithms gives
\[
\log \mathbb E^v\big[\tau_{B_{\Xi^n}(v,r_n)^{\mathrm c}}^{\Xi^n}\big]
=
m_n\log\big(\rho(\mathbf M)\rho(\Psi)\big)+O(1),
\]
where the $O(1)$ term is independent of $n$.
By admissibility,
\[
c_{\dist}\rho_{\min}(\mathcal D)^{m_n}\le r_n\le C_{\dist}\rho_{\min}(\mathcal D)^{m_n},
\]
hence
$
\log r_n
=
m_n\log\rho_{\min}(\mathcal D)+O(1),
$
where the $O(1)$ term is independent of $n$.
Since $r_n\to\infty$ and $\rho_{\min}(\mathcal D)>1$, we have $m_n\to\infty$.
Dividing the last two displays and letting $n\to\infty$ yields
\[
\lim_{n\to\infty}
\frac{\log \mathbb E^v\big[\tau_{B_{\Xi^n}(v,r_n)^{\mathrm c}}^{\Xi^n}\big]}{\log r_n}
=
\frac{\log\big(\rho(\mathbf M)\rho(\Psi)\big)}{\log\rho_{\min}(\mathcal D)}.
\]
The right-hand side does not depend on the choice of the admissible sequence $(r_n)$.
Therefore the limit in Definition~\ref{def:walk_dimension} exists for the vertex $v$ and is independent of $(r_n)$.
We denote it by $\dimwalk(\Xi:v)$.

\textbf{Uniformity.}
The expression
\[
\frac{\log\big(\rho(\mathbf M)\rho(\Psi)\big)}{\log\rho_{\min}(\mathcal D)}
\]
is independent of $v$.
Hence all finite-born vertices share the same pointwise walk dimension.
Therefore $\Xi$ possesses a global walk dimension $\dimwalk(\Xi)$, and
\[
\dimwalk(\Xi)
=
\dimwalk(\Xi:v)
=
\frac{\log\big(\rho(\mathbf M)\rho(\Psi)\big)}{\log\rho_{\min}(\mathcal D)}.
\]

By the definitions of $\dimbox(\Xi)$ and $\dimresis(\Xi)$,
\[
\dimbox(\Xi)=\frac{\log\rho(\mathbf M)}{\log\rho_{\min}(\mathcal D)},
\qquad
\dimresis(\Xi)=\frac{\log\rho(\Psi)}{\log\rho_{\min}(\mathcal D)}.
\]
Therefore
\[
\dimwalk(\Xi)
=
\frac{\log\big(\rho(\mathbf M)\rho(\Psi)\big)}{\log\rho_{\min}(\mathcal D)}
=
\dimbox(\Xi)+\dimresis(\Xi).
\]
\end{proof}

\begin{proposition}\label{prop:MPsiD}
\[
\rho(\mathbf M)\rho(\Psi)\ge \rho_{\min}(\mathcal D)^2.
\]
\end{proposition}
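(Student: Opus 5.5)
The inequality should follow from the classical comparison between resistance, distance and volume in a finite graph, applied to the two terminals of $\Xi^n$. We apply it at level $n$ and let $n\to\infty$. Write $L_n:=d_{\Xi^n}(\mathfrak v_+,\mathfrak v_-)$ and $R_n:=\Reff_{\Xi^n}(\mathfrak v_+,\mathfrak v_-)$.

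The key step is the elementary bound
\[
R_n\,|E(\Xi^n)|\ \ge\ L_n^2 ,
\]
valid in any finite connected graph with unit resistances. To prove it, consider the distance shells $D_j:=\{u: d_{\Xi^n}(\mathfrak v_+,u)=j\}$ for $0\le j<L_n$. Let $E_j$ be the set of edges joining $D_j$ to $D_{j+1}$.

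The sets $E_j$ are disjoint cutsets separating $\mathfrak v_+$ from $\mathfrak v_-$. By the Nash-Williams inequality,
\[
R_n\ \ge\ \sum_{j=0}^{L_n-1}\frac{1}{|E_j|}.
\]
Cauchy--Schwarz gives
\[
\sum_j \frac{1}{|E_j|}\ \ge\ \frac{L_n^2}{\sum_j |E_j|}\ \ge\ \frac{L_n^2}{|E(\Xi^n)|}.
\]
Equivalently, one can argue via the commute-time identity of Proposition~\ref{lem:commute-time-eigs-asymp}: a walk must traverse distance $L_n$ twice, and a nearest-neighbour walk needs on the order of $L_n^2$ steps. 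The Nash-Williams route is cleaner and fully deterministic, so we use it.

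Next we insert the asymptotics already established:
\begin{itemize}
\item $R_n\le C_\Psi\rho(\Psi)^n$ by Lemma~\ref{lem:Reff-asymp-rho};
\item $|E(\Xi^n)|\le C_{\mathrm E}\rho(\mathbf M)^n$ and $L_n\ge c_{\dist}\rho_{\min}(\mathcal D)^n$ by Lemma~\ref{lem:basic_prop}, with $\Xi^n=\Xi_\iota^n$ under Assumption~\ref{ass:standing}.
\end{itemize}
Combining these with the key bound gives
\[
C_\Psi C_{\mathrm E}\,\bigl(\rho(\mathbf M)\rho(\Psi)\bigr)^n\ \ge\ c_{\dist}^2\,\rho_{\min}(\mathcal D)^{2n}
\]
for all $n$. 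Taking $n$-th roots and letting $n\to\infty$ removes the constants and yields $\rho(\mathbf M)\rho(\Psi)\ge\rho_{\min}(\mathcal D)^2$.

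The main point needing care is the Nash-Williams step: the $E_j$ must be genuine pairwise disjoint edge cutsets. They are, since every path from $\mathfrak v_+$ to $\mathfrak v_-$ must pass through each distance level, and the distance along an edge changes by at most one. Only upper bounds on $R_n$ and $|E(\Xi^n)|$ and a lower bound on $L_n$ are used, so the argument does not depend on the regime of $\rho(\Psi)$ and holds in all three types.
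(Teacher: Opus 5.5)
Your proposal is correct and matches the paper's proof: apply $|E(G)|\,\Reff_G(u,v)\ge d_G(u,v)^2$ to $\Xi^n$ with the two terminals, insert the bounds from Lemma~\ref{lem:basic_prop} and Lemma~\ref{lem:Reff-asymp-rho}, and let $n\to\infty$. The only difference is that the paper quotes this elementary inequality without proof, while you prove it in the standard way (Nash-Williams on the disjoint distance-shell cutsets followed by Cauchy--Schwarz), and that proof is sound.
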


\begin{proof}
For every finite connected graph $G$ and all distinct $u,v\in V(G)$,
\[
|E(G)|\Reff_G(u,v)\ge d_G(u,v)^2.
\]
Apply this to $G=\Xi^n$ and $(u,v)=(\mathfrak v_+,\mathfrak v_-)$.
By Lemma~\ref{lem:basic_prop} and Lemma~\ref{lem:Reff-asymp-rho},
\[
C_{\mathrm E}\rho(\mathbf M)^n C_{\Psi}\rho(\Psi)^n
\ge
|E(\Xi^n)|\Reff_{\Xi^n}(\mathfrak v_+,\mathfrak v_-)
\ge
d_{\Xi^n}(\mathfrak v_+,\mathfrak v_-)^2
\ge
c_{\dist}^2 \rho_{\min}(\mathcal D)^{2n}.
\]
Taking logarithms and letting $n\to\infty$ proves the claim.
\end{proof}

\begin{proposition}\label{prop:walk_dimension_ge_2}
\[
\dimwalk(\Xi)\ge 2.
\]
\end{proposition}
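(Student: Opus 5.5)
The plan is to combine the closed form for the walk dimension obtained in the proof of Theorem~\ref{thm:Einstein} with the inequality of Proposition~\ref{prop:MPsiD}. By Theorem~\ref{thm:Einstein} we have
\[
\dimwalk(\Xi)=\frac{\log\bigl(\rho(\mathbf M)\rho(\Psi)\bigr)}{\log\rho_{\min}(\mathcal D)},
\]
so it suffices to show that the numerator is at least $2\log\rho_{\min}(\mathcal D)$ and that the denominator is positive.

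The first step is to recall that distance-positivity (Assumption~\ref{ass:standing}) gives $\rho_{\min}(\mathcal D)>1$. This was already used in the proof of Theorem~\ref{thm:resistance_dimension_formula}, and it also follows from $\diam(\Xi^n)\to\infty$ together with the two-sided bound $\diam(\Xi^n)\asymp\rho_{\min}(\mathcal D)^n$ of Lemma~\ref{lem:basic_prop}. Hence $\log\rho_{\min}(\mathcal D)>0$, and dividing an inequality by it preserves its direction.

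The second step is to take logarithms in Proposition~\ref{prop:MPsiD}, $\rho(\mathbf M)\rho(\Psi)\ge\rho_{\min}(\mathcal D)^2$, which gives $\log\rho(\mathbf M)+\log\rho(\Psi)\ge 2\log\rho_{\min}(\mathcal D)$. This is legitimate since $\rho(\Psi)>0$ by Lemma~\ref{thm:sdprimitive_eigenpair} and $\rho(\mathbf M)\ge 1$. Dividing by $\log\rho_{\min}(\mathcal D)$ and inserting the formula above yields $\dimwalk(\Xi)\ge 2$. Equivalently, $\dimbox(\Xi)+\dimresis(\Xi)\ge 2$ by the Einstein relation.

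There is essentially no obstacle, since the substantive input (the inequality $|E(G)|\Reff_G(u,v)\ge d_G(u,v)^2$ feeding Proposition~\ref{prop:MPsiD}) is already established. The only point needing care is the sign of the denominator, which is settled as described above.
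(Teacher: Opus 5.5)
Your proposal is correct and follows the paper's own proof: it combines the closed form $\dimwalk(\Xi)=\log\bigl(\rho(\mathbf M)\rho(\Psi)\bigr)/\log\rho_{\min}(\mathcal D)$ from Theorem~\ref{thm:Einstein} with the inequality of Proposition~\ref{prop:MPsiD}. The only addition is your explicit check that $\log\rho_{\min}(\mathcal D)>0$, which the paper uses without comment; it is a harmless and welcome extra step.
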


\begin{proof}
By Theorem~\ref{thm:Einstein} and Proposition~\ref{prop:MPsiD},
\[
\dimwalk(\Xi)
=
\dimbox(\Xi)+\dimresis(\Xi)
=
\frac{\log(\rho(\mathbf M)\rho(\Psi))}{\log \rho_{\min}(\mathcal D)}
\ge 2.
\]
\end{proof}

Proposition~\ref{prop:walk_dimension_ge_2} shows that this simple random walk is either diffusive or subdiffusive.
In fractal settings, subdiffusivity is usually the more typical behaviour.
Nevertheless, there is a simple and natural class of EIGS for which the walk is exactly diffusive.

\begin{example}\label{ex:uvDHL}
\begin{figure}[t]
\centering
\includegraphics[width=0.5\linewidth]{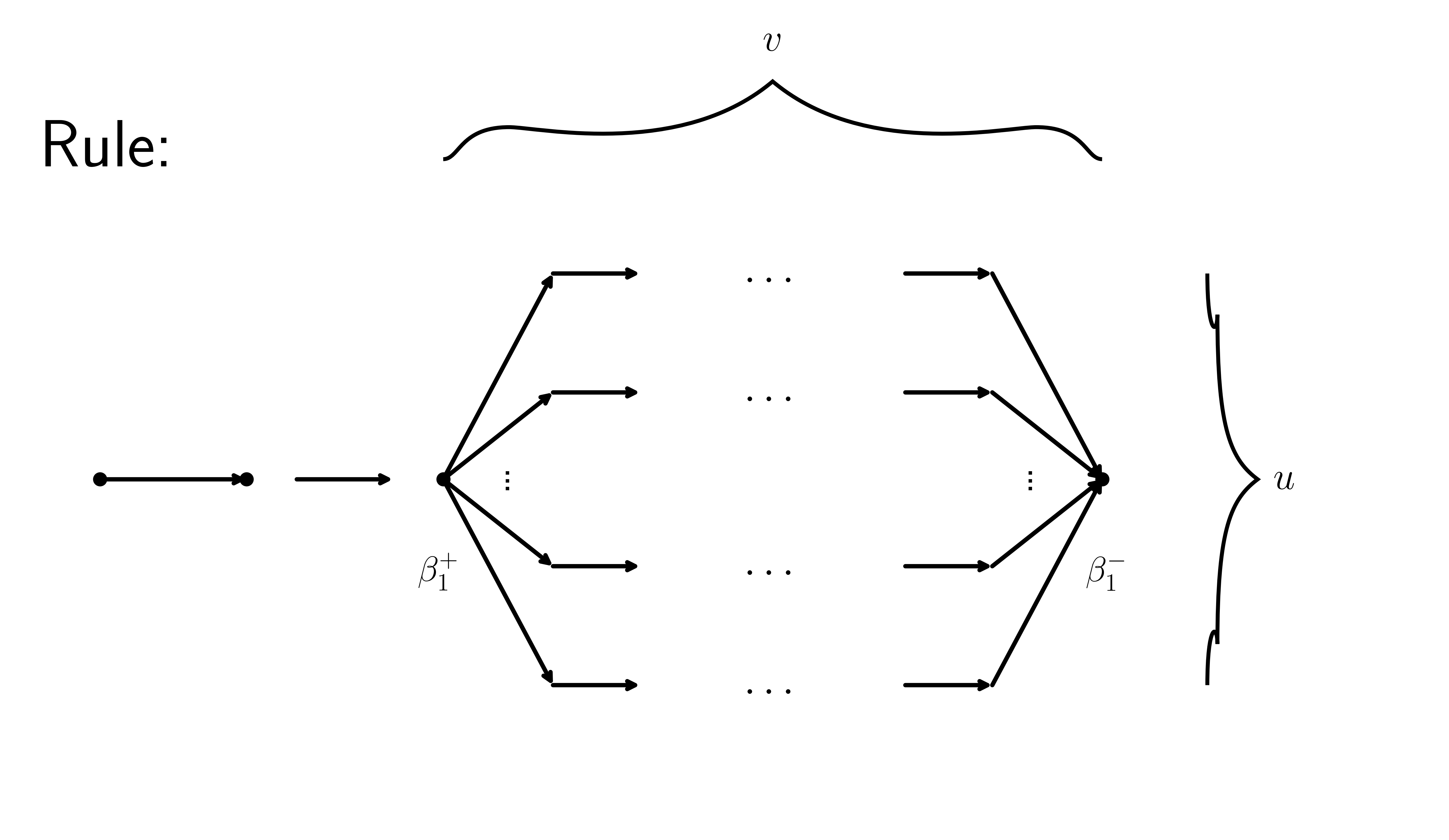}
\caption{$(u,v)$-flower}
\label{fig:uvDHL}
\end{figure}

Fix integers $u\ge 2$ and $v\ge 2$.
The $(u,v)$-flower is the single-coloured EIGS obtained by replacing every edge by $u$ pairwise edge-disjoint parallel paths, each of graph length $v$, with the planting vertices identified with the two endpoints of the original edge.
See Figure~\ref{fig:uvDHL}.
Equivalently, the rule graph consists of $u$ internally disjoint $v$-edge chains between the planting vertices.

For this EIGS,
\[
\Psi(x)=\frac{v}{u}x,
\]
since one replacement block is the parallel combination of $u$ chains, each of total resistance $vx$.
Hence
\[
\rho(\mathbf M)=uv,
\qquad
\rho_{\min}(\mathcal D)=v,
\qquad
\rho(\Psi)=\frac{v}{u}.
\]
Therefore the Minkowski dimension, resistance dimension, and walk dimension are
\[
\dimbox(\Xi)
=
\frac{\log\rho(\mathbf M)}{\log \rho_{\min}(\mathcal D)}
=
1+\frac{\log u}{\log v},
\]
\[
\dimresis(\Xi)
=
\frac{\log \rho(\Psi)}{\log \rho_{\min}(\mathcal D)}
=
1-\frac{\log u}{\log v},
\]
and
\[
\dimwalk(\Xi)
=
\dimbox(\Xi)+\dimresis(\Xi)
=
2.
\]
In particular, Proposition~\ref{prop:asymp_res_recurrence_rhoPsi} shows that $\Xi$ is recurrent exactly when $u<v$.
\end{example}

\section{Diffusion limits on metric spaces}\label{sec:diffusion}

In the previous section we worked with the combinatorial limit graph $\Xi$.
Its vertex set is $V(\Xi):=\bigcup_{n\in\mathbb N_0}V(\Xi^n)$, so every vertex is born at a finite level.
We now pass from this combinatorial object to a compact metric limit.
The metric limit itself is the same as in \cite{neroli2024fractal}, but the reference measure used for the diffusion below is the weak limit of the normalised counting measures on the approximating graphs.
This is necessary because, in the scale-free case, Section~\ref{sec:random_walks} already shows that finite-born points carry a smaller local mass exponent than the global Hausdorff dimension.

\begin{lemma}\label{lem:GH_limit_metric}
After fixing a concrete realisation, $(V(\Xi^n),\hat d_{\Xi^n})$ converges in the Gromov--Hausdorff topology to a compact metric space $(\Xi_{\mathrm M},d_{\Xi_{\mathrm M}})$.
\end{lemma}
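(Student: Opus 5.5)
Since the vertex sets are nested, $V(\Xi^n)\subseteq V(\Xi^{n+1})$, I will realise all the $(V(\Xi^n),\hat d_{\Xi^n})$ on the single countable set $\mathscr V=\bigcup_n V(\Xi^n)$ and build the limit as a completion. Here $\hat d_{\Xi^n}:=d_{\Xi^n}/\diam(\Xi^n)$. The first step is to fix the normalisation. By Lemma~\ref{lem:basic_prop}, $\diam(\Xi^n)\asymp\rho_{\min}(\mathcal D)^n$. Rather than dividing by the exact diameter, I will rescale by $\rho_{\min}(\mathcal D)^{-n}$ (possibly along the normalised Perron data of $\mathcal D$), because a deterministic geometric rescaling is what makes the distances between fixed vertices converge. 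For $u,w\in V(\Xi^k)$ and $n\ge k$, every geodesic in $\Xi^n$ passes through a chain of level-$k$ cells. Contracting each $(n-k)$-step copy to an edge, as in the proofs of Lemma~\ref{lem:mesoscopic_Reff_k_born} and Theorem~\ref{thm:exit_time_scale}, gives
\[
d_{\Xi^n}(u,w)=\min_{\gamma}\sum_{e\in\gamma} d_{H_e^{(n-k)}}(\text{terminals}),
\]
where $\gamma$ ranges over paths in $\Xi^k$. The terminal-to-terminal distances of a colour-$i$ copy obey the min-plus recursion encoded by $\mathcal D$.

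The main step is to prove that $\rho_{\min}(\mathcal D)^{-n}d_{\Xi^n}(u,w)$ converges for all fixed $u,w\in\mathscr V$. I will cite the distance-growth theory of \cite{neroli2024fractal} here (Theorem~3.3 there, together with distance-positivity in Assumption~\ref{ass:standing}). It states that the terminal distances of colour-$i$ copies are $\asymp\rho_{\min}(\mathcal D)^n$; if the normalised vector converges, the displayed formula passes to the limit, since the minimum is over finitely many paths in $\Xi^k$. I expect this convergence to be the real obstacle. The min-plus (tropical) iteration over the family $\mathcal D$ need not converge projectively in general and may oscillate periodically. If it does not converge, I will take a subsequence $n_j$ along which all of the countably many quantities converge, using a diagonal argument and the uniform bounds $c\le\rho_{\min}^{-n}d\le C$. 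This is why the statement says ``after fixing a concrete realisation'': the realisation may be taken along such a subsequence, or with an equivalent metric. In either case one obtains a pseudometric $d_\infty$ on $\mathscr V$. It is a metric by the lower bound $d_{\Xi^n}(u,w)\ge c_{\dist}\rho_{\min}(\mathcal D)^{n-k'}$, where $k'$ is the level at which $u$ and $w$ are separated by a cell boundary.

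I then define $\Xi_{\mathrm M}$ as the completion of $(\mathscr V,d_\infty)$. To prove compactness and Gromov--Hausdorff convergence, I fix $\varepsilon>0$ and choose $k$ with $C_{\dist}\rho_{\min}(\mathcal D)^{-k}<\varepsilon$. Every vertex of $\Xi^n$ lies in some $(n-k)$-step copy, whose rescaled diameter is at most $C_{\dist}\rho_{\min}(\mathcal D)^{-k}$ by Lemma~\ref{lem:basic_prop}. Hence the finite set $V(\Xi^k)$, which is independent of $n$, is an $\varepsilon$-net of every $(V(\Xi^n),\hat d_{\Xi^n})$ and also of $\mathscr V$ under $d_\infty$. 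It follows that $\Xi_{\mathrm M}$ is totally bounded and complete, and therefore compact. For large $n$, the metrics $\hat d_{\Xi^n}$ and $d_\infty$ differ by less than $\varepsilon$ on $V(\Xi^k)$. The correspondence that sends each point to a nearest net point therefore has distortion $O(\varepsilon)$, and so $d_{\mathrm{GH}}\bigl((V(\Xi^n),\hat d_{\Xi^n}),\Xi_{\mathrm M}\bigr)\to0$.
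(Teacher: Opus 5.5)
There is a genuine gap, and it sits at the step you flag yourself. The lemma asserts that the \emph{whole} sequence $(V(\Xi^n),\hat d_{\Xi^n})$ converges, and the rest of Section~\ref{sec:diffusion} (the weak limit in Lemma~\ref{lem:GHP_convergence}, and the Mosco and GHPS statements) uses the full sequence.

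Your fallback is a diagonal subsequence along which the countably many normalised distances converge. That only gives subsequential convergence, i.e.\ Gromov precompactness, which your uniform nets $V(\Xi^k)$ already provide. It does not prove the lemma.

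The phrase ``after fixing a concrete realisation'' cannot rescue this. Gromov--Hausdorff convergence concerns isometry classes, so the choice of realisation has no bearing on whether the sequence converges. In the paper, whose proof simply quotes the metric part of \cite{neroli2024fractal}, the phrase only means regarding every $V(\Xi^n)$ as a subset of $\Xi_{\mathrm M}$. Replacing $\hat d_{\Xi^n}$ by an equivalent metric changes the objects being compared.

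Your last step also has a normalisation slip. $d_\infty$ is built with the factor $\rho_{\min}(\mathcal D)^{-n}$, not $\diam(\Xi^n)^{-1}$. You must additionally show $\rho_{\min}(\mathcal D)^{-n}\diam(\Xi^n)\to\diam(\Xi_{\mathrm M},d_\infty)$, and then take $d_\infty/\diam(\Xi_{\mathrm M},d_\infty)$ as the limit metric. The diameter convergence is immediate from $|\diam X-\diam Y|\le 2d_{\mathrm{GH}}(X,Y)$, but only once the full sequence converges.

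The missing step is in fact true under the standing assumptions. Your worry about periodic oscillation comes from treating the recursion as a min-plus linear map $x\mapsto\min_j(a_{ij}+x_j)$; it is instead a minimum of ordinary linear forms.

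Concretely, the terminal distance of a colour-$i$ copy after $m$ steps is $[F^m(\mathbf 1)]_i$, where $[F(\mathbf x)]_i=\min_P\sum_{e\in P}[\mathbf x]_{\mathscr C(e)}$ over simple $\beta_i^+$--$\beta_i^-$ paths $P$ in $R_i$. This $F$ is monotone, $1$-homogeneous and concave, exactly as $\Psi$ is via Thomson's principle. Distance-positivity makes every coefficient vector have integer entries in some range $[1,L]$.

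Bound $F(\mathbf x)$ above using the minimising paths for $\mathbf y$, and below using those for $\mathbf x$. In Hilbert's projective metric this gives $d_{\mathrm H}(F\mathbf x,F\mathbf y)\le\log\bigl((1-L^{-1})e^{d_{\mathrm H}(\mathbf x,\mathbf y)}+L^{-1}\bigr)$. Moreover, $F$ maps the cone into a set of Hilbert diameter at most $2\log L$. Hence $F$ is a strict contraction on its image. Arguing as in Corollary~\ref{cor:Psi_strong_convergence} then gives $F^n(\mathbf 1)/\rho_{\min}(\mathcal D)^n\to c\,\mathbf v_{\mathcal D}$ with $c>0$ and $\mathbf v_{\mathcal D}\in(0,\infty)^K$.

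With this, your formula over the finitely many simple paths in $\Xi^k$ passes to the limit along the full sequence. The rest of your construction then goes through: the realisation on $\mathscr V$, the completion, the common $\varepsilon$-nets and the correspondence bound. Together they give a self-contained proof of what the paper only cites.
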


\begin{proof}
This is the metric part of \cite{neroli2024fractal}; see also \cite{burago2001course,abraham2013note} for the general formalism.
We fix such a realisation once and for all, and regard every $V(\Xi^n)$ as a finite subset of $\Xi_{\mathrm M}$.
\end{proof}

We write $\mathscr V:=\bigcup_{n\ge 0}V(\Xi^n)\subset \Xi_{\mathrm M}$ for the set of finite-born points and $\mathscr V^\infty:=\Xi_{\mathrm M}\setminus \mathscr V$ for its complement.
We continue to write $\mathscr V^n:=V(\Xi^n)\subset \Xi_{\mathrm M}$.

\begin{lemma}\label{lem:GHP_convergence}
The probability measures $\mu_{\Xi^n}(H):=|V(H)|/|V(\Xi^n)|$ converge weakly to a Borel probability measure $\mu_{\Xi_{\mathrm M}}$ on $\Xi_{\mathrm M}$.
Moreover, every level-$m$ cell $\Lambda$ satisfies $\mu_{\Xi_{\mathrm M}}(\Lambda)\asymp \rho(\mathbf M)^{-m}$, with constants depending only on the cell type, and every point of $\mathscr V$ has $\mu_{\Xi_{\mathrm M}}$-mass zero.
\end{lemma}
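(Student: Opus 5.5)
I would prove weak convergence by showing that $\mu_{\Xi^n}(\Lambda)$ converges for every cell $\Lambda$. Cells form a generating semiring of closed sets whose diameters shrink uniformly, so convergence on cells identifies the limit.

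For a level-$m$ cell $\Lambda$ of colour $i$, i.e.\ the closure in $\Xi_{\mathrm M}$ of the copy $H_e^{(n-m)}$ replacing an $i$-coloured edge $e$ of $\Xi^m$, Lemma~\ref{lem:basic_prop} gives $|V(H_e^{(n-m)})|\asymp\rho(\mathbf M)^{n-m}$ and $|V(\Xi^n)|\asymp\rho(\mathbf M)^n$. Hence $\mu_{\Xi^n}(\Lambda)\asymp\rho(\mathbf M)^{-m}$ uniformly in $n\ge m$.

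To get actual convergence rather than two-sided bounds, I would use the primitive Perron--Frobenius theory for $\mathbf M$, which is needed for the growth rates quoted from \cite{neroli2024fractal}. Since $\chi(\Xi_i^{n})=\mathbf e_i\mathbf M^{n}$, we have $\mathbf M^n/\rho(\mathbf M)^n\to\mathbf r\mathbf l^{\top}$ with positive right and left eigenvectors $\mathbf r,\mathbf l$. Thus $|E(\Xi_i^{n})|/\rho(\mathbf M)^n\to[\mathbf r]_i\|\mathbf l\|_1$. Vertex counts differ from edge counts by a term controlled by the same linear recursion: each edge of colour $j$ contributes $|V(R_j)|-2$ new vertices per step. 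So $|V(\Xi_i^n)|/\rho(\mathbf M)^n$ also converges to a positive constant $\nu_i$. The ratio $\mu_{\Xi^n}(\Lambda)=|V(H_e^{(n-m)})|/|V(\Xi^n)|$ therefore converges to $\nu_i\rho(\mathbf M)^{-m}/\nu_\iota$. This defines a finitely additive set function on cells that is additive up to the finitely many shared boundary vertices, which carry vanishing mass.

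Next comes the Carathéodory step. Since $\Xi_{\mathrm M}$ is compact, the sequence $(\mu_{\Xi^n})$ is tight and has weakly convergent subsequences. Any subsequential limit $\mu$ satisfies $\mu(\operatorname{int}\Lambda)\le\liminf\mu_{\Xi^n}(\Lambda)$ and $\mu(\Lambda)\ge\limsup\mu_{\Xi^n}(\Lambda)$ by the portmanteau theorem. To show all limits agree, I need the cell boundaries, namely the planting vertices, to be $\mu$-null. For a point $x\in\mathscr V^k$, the point is contained in the union of the level-$m$ cells incident to it, for every $m\ge k$. There are $\deg_{\Xi^m}(x)\asymp\rho(\mathbf N)^{m-k}$ such cells, each of mass $\asymp\rho(\mathbf M)^{-m}$, giving
\[
\mu(\{x\})\lesssim\rho(\mathbf N)^{m-k}\rho(\mathbf M)^{-m}\to0,
\]
since $\rho(\mathbf N)<\rho(\mathbf M)$. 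This inequality holds because degree growth is dominated by edge growth; I would cite it from \cite{li2024on}, or argue that $\deg\le|E|$ gives $\rho(\mathbf N)\le\rho(\mathbf M)$, with strictness coming from distance-positivity. Hence every point of $\mathscr V$ is an atom-free point, cell boundaries are $\mu$-null, and $\mu(\Lambda)=\lim\mu_{\Xi^n}(\Lambda)$ for every cell. Cells generate the Borel $\sigma$-algebra, since their diameters are at most $C_{\dist}\rho_{\min}(\mathcal D)^{m-n}$ after rescaling and therefore tend to zero. So all subsequential limits coincide, and the whole sequence converges weakly.

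The main obstacle I expect is the atom-free claim in the borderline case. When $\rho(\mathbf N)=1$, the bound above is trivially fine. The only delicate point is justifying strict inequality $\rho(\mathbf N)<\rho(\mathbf M)$ in general. If it fails, I would instead bound $\mu(\{x\})$ by the mass of a single shrinking neighbourhood chain, using the oscillation-free counting from Lemma~\ref{lem:ball_volume_k_born}.
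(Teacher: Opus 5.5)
Your argument follows the paper's proof essentially step for step. Perron--Frobenius asymptotics for the primitive (indeed positive) matrix $\mathbf M$ give $\mu_{\Xi^n}(\Lambda)\to c_i\rho(\mathbf M)^{-m}$; the paper writes the vertex count as the affine recursion $u(\ell+1)=\mathbf M u(\ell)+b$ for interior vertices, which is the same as your edge-plus-new-vertices recursion. Atoms at a $k$-born point are excluded by covering it with its level-$m$ star of $\lesssim\rho(\mathbf N)^{m-k}$ cells of mass $\lesssim\rho(\mathbf M)^{-m}$. A monotone-class argument on the cell algebra then pins down the limit. For a subsequential limit, you should apply the portmanteau upper bound to the open star, i.e.\ the star with its outer terminals removed, rather than to the closed cells.

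The one step you leave unproved, $\rho(\mathbf N)<\rho(\mathbf M)$, is indispensable, and your proposed fallback cannot replace it. Lemma~\ref{lem:ball_volume_k_born} is two-sided and does not use this inequality. So if $\rho(\mathbf N)=\rho(\mathbf M)$, it would give normalised mass $\asymp\rho(\mathbf N)^{j-k}\rho(\mathbf M)^{-j}=\rho(\mathbf M)^{-k}$ for the ball of radius $\asymp\rho_{\min}(\mathcal D)^{-j}$ about a $k$-born point, for every large $j$. That is a genuine atom.

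The paper proves strictness by counting. First, $\deg_{\Xi^n}(v)\le 2|E(\Xi^n)|$ gives $\rho(\mathbf N)\le\rho(\mathbf M)$. Equality would then force $2|E(\Xi^n)|=\sum_v\deg_{\Xi^n}(v)\gtrsim\sum_{k=0}^{n}\rho(\mathbf M)^k\rho(\mathbf N)^{n-k}\asymp n\rho(\mathbf M)^n$, contradicting Lemma~\ref{lem:basic_prop}.

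Your distance-positivity hint can also be made rigorous:
\begin{itemize}
\item No edge of $R_i$ joins $\beta_i^+$ to $\beta_i^-$, so the $j$-coloured edges at $\beta_i^+$ and those at $\beta_i^-$ are disjoint subsets of the $[\mathbf M]_{ij}$ such edges.
\item Take the positive right Perron vector $\mathbf r$ of $\mathbf M$ and set $\mathbf s\in(0,\infty)^{2K}$ with $[\mathbf s]_{2j-1}=[\mathbf s]_{2j}=[\mathbf r]_j$.
\item Since the other planting vertex has at least one incident edge, $[\mathbf N\mathbf s]_{2i-1},[\mathbf N\mathbf s]_{2i}\le\rho(\mathbf M)[\mathbf r]_i-\min_j[\mathbf r]_j$.
\item The Collatz--Wielandt bound then gives $\rho(\mathbf N)<\rho(\mathbf M)$.
\end{itemize}
With either argument inserted, your proof is complete.
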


\begin{proof}
For each colour $i$, let $u_i(\ell)$ be the number of interior vertices in the level-$\ell$ expansion of a single colour-$i$ edge.
The construction gives an affine recursion
\[
u(\ell+1)=\mathbf M u(\ell)+b,
\]
where $b_i$ is the number of interior vertices of the rule graph $R_i$.
Since $\mathbf M$ is primitive, Perron--Frobenius theory implies that
\[
u_i(\ell)=\kappa_i\rho(\mathbf M)^\ell+o(\rho(\mathbf M)^\ell)
\qquad\text{with }\kappa_i>0.
\]

Let $\Lambda$ be a level-$m$ cell of type $i$.
Inside $V(\Xi^n)$, the cell contributes $u_i(n-m)+O(1)$ vertices, where the $O(1)$ term comes only from the two terminals.
Likewise,
\[
|V(\Xi^n)|=\kappa_*\rho(\mathbf M)^n+o(\rho(\mathbf M)^n)
\]
for some $\kappa_*>0$.
Hence $\mu_{\Xi^n}(\Lambda)\to c_i\rho(\mathbf M)^{-m}$ with $c_i:=\kappa_i/\kappa_*$.
In particular, the values of any subsequential weak limit on the algebra generated by level cells are uniquely determined.

To pass from cells to all Borel sets, it remains to control cell boundaries.
Every boundary point is finite-born.
Fix $x\in \mathscr V$ of birth level $k$.
Its level-$m$ star neighbourhood consists of at most $C\rho(\mathbf N)^{m-k}$ level-$m$ cells, each of which has $\mu_{\Xi_{\mathrm M}}$-mass $\asymp \rho(\mathbf M)^{-m}$.
Note first that $\rho(\mathbf N)<\rho(\mathbf M)$, since the degree of any fixed finite-born vertex grows like $\rho(\mathbf N)^{n}$ up to constants by Lemma~\ref{lem:basic_prop}, while it is at most $2|E(\Xi^n)|\asymp\rho(\mathbf M)^n$; the inequality is strict, for otherwise summing the degree estimates of Lemma~\ref{lem:basic_prop} over birth levels would give $2|E(\Xi^n)|\gtrsim\sum_{k=0}^{n}\rho(\mathbf M)^{k}\rho(\mathbf N)^{n-k}\gtrsim n\rho(\mathbf M)^n$, a contradiction.
Since $\rho(\mathbf N)<\rho(\mathbf M)$, the $\mu_{\Xi_{\mathrm M}}$-mass of this star tends to zero as $m\to\infty$.
Therefore $\mu_{\Xi_{\mathrm M}}(\{x\})=0$.
Thus all boundaries in the cell algebra are $\mu_{\Xi_{\mathrm M}}$-null, and a standard monotone class argument yields a unique weak limit measure on $\Xi_{\mathrm M}$.
\end{proof}

The Hausdorff dimension of $(\Xi_{\mathrm M},d_{\Xi_{\mathrm M}})$ is still $\dimbox(\Xi)$ by \cite{neroli2024fractal}, but the reference measure used below is $\mu_{\Xi_{\mathrm M}}$.

\subsection{Dirichlet form and diffusion limits}\label{subsec:Dirichlet_form}

Let $(\rho(\Psi),\mathbf v)$ be the positive eigenpair from Lemma~\ref{thm:sdprimitive_eigenpair}, normalised arbitrarily.
For each $n\ge 0$ and each edge $e\in E(\Xi^n)$ of colour $\mathscr C(e)=i$, set $r_n(e):=\rho(\Psi)^{-n}[\mathbf v]_i$ and $c_n(e):=r_n(e)^{-1}$.
For $f:V(\Xi^n)\to\mathbb R$, write
\[
\mathcal E_n^{W}(f,f):=\sum_{\{u,v\}\in E(\Xi^n)} c_n(\{u,v\})(f(u)-f(v))^2.
\]
Let $P_n$ be the transition operator of the discrete-time simple random walk on $\Xi^n$, let $\pi_n(x):=\deg_{\Xi^n}(x)/(2|E(\Xi^n)|)$, and write
\[
\mathcal E_n^{X}(f,f):=\langle f-P_nf,f\rangle_{L^2(\pi_n)}.
\]
Finally, set $a_n:=2c_\Psi^*\rho(\Psi)^n|E(\Xi^n)|$ and $\widetilde{\mathcal E}_n^{X}:=a_n\mathcal E_n^{X}$.

\begin{lemma}\label{lem:sec31_trace_weighted}
For every $n\ge 0$ and every $f:V(\Xi^n)\to\mathbb R$, the quantity $\mathcal E_n^{W}(f,f)$ is the minimum of $\mathcal E_{n+1}^{W}(g,g)$ over all extensions $g:V(\Xi^{n+1})\to\mathbb R$ of $f$.
The minimiser is unique.
\end{lemma}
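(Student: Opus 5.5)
The argument is local: the problem splits over the edges of $\Xi^n$. Passing from $\Xi^n$ to $\Xi^{n+1}$ replaces each edge $e=\{a,b\}\in E(\Xi^n)$ of colour $i$ by a fresh copy $R_e$ of $R_i$, glued at $a\equiv\beta_i^+$ and $b\equiv\beta_i^-$. Distinct copies share only vertices of $V(\Xi^n)$, so every vertex of $V(\Xi^{n+1})\setminus V(\Xi^n)$ is an interior vertex of exactly one copy. Hence, for an extension $g$ of $f$,
\[
\mathcal E_{n+1}^{W}(g,g)=\sum_{e\in E(\Xi^n)}\mathcal E_{R_e}^{(n+1)}(g|_{R_e},g|_{R_e}),
\]
where $\mathcal E_{R_e}^{(n+1)}$ is the energy on the edges of $R_e$ with conductances $c_{n+1}$. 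Each summand depends only on $f(a),f(b)$ and on the values of $g$ at interior vertices of $R_e$. The interior values in different copies are independent variables, so the global minimum is the sum of the per-copy minima.

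For a single copy I use the Dirichlet principle for a finite network with two terminals. The minimum of $\mathcal E_{R_e}^{(n+1)}$ over functions with boundary values $f(a),f(b)$ equals
\[
\frac{(f(a)-f(b))^2}{\mathscr R_e},
\]
where $\mathscr R_e$ is the effective resistance of $R_e$ between its terminals. The edge resistances in $R_e$ are $r_{n+1}(e')=\rho(\Psi)^{-(n+1)}[\mathbf v]_{\mathscr C(e')}$, so by Definition~\ref{def:psi_Psi}, homogeneity (Lemma~\ref{lem:psi-props}) and the eigen-relation of Lemma~\ref{thm:sdprimitive_eigenpair},
\[
\mathscr R_e=\psi_{R_i:(\beta_i^+,\beta_i^-)}\bigl(\rho(\Psi)^{-(n+1)}\mathbf v\bigr)=\rho(\Psi)^{-(n+1)}[\Psi(\mathbf v)]_i=\rho(\Psi)^{-n}[\mathbf v]_i=r_n(e).
\]
Each per-copy minimum is therefore $c_n(e)(f(a)-f(b))^2$. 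Summing over $e\in E(\Xi^n)$ gives exactly $\mathcal E_n^W(f,f)$, which proves the identity. The minimum is attained: the extension that is harmonic in each copy realises it.

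For uniqueness, consider any minimiser. By the decomposition, its restriction to each copy must minimise that copy's energy with fixed boundary values. On each copy this is a strictly convex quadratic problem: all conductances are strictly positive since $\mathbf v\in(0,\infty)^K$, and the copy $R_e$ is connected. Connectivity holds because canonicality makes every edge lie on a $\beta^+$–$\beta^-$ path, so every interior vertex is joined to a terminal. Strict convexity gives a unique minimiser, namely the discrete harmonic extension (equivalently, the maximum principle gives uniqueness of the Dirichlet problem).

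The main point needing care is this uniqueness step: every interior vertex must be connected to the boundary inside its copy, otherwise the energy would be degenerate. This is exactly where Assumption~\ref{ass:standing}, together with positivity of $\mathbf v$, enters. The remaining work is bookkeeping of the vertex identifications, which the single-copy structure of Lemma~\ref{lem:renorm} already supplies.
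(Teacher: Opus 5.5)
Your proof is correct and follows the paper's argument: you split the energy over the substituted copies of $R_i$, identify each copy's terminal resistance as $\rho(\Psi)^{-(n+1)}\psi_{R_i:(\beta_i^+,\beta_i^-)}(\mathbf v)=r_n(e)$ via homogeneity and the eigen-relation, apply the Dirichlet principle copy by copy, and obtain uniqueness from strict convexity using canonicality and $\mathbf v\in(0,\infty)^K$. The only difference is that you spell out the independence of the per-copy minimisation problems and the connectivity argument, which the paper leaves implicit.
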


\begin{proof}
Each edge $e=\{a,b\}\in E(\Xi^n)$ of colour $i$ is replaced by a copy of $R_i$ attached only at $a$ and $b$.
On that copy, the effective resistance between the two terminals is
\[
\rho(\Psi)^{-(n+1)}\psi_{R_i:(\beta_i^+,\beta_i^-)}(\mathbf v)
=
\rho(\Psi)^{-n}[\mathbf v]_i
=
r_n(e).
\]
The Dirichlet principle therefore shows that the least possible energy on this copy is exactly $c_n(e)(f(a)-f(b))^2$.
Summing over all edges proves the claim, and strict convexity gives uniqueness, since by canonicality every internal vertex is electrically connected to the planting vertices; see \cite{kigami2001analysis}.
\end{proof}

\begin{lemma}\label{lem:sec31_continuous_harmonics}
For every $m\ge 0$ and every $h:V(\Xi^m)\to\mathbb R$, there is a unique continuous function on $\Xi_{\mathrm M}$ whose restriction to each $V(\Xi^n)$, $n\ge m$, is the minimising extension from Lemma~\ref{lem:sec31_trace_weighted}.
The union of all such functions is uniformly dense in $C(\Xi_{\mathrm M})$.
\end{lemma}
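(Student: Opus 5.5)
The plan is to build the harmonic extension level by level and then pass to the metric limit. Fix $m$ and $h:V(\Xi^m)\to\mathbb R$. Applying Lemma~\ref{lem:sec31_trace_weighted} repeatedly gives a unique sequence $h_n:V(\Xi^n)\to\mathbb R$, $n\ge m$, with $h_m=h$. Each $h_{n+1}$ is the energy-minimising extension of $h_n$, so $h_{n+1}|_{V(\Xi^n)}=h_n$. Hence there is a single well-defined function $\tilde h$ on $\mathscr V$. Uniqueness of the minimisers makes this family canonical. The real work is to show that $\tilde h$ is uniformly continuous on $\mathscr V$ with respect to $d_{\Xi_{\mathrm M}}$. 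Since $\mathscr V$ is dense in $\Xi_{\mathrm M}$ (by the construction in Lemma~\ref{lem:GH_limit_metric}), $\tilde h$ then extends uniquely to a continuous function on $\Xi_{\mathrm M}$.

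For continuity I would use the oscillation contraction already invoked in the proof of Lemma~\ref{lem:star_domain_exit_time_scale}. On each level-$\ell$ cell $\Lambda$ with $\ell\ge m$, $\tilde h$ restricted to the vertices of $\Lambda$ is harmonic in the interior of $\Lambda$ for the weights $c_n$. This holds because harmonic extension is local, being done edge by edge through copies of $R_i$ attached only at planting vertices. Its values on $\Lambda$ therefore lie between its two terminal values, by the maximum principle. The two terminal values determine it through the normalised profile of a colour-$\mathscr C(\Lambda)$ cell with conductances proportional to $\mathbf v$. This profile is exactly self-similar, since $\Psi(\mathbf v)=\rho(\Psi)\mathbf v$, so only $K$ profiles occur. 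I would take $\theta\in(0,1)$ to be the maximum over colours and child cells of the oscillation of the profile on a child cell. Then $\theta<1$, because by canonicality and the strong maximum principle no child cell contains both planting vertices together with profile values $0$ and $1$ at its terminals, unless $R_i$ is a single edge; the latter is excluded because distance-positivity forces $d_{R_i}(\beta_i^+,\beta_i^-)\ge2$. It follows that $\operatorname{osc}_\Lambda\tilde h\le \theta^{\ell-m}\operatorname{osc}\,h$ for every level-$\ell$ cell $\Lambda$.

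Now take $x,y\in\mathscr V$ with $d_{\Xi_{\mathrm M}}(x,y)<\delta$. I would cover this by cell geometry. Diameters of level-$\ell$ cells are $\asymp\rho_{\min}(\mathcal D)^{-\ell}$. By the crossing bound in Lemma~\ref{lem:basic_prop}, used as in the proof of Theorem~\ref{thm:exit_time_scale}, two points at distance less than $c\rho_{\min}(\mathcal D)^{-\ell}$ lie in level-$\ell$ cells that share a vertex. Hence $|\tilde h(x)-\tilde h(y)|\le 2\theta^{\ell-m}\operatorname{osc}\,h$, which is uniform continuity. This step is the main obstacle. Cells sharing a vertex can be numerous at a high-degree vertex, but we only need two of them, through the common vertex, so the degree blow-up does not matter. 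Uniqueness of the continuous extension follows from density of $\mathscr V$.

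For density in $C(\Xi_{\mathrm M})$, I would note that the union $\mathcal H$ of these functions over all $m$ is a linear space containing constants. Given $f\in C(\Xi_{\mathrm M})$ and $\varepsilon>0$, use uniform continuity of $f$ and choose $m$ with every level-$m$ cell of diameter small enough that $\operatorname{osc}_\Lambda f<\varepsilon$. Let $h:=f|_{V(\Xi^m)}$. On each level-$m$ cell, the harmonic function $\tilde h$ takes values between the terminal values of $f$, by the maximum principle. Therefore $|\tilde h-f|\le 2\varepsilon$ on each cell. Cells cover $\Xi_{\mathrm M}$, being closures of the cells in $\mathscr V$, so $\|\tilde h-f\|_\infty\le2\varepsilon$. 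This gives uniform density directly, without Stone--Weierstrass.
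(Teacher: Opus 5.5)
Your proposal is correct and follows essentially the paper's proof. Both arguments get a uniform oscillation contraction $\theta<1$ from the strong maximum principle together with the absence of an edge joining $\beta_i^+$ and $\beta_i^-$; this gives uniform continuity on $\mathscr V$ and a unique continuous extension, and density then follows from the maximum principle on small level-$m$ cells.

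The differences are minor. You use only the $K$ exactly self-similar $\mathbf v$-weighted profiles, whereas the paper takes a larger compact conductance family because it reuses the contraction for the unit-resistance walks. You also spell out, via the crossing bound, the uniform-continuity step that the paper compresses into ``uniformly Cauchy''. One correction: the case to exclude is an edge of $R_i$ joining the two planting vertices, not $R_i$ being a single edge, and that is exactly what $d_{R_i}(\beta_i^+,\beta_i^-)\ge 2$ rules out.
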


\begin{proof}
Fix $m$ and $h$.
For each rule graph and each conductance vector from the compact family generated by $\alpha_j^{(q)}:=c_\Psi^*\rho(\Psi)^q[\Psi^q(\mathbf 1)]_j^{-1}$ and $\alpha_j^{(\infty)}:=[\mathbf v]_j^{-1}$, the $0$--$1$ harmonic profile has edge oscillation strictly smaller than $1$.
Indeed, no rule graph contains an edge joining its two planting vertices: for $K=1$ this is the assumption $d_{R_1}(\beta_1^+,\beta_1^-)\ge 2$, while for $K\ge 2$ such an edge would itself be an admissible path using a single colour, contradicting distance-positivity; the strong maximum principle then forces every edge oscillation of the profile to be strictly smaller than $1$, and compactness of the conductance family makes the bound uniform.
Since the family of rule graphs is finite, there is a uniform $\theta\in(0,1)$ such that every child-cell oscillation is at most $\theta$ times the parent-cell oscillation.
Hence the harmonic extensions on $\mathscr V$ are uniformly Cauchy and extend uniquely to a continuous function on $\Xi_{\mathrm M}$.

For density, let $f\in C(\Xi_{\mathrm M})$ and $\varepsilon>0$.
Choose $m$ so large that the oscillation of $f$ on every level-$m$ cell is at most $\varepsilon$.
If $h:=f|_{V(\Xi^m)}$, then the harmonic extension of $h$ stays between the terminal values on each level-$m$ cell, so it differs from $f$ by at most $\varepsilon$.
\end{proof}

\begin{lemma}\label{lem:sec31_projection_tail_continuous}
If $u$ is one of the continuous harmonic functions from Lemma~\ref{lem:sec31_continuous_harmonics}$,$ and $u_m$ is the level-$m$ harmonic function with $u_m|_{V(\Xi^m)}=u|_{V(\Xi^m)}$, then $\mathcal E(u,u)=\mathcal E(u_m,u_m)+\mathcal E(u-u_m,u-u_m)$ and
\[
\|u-u_m\|_{L^2(\mu_{\Xi_{\mathrm M}})}^2
\le
C(\rho(\mathbf M)\rho(\Psi))^{-m}\mathcal E(u,u).
\]
\end{lemma}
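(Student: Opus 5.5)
The plan is to prove the two claims separately: first the energy orthogonality, then the $L^2$ bound cell by cell. Throughout, $\mathcal E$ is the limiting form defined on continuous harmonic functions as the common value $\mathcal E(u,u)=\lim_n\mathcal E_n^W(u|_{V(\Xi^n)},u|_{V(\Xi^n)})$. By Lemma~\ref{lem:sec31_trace_weighted} this sequence is nondecreasing, and it is eventually constant for the functions of Lemma~\ref{lem:sec31_continuous_harmonics}. The polarised form is handled the same way.

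\textbf{Orthogonality.} Let $w:=u-u_m$, so $w$ vanishes on $V(\Xi^m)$. For $n\ge m$, $u_m|_{V(\Xi^n)}$ is the minimising (harmonic) extension of its values on $V(\Xi^m)$. It is therefore harmonic at every vertex of $V(\Xi^n)\setminus V(\Xi^m)$ for the conductances $c_n$. Discrete Green's formula gives
\[
\mathcal E_n^W(u_m,w)=\sum_{x\in V(\Xi^n)} w(x)\,\Delta_n u_m(x).
\]
Each summand vanishes: either $w(x)=0$ (when $x\in V(\Xi^m)$) or $\Delta_n u_m(x)=0$ (otherwise). Expanding $\mathcal E_n^W(u_m+w,u_m+w)$ and letting $n\to\infty$ yields the Pythagorean identity.

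\textbf{$L^2$ estimate.} The function $w$ vanishes at the vertices of $\Xi^m$. Because the cells are attached only through their planting vertices, $\mathcal E(w,w)=\sum_{\Lambda}\mathcal E_\Lambda(w,w)$, the sum running over level-$m$ cells $\Lambda$ (copies $H_e^{(\infty)}$ of the edges $e\in E(\Xi^m)$).

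On a cell $\Lambda$ of colour $i$ with terminals $a,b$ I will use the resistance estimate. For $y\in\Lambda\cap\mathscr V$ and a boundary point $a$,
\[
|w(y)|^2=|w(y)-w(a)|^2\le R_\Lambda(y,a)\,\mathcal E_\Lambda(w,w).
\]
Here $R_\Lambda$ is the effective resistance inside the cell for the rescaled conductances $c_n$. The same chain argument as in Lemma~\ref{lem:renorm} and Corollary~\ref{lem:Psi_range} should bound it by $C\sup_{y}R_\Lambda(y,a)\le C'\rho(\Psi)^{-m}$. The weights $r_n(e)=\rho(\Psi)^{-n}[\mathbf v]_i$ give the top cell resistance $\rho(\Psi)^{-m}[\mathbf v]_i$. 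Any point lies at the end of a nested chain of subcells, so $R_\Lambda(y,a)$ is bounded by a sum of one terminal-to-terminal resistance per level. That sum is geometric, and hence bounded by a constant times $\rho(\Psi)^{-m}$, \emph{provided} $\rho(\Psi)>1$.

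This is the step I expect to be the main obstacle. When $\rho(\Psi)\le 1$ the geometric series fails, so a different route is needed there. I would replace the pointwise bound by a Poincaré-type inequality on the cell. One way is to use the oscillation contraction of Lemma~\ref{lem:sec31_continuous_harmonics} together with a decomposition $w=\sum_{q>m}(u_q-u_{q-1})$. Each increment $u_q-u_{q-1}$ is supported away from $V(\Xi^{q-1})$ and is harmonic off $V(\Xi^q)$, so its sup on each level-$q$ cell is controlled by its cell energy times $\rho(\Psi)^{-q}$. Orthogonality of the increments (the first step, iterated) then gives
\[
\|u_q-u_{q-1}\|_{L^2}^2\lesssim(\rho(\mathbf M)\rho(\Psi))^{-q}\,\mathcal E(u_q-u_{q-1}).
\]
Summing these, weighted with the convergent factor $(\rho(\mathbf M)\rho(\Psi))^{-q}$ (recall $\rho(\mathbf M)\rho(\Psi)\ge\rho_{\min}(\mathcal D)^2>1$ by Proposition~\ref{prop:MPsiD}), and using Cauchy--Schwarz, gives the bound in all regimes.

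\textbf{Integration.} Integrating the pointwise bound against $\mu_{\Xi_{\mathrm M}}$, Lemma~\ref{lem:GHP_convergence} gives $\mu_{\Xi_{\mathrm M}}(\Lambda)\le C\rho(\mathbf M)^{-m}$. Hence
\[
\int_\Lambda|w|^2\,d\mu_{\Xi_{\mathrm M}}\le C\,(\rho(\mathbf M)\rho(\Psi))^{-m}\,\mathcal E_\Lambda(w,w).
\]
Points of $\mathscr V^\infty$ are handled by continuity, and $\mathscr V$ is $\mu_{\Xi_{\mathrm M}}$-null anyway. Summing over cells and using $\mathcal E(w,w)\le\mathcal E(u,u)$ from the orthogonality step finishes the proof.
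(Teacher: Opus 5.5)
Your proof is correct, and its general-regime core is exactly the paper's argument: decompose $u-u_m$ into the one-step increments $u_q-u_{q-1}$, bound each increment's supremum on a cell by $\rho(\Psi)^{-q}$ times its cell energy using finite-dimensionality, weight by the cell masses from Lemma~\ref{lem:GHP_convergence}, and sum over cells and levels. Your Cauchy--Schwarz step, together with the energy orthogonality $\sum_q\mathcal E(u_q-u_{q-1})=\mathcal E(u-u_m)$ (which is where the orthogonality is actually used, not in the per-increment bound), makes explicit a summation the paper leaves implicit; since this route works for every value of $\rho(\Psi)$, your resistance-chain route for $\rho(\Psi)>1$ (essentially Lemma~\ref{lem:sec32_point_boundary}) and the closing ``Integration'' paragraph attached to it are redundant.
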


\begin{proof}
The orthogonality identity is the Euler--Lagrange relation for harmonic projection.
Recall that $\rho(\mathbf M)\rho(\Psi)\ge\rho_{\min}(\mathcal D)^2>1$ by Proposition~\ref{prop:MPsiD}, so the bound below decays geometrically in $m$.
For the $L^2$ bound, write $u-u_m$ as the sum of one-step increments across levels.
On each level-$\ell$ cell, such an increment vanishes at the two terminals and lies in a fixed finite-dimensional space, so its supremum is bounded by its cell energy.
Since each level-$\ell$ cell has $\mu_{\Xi_{\mathrm M}}$-mass $\asymp \rho(\mathbf M)^{-\ell}$ by Lemma~\ref{lem:GHP_convergence}, summing over cells and over $\ell\ge m$ proves the estimate.
\end{proof}

\begin{theorem}\label{thm:sec31_limit_dirichlet_form}
There is a conservative regular Dirichlet form $(\mathcal E_W,\mathcal F_W)$ on $L^2(\Xi_{\mathrm M},\mu_{\Xi_{\mathrm M}})$ whose trace on $V(\Xi^m)$ is $\mathcal E_m^W$ for every $m\ge 0$.
\end{theorem}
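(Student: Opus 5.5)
The form will be built as the monotone limit of the compatible sequence $(\mathcal E_m^W)_{m\ge0}$, following Kigami's construction for self-similar and resistance forms, with the measure $\mu_{\Xi_{\mathrm M}}$ supplying the $L^2$ structure.

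\textbf{Step 1: the form on $\mathscr V$.} By Lemma~\ref{lem:sec31_trace_weighted}, for every $f:\mathscr V\to\mathbb R$ the sequence $m\mapsto\mathcal E_m^W(f|_{V(\Xi^m)},f|_{V(\Xi^m)})$ is nondecreasing. Set
\[
\mathcal E(f,f):=\lim_{m\to\infty}\mathcal E_m^W(f|_{V(\Xi^m)},f|_{V(\Xi^m)})\in[0,\infty],
\]
and let $\mathcal D$ be the set of $f$ with $\mathcal E(f,f)<\infty$. Compatibility and uniqueness of minimising extensions show that the trace of $\mathcal E$ on $V(\Xi^m)$ equals $\mathcal E_m^W$; the minimiser is the harmonic function of Lemma~\ref{lem:sec31_continuous_harmonics}. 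The Markov property holds levelwise, since each $\mathcal E_m^W$ is a weighted graph energy, and passes to the limit.

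\textbf{Step 2: embedding into $L^2(\mu_{\Xi_{\mathrm M}})$.} Every $f\in\mathcal D$ must extend continuously to $\Xi_{\mathrm M}$, and the map $f\mapsto$ its $\mu$-class must be injective, so that $(\mathcal E,\mathcal F_W)$ is a closable form on $L^2$. Continuity would follow from an oscillation bound: on a level-$\ell$ cell, the oscillation of $f$ is controlled by the cell energy times the effective resistance across the cell, which is $\asymp\rho(\Psi)^{-\ell}$.

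This bound is immediate only when $\rho(\Psi)>1$. In general I would instead use the approximation of Lemma~\ref{lem:sec31_projection_tail_continuous}. Take $u_m$ harmonic with data $f|_{V(\Xi^m)}$. The tail $f-u_m$ is then controlled in $L^2$ by $(\rho(\mathbf M)\rho(\Psi))^{-m}\mathcal E(f,f)$, using $\rho(\mathbf M)\rho(\Psi)>1$ from Proposition~\ref{prop:MPsiD}. Hence $u_m\to f$ in $L^2$ and, along a subsequence, $\mu$-a.e.

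So I would define $\mathcal F_W$ as the set of $L^2$-limits of $\mathcal E$-Cauchy sequences of harmonic functions. Closedness then follows from the orthogonal decomposition in Lemma~\ref{lem:sec31_projection_tail_continuous}, which makes $\mathcal E_1=\mathcal E+\|\cdot\|_{L^2}^2$-Cauchy sequences converge. This is a Cauchy completion whose well-definedness requires closability: if $u_j\to0$ in $L^2$ and $(u_j)$ is $\mathcal E$-Cauchy, then $\mathcal E(u_j,u_j)\to0$.

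\textbf{Step 3: closability, the main obstacle.} I would prove closability levelwise. The traces at level $m$ satisfy $\mathcal E_m^W(u_j-u_k)\le\mathcal E(u_j-u_k)$, so $(u_j|_{V(\Xi^m)})$ converges on the finite set $V(\Xi^m)$. Its limit must be zero, because the $L^2$ distance between $u_j$ and its level-$m$ projection is uniformly small by Lemma~\ref{lem:sec31_projection_tail_continuous}, while each level-$m$ harmonic function is determined by finitely many values and cells have mass $\asymp\rho(\mathbf M)^{-m}$ (Lemma~\ref{lem:GHP_convergence}). Therefore $\mathcal E_m^W(\lim_j u_j)=0$ for every $m$, and hence $\mathcal E(u_j)\to0$.

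\textbf{Step 4: regularity and conservativeness.} Regularity follows because harmonic functions lie in $\mathcal F_W\cap C(\Xi_{\mathrm M})$ and are uniformly dense by Lemma~\ref{lem:sec31_continuous_harmonics}. They are also $\mathcal E_1$-dense in $\mathcal F_W$ by construction. Conservativeness follows because $1\in\mathcal F_W$ with $\mathcal E(1,1)=0$ and $\Xi_{\mathrm M}$ is compact.
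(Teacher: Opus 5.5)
Your overall route is the paper's. You put $\mathcal E$ on the continuous harmonic splines, get closability from Lemma~\ref{lem:sec31_projection_tail_continuous}, take the closure, and read off regularity from uniform density of splines and conservativeness from $1\in\mathcal F_W$.

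The problem is Step~3, which is where all the content sits, and its key inference does not hold. Suppose $u_j\to0$ in $L^2$ and $E:=\sup_j\mathcal E(u_j,u_j)<\infty$. Then Lemma~\ref{lem:sec31_projection_tail_continuous} only gives $\|h_m\|_{L^2}^2\le C(\rho(\mathbf M)\rho(\Psi))^{-m}E$ for the limit $h_m$ of the projections $P_mu_j$. That is small, not zero. So ``its limit must be zero'' and ``$\mathcal E_m^W(\lim_j u_j)=0$ for every $m$'' do not follow. (Also, $\mathcal E_m^W$ is only a seminorm, so even convergence of $u_j|_{V(\Xi^m)}$ needs the constants fixed via $L^2$.)

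The natural repair is to let $h$ be the pointwise limit of $u_j$ on $\mathscr V$, so that $h_m=P_mh$, and send $m\to\infty$ at a fixed point. But if you convert $L^2$ to point values as you propose, through cell masses $\asymp\rho(\mathbf M)^{-m}$, you only get $\max_{V(\Xi^m)}h^2\lesssim\rho(\mathbf M)^m\|h_m\|_{L^2}^2\lesssim\rho(\Psi)^{-m}E$. This forces $h=0$ on $\mathscr V$ only when $\rho(\Psi)>1$. The theorem is stated for all regimes, including the DHL ($\rho(\Psi)=1$) and the superconducting case.

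The missing idea is a localised estimate at a fixed finite-born point $x$ that exploits degree growth. Let $\Lambda_{m,i}$ be the level-$m$ cells incident to $x$, with far terminals $z_{m,i}$, edge conductances $c_m(e_i)$ and union $S_m$. Set $D_m(x):=\sum_i c_m(e_i)$. On $\Lambda_{m,i}$, $h_m$ lies between $h(x)$ and $h(z_{m,i})$. By Lemma~\ref{lem:GHP_convergence}, $\mu_{\Xi_{\mathrm M}}(\Lambda_{m,i})\asymp(\rho(\mathbf M)\rho(\Psi))^{-m}c_m(e_i)$. Hence
\[
\mu_{\Xi_{\mathrm M}}(S_m)\,h(x)^2
\le
2\|h_m\|_{L^2}^2+2\sum_i\mu_{\Xi_{\mathrm M}}(\Lambda_{m,i})\bigl(h(z_{m,i})-h(x)\bigr)^2
\lesssim
(\rho(\mathbf M)\rho(\Psi))^{-m}E .
\]
Since $\mu_{\Xi_{\mathrm M}}(S_m)\gtrsim(\rho(\mathbf M)\rho(\Psi))^{-m}D_m(x)$, this gives $h(x)^2\lesssim E/D_m(x)$.

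It remains to show $D_m(x)\to\infty$. Apply Nash--Williams in $R_i$ (resistances $[\mathbf v]_j$ on colour-$j$ edges) to the two cutsets at $\beta_i^{+}$ and $\beta_i^{-}$. They are disjoint because no rule graph has an edge between its planting vertices. This gives $\rho(\Psi)\sum_{e\ni\beta_i^\pm}[\mathbf v]_{\mathscr C(e)}^{-1}>[\mathbf v]_i^{-1}$. Hence $D_{m+1}(x)\ge(1+\delta)D_m(x)$ for a uniform $\delta>0$, and in particular $\rho(\mathbf N)\rho(\Psi)>1$. Therefore $h\equiv0$ on $\mathscr V$, and $\mathcal E(u_j,u_j)=\mathcal E(u_j-h,u_j-h)\to0$, which is closability.

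A smaller point: harmonic splines are not stable under normal contractions. So the Markov property of the closure should be checked on the larger domain of continuous finite-energy functions, which splines approximate in $\mathcal E_1$. The paper is equally brief there.
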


\begin{proof}
Let $\mathcal H_m$ be the space of continuous level-$m$ harmonic functions from Lemma~\ref{lem:sec31_continuous_harmonics}.
For $u\in\mathcal H_m$, define $\mathcal E(u,u):=\mathcal E_m^W(u|_{V(\Xi^m)},u|_{V(\Xi^m)})$.
Lemma~\ref{lem:sec31_trace_weighted} shows that this is well defined.
By Lemma~\ref{lem:sec31_continuous_harmonics}, $\bigcup_m\mathcal H_m$ is uniformly dense in $C(\Xi_{\mathrm M})$, hence dense in $L^2(\Xi_{\mathrm M},\mu_{\Xi_{\mathrm M}})$.
Lemma~\ref{lem:sec31_projection_tail_continuous} implies closability and compactness of the form norm embedding.
The Markov property is inherited from the finite traces.
The closure is therefore a conservative regular Dirichlet form; see \cite{fukushima1994dirichlet}.
\end{proof}

\begin{lemma}\label{lem:sec31_speed_measure}
We have $\pi_n\Rightarrow \mu_{\Xi_{\mathrm M}}$.
Moreover, for every $m\ge 0$ there is $C_m>0$ such that every level-$m$ cell $\Lambda\subset \Xi^n$ satisfies $\pi_n(V(\Lambda))\le C_m\rho(\mathbf M)^{-m}$ for all $n\ge m$.
\end{lemma}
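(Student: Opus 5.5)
We prove the uniform cell bound first and then obtain weak convergence from it together with Lemma~\ref{lem:GHP_convergence}. Write $\pi_n(V(\Lambda))=\sum_{x\in V(\Lambda)}\deg_{\Xi^n}(x)/(2|E(\Xi^n)|)$ and split the sum into the two terminals of $\Lambda$ and the interior vertices. Every edge at an interior vertex of $\Lambda$ lies in $\Lambda$, so the interior sum is at most $2|E(\Lambda)|$. Since $\Lambda$ is a copy of $\Xi^{n-m}_i$, Lemma~\ref{lem:basic_prop} gives $|E(\Lambda)|\le C_{\mathrm E}\rho(\mathbf M)^{n-m}$, while $|E(\Xi^n)|\ge c_{\mathrm E}\rho(\mathbf M)^n$. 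The interior contribution is therefore at most $(C_{\mathrm E}/c_{\mathrm E})\rho(\mathbf M)^{-m}$, uniformly in $n$.

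The two terminals are the delicate part, because their degrees in $\Xi^n$ are of order $\rho(\mathbf N)^{n-k}$ with $k$ their birth level; this is why the constant is allowed to depend on $m$. A terminal of a level-$m$ cell lies in $V(\Xi^m)$, so $k\le m$. By Lemma~\ref{lem:basic_prop} its degree is at most $C_{\deg}\rho(\mathbf N)^{n-k}\le C_{\deg}\rho(\mathbf N)^{n}$. Dividing by $2c_{\mathrm E}\rho(\mathbf M)^n$ gives a contribution at most $C(\rho(\mathbf N)/\rho(\mathbf M))^{n}$. By the strict inequality $\rho(\mathbf N)<\rho(\mathbf M)$ proved in Lemma~\ref{lem:GHP_convergence}, this is at most $C$ for every $n\ge m$. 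Taking $C_m:=C_{\mathrm E}/c_{\mathrm E}+C\rho(\mathbf M)^{m}$ gives $\pi_n(V(\Lambda))\le C_m\rho(\mathbf M)^{-m}$. (In fact the terminal term tends to $0$ as $n\to\infty$, but the uniform-in-$n$ bound is all that is required.)

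For the weak convergence $\pi_n\Rightarrow\mu_{\Xi_{\mathrm M}}$, fix a level-$m$ cell $\Lambda$ of type $i$ and compute $\lim_n\pi_n(V(\Lambda))$. The terminal contribution vanishes as $n\to\infty$. The interior contribution is $(2|E(\Lambda)|-\deg_\Lambda(a)-\deg_\Lambda(b))/(2|E(\Xi^n)|)$, and by the same $\rho(\mathbf N)<\rho(\mathbf M)$ bound it equals $|E(\Xi_i^{n-m})|/|E(\Xi^n)|+o(1)$. The edge counts satisfy the linear recursion $\chi(\Xi_i^{\ell+1})=\chi(\Xi_i^\ell)\mathbf M$. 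Primitivity of $\mathbf M$ and Perron--Frobenius then give $|E(\Xi_i^\ell)|=\lambda_i\rho(\mathbf M)^\ell(1+o(1))$, so $\lim_n\pi_n(V(\Lambda))=\lambda_i\rho(\mathbf M)^{-m}/\lambda_\iota$.

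The main obstacle is identifying this limit with $\mu_{\Xi_{\mathrm M}}(\Lambda)=(\kappa_i/\kappa_*)\rho(\mathbf M)^{-m}$. Both the edge-count vector and the interior-vertex-count vector $u(\ell)$ are asymptotically multiples of the same right Perron vector of $\mathbf M$; the vertex recursion is affine, but its constant term is lower order. Hence $\lambda_i/\lambda_\iota=\kappa_i/\kappa_*$, using $\Xi^n=\Xi^n_\iota$. Once the two set functions agree on the cell algebra, conclude as in Lemma~\ref{lem:GHP_convergence}. The sequence $(\pi_n)$ is tight because $\Xi_{\mathrm M}$ is compact. Any subsequential limit agrees with $\mu_{\Xi_{\mathrm M}}$ on cells, and it gives no mass to finite-born points, since the star neighbourhood of such a point consists of at most $C\rho(\mathbf N)^{m-k}$ level-$m$ cells, whose total mass is $O\bigl((\rho(\mathbf N)/\rho(\mathbf M))^m\bigr)$ by the uniform bound above. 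The monotone class argument then identifies every subsequential limit with $\mu_{\Xi_{\mathrm M}}$, which proves $\pi_n\Rightarrow\mu_{\Xi_{\mathrm M}}$.
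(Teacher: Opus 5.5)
Your argument is correct in substance and uses the same ingredients as the paper. For the cell bound these are the decomposition of $\sum_{x\in V(\Lambda)}\deg_{\Xi^n}(x)$ into at most $2|E(\Lambda)|$ from interior vertices plus the two terminal degrees, Lemma~\ref{lem:basic_prop}, and $\rho(\mathbf N)<\rho(\mathbf M)$. The routes differ only for the weak convergence. The paper compares $\pi_n$ with $\mu_{\Xi^n}$ cell by cell and inherits the limit from Lemma~\ref{lem:GHP_convergence}. That comparison tacitly needs exactly the fact you prove: edge counts and interior-vertex counts of cells are asymptotic to multiples of the same right Perron vector of $\mathbf M$, so $\lambda_i/\lambda_\iota=\kappa_i/\kappa_\iota$. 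You instead identify subsequential limits of $(\pi_n)$ directly via tightness, absence of atoms and a monotone class argument. This is longer than the paper's transfer argument but self-contained.

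One justification in that last step is wrong as written. With your constant $C_m=C_{\mathrm E}/c_{\mathrm E}+C\rho(\mathbf M)^m$, the uniform bound only gives $\pi_n(V(\Lambda))=O(1)$ per cell. Summing over the $\deg_{\Xi^m}(x)\asymp\rho(\mathbf N)^{m-k}$ cells of the star therefore gives nothing small, not $O\bigl((\rho(\mathbf N)/\rho(\mathbf M))^m\bigr)$.

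The fix is to use the bound in the limit $n\to\infty$, which you already have:
\begin{itemize}
\item the interior part of $\pi_n(V(\Lambda))$ is at most $(C_{\mathrm E}/c_{\mathrm E})\rho(\mathbf M)^{-m}$, uniformly in $n$;
\item for fixed $m$, each of the finitely many terminal contributions in the star, including that of $x$ itself, is $O\bigl((\rho(\mathbf N)/\rho(\mathbf M))^n\bigr)\to 0$.
\end{itemize}
Now let $\nu$ be the limit of a subsequence $\pi_{n_j}$, and let $U$ be an open ball around $x$ contained in the level-$m$ star. Then
\[
\nu(\{x\})\le\nu(U)\le\liminf_{j}\pi_{n_j}(U)\lesssim\rho(\mathbf N)^{m-k}\rho(\mathbf M)^{-m},
\]
and the right-hand side tends to $0$ as $m\to\infty$.

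Note also the order of the steps. This no-atom statement must come before the identification $\nu(\Lambda)=\lim_j\pi_{n_j}(V(\Lambda))$, because that identification uses $\nu(\partial\Lambda)=0$.
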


\begin{proof}
Fix $m$ and a level-$m$ cell $\Lambda$ with planting vertices $u_1,u_2$.
Then
\[
\sum_{x\in V(\Lambda)}\deg_{\Xi^n}(x)
=
2|E(\Lambda)|+O(\deg_{\Xi^n}(u_1)+\deg_{\Xi^n}(u_2)).
\]
By Lemma~\ref{lem:basic_prop}, $|E(\Lambda)|\asymp \rho(\mathbf M)^{n-m}$, $|E(\Xi^n)|\asymp \rho(\mathbf M)^n$, and $\deg_{\Xi^n}(u_i)\lesssim \rho(\mathbf N)^{n-m}$.
Hence $\pi_n(V(\Lambda))\le C_m\rho(\mathbf M)^{-m}$.

The same decomposition shows that, on each fixed level-$m$ cell, $\pi_n$ and $\mu_{\Xi^n}$ differ by $O((\rho(\mathbf N)/\rho(\mathbf M))^{n-m})$.
Since $\mu_{\Xi^n}\Rightarrow \mu_{\Xi_{\mathrm M}}$ by Lemma~\ref{lem:GHP_convergence}, the weak convergence of $\pi_n$ follows.
\end{proof}

\begin{lemma}\label{lem:sec31_discrete_harmonics}
Fix $m\ge 0$ and $h:V(\Xi^m)\to\mathbb R$.
Let $P_m^{(n)}h$ be the unique minimiser of $\widetilde{\mathcal E}_n^{X}$ on $V(\Xi^n)$ with boundary values $h$ on $V(\Xi^m)$, and let $P_mh$ be the continuous harmonic function from Lemma~\ref{lem:sec31_continuous_harmonics}.
Then $\|P_m^{(n)}h-P_mh\|_{L^\infty(V(\Xi^n))}\to 0$.
In particular, if $(\eta_a)_{a=1}^{d_m}$ is a basis of $\mathbb R^{V(\Xi^m)}$, $\phi_{m,a}^{(n)}:=P_m^{(n)}\eta_a$, and $\phi_{m,a}:=P_m\eta_a$, then
\[
\widetilde{\mathcal E}_n^{X}(\phi_{m,a}^{(n)},\phi_{m,b}^{(n)})
\to
\mathcal E_W(\phi_{m,a},\phi_{m,b}),
\]
and
\[
\langle \phi_{m,a}^{(n)},\phi_{m,b}^{(n)}\rangle_{L^2(\pi_n)}
\to
\langle \phi_{m,a},\phi_{m,b}\rangle_{L^2(\mu_{\Xi_{\mathrm M}})}.
\]
\end{lemma}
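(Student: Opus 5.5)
The plan is to reduce everything to the level-wise traces supplied by Lemma~\ref{lem:renorm}, and then to use the uniform oscillation contraction already established in the proof of Lemma~\ref{lem:sec31_continuous_harmonics}. First I will unwind the normalisation of $\widetilde{\mathcal E}_n^X$. Since $\langle f-P_nf,f\rangle_{L^2(\pi_n)}=\frac{1}{2|E(\Xi^n)|}\sum_{\{u,w\}\in E(\Xi^n)}(f(u)-f(w))^2$, the choice of $a_n$ gives
\[
\widetilde{\mathcal E}_n^X(f,f)=c_\Psi^*\rho(\Psi)^n\sum_{\{u,w\}\in E(\Xi^n)}(f(u)-f(w))^2 .
\]
Its minimiser $P_m^{(n)}h$ is therefore the unit-conductance harmonic extension of $h$, computed cell by cell on the level-$m$ cells.

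Fix $\ell$ with $m\le\ell\le n$. Each level-$\ell$ cell of colour $i$ is an $(n-\ell)$-step copy attached only at its terminals. By the super-edge reduction in Lemma~\ref{lem:renorm}, the restriction of $P_m^{(n)}h$ to $V(\Xi^\ell)$ is the harmonic extension of $h$ on $\Xi^\ell$ in which each colour-$i$ edge carries conductance proportional to $\alpha_i^{(n-\ell)}=c_\Psi^*\rho(\Psi)^{n-\ell}[\Psi^{n-\ell}(\mathbf 1)]_i^{-1}$. By construction, $P_mh$ is the analogous extension with conductances $\alpha_i^{(\infty)}=[\mathbf v]_i^{-1}$, since $r_\ell$ is proportional to $\mathbf v$ colourwise. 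Corollary~\ref{cor:Psi_strong_convergence} gives $\alpha^{(q)}\to\alpha^{(\infty)}$ as $q\to\infty$, with the normalisation $\|\mathbf v\|_1=1$. Harmonic extension on the fixed finite graph $\Xi^\ell$ is a continuous function of positive conductances, being the solution of a nondegenerate linear system. Hence, for each fixed $\ell$,
\[
P_m^{(n)}h\big|_{V(\Xi^\ell)}\longrightarrow P_mh\big|_{V(\Xi^\ell)}\qquad(n\to\infty).
\]

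To upgrade this to uniform convergence I will use the contraction factor $\theta\in(0,1)$ from Lemma~\ref{lem:sec31_continuous_harmonics}. That factor is uniform over the compact conductance family $\{\alpha^{(q)}\}_{q}\cup\{\alpha^{(\infty)}\}$, and it applies to both $P_m^{(n)}h$ and $P_mh$ because their traces are harmonic for conductances in this family. The order of choices is as follows.
\begin{itemize}
\item Given $\varepsilon>0$, choose $\ell$ so large that $\theta^{\ell-m}\operatorname{osc}(h)<\varepsilon$.
\item Then both functions oscillate by less than $\varepsilon$ on every level-$\ell$ cell, and by the maximum principle each stays within $\varepsilon$ of its terminal values on that cell.
\item Finally choose $n$ large enough that the terminal values at level $\ell$ agree to within $\varepsilon$, using the previous paragraph.
\end{itemize}
This gives $\|P_m^{(n)}h-P_mh\|_{L^\infty(V(\Xi^n))}\le 3\varepsilon$. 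The step I expect to need the most care is the claim that the contraction constant really is uniform along the whole sequence $\alpha^{(q)}$, including small $q$. I would handle this by noting that the family is precompact in $(0,\infty)^K$ by Corollary~\ref{lem:Psi_range}, and then invoking the strong-maximum-principle argument of Lemma~\ref{lem:sec31_continuous_harmonics} verbatim.

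For the energies, I will trace down to level $m$. This gives
\[
\widetilde{\mathcal E}_n^X\bigl(P_m^{(n)}h,P_m^{(n)}h\bigr)=\sum_{e\in E(\Xi^m)}c_\Psi^*\rho(\Psi)^n[\Psi^{n-m}(\mathbf 1)]_{\mathscr C(e)}^{-1}\,(\nabla_e h)^2 .
\]
By Corollary~\ref{cor:Psi_strong_convergence}, this converges to $\sum_{e}\rho(\Psi)^m[\mathbf v]_{\mathscr C(e)}^{-1}(\nabla_eh)^2=\mathcal E_m^W(h,h)$, and the latter equals $\mathcal E_W(P_mh,P_mh)$ by Theorem~\ref{thm:sec31_limit_dirichlet_form}. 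The bilinear statement then follows by polarisation, since $P_m^{(n)}$ and $P_m$ are linear in $h$. For the inner products, I will write
\[
\langle\phi^{(n)}_{m,a},\phi^{(n)}_{m,b}\rangle_{\pi_n}-\langle\phi_{m,a},\phi_{m,b}\rangle_{\mu_{\Xi_{\mathrm M}}}
\]
as the sum of two terms. The first term, $\langle\phi^{(n)}_{m,a},\phi^{(n)}_{m,b}\rangle_{\pi_n}-\langle\phi_{m,a},\phi_{m,b}\rangle_{\pi_n}$, is bounded by the uniform error, using the uniform boundedness of all functions by $\max|\eta|$. The second term, $\int\phi_{m,a}\phi_{m,b}\,d\pi_n-\int\phi_{m,a}\phi_{m,b}\,d\mu_{\Xi_{\mathrm M}}$, tends to $0$ by Lemma~\ref{lem:sec31_speed_measure}. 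Here I use that $\phi_{m,a}\phi_{m,b}\in C(\Xi_{\mathrm M})$ and that $V(\Xi^n)\subset\Xi_{\mathrm M}$ in the fixed realisation.
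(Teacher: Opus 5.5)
Your proposal is correct and follows essentially the same route as the paper: trace the unit-resistance network to a fixed coarse level, and use Corollary~\ref{cor:Psi_strong_convergence} to get convergence of the traced conductances, hence of the finite-dimensional harmonic extension. You then upgrade to uniform convergence on $V(\Xi^n)$ via the oscillation contraction of Lemma~\ref{lem:sec31_continuous_harmonics}, and read off the Gram-matrix limits from the traced energies and Lemma~\ref{lem:sec31_speed_measure}. Your write-up is just more explicit than the paper's (normalisation of $\widetilde{\mathcal E}_n^X$, the $3\varepsilon$ argument, polarisation), and the uniformity of $\theta$ down to small $q$ that you flag is harmless but not actually needed. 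The contraction is only applied between levels $m$ and $\ell$, where the relevant index satisfies $q\ge n-\ell$, which is large once $n$ is chosen after $\ell$.
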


\begin{proof}
Fix a coarse depth $r$.
Tracing the unit-resistance network on $\Xi^n$ to $V(\Xi^{m+r})$ gives macro-conductances $c_\Psi^*\rho(\Psi)^n[\Psi^{n-m-r}(\mathbf 1)]_j^{-1}$ on colour-$j$ edges.
By Perron--Frobenius asymptotics, these converge uniformly in $j$ to $\rho(\Psi)^{m+r}[\mathbf v]_j^{-1}$.
Hence the discrete harmonic extension on the fixed finite graph $V(\Xi^{m+r})$ converges to the weighted harmonic extension.
The oscillation contraction from Lemma~\ref{lem:sec31_continuous_harmonics} upgrades this to uniform convergence on all deeper levels.
The convergence of the energy and $L^2$ Gram matrices is then immediate from the traced energy convergence and Lemma~\ref{lem:sec31_speed_measure}.
\end{proof}

\begin{lemma}\label{lem:sec31_projection_tail_discrete}
For every $n\ge m$ and every $f:V(\Xi^n)\to\mathbb R$, if $f_m$ is the discrete harmonic extension of $f|_{V(\Xi^m)}$, then
\[
\|f-f_m\|_{L^2(\pi_n)}^2
\le
C(\rho(\mathbf M)\rho(\Psi))^{-m}\widetilde{\mathcal E}_n^{X}(f,f).
\]
\end{lemma}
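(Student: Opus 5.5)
The plan is to follow the continuous argument of Lemma~\ref{lem:sec31_projection_tail_continuous} at the discrete level, using a telescoping decomposition across levels. Write $f-f_m=\sum_{\ell=m}^{n-1}(f_{\ell+1}-f_\ell)$, where $f_\ell$ is the discrete harmonic extension (for $\widetilde{\mathcal E}_n^X$, equivalently for the unit-conductance energy on $\Xi^n$) of $f|_{V(\Xi^\ell)}$, and $f_n=f$. Each increment $g_\ell:=f_{\ell+1}-f_\ell$ vanishes on $V(\Xi^\ell)$. On every level-$\ell$ cell $\Lambda$ it is the difference of two harmonic functions on that cell. Because the cells are attached only at their terminals, orthogonality of harmonic projections gives the Pythagorean identity
\[
\widetilde{\mathcal E}_n^X(f,f)=\widetilde{\mathcal E}_n^X(f_m,f_m)+\sum_{\ell=m}^{n-1}\widetilde{\mathcal E}_n^X(g_\ell,g_\ell),
\]
and each $\widetilde{\mathcal E}_n^X(g_\ell,g_\ell)$ splits as a sum over level-$\ell$ cells.

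The key step is a cellwise sup bound. Let $\Lambda$ be a level-$\ell$ cell of $\Xi^n$. Then $g_\ell$ vanishes at the two terminals of $\Lambda$ and is harmonic off $V(\Xi^{\ell+1})\cap\Lambda$. Its values are therefore determined by finitely many values at the level-$(\ell+1)$ vertices inside $\Lambda$, namely the interior vertices of one rule graph. The energy of $g_\ell$ on $\Lambda$ is at least its traced energy on this rule graph. The traced conductances are, by Lemma~\ref{lem:renorm} and Corollary~\ref{lem:Psi_range}, $[\Psi^{n-\ell-1}(\mathbf 1)]_j^{-1}\asymp\rho(\Psi)^{-(n-\ell)}$ uniformly. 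After multiplying by $a_n/(2|E(\Xi^n)|)\asymp\rho(\Psi)^n$, the $\widetilde{\mathcal E}_n^X$-traced conductances are therefore $\asymp\rho(\Psi)^{\ell}$. On a finite rule graph with zero boundary values at the planting vertices, the energy is a norm on the interior values. Hence
\[
\sup_\Lambda|g_\ell|^2\le C\rho(\Psi)^{-\ell}\,\widetilde{\mathcal E}_{n,\Lambda}^X(g_\ell,g_\ell),
\]
where the maximum principle extends the sup from the skeleton to all of $\Lambda$, and $C$ depends only on the finitely many rule graphs.

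Next, Lemma~\ref{lem:sec31_speed_measure} supplies the mass input: $\pi_n(V(\Lambda))\le C\rho(\mathbf M)^{-\ell}$. I need this with a constant uniform in $\ell$, not $C_\ell$. I will check this directly from its proof. The terminal-degree correction is $\rho(\mathbf N)^{n-\ell}/\rho(\mathbf M)^n\le\rho(\mathbf M)^{-\ell}$, and $|E(\Lambda)|\le C_{\mathrm E}\rho(\mathbf M)^{n-\ell}$. Thus
\[
\|g_\ell\|_{L^2(\pi_n)}^2\le C(\rho(\mathbf M)\rho(\Psi))^{-\ell}\widetilde{\mathcal E}_n^X(g_\ell,g_\ell).
\]
Here a terminal shared by many cells has $g_\ell=0$, so only interior vertices count and there is no overcounting.

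Finally, sum over $\ell$. Applying Minkowski's inequality and then Cauchy--Schwarz with weights $\lambda^{-\ell/2}$, where $\lambda:=\rho(\mathbf M)\rho(\Psi)>1$ by Proposition~\ref{prop:MPsiD}, gives
\[
\|f-f_m\|_{L^2(\pi_n)}^2\le\Big(\sum_{\ell\ge m}\lambda^{-\ell}\Big)\,C\sum_\ell\widetilde{\mathcal E}_n^X(g_\ell,g_\ell)\le C'\lambda^{-m}\widetilde{\mathcal E}_n^X(f,f).
\]
The main obstacle I expect is the uniformity of the cellwise constant. It requires the traced conductances on each cell's rule-graph skeleton to lie in a compact subset of $(0,\infty)^K$ uniformly in $n-\ell$, which Corollary~\ref{lem:Psi_range} provides. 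It also requires the mass bound on cells to be uniform in the level, which I expect to verify by the computation above rather than quoting Lemma~\ref{lem:sec31_speed_measure} verbatim.
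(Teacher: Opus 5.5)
Your proposal is correct and follows the paper's proof. As there, you telescope $f-f_m$ into one-step harmonic increments, bound each increment's cellwise supremum by its cellwise energy through the compact family of traced conductances on the rule-graph skeleton, multiply by a cell mass $\lesssim\rho(\mathbf M)^{-\ell}$, and sum over cells and over $\ell\ge m$; your Pythagorean identity and Minkowski--Cauchy--Schwarz step spell out what the paper leaves implicit. One correction to your mass check: a terminal of a level-$\ell$ cell born at level $k\le\ell$ has $\deg_{\Xi^n}\asymp\rho(\mathbf N)^{n-k}$, not $\rho(\mathbf N)^{n-\ell}$, so $\pi_n(V(\Lambda))\lesssim\rho(\mathbf M)^{-\ell}$ is \emph{not} uniform in $\ell$ once terminals are included. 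The bound you actually need is $\pi_n\bigl(V(\Lambda)\setminus\{u_1,u_2\}\bigr)\le|E(\Lambda)|/|E(\Xi^n)|\lesssim\rho(\mathbf M)^{-\ell}$, and it suffices exactly because $g_\ell$ vanishes at the terminals, as you note; this is also the uniformity that the paper's appeal to Lemma~\ref{lem:sec31_speed_measure}, with its $m$-dependent constant $C_m$, tacitly requires.
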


\begin{proof}
Write $f-f_m$ as the sum of the one-step increments between consecutive coarse levels.
On each level-$\ell$ cell such an increment vanishes at the two terminals and lies in a fixed finite-dimensional prototype space.
The traced conductance vectors form a compact family, so a uniform norm equivalence bounds the cellwise supremum by the cellwise energy.
Lemma~\ref{lem:sec31_speed_measure} gives $\pi_n(V(\Lambda))\lesssim \rho(\mathbf M)^{-\ell}$ for every level-$\ell$ cell $\Lambda$.
Summing over cells and over $\ell\ge m$ proves the estimate.
\end{proof}

We now use the asymptotic relation from \cite{kuwae2003convergence} induced by the restriction maps $J_n\varphi:=\varphi|_{V(\Xi^n)}$ for $\varphi\in C(\Xi_{\mathrm M})$.
Strong and weak convergence below are understood with respect to this common-space realisation.

\begin{theorem}\label{thm:sec31_mosco}
The forms $\widetilde{\mathcal E}_n^{X}$ Mosco-converge to $(\mathcal E_W,\mathcal F_W)$.
\end{theorem}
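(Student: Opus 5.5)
We verify the two Mosco conditions in the Kuwae--Shioya framework with respect to the restriction maps $J_n$. The core is a finite-dimensional approximation. Lemmas~\ref{lem:sec31_discrete_harmonics} and~\ref{lem:sec31_projection_tail_discrete} reduce everything to level-$m$ harmonic functions, and Lemma~\ref{lem:sec31_projection_tail_continuous} controls the tails uniformly in $n$.

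\textbf{Limsup (recovery) condition.} Since $\bigcup_m\mathcal H_m$ is a core for $(\mathcal E_W,\mathcal F_W)$ by Theorem~\ref{thm:sec31_limit_dirichlet_form}, it suffices by a diagonal argument to treat $u\in\mathcal H_m$. Write $u=\sum_a \lambda_a\phi_{m,a}$ and set $u_n:=\sum_a\lambda_a\phi^{(n)}_{m,a}$. By Lemma~\ref{lem:sec31_discrete_harmonics}, $\|u_n-J_nu\|_{L^\infty}\to 0$, and Lemma~\ref{lem:sec31_speed_measure} then gives $u_n\to u$ strongly. The Gram-matrix convergence in Lemma~\ref{lem:sec31_discrete_harmonics} yields $\widetilde{\mathcal E}^X_n(u_n,u_n)\to\mathcal E_W(u,u)$. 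For general $u\in\mathcal F_W$ we approximate by $u^{(m)}\in\mathcal H_m$ in form norm and take a diagonal sequence.

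\textbf{Liminf condition.} Let $f_n\rightharpoonup u$ weakly with $\liminf\widetilde{\mathcal E}^X_n(f_n,f_n)<\infty$, and pass to a subsequence with bounded energy. Fix $m$ and let $f_{n,m}$ be the discrete harmonic extension of $f_n|_{V(\Xi^m)}$. By minimality, $\widetilde{\mathcal E}^X_n(f_n,f_n)\ge\widetilde{\mathcal E}^X_n(f_{n,m},f_{n,m})$, and by Lemma~\ref{lem:sec31_projection_tail_discrete}, $\|f_n-f_{n,m}\|^2_{L^2(\pi_n)}\le C(\rho(\mathbf M)\rho(\Psi))^{-m}\sup_n\widetilde{\mathcal E}^X_n(f_n,f_n)$. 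The coefficient vectors $f_n|_{V(\Xi^m)}\in\mathbb R^{V(\Xi^m)}$ are bounded, because the energy controls oscillations and the $L^2$ norm controls the mean. After a further subsequence they converge to some $h_m$. Lemma~\ref{lem:sec31_discrete_harmonics} then gives $f_{n,m}\to P_mh_m$ strongly and $\widetilde{\mathcal E}^X_n(f_{n,m},f_{n,m})\to\mathcal E_W(P_mh_m,P_mh_m)$. Weak convergence of $f_n$ to $u$, together with the tail bound, yields $\|u-P_mh_m\|_{L^2}\lesssim(\rho(\mathbf M)\rho(\Psi))^{-m/2}$, using Proposition~\ref{prop:MPsiD} to ensure that this decays. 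The energies $\mathcal E_W(P_mh_m,P_mh_m)$ are bounded by $\liminf\widetilde{\mathcal E}^X_n(f_n,f_n)$. Since the traces on $V(\Xi^m)$ are compatible, $P_mh_m$ is the level-$m$ harmonic projection of $P_{m'}h_{m'}$ for $m'>m$. Letting $m\to\infty$, closedness of $\mathcal E_W$ gives $u\in\mathcal F_W$ and $\mathcal E_W(u,u)\le\liminf_n\widetilde{\mathcal E}^X_n(f_n,f_n)$.

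\textbf{Main obstacle.} The hardest step is the liminf argument: identifying the limits $h_m$ consistently across $m$ and ensuring that the vertex values $f_n(x)$ for $x\in V(\Xi^m)$ converge. Point values are not controlled by weak $L^2$ convergence alone. I would first try to derive them from cell averages: the tail estimate shows that $f_n$ is close in $L^2$ to a function that is harmonic on level-$m$ cells, whose cell averages are fixed linear combinations of the vertex values. Weak convergence tested against cell indicators, whose boundaries are $\mu_{\Xi_{\mathrm M}}$-null by Lemma~\ref{lem:GHP_convergence}, then determines those combinations up to errors of order $(\rho(\mathbf M)\rho(\Psi))^{-m/2}$. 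The remaining risk is the degeneracy of $\pi_n$ near high-degree finite-born vertices, which is handled by the uniform cell bound of Lemma~\ref{lem:sec31_speed_measure}.
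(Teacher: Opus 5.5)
Your proposal is correct and follows essentially the same route as the paper. The limsup uses recovery sequences $P_m^{(n)}h$ from Lemma~\ref{lem:sec31_discrete_harmonics} together with the core property, and the liminf uses level-$m$ harmonic projection, the uniform tail bound of Lemma~\ref{lem:sec31_projection_tail_discrete}, finite-dimensional compactness and $m\to\infty$; where the paper cites Lemma~\ref{lem:sec31_projection_tail_continuous} at the end, you use weak lower semicontinuity of the norm and closedness of $\mathcal E_W$.

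The ``main obstacle'' you flag is already settled by your own compactness step, so the cell-average detour is unnecessary. After a diagonal extraction, every subsequential limit $h_m$ satisfies both $\|u-P_mh_m\|_{L^2}\lesssim(\rho(\mathbf M)\rho(\Psi))^{-m/2}$ and the energy bound, so the point values never need to be identified.
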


\begin{proof}
For the limsup inequality, it is enough to approximate a harmonic spline $u=P_mh$ by $u_n:=P_m^{(n)}h$.
Lemma~\ref{lem:sec31_discrete_harmonics} gives $u_n\to u$ strongly, and the traced energy convergence gives $\widetilde{\mathcal E}_n^X(u_n,u_n)\to \mathcal E_W(u,u)$.
Since harmonic splines form a core by Theorem~\ref{thm:sec31_limit_dirichlet_form}, the full limsup follows.

For the liminf inequality, let $f_n\rightharpoonup f$ and assume that $\sup_n\widetilde{\mathcal E}_n^X(f_n,f_n)<\infty$.
Fix $m$ and let $u_n^{(m)}$ be the discrete harmonic extension of $f_n|_{V(\Xi^m)}$.
By Lemma~\ref{lem:sec31_projection_tail_discrete}, the difference $f_n-u_n^{(m)}$ tends to zero in $L^2$ uniformly in $n$ up to the factor $(\rho(\mathbf M)\rho(\Psi))^{-m/2}$.
Expand $u_n^{(m)}$ in a fixed basis of level-$m$ splines.
Lemma~\ref{lem:sec31_discrete_harmonics} implies convergence of the corresponding Gram matrices, hence after passing to a subsequence we obtain a limit $u_m\in \mathcal H_m$ with $\mathcal E_W(u_m,u_m)\le \liminf_n\widetilde{\mathcal E}_n^X(f_n,f_n)$.
The family $(u_m)$ is compatible under harmonic projection, and Lemma~\ref{lem:sec31_projection_tail_continuous} shows that it converges in $L^2(\mu_{\Xi_{\mathrm M}})$ to $f$.
Letting $m\to\infty$ yields the liminf inequality.
\end{proof}

\begin{lemma}\label{lem:sec31_one_step_stay}
There are constants $r_{\mathrm{stay}}\in(0,1)$, $c_{\mathrm{stay}}>0$, $p_{\mathrm{stay}}\in(0,1)$, and $n_{\mathrm{stay}}\in\mathbb N$ such that, for all $n\ge n_{\mathrm{stay}}$, all $x\in V(\Xi^n)$, and all admissible radii $\delta\in(0,r_{\mathrm{stay}}]$, the exit time $\tau_n(x,\delta)$ from the ball of radius $\delta$ in the metric $\hat d_{\Xi^n}$ satisfies
\[
\mathbb P_x\bigl(\tau_n(x,\delta)\ge c_{\mathrm{stay}}a_n\delta^{\dimwalk(\Xi)}\bigr)\ge p_{\mathrm{stay}}.
\]
\end{lemma}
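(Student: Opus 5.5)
The plan is to derive a uniform lower bound on the staying probability from the two-sided mean exit-time bounds, via the standard inequality for exit times from a set $U$:
\[
\mathbb P_x(\tau_U\ge s)\ \ge\ \frac{\mathbb E^x[\tau_U]-s}{\sup_{y\in U}\mathbb E^y[\tau_U]}.
\]
This follows from the Markov property at time $s$, since $\mathbb E^x[\tau_U]\le s+\mathbb P_x(\tau_U> s)\sup_y\mathbb E^y[\tau_U]$. It therefore suffices to show two things, uniformly in $n$, $x$ and $\delta$. First, a lower bound $\mathbb E^x[\tau_n(x,\delta)]\ge c\,a_n\delta^{\dimwalk(\Xi)}$. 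Second, an upper bound $\sup_{y\in B}\mathbb E^y[\tau_n(x,\delta)]\le C\,a_n\delta^{\dimwalk(\Xi)}$, where $B$ is the rescaled ball of radius $\delta$ about $x$. Taking $c_{\mathrm{stay}}=c/2$ then gives $p_{\mathrm{stay}}=c/(2C)$.

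\textbf{Translating scales.} An admissible $\delta$ corresponds to a graph radius $r=\delta\,\diam(\Xi^n)$ with $c_{\dist}\rho_{\min}(\mathcal D)^m\le r\le C_{\dist}\rho_{\min}(\mathcal D)^m$. Since $\diam(\Xi^n)\asymp\rho_{\min}(\mathcal D)^n$ by Lemma~\ref{lem:basic_prop}, we get $\delta\asymp\rho_{\min}(\mathcal D)^{m-n}$. By Theorem~\ref{thm:Einstein}, $\rho_{\min}(\mathcal D)^{\dimwalk(\Xi)}=\rho(\mathbf M)\rho(\Psi)$. Also $a_n\asymp(\rho(\mathbf M)\rho(\Psi))^n$ by Lemma~\ref{lem:basic_prop}. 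Hence
\[
a_n\delta^{\dimwalk(\Xi)}\asymp(\rho(\mathbf M)\rho(\Psi))^m,
\]
which is exactly the scale appearing in Theorem~\ref{thm:exit_time_scale}.

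\textbf{Lower bound on the mean.} If $x$ is born at level $k\le n-m-c_0$, Theorem~\ref{thm:exit_time_scale} gives the lower bound directly, with constants depending only on the EIGS. If $x$ is born late ($k>n-m-c_0$), then $x$ lies inside a level-$(n-m-c_0)$ cell. I would pick a vertex $x'$ of $x$'s cell at a level with $m'=m-O(1)$ such that a star domain around $x'$ is contained in $B(x,r)$ and contains $x$, and apply the star-domain bound of Lemma~\ref{lem:star_domain_exit_time_scale}. The hitting probabilities along the way must be controlled, and I would do this with the oscillation contraction of Lemma~\ref{lem:sec31_continuous_harmonics}. Alternatively, I would use a Green-function argument: bound $\mathbb E^x[\tau]\ge R(x,B^c)\sum_y h(y)\deg(y)$ with $h\ge 1/2$ on a descendant cell of mass $\asymp\rho(\mathbf M)^{m}$, exactly as in the proof of Lemma~\ref{lem:star_domain_exit_time_scale}. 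The case of a small $m$ is absorbed by choosing $n_{\mathrm{stay}}$ large and $r_{\mathrm{stay}}$ small, so that only finitely many bounded scales remain.

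\textbf{Upper bound on the mean.} For any $y$ in the ball, $\mathbb E^y[\tau_B]\le R(y,B^c)\sum_{z\in B}\deg(z)$ by the Green representation. The ball lies in a union of level-$(n-m-c_0)$ cells around the cells containing $x$. Its total degree is then $\lesssim$ (number of cells) $\cdot\rho(\mathbf M)^{m}$, and the resistance is bounded by $C\rho(\Psi)^m$ divided by the same multiplicity. This is the same cancellation of the $\rho(\mathbf N)$ factors as in Theorem~\ref{thm:exit_time_scale}, obtained there via the $S_+$ star domain.

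\textbf{Main obstacle.} I expect the hardest step to be uniformity over late-born centres $x$ near a high-degree hub, and over starting points $y\neq x$ in the upper bound. There the ball is not aligned with any star domain, and the number of cells it meets can be as large as the hub degree. My first attempt would be to bound $\mathbb E^y[\tau_B]$ by $\mathbb E^y[\tau_{S_+(v)}]$ for a common star domain $S_+(v)$ centred at the hub $v$ nearest to $x$ at level $n-m-c_0$. I would then show $\sup_y\mathbb E^y[\tau_{S_+}]\asymp\mathbb E^v[\tau_{S_+}]$ using $h\le1$ in the Green representation. This reduces everything to Lemma~\ref{lem:star_domain_exit_time_scale}.
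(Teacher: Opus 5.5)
Your reduction via $\mathbb P_x(\tau_U\ge s)\ge(\mathbb E^x[\tau_U]-s)/\sup_{y\in U}\mathbb E^y[\tau_U]$ is sound, and the paper uses the same inequality. The identification $a_n\delta^{\dimwalk(\Xi)}\asymp(\rho(\mathbf M)\rho(\Psi))^m$ is also correct. The gap is the upper bound on $\sup_y\mathbb E^y[\tau]$.

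The factor $1/(\text{multiplicity})$ in the resistance only appears when the walk starts at the hub $v$. Take $y$ inside a single branch $H$ of the star and ground $v$ as well. This shows that $R(y,B^c)$, and likewise $R(y,S_+)$, is bounded below by a resistance computed inside $H$ alone, so it is not divided by the number of branches. Hence $R(y,B^c)\sum_{z\in B}\deg(z)$ overshoots by the hub degree, which is of order $\rho(\mathbf N)^{n-m-k}$ and unbounded.

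For the same reason, ``$h\le 1$ in the Green representation'' does not give $\sup_y\mathbb E^y[\tau_{S_+}]\lesssim\mathbb E^v[\tau_{S_+}]$. The Green kernel started at $y\ne v$ has diagonal value $R(y,S_+)$, not $R(v,S_+)$. What is true comes from the strong Markov property at the hitting time $\sigma_{\partial H}$ of the two terminals of the branch:
\[
\mathbb E^y[\tau_{S_+}]\le\sup_{z\in V(H)}\mathbb E^z[\sigma_{\partial H}]+\mathbb E^v[\tau_{S_+}].
\]
So you need a uniform cell estimate $T_s:=\sup_{H,z}\mathbb E^z[\sigma_{\partial H}]\le C_T(\rho(\mathbf M)\rho(\Psi))^s$, over all $s$-step cells $H$ and all $z\in V(H)$. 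Your proposal does not contain it.

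This cell estimate is the missing idea, and the crude Green bound $\mathbb E^z[\sigma_{\partial H}]\le R(z,\partial H)\,2|E(H)|$ does not supply it. That bound has the right order only when $\rho(\Psi)>1$, where $R(z,\partial H)\lesssim\rho(\Psi)^s$ by a geometric series along the nested cells containing $z$. In that regime your route could be completed with this extra estimate and the decomposition above. Otherwise it fails:
\begin{itemize}
\item For $\rho(\Psi)=1$ (the DHL), a test function dropping by $O(1/s)$ across every second nested level has energy $O(1/s)$. So typical deep vertices have $R(z,\partial H)\gtrsim s$, and you lose a factor $s$.
\item For $\rho(\Psi)<1$, a bounded-degree $z$ has $R(z,\partial H)\ge 1/\deg(z)$ while $\rho(\Psi)^s\to 0$. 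You lose a factor $\rho(\Psi)^{-s}$.
\end{itemize}
In either case $p_{\mathrm{stay}}$ would not be uniform, although the lemma is needed in all three regimes.

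The paper proves the cell estimate by a skeleton recursion. From $z$ the walk reaches the level-one skeleton of $H$ before leaving its child cell, at cost at most $T_{s-1}$. The skeleton chain is a walk on a rule graph with conductances in a compact set, so it makes boundedly many jumps in expectation. Each jump is a star-domain hitting time, bounded by Lemma~\ref{lem:star_domain_exit_time_scale} at depth $s-1$. Hence $T_s\le T_{s-1}+C(\rho(\mathbf M)\rho(\Psi))^{s-1}$, which sums because $\rho(\mathbf M)\rho(\Psi)>1$ (Proposition~\ref{prop:MPsiD}).

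Relatedly, your Green-function lower bound at a late-born centre is unjustified. There $h=\mathbb P^{\cdot}(\tau_x<\tau_{B^c})$ is no longer a cell's $0$--$1$ profile, so oscillation contraction gives nothing. The paper avoids this by applying the inequality only at early-born terminals $w$ of the level-$t$ cell $\Lambda\ni x$. It then transfers to $x$ by the strong Markov property at $\sigma_{\partial\Lambda}$, since the walk must hit a terminal before leaving $\Lambda\subseteq B(x,r/4)$.
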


\begin{proof}
Write $r:=\delta\operatorname{diam}(\Xi^n)$ for the graph-metric radius, so that admissibility provides an integer $m$ with $c_{\dist}\rho_{\min}(\mathcal D)^m\le r\le C_{\dist}\rho_{\min}(\mathcal D)^m$ and $r>1$, and $a_n\delta^{\dimwalk(\Xi)}\asymp(\rho(\mathbf M)\rho(\Psi))^m$ with constants depending only on the EIGS.
Fix the integer $c_1$ with $C_{\dist}\rho_{\min}(\mathcal D)^{-c_1}\le c_{\dist}/4$, and set $m_0:=c_1+2$.

\textbf{Small scales.}
Assume first $m<m_0$.
Two distinct vertices born by level $n-c_2$ are at distance at least $c_{\dist}\rho_{\min}(\mathcal D)^{c_2}>1$ in $\Xi^n$, where $c_2$ is the fixed integer with $c_{\dist}\rho_{\min}(\mathcal D)^{c_2}>1$; hence every neighbour of $x$ has birth level larger than $n-c_2$ and degree at most $D:=C_{\deg}\rho(\mathbf N)^{c_2}$ by Lemma~\ref{lem:basic_prop}.
The walk that alternates between $x$ and a neighbour for $\lceil c_{\mathrm{stay}}a_n\delta^{\dimwalk(\Xi)}\rceil$ steps stays in $B(x,r)$ because $r>1$, and this event has probability at least $D^{-\lceil c_{\mathrm{stay}}a_n\delta^{\dimwalk(\Xi)}\rceil}$, which is bounded below because $a_n\delta^{\dimwalk(\Xi)}\le C(\rho(\mathbf M)\rho(\Psi))^{m_0}$ is bounded.
Assume from now on $m\ge m_0$, and set $t:=n-m+c_1$ and $m':=m-c_1$.

\textbf{Uniform exit bound for cells.}
For $s\in\mathbb N$, define $T_s:=\sup\mathbb E_z\bigl[\sigma_{\partial H}\bigr]$, where the supremum runs over all $s$-step substituted copies $H$ arising in the construction and all $z\in V(H)$, and $\sigma_{\partial H}$ denotes the hitting time of the two terminals of $H$; until $\sigma_{\partial H}$ the walk on $\Xi^n$ agrees with the walk on $H$, since copies are attached only at their terminals.
We claim that $T_s\le C_T(\rho(\mathbf M)\rho(\Psi))^{s}$ with $C_T$ depending only on the EIGS.
From any $z$, the hitting time of the level-one skeleton $V_1(H)$ is bounded by $\sigma_{\partial\Lambda'}$ for the level-one cell $\Lambda'$ containing $z$, hence its expectation is at most $T_{s-1}$.
Observed at successive distinct skeleton vertices, the walk is the random walk on the rule graph in which every colour-$j$ edge carries the effective conductance of the corresponding $(s-1)$-step copy, by the super-edge reduction underlying Lemma~\ref{lem:renorm}.
By Corollary~\ref{cor:Psi_strong_convergence} these conductances lie, after normalisation, in a fixed compact subset of $(0,\infty)^K$, so the expected number of skeleton jumps before hitting the two terminals of $H$ is at most a constant $C_R$ depending only on the EIGS.
Each jump starts at a skeleton vertex $u$, ends at the first visit to $V_1(H)\setminus\{u\}$, and therefore lasts at most $C_\star(\rho(\mathbf M)\rho(\Psi))^{s-1}$ in expectation, uniformly in $u$, by Lemma~\ref{lem:star_domain_exit_time_scale} applied inside $H$ with parameter $s-1$.
Optional stopping then gives $T_s\le T_{s-1}+C_RC_\star(\rho(\mathbf M)\rho(\Psi))^{s-1}$, and since $\rho(\mathbf M)\rho(\Psi)\ge\rho_{\min}(\mathcal D)^2>1$ by Proposition~\ref{prop:MPsiD}, unrolling the recursion proves the claim.

\textbf{Staying at early-born vertices.}
Let $w\in V(\Xi^{t})$, so that the birth level of $w$ is at most $t$ and $m'\le n-t+c_1-c_1= m-c_1\le n-\mathrm{birth}(w)$.
Consider the star of $w$ at depth $m'$, namely the branches $H_e^{(m')}$ over $e\in E_{\Xi^{n-m'}}(w)$ with boundary $S_w^{(n,m')}$, and note that every point of the star is within distance $C_{\dist}\rho_{\min}(\mathcal D)^{m'}\le r/4$ of $w$.
Hence, on the event $\{\tau_w^{(n,m')}\ge T_1\}$ with $T_1:=\tfrac12 c_\star(\rho(\mathbf M)\rho(\Psi))^{m'}$, the walk started at $w$ stays in $B(w,r/4)$ up to time $T_1$.
By Lemma~\ref{lem:star_domain_exit_time_scale}, $\mathbb E_w\bigl[\tau_w^{(n,m')}\bigr]\ge c_\star(\rho(\mathbf M)\rho(\Psi))^{m'}$.
For any $y$ in the star, $y$ lies in a single branch $H$, and by the strong Markov property at $\sigma_{\partial H}$,
\[
\mathbb E_y\bigl[\tau_w^{(n,m')}\bigr]
\le
T_{m'}+C_\star(\rho(\mathbf M)\rho(\Psi))^{m'}
\le
C'(\rho(\mathbf M)\rho(\Psi))^{m'},
\]
because the walk either exits the star at the far terminal of $H$ or reaches $w$, from where Lemma~\ref{lem:star_domain_exit_time_scale} applies again.
Combining $\mathbb E_w[\tau_w^{(n,m')}]\le T_1+\mathbb P_w\bigl(\tau_w^{(n,m')}>T_1\bigr)\sup_y\mathbb E_y[\tau_w^{(n,m')}]$ with the two displays yields
\[
\mathbb P_w\bigl(\tau_w^{(n,m')}\ge T_1\bigr)\ge\frac{c_\star}{2C'}=:p_0.
\]

\textbf{Transfer to arbitrary starting points.}
If $x\in V(\Xi^{t})$, the previous step with $w:=x$ shows that the walk stays in $B(x,r/4)\subseteq B(x,r)$ up to time $T_1$ with probability at least $p_0$.
Otherwise, let $\Lambda$ be the level-$t$ cell containing $x$ and $\sigma:=\sigma_{\partial\Lambda}$; then $\operatorname{diam}(\Lambda)\le C_{\dist}\rho_{\min}(\mathcal D)^{m-c_1}\le r/4$, both terminals of $\Lambda$ belong to $V(\Xi^{t})$, and before $\sigma$ the walk stays in $\Lambda\subseteq B(x,r/4)$.
By the strong Markov property at $\sigma$,
\[
\mathbb P_x\bigl(\tau_n(x,\delta)\ge T_1\bigr)
\ge
\sum_{w\in\partial\Lambda}\mathbb P_x\bigl(X^{\Xi^n}_{\sigma}=w\bigr)\,\mathbb P_w\bigl(\tau_w^{(n,m')}\ge T_1\bigr)
\ge
p_0,
\]
because on the event $\{\tau_w^{(n,m')}\ge T_1\}$ the walk after $\sigma$ stays in $B(w,r/4)\subseteq B(x,r/2)$, and the time elapsed before $\sigma$ only increases $\tau_n(x,\delta)$.
Since $T_1\asymp a_n\delta^{\dimwalk(\Xi)}$ with constants depending only on the EIGS, choosing $c_{\mathrm{stay}}$ accordingly and setting $p_{\mathrm{stay}}$ to be the minimum of $p_0$ and the small-scale bound completes the proof.%

\end{proof}

\begin{theorem}\label{thm:sec31_discrete_eld_tightness}
There are constants $n_{\mathrm{eld}},r_{\mathrm{eld}},t_{\mathrm{eld}},c_{\mathrm{eld}},C_{\mathrm{eld}}>0$ such that, for all $n\ge n_{\mathrm{eld}}$, all $x\in V(\Xi^n)$, all admissible radii $r\in(0,r_{\mathrm{eld}}]$, and all $t\in(0,t_{\mathrm{eld}}r^{\dimwalk(\Xi)}]$, we have
\[
\mathbb P_x\Bigl(a_n^{-1}\tau_{B_{\hat d_{\Xi^n}}(x,r)^c}^{\Xi^n}\le t\Bigr)
\le
C_{\mathrm{eld}}
\exp\Bigl[-c_{\mathrm{eld}}
\Bigl(\frac{r^{\dimwalk(\Xi)}}{t}\Bigr)^{1/(\dimwalk(\Xi)-1)}\Bigr].
\]
In particular, the Poissonised walks $Y_t^{(n)}:=X_{N_{a_nt}}^{\Xi^n}$ are tight in $D([0,T],\Xi_{\mathrm M})$ for every $T>0$.
\end{theorem}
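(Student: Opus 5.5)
The plan is the standard chaining argument: turn the one-scale stay estimate of Lemma~\ref{lem:sec31_one_step_stay} into a sub-Gaussian-type tail by splitting the exit from a large ball into many exits from small balls. Fix $n$, $x$, an admissible $r$ and $t\le t_{\mathrm{eld}}r^{\dimwalk(\Xi)}$. Write $\beta:=\dimwalk(\Xi)\ge2$ (Proposition~\ref{prop:walk_dimension_ge_2}). Choose an integer $N\ge1$ and a radius $\delta:=r/(2N)$, rounded to an admissible radius comparable to $r/N$. Rounding costs only constants, because admissible radii are spaced by the fixed factor $\rho_{\min}(\mathcal D)$.

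The first step defines successive stopping times $\sigma_0:=0$ and $\sigma_{i+1}:=\inf\{j>\sigma_i:\hat d_{\Xi^n}(X_j,X_{\sigma_i})\ge\delta\}$. Since one step moves at most $1/\diam(\Xi^n)\ll\delta$ for $n\ge n_{\mathrm{eld}}$, the walk cannot leave $B(x,r)$ before $\sigma_N$, which gives $\tau_{B(x,r)^c}\ge\sigma_N=\sum_{i<N}\xi_i$ with $\xi_i:=\sigma_{i+1}-\sigma_i$. By the strong Markov property and Lemma~\ref{lem:sec31_one_step_stay}, each $\xi_i$ satisfies $\mathbb P(\xi_i\ge s_\delta\mid\mathcal F_{\sigma_i})\ge p_{\mathrm{stay}}$ with $s_\delta:=c_{\mathrm{stay}}a_n\delta^{\beta}$, uniformly in the starting point.

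The second step compares with a binomial. The indicators $\mathbf 1\{\xi_i\ge s_\delta\}$ stochastically dominate i.i.d.\ Bernoulli$(p_{\mathrm{stay}})$ variables, and the comparison is conditional at each stage. Azuma--Hoeffding applied to the martingale differences $\mathbf 1\{\xi_i\ge s_\delta\}-\mathbb P(\cdot\mid\mathcal F_{\sigma_i})$ then gives $\mathbb P\bigl(\#\{i<N:\xi_i\ge s_\delta\}\le p_{\mathrm{stay}}N/2\bigr)\le e^{-cN}$. On the complementary event, $\sigma_N\ge \tfrac12p_{\mathrm{stay}}N s_\delta\asymp a_n N^{1-\beta}r^{\beta}$. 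Choose $N$ so that this lower bound exceeds $a_nt$, that is $N^{\beta-1}\asymp r^\beta/t$, or $N\asymp (r^\beta/t)^{1/(\beta-1)}$. Then $\mathbb P_x(a_n^{-1}\tau\le t)\le e^{-cN}$, which is the claimed bound. The constraint $t\le t_{\mathrm{eld}}r^\beta$ ensures $N\ge1$. The constraint $r\le r_{\mathrm{eld}}$, together with $\delta\le r_{\mathrm{stay}}$, keeps the lemma applicable. The constant $C_{\mathrm{eld}}$ absorbs the case where the optimal $N$ would force $\delta$ below the smallest admissible radius at level $n$, namely $\delta\diam(\Xi^n)\le1$. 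In that regime we cap $N$ at the number of levels available and use $C_{\mathrm{eld}}$ large; alternatively, we restrict $n\ge n_{\mathrm{eld}}$ depending on $t$, which suffices for tightness.

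The third step derives tightness from the tail bound. The Poissonisation $N_{a_nt}$ is handled by a large-deviation estimate for Poisson counts, so the same tail bound holds for $Y^{(n)}$ with adjusted constants. Then apply Aldous' criterion in $D([0,T],\Xi_{\mathrm M})$. Compact containment is automatic because $\Xi_{\mathrm M}$ is compact. For stopping times $S_n\le T$ and $h_n\to0$, the strong Markov property at $S_n$ and the uniform-in-$x$ bound give $\mathbb P(\hat d(Y_{S_n},Y_{S_n+h_n})\ge r)\le C\exp[-c(r^\beta/h_n)^{1/(\beta-1)}]\to0$.

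The main obstacle is the admissibility and discreteness bookkeeping: matching $\delta$ to an admissible radius, ensuring the increment bound is uniform in the starting point (Lemma~\ref{lem:sec31_one_step_stay} is stated for all $x$, which is exactly what is needed), and handling the small-$t$ regime where the chaining scale hits the lattice scale.
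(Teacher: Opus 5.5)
Your chaining argument is the same as the paper's proof, written out in more detail. Your choice $N\asymp(r^\beta/t)^{1/(\beta-1)}$ with $\beta=\dimwalk(\Xi)$ is exactly the paper's $\delta\asymp(t/r)^{1/(\beta-1)}$. The paper likewise uses radial stopping times, indicators of long increments controlled by Lemma~\ref{lem:sec31_one_step_stay} and the strong Markov property, Azuma--Hoeffding, and Aldous' criterion. The one place your proposal goes wrong is the lattice regime you flag as the main obstacle. Your remedies do not prove the statement as written. Capping $N$ gives a bound $e^{-cN_{\max}}$ that no fixed $C_{\mathrm{eld}}$ can compare with $\exp[-c(r^\beta/t)^{1/(\beta-1)}]$ as $t\downarrow 0$. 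Letting $n_{\mathrm{eld}}$ depend on $t$ changes the claim. In fact the regime is empty, and the paper's sketch does not need to discuss it:
\begin{itemize}
\item If $a_nt<r\diam(\Xi^n)$, the discrete walk cannot leave $B_{\hat d_{\Xi^n}}(x,r)$ by time $a_nt$, because each step changes graph distance by at most one. So the probability is zero.
\item If $a_nt\ge r\diam(\Xi^n)$, take $N=\lfloor c_1(r^\beta/t)^{1/(\beta-1)}\rfloor$. Since $a_n\asymp\diam(\Xi^n)^\beta$, the graph radius satisfies $\delta\diam(\Xi^n)\gtrsim c_1^{-1}\bigl(a_nt/(r\diam(\Xi^n))\bigr)^{1/(\beta-1)}\gtrsim c_1^{-1}$.
\end{itemize}
Shrinking $c_1$, at the cost of smaller $c_{\mathrm{eld}}$ and $t_{\mathrm{eld}}$, keeps $\delta$ above any fixed lattice threshold. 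Rounding down to an admissible radius then loses only a factor depending on $\rho_{\min}(\mathcal D)$.

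A smaller point concerns Poissonisation: the tail bound does not transfer to $Y^{(n)}$ with adjusted constants uniformly in $t$. Set $R:=r\diam(\Xi^n)$. When $a_nt\ll R$, the Poissonised walk can exit through a burst of about $R$ jumps. The cost of this in the exponent is only of order $R\log(R/(a_nt))$, far below $(r^\beta/t)^{1/(\beta-1)}\asymp R\,(R/(a_nt))^{1/(\beta-1)}$. Tightness does not need this uniformity, however. For exit times from a ball $B(y,\eta)$ you have $\mathbb P_y(\tau^{Y}\le\theta)\le\mathbb P(N_{a_n\theta}>2a_n\theta)+\mathbb P_y(\tau^{X}\le 2a_n\theta)$. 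The first term tends to zero as $n\to\infty$ for each fixed $\theta$. The second tends to zero as $\theta\downarrow 0$, uniformly in $n$ and $y$, by the discrete bound. This is exactly what the $\lim_{\theta\downarrow 0}\limsup_{n\to\infty}$ form of Aldous' criterion requires.
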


\begin{proof}
Choose $\delta\asymp (t/r)^{1/(\dimwalk(\Xi)-1)}$ (meaningful since $\dimwalk(\Xi)\ge 2$ by Proposition~\ref{prop:walk_dimension_ge_2}) with $16\delta\le r$, define radial stopping times as in the standard chaining proof, and let $I_k$ record the event that the $k$-th radial increment takes at least $c_{\mathrm{stay}}a_n(\delta/8)^{\dimwalk(\Xi)}$ time.
Lemma~\ref{lem:sec31_one_step_stay} and the strong Markov property give a uniform lower bound on the conditional expectations of $I_k$.
If the walk exits $B(x,r)$ by time $a_nt$, then the number of long increments must be abnormally small.
Azuma--Hoeffding therefore yields the stated lower-tail estimate, and Aldous' tightness criterion gives the path-space tightness; see \cite{azuma1967weighted,aldous1978stopping,billingsley1999convergence}.
\end{proof}

In the common realisation fixed above, we use
$\overset{\mathrm{GHPS}}{\longrightarrow}$
as shorthand for Gromov--Hausdorff--Prokhorov convergence of the metric-measure spaces together with weak convergence of the associated paths in $D([0,T],\Xi_{\mathrm M})$.

\begin{theorem}\label{thm:sec31_ghps}
Let $v_n\in V(\Xi^n)$ and suppose that $x_n\to x\in \Xi_{\mathrm M}$, where $x_n:=\pi(v_n)$.
Then for every $T>0$,
\[
\bigl(V(\Xi^n),\hat d_{\Xi^n},\mu_{\Xi^n},v_n,(X_{\lfloor a_nt\rfloor}^{\Xi^n})_{0\le t\le T}\bigr)
\overset{\mathrm{GHPS}}{\longrightarrow}
\bigl(\Xi_{\mathrm M},d_{\Xi_{\mathrm M}},\mu_{\Xi_{\mathrm M}},x,(W_t)_{0\le t\le T}\bigr).
\]
Equivalently, after the common realisation fixed above, the rescaled walks
$t\mapsto X_{\lfloor a_nt\rfloor}^{\Xi^n}$ started from $v_n$ converge in distribution in
$D([0,T],\Xi_{\mathrm M})$ to the diffusion $W$ started from $x$.
\end{theorem}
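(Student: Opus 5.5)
The proof will follow the standard Kuwae--Shioya scheme: Mosco convergence together with tightness gives convergence of the processes. Everything is realised in the common space $\Xi_{\mathrm M}$ fixed in Lemma~\ref{lem:GH_limit_metric}. In that realisation the metric-measure part of the GHPS convergence is immediate: $V(\Xi^n)\subset\Xi_{\mathrm M}$ is Hausdorff-close to $\Xi_{\mathrm M}$ because cell diameters shrink like $\rho_{\min}(\mathcal D)^{-(n-m)}$, Lemma~\ref{lem:GHP_convergence} gives $\mu_{\Xi^n}\Rightarrow\mu_{\Xi_{\mathrm M}}$, and the roots satisfy $x_n\to x$ by hypothesis. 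What remains is convergence in distribution of the paths in $D([0,T],\Xi_{\mathrm M})$.

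First, I will convert Mosco convergence into convergence of semigroups. By Theorem~\ref{thm:sec31_mosco}, $\widetilde{\mathcal E}_n^X$ Mosco-converges to $(\mathcal E_W,\mathcal F_W)$ along the restriction maps $J_n$ and the measures $\pi_n\Rightarrow\mu_{\Xi_{\mathrm M}}$ (Lemma~\ref{lem:sec31_speed_measure}). The Kuwae--Shioya theorem then gives strong convergence of the resolvents, and hence of the semigroups, $e^{-t(I-P_n)a_n}J_n\varphi\to J_nT_t^W\varphi$ in the $L^2$ sense of \cite{kuwae2003convergence}. Here $T^W_t$ is the semigroup of the Hunt process $W$ attached to the regular Dirichlet form of Theorem~\ref{thm:sec31_limit_dirichlet_form}. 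The operator $a_n(I-P_n)$ generates exactly the Poissonised walk $Y^{(n)}$ from Theorem~\ref{thm:sec31_discrete_eld_tightness}.

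Second, I will upgrade this from $L^2$ to pointwise convergence started from $v_n$. This step is the main obstacle: $L^2$ convergence says nothing about a single starting point, and finite-born starting points carry zero $\mu_{\Xi_{\mathrm M}}$-mass. My first attempt will be an equicontinuity argument. The lower-tail exit estimate of Theorem~\ref{thm:sec31_discrete_eld_tightness} holds uniformly in the starting point, so for $\varphi$ Lipschitz, $x\mapsto \mathbb E_x[\varphi(Y^{(n)}_t)]$ is uniformly equicontinuous in $n$ up to an error that vanishes as $t$ and the radius shrink. Combined with the semigroup identity and a short-time smoothing, this should turn $L^2$ convergence into locally uniform convergence. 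That in turn identifies the limit as the continuous version of $T_t^W\varphi$, and makes sense of $W$ started from an arbitrary $x$, including $x\in\mathscr V$. The Markov property then gives convergence of finite-dimensional distributions of $Y^{(n)}$ from $v_n$ to those of $W$ from $x$.

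Third, I will establish tightness and pass from the Poissonised walk to the discrete-time walk. Path-space tightness of $Y^{(n)}$ is part of Theorem~\ref{thm:sec31_discrete_eld_tightness}, and together with the finite-dimensional convergence it gives $Y^{(n)}\Rightarrow W$ in $D([0,T],\Xi_{\mathrm M})$. To return to $X_{\lfloor a_nt\rfloor}$, note that $N_{a_nt}/a_n\to t$ uniformly on $[0,T]$ almost surely, since $a_n\to\infty$. A random time change with uniformly small distortion is continuous in the Skorokhod $J_1$ topology, so the discrete-time rescaled walk has the same limit. If needed, I will also invoke the Skorokhod representation to present the convergence as almost-sure convergence in a common probability space.
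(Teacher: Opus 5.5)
Your architecture is the paper's. Its proof of Theorem~\ref{thm:sec31_ghps} is a one-line appeal to the Kuwae--Shioya process-convergence theorem, fed by the Mosco convergence of Theorem~\ref{thm:sec31_mosco} and the tightness of Theorem~\ref{thm:sec31_discrete_eld_tightness}, followed by the same de-Poissonisation you give. Your Step~2 goes beyond the paper, and you are right that something is needed there. Mosco convergence gives strong $L^2$ convergence of semigroups and resolvents. That controls walks started from initial laws with converging densities, not from a prescribed vertex $v_n$, and $\mu_{\Xi_{\mathrm M}}$ charges no point of $\mathscr V$ (Lemma~\ref{lem:GHP_convergence}). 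The paper passes over this point silently.

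\textbf{The gap.} The mechanism you propose for Step~2 does not work. The lower-tail exit bound of Theorem~\ref{thm:sec31_discrete_eld_tightness} says the walk is unlikely to leave $B(x,r)$ before a time of order $r^{\dimwalk(\Xi)}$. This yields $\sup_y|P^{(n)}_s\varphi(y)-\varphi(y)|\to 0$ as $s\downarrow 0$ uniformly in $n$, which is regularity in time and exactly what tightness uses. It gives no comparison between $P^{(n)}_t\varphi(x)$ and $P^{(n)}_t\varphi(y)$ at a fixed $t>0$ for nearby $x,y$. The semigroup identity $P^{(n)}_t=P^{(n)}_sP^{(n)}_{t-s}$ does not help, because it presupposes that $P^{(n)}_{t-s}\varphi$ is already equicontinuous. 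A short-time smoothing that exploits $L^2$ convergence would instead need uniform-in-$n$ bounds on the $L^2(\pi_n)$-density of the law of $Y^{(n)}_s$ started at $v_n$. Those are discrete heat-kernel upper bounds, which the paper does not establish.

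\textbf{What would repair it.} You need a spatial regularity estimate that is uniform in $n$, and two ingredients already in the paper supply it.
\begin{itemize}
\item The oscillation contraction $\theta<1$ for functions harmonic in the interior of a cell. It holds uniformly for the discrete networks because the traced conductances lie in a compact family (proofs of Lemmas~\ref{lem:star_domain_exit_time_scale} and~\ref{lem:sec31_discrete_harmonics}).
\item The uniform upper bounds on expected exit times from cells and stars (the bound on $T_s$ in the proof of Lemma~\ref{lem:sec31_one_step_stay}, and Theorem~\ref{thm:exit_time_scale}).
\end{itemize}
Split $U^{(n)}_\lambda f$ at the exit time of a small star, exactly as in the continuity part of Proposition~\ref{prop:heat-kernel}. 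This bounds its oscillation near any point by the oscillation of a star-harmonic function plus $\|f\|_\infty$ times a rescaled expected exit time, so the resolvents are equicontinuous uniformly in $n$. Combined with $L^2$ resolvent convergence, this forces $\sup_{y\in V(\Xi^n)}|U^{(n)}_\lambda f(y)-U_\lambda f(y)|\to 0$ with $U_\lambda f$ continuous. Any subsequential limit of $Y^{(n)}$ from $v_n$ exists by tightness. It is then Markov with resolvent $U_\lambda$ started at $x$, hence it is $W$ from $x$. With that replacement for Step~2, your Steps~1 and~3 go through as written.
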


\begin{proof}
The corresponding statement for the Poissonised walks follows from the standard process-convergence theorem for Mosco-convergent regular Dirichlet forms in varying spaces, applied with Theorem~\ref{thm:sec31_mosco}, Theorem~\ref{thm:sec31_discrete_eld_tightness}, and the common-space realisation fixed above; see \cite{kuwae2003convergence,fukushima1994dirichlet,billingsley1999convergence}.
The de-Poissonised statement follows from $\sup_{0\le t\le T}|N_{a_nt}-a_nt|/a_n\to 0$ in probability.
\end{proof}

\begin{corollary}\label{cor:sec31_continuous_stay}
For every $x\in \Xi_{\mathrm M}$ there are constants $c_x,p_x,r_x>0$ such that
\[
\mathbb P_x\bigl(\tau_{B_{\Xi_{\mathrm M}}(x,r)}>c_xr^{\dimwalk(\Xi)}\bigr)\ge p_x
\]
for all $r\in(0,r_x)$.
\end{corollary}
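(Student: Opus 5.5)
The natural route is to transfer the uniform discrete staying estimate of Lemma~\ref{lem:sec31_one_step_stay} to the limit diffusion through the process convergence of Theorem~\ref{thm:sec31_ghps}, using the portmanteau theorem on a suitable closed set of paths.

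Fix $x\in\Xi_{\mathrm M}$ and choose $v_n\in V(\Xi^n)$ with $\pi(v_n)\to x$; this is possible since $\bigcup_n V(\Xi^n)$ is dense in the realisation. Let $r\in(0,r_x)$, where $r_x$ is small enough that $2r\le r_{\mathrm{stay}}$. For each large $n$ pick an admissible radius $\delta_n\in[r/2,r]$ in the metric $\hat d_{\Xi^n}$. Admissible radii form geometric bands of ratio $C_{\dist}/c_{\dist}$ times powers of $\rho_{\min}(\mathcal D)$. If the band $[r/2,r]$ is too narrow to contain one, I would instead use the interval $[r/A,r]$ for a fixed constant $A$ depending only on the EIGS, and absorb $A$ into $c_x$. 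For $n$ large, $\hat d_{\Xi^n}(v_n,x)<r/8$ and $\hat d_{\Xi^n}$ is uniformly close to $d_{\Xi_{\mathrm M}}$ on $V(\Xi^n)$. Hence the $\hat d_{\Xi^n}$-ball of radius $\delta_n/2$ about $v_n$ lies inside the open $d_{\Xi_{\mathrm M}}$-ball $B(x,r')$ with $r'=3r/4$, say. Apply Lemma~\ref{lem:sec31_one_step_stay} at $v_n$ with radius $\delta_n/2$. With probability at least $p_{\mathrm{stay}}$, the rescaled walk $t\mapsto X^{\Xi^n}_{\lfloor a_nt\rfloor}$ stays in $B(v_n,\delta_n/2)$ for $t\le s:=c\,r^{\dimwalk(\Xi)}$, with $c$ depending only on $c_{\mathrm{stay}}$ and $A$. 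Up to the tiny $O(a_n^{-1})$ discretisation, this is the same as the event of staying in $B(x,r')$.

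Next I pass to the limit. Let $F:=\{\omega\in D([0,T],\Xi_{\mathrm M}):\ \omega(t)\in\overline{B(x,r')}\ \text{for all } t\le s\}$. This set is closed in the Skorokhod topology, because Skorokhod limits of paths confined to a closed set over $[0,s]$ remain confined there, with a little care at $t=s$. Theorem~\ref{thm:sec31_ghps} and the portmanteau theorem then give
\[
\mathbb P_x\bigl(W_t\in\overline{B(x,r')},\ t\le s\bigr)\ge\limsup_n\mathbb P\bigl(X^{(n)}\in F\bigr)\ge p_{\mathrm{stay}}.
\]
Since $\overline{B(x,r')}\subset B(x,r)$, the exit time from $B_{\Xi_{\mathrm M}}(x,r)$ exceeds $s$ on this event. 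By path continuity of the diffusion, this holds in the strict sense as well. Setting $c_x:=c$ and $p_x:=p_{\mathrm{stay}}$ gives the claim.

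I expect the main obstacle to be the matching of the discrete admissibility constraint with an arbitrary continuum radius $r$, together with the comparison between $\hat d_{\Xi^n}$-balls and $d_{\Xi_{\mathrm M}}$-balls. I would resolve both as described: use the geometric spacing of admissible radii with a fixed slack factor $A$, and use the uniform convergence of metrics in the common realisation of Lemma~\ref{lem:GH_limit_metric}.
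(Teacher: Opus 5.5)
Your argument is correct and is essentially the paper's: both transfer Lemma~\ref{lem:sec31_one_step_stay} to $W$ through Theorem~\ref{thm:sec31_ghps}. The paper realises the convergence by a Skorokhod coupling, whereas you apply the portmanteau theorem to a closed set of paths. For that set, take $\{\omega:\omega(t)\in\overline{B(x,r')}\ \text{for all } t<s\}$, which is $J_1$-closed, and then use continuity of $W$ at $t=s$.

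One small bookkeeping fix: half of an admissible radius need not be admissible. So choose the admissible $\delta_n$ directly in $[r/4,r/2]$ (or in $[r/(2A),r/2]$), as the paper does, and apply the lemma at $\delta_n$ itself.
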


\begin{proof}
Choose $x_n\in V(\Xi^n)$ with $x_n\to x$, and let $Y_t^{(n)}:=X_{\lfloor a_nt\rfloor}^{\Xi^n}$.
By Theorem~\ref{thm:sec31_ghps}, $Y^{(n)}$ converges in distribution to $W$ started from $x$.
Fix $r>0$ small and choose admissible $\delta_n$ with $r/4\le \delta_n\le r/2$.
Set $t_r:=c_{\mathrm{stay}}4^{-\dimwalk(\Xi)}r^{\dimwalk(\Xi)}$.
Lemma~\ref{lem:sec31_one_step_stay} gives
\[
\mathbb P_{x_n}\bigl(\tau_{B_{\hat d_{\Xi^n}}(x_n,\delta_n)}^{\Xi^n}>a_nt_r\bigr)\ge p_{\mathrm{stay}}
\]
for all large $n$.

By the Skorokhod representation theorem \cite{billingsley1999convergence}, we may realise $Y^{(n)}$ and $W$ on a common probability space so that $Y^{(n)}\to W$ almost surely in path space.
On this coupling, if $d(x_n,x)<r/4$ and $\sup_{0\le s\le t_r}d(Y_s^{(n)},W_s)<r/4$, then the event $\{\sup_{0\le s\le t_r}d(Y_s^{(n)},x_n)\le \delta_n\}$ implies $\{\sup_{0\le s\le t_r}d(W_s,x)<r\}$.
Passing to the limit yields
\[
\mathbb P_x\bigl(\tau_{B_{\Xi_{\mathrm M}}(x,r)}>t_r\bigr)\ge p_{\mathrm{stay}}.
\]
\end{proof}


\subsection{Resistance-form Brownian motion}\label{subsec:resistance_form}

Throughout this subsection we assume $\dimresis(\Xi)>0$.
Equivalently, by Theorem~\ref{thm:resistance_dimension_formula}, $\rho(\Psi)>1$.
We emphasise that the effective resistance used in this subsection is not the unit-resistance effective resistance $\Reff_G$ introduced in Section~\ref{sec:random_walks}; it is the renormalised resistance induced by the compatible energies $(\mathcal E_m^W)_{m\ge 0}$.

For $m\ge 0$ and $x,y\in V(\Xi^m)$, set $\Reff_m(x,x):=0$ and, for $x\ne y$, define
\[
\Reff_m(x,y)
:=
\left(
\inf\left\{
\mathcal E_m^W(f,f):
 f:V(\Xi^m)\to\mathbb R,
 f(x)=0,
 f(y)=1
\right\}
\right)^{-1}.
\]
By the trace compatibility of Lemma~\ref{lem:sec31_trace_weighted},
\[
\Reff_{m+1}(x,y)=\Reff_m(x,y),
\qquad x,y\in V(\Xi^m).
\]
Hence, for $x,y\in\mathscr V$, we define
\[
\Reff_*(x,y):=\Reff_m(x,y),
\]
where $m$ is any integer such that $x,y\in V(\Xi^m)$.
This is well defined and will be kept distinct from the unit-resistance quantity $\Reff_G$.

\begin{lemma}\label{lem:sec32_point_boundary}
There is $C_\partial>0$ such that, for every level-$m$ cell $\Lambda$ with terminals $a_{\Lambda},b_{\Lambda}$ and every $z\in\mathscr V\cap\Lambda$,
\[
\Reff_*(z,a_{\Lambda})\le C_\partial\rho(\Psi)^{-m},
\qquad
\Reff_*(z,b_{\Lambda})\le C_\partial\rho(\Psi)^{-m}.
\]
\end{lemma}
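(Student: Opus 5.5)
The bound will come from combining three ingredients: the triangle inequality for the renormalised resistance $\Reff_*$, a one-step bound relating each skeleton vertex to a terminal of its enclosing cell, and the scaling of renormalised resistances inside a cell. Since $\Reff_*$ is the effective resistance of the compatible network $(\mathcal E_m^W)$, it is a metric on $\mathscr V$. It is also monotone under passing to subnetworks: by Rayleigh monotonicity, the resistance in the full graph is at most the resistance computed inside the cell $\Lambda$ alone. Let $z\in\mathscr V\cap\Lambda$ with $z\in V(\Xi^n)$, $n\ge m$. We build a chain of cells $\Lambda=\Lambda_m\supset\Lambda_{m+1}\supset\cdots\supset\Lambda_n$, where each $\Lambda_\ell$ is a level-$\ell$ cell containing $z$ and $z$ is a terminal of $\Lambda_n$. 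Let $p_\ell$ be a terminal of $\Lambda_\ell$, chosen with $p_n:=z$ and $p_m:=a_\Lambda$. The triangle inequality then gives
\[
\Reff_*(z,a_\Lambda)\le\sum_{\ell=m}^{n-1}\Reff_*(p_{\ell+1},p_\ell).
\]

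The main step is to bound a single increment by $C\rho(\Psi)^{-\ell}$. Both $p_{\ell+1}$ and $p_\ell$ lie in $V(\Xi^{\ell+1})\cap\Lambda_\ell$. By trace compatibility (Lemma~\ref{lem:sec31_trace_weighted}), we may compute their resistance on level $\ell+1$. Restricting to the cell $\Lambda_\ell$ can only increase the resistance, so the increment is bounded by the resistance inside a copy of the rule graph $R_i$ whose colour-$j$ edges carry resistance $\rho(\Psi)^{-(\ell+1)}[\mathbf v]_j$. By homogeneity (Lemma~\ref{lem:psi-props}), this equals $\rho(\Psi)^{-(\ell+1)}$ times the resistance between two vertices of $R_i$ under edge weights $\mathbf v$. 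Two facts make that fixed resistance finite. First, canonicality makes $R_i$ connected through the planting vertices. Second, only finitely many rule graphs and vertex pairs occur. Hence there is a constant $C_0$, depending only on the EIGS, with
\[
\Reff_*(p_{\ell+1},p_\ell)\le C_0\rho(\Psi)^{-(\ell+1)}.
\]

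Summing the geometric series, which converges because $\rho(\Psi)>1$ under the standing assumption $\dimresis(\Xi)>0$ of this subsection, gives
\[
\Reff_*(z,a_\Lambda)\le C_0\sum_{\ell\ge m}\rho(\Psi)^{-(\ell+1)}=\frac{C_0}{\rho(\Psi)-1}\rho(\Psi)^{-m}.
\]
The bound for $b_\Lambda$ follows in the same way, either by taking $p_m=b_\Lambda$ or from $\Reff_*(a_\Lambda,b_\Lambda)=\rho(\Psi)^{-m}[\mathbf v]_i$. Setting $C_\partial:=C_0/(\rho(\Psi)-1)+\max_i[\mathbf v]_i$ completes the plan.

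The step needing care is the bookkeeping of the chain. One must check that a nested chain of cells containing $z$ exists, and that consecutive chosen terminals lie in a common level-$\ell$ cell. Both follow because each level-$(\ell+1)$ cell is a child edge-copy inside a unique level-$\ell$ cell, and the terminals of the child are vertices of the copy of $R_i$. The ambiguity when $z$ lies on the boundary of several cells is harmless, since any choice of cell works.
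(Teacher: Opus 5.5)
Your proposal is correct and is essentially the paper's argument: the paper proves $A_{L+1}\le\rho(\Psi)^{-1}A_L+B$ by induction on the depth $h_\Lambda(z)$, using the triangle inequality through a first-generation child cell and a prototype constant $B$. Your nested chain of terminals with the geometric sum is that recursion unrolled, and it uses the same one-step bound. One small point: $\Reff_*(a_\Lambda,b_\Lambda)$ is only bounded above by $\rho(\Psi)^{-m}[\mathbf v]_i$ (by Rayleigh monotonicity) rather than equal to it, but your choice $p_m:=b_\Lambda$ already handles $b_\Lambda$, so the extra term in $C_\partial$ is not needed.
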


\begin{proof}
For $z\in\mathscr V\cap\Lambda$, let $h_{\Lambda}(z)$ be the least $\ell\ge 0$ such that $z$ is a terminal of some level-$(m+\ell)$ descendant cell of $\Lambda$.
For $L\ge 0$, define
\[
A_L:=
\sup
\left\{
\rho(\Psi)^m
\max\bigl(
\Reff_*(z,a_{\Lambda}),
\Reff_*(z,b_{\Lambda})
\bigr):
 h_{\Lambda}(z)\le L
\right\},
\]
where the supremum is over all levels $m$, all level-$m$ cells $\Lambda$, and all $z\in\mathscr V\cap\Lambda$ satisfying $h_{\Lambda}(z)\le L$.

If $L=0$, then $z$ is one of the two terminals of $\Lambda$.
The resistance between the two terminals of a level-$m$ cell of colour $i$ is $\rho(\Psi)^{-m}[\mathbf v]_i$.
Thus
\[
A_0\le \max_{i\in[K]}[\mathbf v]_i<\infty.
\]

Assume that $A_L<\infty$.
Let $z\in\mathscr V\cap\Lambda$ with $h_{\Lambda}(z)\le L+1$.
If $z$ is a terminal of $\Lambda$, the required bound is already covered by $A_0$.
Otherwise choose a first-generation child cell $\Lambda'\subset\Lambda$ containing $z$.
Then $h_{\Lambda'}(z)\le L$.
By the triangle inequality,
\[
\Reff_*(z,a_{\Lambda})
\le
\Reff_*(z,a_{\Lambda'})
+
\Reff_*(a_{\Lambda'},a_{\Lambda}).
\]
The first term is bounded by $\rho(\Psi)^{-(m+1)}A_L$.
For the second term, only finitely many first-generation prototype configurations occur.
Therefore there exists a finite constant $B$, depending only on the rule graphs and on $\mathbf v$, such that
\[
\Reff_*(a_{\Lambda'},a_{\Lambda})\le B\rho(\Psi)^{-m}.
\]
The same argument applies with $b_{\Lambda}$ in place of $a_{\Lambda}$.
Hence
\[
A_{L+1}\le \rho(\Psi)^{-1}A_L+B.
\]
Since $\rho(\Psi)>1$, the sequence $(A_L)_{L\ge 0}$ is uniformly bounded.
This proves the claim.
\end{proof}

\begin{theorem}\label{thm:sec32_resistance_metric}
There exists a resistance form $(\mathcal E_R,\mathcal F_R)$ on $\Xi_{\mathrm M}$ whose trace on $V(\Xi^m)$ is $\mathcal E_m^W$ for every $m\ge 0$.
If $\Reff_{\mathcal E_R}$ denotes the resistance metric induced by $(\mathcal E_R,\mathcal F_R)$, then
\[
\Reff_{\mathcal E_R}|_{\mathscr V\times\mathscr V}=\Reff_*,
\]
and $\Reff_{\mathcal E_R}$ induces the same topology as $d_{\Xi_{\mathrm M}}$.
\end{theorem}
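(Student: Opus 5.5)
The plan is to follow Kigami's construction of a resistance form from a compatible sequence of electrical networks (Kigami, \emph{Analysis on Fractals}, Theorem~2.3.10 and Chapter~2). First, Lemma~\ref{lem:sec31_trace_weighted} shows that $(V(\Xi^m),\mathcal E_m^W)_{m\ge0}$ is a compatible sequence of finite resistance forms on the increasing sets $V(\Xi^m)$. Kigami's theory then yields a resistance form $(\mathcal E_*,\mathcal F_*)$ on the countable set $\mathscr V$. Here $\mathcal F_*$ consists of the functions $f$ on $\mathscr V$ with $\lim_m\mathcal E_m^W(f|_{V(\Xi^m)})<\infty$, and $\mathcal E_*$ is that (monotone) limit. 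Its resistance metric coincides with $\Reff_*$ on $\mathscr V\times\mathscr V$, since the resistance of a compatible sequence is computed at any finite level containing both points. Every $f\in\mathcal F_*$ satisfies
\[
|f(x)-f(y)|^2\le \Reff_*(x,y)\,\mathcal E_*(f,f),
\]
so $f$ is uniformly continuous with respect to $\Reff_*$. Hence $(\mathcal E_*,\mathcal F_*)$ extends to the $\Reff_*$-completion of $\mathscr V$. The remaining task is to identify this completion with $\Xi_{\mathrm M}$.

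The key step is to compare $\Reff_*$ with $d_{\Xi_{\mathrm M}}$ on $\mathscr V$ through cells. For the upper bound, if $x,y$ lie in a common level-$m$ cell $\Lambda$, Lemma~\ref{lem:sec32_point_boundary} and the triangle inequality for the resistance metric give $\Reff_*(x,y)\le 2C_\partial\rho(\Psi)^{-m}$. More generally, if $d_{\Xi_{\mathrm M}}(x,y)$ is small, I would show that $x,y$ lie in two level-$m$ cells sharing a terminal, with $m\to\infty$ as $d\to0$. This uses the diameter and endpoint-distance bounds of Lemma~\ref{lem:basic_prop} after rescaling, together with the fact that paths leave a cell only through its terminals. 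Then $\Reff_*(x,y)\le 4C_\partial\rho(\Psi)^{-m}\to0$, and this is where $\rho(\Psi)>1$ enters.

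For the converse, I want $\Reff_*(x,y)$ bounded below whenever $d_{\Xi_{\mathrm M}}(x,y)\ge\varepsilon$. Choose a level $m$, depending on $\varepsilon$, such that $y$ lies outside the union of level-$m$ cells touching the level-$m$ star of $x$. Build a test function $f=P_mh$ from Lemma~\ref{lem:sec31_continuous_harmonics}, with $h$ equal to $1$ on the level-$m$ vertices of the closed star around $x$ and $0$ elsewhere; if $x$ is not a level-$m$ vertex, use the cell containing $x$. Then $f(x)=1$, $f(y)=0$, and $\mathcal E_*(f,f)=\mathcal E_m^W(h,h)$. The obstacle is that this energy involves the degree of the star, which can be of order $\rho(\mathbf N)^{m}$. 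It is nevertheless finite for each fixed $m$ and $x$, and a single compactness argument suffices: $\Reff_*(x,y)\ge \mathcal E_m^W(h,h)^{-1}$. To make this uniform over pairs at distance $\ge\varepsilon$, cover $\Xi_{\mathrm M}$ by finitely many level-$m$ stars, which is possible because $V(\Xi^m)$ is finite.

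With both comparisons in place, $\mathrm{id}:(\mathscr V,d)\to(\mathscr V,\Reff_*)$ is uniformly continuous in both directions. The completions therefore coincide, with the same topology, and $\mathscr V$ is dense in $\Xi_{\mathrm M}$ by construction. Set $(\mathcal E_R,\mathcal F_R)$ to be the extension of $(\mathcal E_*,\mathcal F_*)$ to $\Xi_{\mathrm M}$. Its trace on $V(\Xi^m)$ is $\mathcal E_m^W$ by compatibility and Lemma~\ref{lem:sec31_trace_weighted}. $\Reff_{\mathcal E_R}$ extends $\Reff_*$ continuously, and the two metrics induce the same topology. I expect the main obstacle to be the geometric input in the upper bound, namely that $d_{\Xi_{\mathrm M}}$-close points lie in adjacent small cells, uniformly near high-degree finite-born points. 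I would derive it from the cell-exit argument in Lemma~\ref{lem:ball_volume_k_born}, which shows that a ball of radius $r$ lies inside the star of depth $m+a_0$.
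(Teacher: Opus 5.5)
Your proposal is correct. It shares the paper's skeleton: Kigami's compatible-sequence construction on $\mathscr V$, followed by identifying the $\Reff_*$-completion with $\Xi_{\mathrm M}$ using Lemma~\ref{lem:sec32_point_boundary}. The identification step, however, is done differently.

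\textbf{The paper's route.} It only shows that the identity extends to a well-defined continuous map $\iota:\Xi_{\mathrm M}\to\Xi_{\mathrm R}$, arguing pointwise through the level-$m$ cell or star around a fixed limit point. Injectivity is obtained non-quantitatively: a continuous function separating $x,y$ is approximated by a harmonic spline via Lemma~\ref{lem:sec31_continuous_harmonics}. Compactness of $\Xi_{\mathrm M}$ then gives surjectivity (the image is closed and contains the dense set $\mathscr V$) and upgrades the continuous bijection to a homeomorphism.

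\textbf{Your route.} You prove that the identity on $\mathscr V$ is uniformly continuous in both directions.
\begin{itemize}
\item The forward direction uses the uniform fact that points at rescaled distance below $c\,\rho_{\min}(\mathcal D)^{-m}$ lie in one level-$m$ cell or in two cells sharing a terminal. This is easier than you fear: a geodesic visiting two distinct level-$m$ vertices must cross a whole level-$m$ cell between its terminals. So vertex degree plays no role, and the bound $2C_\partial\rho(\Psi)^{-m}$ already suffices.
\item The reverse direction uses the explicit plateau spline $P_mh$ and the finiteness of $V(\Xi^m)$. This is the uniform, explicit counterpart of the paper's injectivity argument.
\end{itemize}

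\textbf{Comparison.} Your approach yields quantitative moduli in both directions and needs neither compactness nor the density lemma. It identifies the two completions directly as uniformly isomorphic spaces. The price is the extra uniform geometric input. The paper's route is shorter because it leans on compactness and on the density of harmonic splines established earlier.
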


\begin{proof}
Define
\[
\mathcal F_{\mathscr V}:=
\left\{
 f:\mathscr V\to\mathbb R:
 \sup_{m\ge 0}\mathcal E_m^W(f|_{V(\Xi^m)},f|_{V(\Xi^m)})<\infty
\right\},
\]
and, for $f\in\mathcal F_{\mathscr V}$,
\[
\mathcal E_{\mathscr V}(f,f):=
\lim_{m\to\infty}
\mathcal E_m^W(f|_{V(\Xi^m)},f|_{V(\Xi^m)}).
\]
The limit exists because the trace compatibility of Lemma~\ref{lem:sec31_trace_weighted} makes the displayed sequence non-decreasing.
By the standard resistance-form construction from compatible finite traces, $(\mathcal E_{\mathscr V},\mathcal F_{\mathscr V})$ is a resistance form on $\mathscr V$, and its resistance metric is precisely $\Reff_*$; see \cite{kigami2001analysis}.
Let $\Xi_{\mathrm R}$ be the $\Reff_*$-completion of $\mathscr V$.

We first show that the identity map on $\mathscr V$ extends to a homeomorphism
\[
\iota:\Xi_{\mathrm M}\longrightarrow \Xi_{\mathrm R}.
\]
Fix $x\in\Xi_{\mathrm M}$ and let $(x_j)_{j\ge 1}\subset\mathscr V$ be any sequence converging to $x$ in the $d_{\Xi_{\mathrm M}}$-topology.
If $x\in\mathscr V^\infty$, then for every fixed $m$, all sufficiently large $x_j$ belong to the unique level-$m$ cell $\Lambda_m(x)$ containing $x$.
By Lemma~\ref{lem:sec32_point_boundary},
\[
\Reff_*(x_j,x_{j'})\le 2C_\partial\rho(\Psi)^{-m}
\]
for all sufficiently large $j,j'$.
If $x\in\mathscr V$, then for every fixed $m$ greater than the birth level of $x$, all sufficiently large $x_j$ belong to the level-$m$ star of $x$.
Since $x$ is a terminal of each level-$m$ cell in this star, Lemma~\ref{lem:sec32_point_boundary} gives
\[
\Reff_*(x_j,x)\le C_\partial\rho(\Psi)^{-m}
\]
for all sufficiently large $j$, and hence again
\[
\Reff_*(x_j,x_{j'})\le 2C_\partial\rho(\Psi)^{-m}.
\]
Letting $m\to\infty$ shows that $(x_j)$ is Cauchy in $\Reff_*$.
The same estimate applied to two interlaced approximating sequences shows that the limit in $\Xi_{\mathrm R}$ is independent of the approximating sequence.
Thus $\iota$ is well defined.
The same argument also shows that $\iota$ is continuous.

To prove injectivity, let $x\ne y$ in $\Xi_{\mathrm M}$.
Choose $f\in C(\Xi_{\mathrm M})$ with $f(x)=0$ and $f(y)=1$.
By Lemma~\ref{lem:sec31_continuous_harmonics}, there exists a continuous harmonic spline $u$ such that $\|u-f\|_\infty<1/3$.
Then $u(x)\ne u(y)$, and $u$ has finite energy on $\mathscr V$.
Finite-energy functions are continuous in the resistance metric, so $u$ extends continuously to $\Xi_{\mathrm R}$, and the extension agrees with $u$ on all of $\Xi_{\mathrm M}$ because $\mathscr V$ is dense and both sides are continuous.
Therefore $\iota(x)\ne\iota(y)$.

The image $\iota(\Xi_{\mathrm M})$ is compact in the metric space $\Xi_{\mathrm R}$, hence closed.
It contains $\mathscr V$, and $\mathscr V$ is dense in $\Xi_{\mathrm R}$ by definition of completion.
Therefore $\iota(\Xi_{\mathrm M})=\Xi_{\mathrm R}$.
Thus $\iota$ is a homeomorphism.
Transporting the resistance form from $\Xi_{\mathrm R}$ to $\Xi_{\mathrm M}$ gives the claimed form $(\mathcal E_R,\mathcal F_R)$.
The identity $\Reff_{\mathcal E_R}|_{\mathscr V\times\mathscr V}=\Reff_*$ follows from the construction.
\end{proof}

\begin{lemma}\label{lem:sec32_common_core}
The union of the continuous harmonic splines from Lemma~\ref{lem:sec31_continuous_harmonics} is a form core for the $L^2(\Xi_{\mathrm M},\mu_{\Xi_{\mathrm M}})$ realisation of $(\mathcal E_R,\mathcal F_R)$.
\end{lemma}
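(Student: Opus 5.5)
The plan is to realise $(\mathcal E_R,\mathcal F_R)$ as a regular Dirichlet form on $L^2(\Xi_{\mathrm M},\mu_{\Xi_{\mathrm M}})$ and to approximate an arbitrary element of its domain by harmonic splines. Kigami's theory gives the starting point: $\Xi_{\mathrm M}$ is compact in the resistance metric (Theorem~\ref{thm:sec32_resistance_metric}) and $\mu_{\Xi_{\mathrm M}}$ is a finite Borel measure. Hence $\mathcal F_R\subset C(\Xi_{\mathrm M})$, and $(\mathcal E_R,\mathcal F_R)$, with norm $\mathcal E_R+\|\cdot\|_{L^2}^2$, is a regular Dirichlet form on $L^2(\mu_{\Xi_{\mathrm M}})$; no extra domain restriction is needed since the space is compact. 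It therefore suffices to show two things: harmonic splines lie in $\mathcal F_R$, and they are dense in $\mathcal F_R$ for the norm $\mathcal E_R+\|\cdot\|_{L^2}^2$.

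Membership is immediate. For $u=P_mh\in\mathcal H_m$, Lemma~\ref{lem:sec31_trace_weighted} gives $\mathcal E_n^W(u|_{V(\Xi^n)},u|_{V(\Xi^n)})=\mathcal E_m^W(h,h)$ for every $n\ge m$. So $u|_{\mathscr V}\in\mathcal F_{\mathscr V}$, and its continuous extension is exactly the continuous $u$ of Lemma~\ref{lem:sec31_continuous_harmonics}, by density of $\mathscr V$.

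For density, fix $f\in\mathcal F_R$ and put $u_m:=P_m(f|_{V(\Xi^m)})$. By the trace property and minimality of harmonic extensions, $f-u_m$ vanishes on $V(\Xi^m)$, and the Pythagorean identity
\[
\mathcal E_R(f,f)=\mathcal E_R(u_m,u_m)+\mathcal E_R(f-u_m,f-u_m)
\]
holds. Since $\mathcal E_R(u_m,u_m)=\mathcal E_m^W(f|_{V(\Xi^m)},f|_{V(\Xi^m)})\uparrow\mathcal E_R(f,f)$ by the definition of $\mathcal E_{\mathscr V}$ as a monotone limit, we get $\mathcal E_R(f-u_m,f-u_m)\to0$.

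For the $L^2$ part, a function $g=f-u_m$ vanishing on $V(\Xi^m)$ satisfies, for $z$ in a level-$m$ cell $\Lambda$ with terminal $a_\Lambda$,
\[
|g(z)|^2=|g(z)-g(a_\Lambda)|^2\le \Reff_{\mathcal E_R}(z,a_\Lambda)\,\mathcal E_R(g,g)\le C_\partial\rho(\Psi)^{-m}\mathcal E_R(g,g),
\]
using Lemma~\ref{lem:sec32_point_boundary}, extended from $\mathscr V$ to all of $\Lambda$ by continuity. Since $\rho(\Psi)>1$ in this subsection, $\|f-u_m\|_\infty\to0$, and therefore $\|f-u_m\|_{L^2}\to0$ because $\mu_{\Xi_{\mathrm M}}$ is a probability measure. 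Alternatively, Lemma~\ref{lem:sec31_projection_tail_continuous} could be invoked. Thus $u_m\to f$ in the form norm, which shows that harmonic splines form a core. Uniform density in $C(\Xi_{\mathrm M})$ is already supplied by Lemma~\ref{lem:sec31_continuous_harmonics}.

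The main obstacle is the Pythagorean/trace step. We must check that for every $f\in\mathcal F_R$ the harmonic spline $u_m$ is $\mathcal E_R$-orthogonal to all functions vanishing on $V(\Xi^m)$, so that $\mathcal E_R(u_m,u_m)=\mathcal E_m^W(f|_{V(\Xi^m)},f|_{V(\Xi^m)})$. This requires the form built on the completion $\Xi_{\mathrm R}$ to be determined by its values on $\mathscr V$, and the energy-minimising extension in $\mathcal F_R$ to coincide with the continuous spline. The approach is to verify orthogonality first on $\mathscr V$ level by level using Lemma~\ref{lem:sec31_trace_weighted}, where it is the discrete Euler--Lagrange relation. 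It then passes to the limit through the monotone definition of $\mathcal E_{\mathscr V}$, together with the fact that elements of $\mathcal F_R$ are determined by their restriction to the dense set $\mathscr V$.
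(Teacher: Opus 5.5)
Your proposal is correct and follows essentially the same route as the paper. Both arguments use the harmonic projection $u_m=P_m(f|_{V(\Xi^m)})$, the orthogonal splitting $\mathcal E_R(f,f)=\mathcal E_R(u_m,u_m)+\mathcal E_R(f-u_m,f-u_m)$ coming from trace compatibility, and the uniform bound $|f-u_m|^2\le C_\partial\rho(\Psi)^{-m}\mathcal E_R(f-u_m,f-u_m)$ from Lemma~\ref{lem:sec32_point_boundary}. The one minor difference is in the energy convergence: you read off $\mathcal E_R(f-u_m,f-u_m)\to 0$ directly from the monotone-limit definition of $\mathcal E_{\mathscr V}$, whereas the paper normalises to $\mathcal F_R^0$ and shows that a function orthogonal to every spline subspace vanishes on $\mathscr V$, hence identically.
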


\begin{proof}
Fix $q\in V(\Xi^0)$ and let $\mathcal F_R^0:=\{u\in\mathcal F_R:u(q)=0\}$.
For each $m\ge 0$, consider the subspace of $\mathcal F_R^0$ consisting of level-$m$ harmonic splines.
These subspaces are increasing.
Trace theory identifies the $\mathcal E_R$-orthogonal complement of the level-$m$ harmonic-spline subspace with the functions in $\mathcal F_R^0$ whose trace on $V(\Xi^m)$ is zero.
Hence, if a function in $\mathcal F_R^0$ is orthogonal to the harmonic splines at every level, then it vanishes on $\bigcup_m V(\Xi^m)=\mathscr V$.
Finite-energy functions are continuous in the resistance topology, and Theorem~\ref{thm:sec32_resistance_metric} identifies this topology with the natural topology.
Therefore such a function is identically zero.
Thus the union over all levels of the harmonic-spline subspaces is dense in $\mathcal F_R^0$ for the $\mathcal E_R$-norm.

For $u\in\mathcal F_R^0$, let $P_m^Ru$ be the level-$m$ harmonic spline whose trace on $V(\Xi^m)$ equals $u|_{V(\Xi^m)}$.
Then $u-P_m^Ru$ vanishes on $V(\Xi^m)$.
If $x\in\Xi_{\mathrm M}$ lies in a level-$m$ cell $\Lambda$, choose one terminal $a_{\Lambda}\in V(\Xi^m)$.
By Lemma~\ref{lem:sec32_point_boundary} and continuity in the $\Reff_{\mathcal E_R}$-metric,
\[
\Reff_{\mathcal E_R}(x,a_{\Lambda})\le C_\partial\rho(\Psi)^{-m}.
\]
Therefore
\[
|u(x)-P_m^Ru(x)|^2
\le
\mathcal E_R(u-P_m^Ru,u-P_m^Ru)\,\Reff_{\mathcal E_R}(x,a_{\Lambda})
\le
C_\partial\rho(\Psi)^{-m}\mathcal E_R(u,u).
\]
It follows that $\|u-P_m^Ru\|_{L^2(\mu_{\Xi_{\mathrm M}})}\to 0$.
Thus the harmonic splines are dense in $\mathcal F_R^0$ for the form norm and in $L^2$.
Adding constants gives the corresponding core statement for $\mathcal F_R$.
\end{proof}

\begin{theorem}\label{thm:sec32_brownian_motion}
The $L^2(\Xi_{\mathrm M},\mu_{\Xi_{\mathrm M}})$ realisation of $(\mathcal E_R,\mathcal F_R)$ coincides with $(\mathcal E_W,\mathcal F_W)$.
Consequently, the diffusion $W$ is the Brownian motion associated with the resistance form $(\mathcal E_R,\mathcal F_R)$.
\end{theorem}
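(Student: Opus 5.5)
The plan is to show that the two closed forms share a common core and agree on it, and then use the standard fact that a closed form is determined by its values on a core. Both $(\mathcal E_W,\mathcal F_W)$ and the $L^2(\Xi_{\mathrm M},\mu_{\Xi_{\mathrm M}})$ realisation of $(\mathcal E_R,\mathcal F_R)$ are closed forms on the same Hilbert space. By Theorem~\ref{thm:sec31_limit_dirichlet_form}, $\mathcal F_W$ is the closure of $\bigcup_m\mathcal H_m$, the continuous harmonic splines of Lemma~\ref{lem:sec31_continuous_harmonics}, with respect to the form norm $\mathcal E_W+\|\cdot\|_{L^2}^2$. By Lemma~\ref{lem:sec32_common_core}, the same union $\bigcup_m\mathcal H_m$ is a form core for the $L^2$ realisation of $\mathcal E_R$.

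The first step is to check that the two energies coincide on each $\mathcal H_m$. For $u\in\mathcal H_m$, the definition gives $\mathcal E_W(u,u)=\mathcal E_m^W(u|_{V(\Xi^m)},u|_{V(\Xi^m)})$. On the resistance side, $\mathcal E_R(u,u)=\lim_{n\to\infty}\mathcal E_n^W(u|_{V(\Xi^n)},u|_{V(\Xi^n)})$ by the construction in Theorem~\ref{thm:sec32_resistance_metric}. Since $u|_{V(\Xi^n)}$ is the minimising extension of $u|_{V(\Xi^m)}$ for every $n\ge m$, Lemma~\ref{lem:sec31_trace_weighted} shows that this sequence is constant from $n=m$ on, equal to $\mathcal E_m^W(u|_{V(\Xi^m)},u|_{V(\Xi^m)})$. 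Polarisation then gives $\mathcal E_W(u,w)=\mathcal E_R(u,w)$ for all $u,w\in\bigcup_m\mathcal H_m$.

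The second step is to pass to closures. The two forms have the same form norm on the common core, so the closure of the core in $L^2$ under that norm is the same set for both, namely $\mathcal F_W=\mathcal F_R\cap L^2$. The extended forms agree there by continuity. Since $\Xi_{\mathrm M}$ is compact, $\mathcal F_R\subset C(\Xi_{\mathrm M})\subset L^2$, so $\mathcal F_R$ is contained in $L^2$ and the two domains coincide. The Hunt process associated with a regular Dirichlet form is unique in law up to exceptional sets. Because $W$ is the diffusion attached to $(\mathcal E_W,\mathcal F_W)$ in Theorem~\ref{thm:sec31_ghps}, it is therefore the Brownian motion of the resistance form. In the resistance setting points have positive capacity, so there is no exceptional set and the identification holds for every starting point.

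The step I expect to be the main obstacle is making sure Lemma~\ref{lem:sec32_common_core} really supplies density in the full form norm $\mathcal E_R+\|\cdot\|^2_{L^2(\mu_{\Xi_{\mathrm M}})}$, and not only in the energy seminorm on $\mathcal F_R^0$. I would verify this directly. The projections $P_m^Ru$ converge to $u$ in $\mathcal E_R$, since $\mathcal E_R(u-P^R_mu,u-P^R_mu)\to 0$ by the orthogonal decomposition. They also converge uniformly, by the pointwise bound $|u-P^R_mu|^2\le C_\partial\rho(\Psi)^{-m}\mathcal E_R(u,u)$, and uniform convergence implies convergence in $L^2(\mu_{\Xi_{\mathrm M}})$ because $\mu_{\Xi_{\mathrm M}}$ is a probability measure. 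Finally, the regularity of the $\mathcal E_R$ realisation, which the theory needs for the Hunt-process identification, comes from Kigami's theory, since the resistance metric induces the original compact topology by Theorem~\ref{thm:sec32_resistance_metric}.
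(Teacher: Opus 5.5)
Your proposal is correct and follows the paper's proof. Both arguments take $\bigcup_m\mathcal H_m$ as a common core, by Theorem~\ref{thm:sec31_limit_dirichlet_form} and Lemma~\ref{lem:sec32_common_core}, and check that $\mathcal E_R(u,u)=\mathcal E_m^W(h,h)=\mathcal E_W(u,u)$ on harmonic splines. They then conclude that two closed forms agreeing on a common core coincide, which identifies the associated Hunt processes.

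Your additional checks only make explicit what the paper treats as standard: constancy of the trace sequence via Lemma~\ref{lem:sec31_trace_weighted}, form-norm density, and the absence of exceptional sets. One small point on the density check: the $\mathcal E_R$-convergence of $P_m^Ru$ to $u$ needs, besides orthogonality, that $\mathcal E_m^W(u|_{V(\Xi^m)},u|_{V(\Xi^m)})\uparrow\mathcal E_R(u,u)$. This holds by construction of $\mathcal E_R$.
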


\begin{proof}
By Lemma~\ref{lem:sec32_common_core}, the harmonic splines form a core for the $L^2$ realisation of $(\mathcal E_R,\mathcal F_R)$.
By Theorem~\ref{thm:sec31_limit_dirichlet_form}, the same family is a core for $(\mathcal E_W,\mathcal F_W)$.
If $u=P_mh$ is a level-$m$ harmonic spline, then both forms give
\[
\mathcal E_R(u,u)=\mathcal E_m^W(h,h)=\mathcal E_W(u,u).
\]
Thus the two closed forms agree on a common core and hence agree everywhere.
The associated symmetric Hunt process is therefore the Brownian motion of the resistance form; see \cite{kigami2001analysis,fukushima1994dirichlet}.
\end{proof}

We now clarify the role of the metric in the positive-resistance regime.
The assumption $\dimresis(\Xi)>0$ does not produce a second stochastic process;
rather, it gives a second geometric realisation of the same limiting Dirichlet form.
Indeed, Theorem~\ref{thm:sec32_resistance_metric} constructs a resistance form
$(\mathcal E_R,\mathcal F_R)$ whose resistance metric induces the same topology as the
natural metric.

Moreover, Theorem~\ref{thm:sec32_brownian_motion} shows that the
$L^2(\Xi_{\mathrm M},\mu_{\Xi_{\mathrm M}})$-realisation of this resistance form coincides with the limiting
Dirichlet form $(\mathcal E_W,\mathcal F_W)$ obtained from the renormalised graph energies.
Consequently the associated semigroups, Hunt processes, and heat kernels agree:
\[
    P_t^R = P_t^W,
    \qquad
    q_t^\Xi(x,y)=p_t^\Xi(x,y),
\]
after choosing their jointly continuous versions.
Thus the stochastic dynamics do not depend on whether we describe the compact state space using the natural metric or the resistance metric.
The metric changes the geometric language in which we state estimates, for example natural balls versus resistance balls, but it does not change the underlying diffusion.
This is why the next subsection can reinterpret the same heat kernel and the same spectral dimension in resistance-geometric terms without introducing any new process.

\begin{figure}[bt]
    \centering
    \begin{minipage}[t]{0.6\linewidth}
        \includegraphics[width=\linewidth]{figure/Xi_random_walk.png}
        \caption{A sample trace of a \(10^6\)-step simple random walk on the level-6 approximation \(\Xi^6\) of the canonical Xi graph, drawn in its natural embedding in \(\mathbb Z^2\). The underlying graph is shown in grey and the trace in red.}
    \end{minipage}
\hfill
    \begin{minipage}[t]{0.35\linewidth}
        \includegraphics[width=\linewidth]{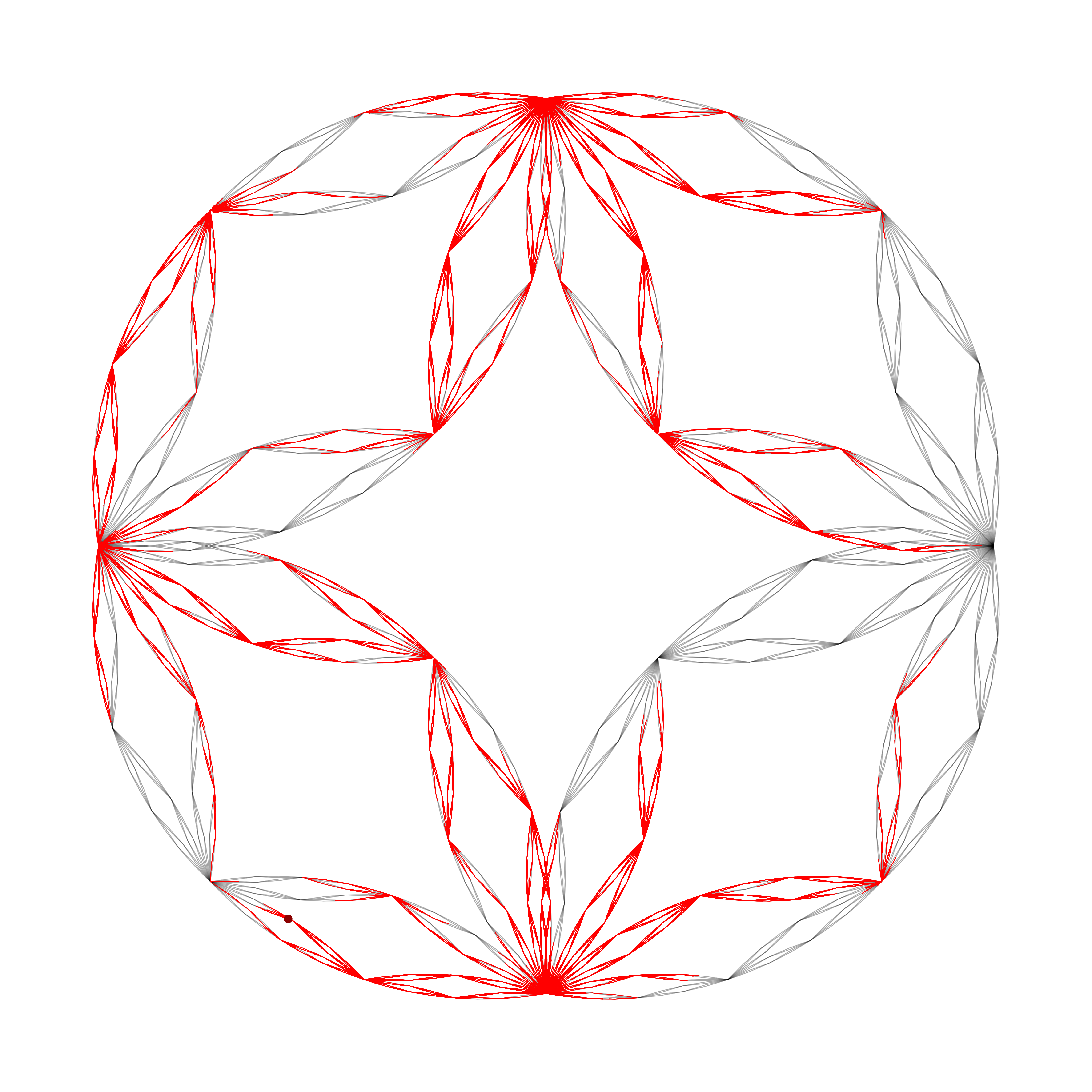}
        \caption{A sample trace of a $10^5$-step simple random walk on the level-8 approximation $\Xi^8$ of DHL, drawn in the symmetric planar embedding used for visualisation. The underlying graph is shown in grey and the trace in red.}
    \end{minipage}
\end{figure}

\subsection{Heat kernel for the diffusion limit}\label{subsec:heat_kernel_diffusion}

In this subsection we work with the diffusion $W=(W_t)_{t\ge 0}$ associated with $(\mathcal E_W,\mathcal F_W)$ and write $P_t$ for its semigroup.
By Corollary~\ref{cor:sec31_continuous_stay}, for every $x\in \Xi_{\mathrm M}$ there are constants $c_x,p_x,r_x>0$ such that
\[
\mathbb P_x\bigl(\tau_{B_{\Xi_{\mathrm M}}(x,r)}>c_xr^{\dimwalk(\Xi)}\bigr)\ge p_x
\]
for all $r\in(0,r_x)$.

\begin{lemma}\label{lem:heat-volume}
For every $x\in \mathscr V$ there are $c_x,C_x,r_0(x)>0$ such that
\[
c_x r^{\dimbox(\Xi)\left(1-\frac{1}{\dimdeg(\Xi)}\right)}
\le
\mu_{\Xi_{\mathrm M}}(B_{\Xi_{\mathrm M}}(x,r))
\le
C_x r^{\dimbox(\Xi)\left(1-\frac{1}{\dimdeg(\Xi)}\right)}
\]
for all $r\in(0,r_0(x))$.
For every $\varepsilon>0$ and for $\mu_{\Xi_{\mathrm M}}$-almost every $x\in \mathscr V^\infty$ there are $c_x,C_{x,\varepsilon},r_0(x)>0$ such that
\[
c_x r^{\dimbox(\Xi)}
\le
\mu_{\Xi_{\mathrm M}}(B_{\Xi_{\mathrm M}}(x,r))
\le
C_{x,\varepsilon} |\log r|^{\frac{1}{\dimdeg(\Xi)-1}+\varepsilon} r^{\dimbox(\Xi)}
\]
for all $r\in(0,r_0(x))$.
\end{lemma}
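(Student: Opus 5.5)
The plan is to reduce both estimates to counting level-$j$ cells. We use the mass identity $\mu_{\Xi_{\mathrm M}}(\Lambda)\asymp\rho(\mathbf M)^{-j}$ for level-$j$ cells from Lemma~\ref{lem:GHP_convergence}. We also use the metric comparison already exploited in Lemma~\ref{lem:ball_volume_k_born} and Theorem~\ref{thm:exit_time_scale}: a level-$j$ cell has rescaled diameter $\asymp\rho_{\min}(\mathcal D)^{-j}$, and its terminals are at rescaled distance $\ge c\rho_{\min}(\mathcal D)^{-j}$. These bounds pass to $\Xi_{\mathrm M}$ because the boundary points of cells are $\mu_{\Xi_{\mathrm M}}$-null. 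Given $r$ small, choose $j$ with $r\asymp\rho_{\min}(\mathcal D)^{-j}$. Throughout, $\dimbox(\Xi)=\log\rho(\mathbf M)/\log\rho_{\min}(\mathcal D)$ and $\dimbox(\Xi)/\dimdeg(\Xi)=\log\rho(\mathbf N)/\log\rho_{\min}(\mathcal D)$. Hence
\[
\rho(\mathbf N)^{j}\rho(\mathbf M)^{-j}\asymp r^{\dimbox(\Xi)(1-1/\dimdeg(\Xi))}.
\]

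\textbf{Finite-born points.} Let $x\in\mathscr V$ be born at level $k$. As in the proof of Lemma~\ref{lem:ball_volume_k_born}, with the shift $c_0$, the ball $B_{\Xi_{\mathrm M}}(x,r)$ contains the level-$(j+c_0)$ star of $x$, which is the union of the level-$(j+c_0)$ cells having $x$ as a terminal. The ball is contained in the level-$(j-c_0)$ star of $x$, because every exit from that star passes through a far terminal at distance $\ge 4r$. By Lemma~\ref{lem:basic_prop}, the level-$\ell$ star consists of $\asymp\rho(\mathbf N)^{\ell-k}$ cells, each of mass $\asymp\rho(\mathbf M)^{-\ell}$. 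Taking $\ell=j\pm c_0$ gives the two-sided bound $\asymp_x \rho(\mathbf N)^{j}\rho(\mathbf M)^{-j}$, with constants depending on $k$. This is the first claim. If $\rho(\mathbf N)=1$, the same argument applies with the convention $\dimdeg(\Xi)=\infty$.

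\textbf{Typical points, lower bound.} Let $x\in\mathscr V^\infty$. It lies in a unique level-$(j+c_0)$ cell, whose diameter is $\le r$. Therefore $\mu_{\Xi_{\mathrm M}}(B(x,r))\ge c\rho(\mathbf M)^{-j}\asymp r^{\dimbox(\Xi)}$, and this holds for every such $x$.

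\textbf{Typical points, upper bound.} Let $\Lambda_j(x)$ be the level-$(j-c_0)$ cell containing $x$, with terminals $a,b$. Any path leaving $\Lambda_j(x)$ passes through $a$ or $b$. Hence $B(x,r)$ is contained in the union of the level-$(j-c_0)$ stars of $a$ and $b$. Define the age $A_j(x):=(j-c_0)-\min(\mathrm{birth}(a),\mathrm{birth}(b))$. Then
\[
\mu_{\Xi_{\mathrm M}}(B(x,r))\lesssim \rho(\mathbf N)^{A_j(x)}\rho(\mathbf M)^{-j}.
\]
The remaining task, which is the main obstacle, is to bound $A_j(x)$ almost surely. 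We control it by a first-moment count. The set $\{A_j\ge s\}$ is covered by the level-$(j-c_0)$ cells incident to some vertex born at level $j-c_0-s$ or earlier. There are $\lesssim\rho(\mathbf M)^{j-s'}$ vertices born at level $j-s'$. Each contributes $\lesssim\rho(\mathbf N)^{s'}$ cells of mass $\asymp\rho(\mathbf M)^{-j}$. Summing the geometric series over $s'\ge s$, using $\rho(\mathbf N)<\rho(\mathbf M)$, gives
\[
\mu_{\Xi_{\mathrm M}}(A_j\ge s)\lesssim(\rho(\mathbf N)/\rho(\mathbf M))^{s}.
\]
Fix $\beta=\frac{1}{\dimdeg(\Xi)-1}+\varepsilon$ and choose $s_j=\beta\log j/\log\rho(\mathbf N)$. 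Then $\rho(\mathbf N)^{s_j}=j^{\beta}$ and
\[
\mu_{\Xi_{\mathrm M}}(A_j\ge s_j)\lesssim j^{-\beta(\log\rho(\mathbf M)-\log\rho(\mathbf N))/\log\rho(\mathbf N)}=j^{-\beta(\dimdeg(\Xi)-1)}.
\]
This is summable since $\beta(\dimdeg(\Xi)-1)>1$. By Borel--Cantelli, $\mu_{\Xi_{\mathrm M}}$-almost every $x$ has $\rho(\mathbf N)^{A_j(x)}\le j^{\beta}$ for all $j\ge j_0(x)$. Since $j\asymp|\log r|$, this yields the factor $|\log r|^{\beta}$. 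Small $r$ not of the exact form $\rho_{\min}(\mathcal D)^{-j}$ are handled by monotonicity of balls in $r$, which costs only constants. Finally, we take a countable sequence $\varepsilon\downarrow0$ to obtain a single full-measure set that works for every $\varepsilon>0$.
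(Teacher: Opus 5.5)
Your proposal is correct and follows essentially the paper's route: star counting through the degree estimate at finite-born points, the containing cell for the typical lower bound, and for the typical upper bound a first-moment bound $\lesssim(\rho(\mathbf N)/\rho(\mathbf M))^{L}$ on the set of points with an old nearby terminal, with $L\asymp\log j$ and Borel--Cantelli. The differences are cosmetic. You sandwich balls between unions of cells directly in $\Xi_{\mathrm M}$, which sidesteps the paper's passage from graph balls to $\mu_{\Xi_{\mathrm M}}$. You also cover by the stars of both terminals of the level-$(j-c_0)$ cell, which avoids the paper's case split on the birth level $b_m$ of the nearest terminal.
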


\begin{proof}
Fix $x=\pi(v)\in\mathscr V$, and let $k$ be the birth level of $v$.
Choose $m=m(r)$ so that $\rho_{\min}(\mathcal D)^{-(m+1)}<r\le \rho_{\min}(\mathcal D)^{-m}$.
On level $n$, the corresponding graph-metric radius is $R_n:=r\,\diam(\Xi^n)\asymp \rho_{\min}(\mathcal D)^{\,n-m}$.
Applying Lemma~\ref{lem:ball_volume_k_born} with parameter $n-m$ gives
\[
|V(B_{\Xi^n}(v,R_n))|
\asymp
\rho(\mathbf N)^{m-k}\rho(\mathbf M)^{n-m}.
\]
Dividing by $|V(\Xi^n)|\asymp \rho(\mathbf M)^n$ and letting $n\to\infty$ yields
\[
\mu_{\Xi_{\mathrm M}}(B(x,r))
\asymp
\rho(\mathbf N)^{m-k}\rho(\mathbf M)^{-m}.
\]
Since $\rho_{\min}(\mathcal D)^{-m}\asymp r$, this is exactly the finite-born exponent.
Since only finitely many birth types occur, the constants in this two-sided estimate may be chosen uniformly over all finite-born points, by Lemma~\ref{lem:ball_volume_k_born}.

Now fix $\varepsilon>0$ and consider $x\in\mathscr V^\infty$.
For the lower bound, let $\Lambda_m(x)$ be the unique level-$m$ cell containing $x$.
If $m=m(r)$ is chosen as above, then $\Lambda_m(x)\subset B(x,r)$ and Lemma~\ref{lem:GHP_convergence} gives
\[
\mu_{\Xi_{\mathrm M}}(\Lambda_m(x))
\asymp
\rho(\mathbf M)^{-m}
\asymp
r^{\dimbox(\Xi)}.
\]

For the upper bound, let $S_j$ denote the level-$j$ skeleton and fix a geometric constant $C_*>0$ large enough that every level-$m$ cell lies within $C_*\rho_{\min}(\mathcal D)^{-m}$ of at least one of its terminals.
For integers $m\ge 1$ and $L\le m$, set
\[
E_{m,L}
:=
\{y\in\Xi_{\mathrm M}: d_{\Xi_{\mathrm M}}(y,S_{m-L})\le C_*\rho_{\min}(\mathcal D)^{-m}\}.
\]
We first estimate $\mu_{\Xi_{\mathrm M}}(E_{m,L})$.
If $v$ is a $k$-born point with $k\le m-L$, then the finite-born estimate just proved gives
\[
\mu_{\Xi_{\mathrm M}}(B(v,C_*\rho_{\min}(\mathcal D)^{-m}))
\lesssim
\rho(\mathbf N)^{m-k}\rho(\mathbf M)^{-m}.
\]
The number of $k$-born points is $O(\rho(\mathbf M)^k)$ by Lemma~\ref{lem:basic_prop}.
Summing over $k\le m-L$, we obtain
\[
\mu_{\Xi_{\mathrm M}}(E_{m,L})
\lesssim
\sum_{k=0}^{m-L}
\rho(\mathbf M)^k\rho(\mathbf N)^{m-k}\rho(\mathbf M)^{-m}
\lesssim
\Bigl(\frac{\rho(\mathbf N)}{\rho(\mathbf M)}\Bigr)^L.
\]

Now set
\[
L_m
:=
\Bigl\lceil
\frac{(1+\varepsilon)\log m}{\log(\rho(\mathbf M)/\rho(\mathbf N))}
\Bigr\rceil.
\]
Since $L_m=O(\log m)$, we have $L_m\le m$ for all large $m$, and we tacitly restrict to such $m$.
Then $\sum_{m\ge 1}\mu_{\Xi_{\mathrm M}}(E_{m,L_m})<\infty$.
By Borel--Cantelli, for $\mu_{\Xi_{\mathrm M}}$-almost every $x\in\mathscr V^\infty$ there is $m_0(x)$ such that $x\notin E_{m,L_m}$ for all $m\ge m_0(x)$.
Fix such an $x$.
For each large $m$, choose a terminal $v_m(x)$ of the unique level-$m$ cell containing $x$ with $d_{\Xi_{\mathrm M}}(x,v_m(x))\le C_*\rho_{\min}(\mathcal D)^{-m}$.
Since $x\notin E_{m,L_m}$, this terminal cannot belong to the level-$(m-L_m)$ skeleton, so its birth level is at least $m-L_m+1$.
Hence
\[
B(x,c\rho_{\min}(\mathcal D)^{-m})
\subset
B(v_m(x),C\rho_{\min}(\mathcal D)^{-m}).
\]
Write $b_m$ for the birth level of $v_m(x)$, so that $m-L_m<b_m\le m$, and let $c_1$ be a fixed integer, depending only on $C$ and the constants of the uniform finite-born estimate, such that this estimate applies to every ball of radius $C\rho_{\min}(\mathcal D)^{-m}$ centred at a vertex of birth level at most $m-c_1$.
If $b_m\le m-c_1$, the uniform finite-born estimate gives
\[
\mu_{\Xi_{\mathrm M}}(B(x,c\rho_{\min}(\mathcal D)^{-m}))
\lesssim
\rho(\mathbf N)^{m-b_m}\rho(\mathbf M)^{-m}
\le
\rho(\mathbf N)^{L_m}\rho(\mathbf M)^{-m}.
\]
If instead $b_m>m-c_1$, then $B(v_m(x),C\rho_{\min}(\mathcal D)^{-m})\subset B(v_m(x),C\rho_{\min}(\mathcal D)^{-b_m})$, and the uniform finite-born estimate at the birth scale gives
\[
\mu_{\Xi_{\mathrm M}}(B(x,c\rho_{\min}(\mathcal D)^{-m}))
\lesssim
\rho(\mathbf M)^{-b_m}
\le
\rho(\mathbf M)^{c_1}\rho(\mathbf M)^{-m}.
\]
In either case,
\[
\mu_{\Xi_{\mathrm M}}(B(x,c\rho_{\min}(\mathcal D)^{-m}))
\lesssim
\rho(\mathbf N)^{L_m}\rho(\mathbf M)^{-m}.
\]
Since $\rho(\mathbf N)^{L_m}\le \rho(\mathbf N)\,m^{\frac{1+\varepsilon}{\dimdeg(\Xi)-1}}$ and $m\asymp |\log r|$, this yields the claimed upper bound.
\end{proof}

\begin{lemma}\label{lem:heat-exit}
For every $x\in \Xi_{\mathrm M}$ there are $C_x,r_0(x)>0$ such that
\[
\sup_{y\in B_{\Xi_{\mathrm M}}(x,r)}
\mathbb E_y[\tau_{B_{\Xi_{\mathrm M}}(x,r)}]
\le
C_x r^{\dimwalk(\Xi)}
\]
for all $r\in(0,r_0(x))$.
\end{lemma}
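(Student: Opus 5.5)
The plan is to prove the bound first for the approximating walks, uniformly in $n$, and then pass to the limit with Theorem~\ref{thm:sec31_ghps}, in the same way that Corollary~\ref{cor:sec31_continuous_stay} was obtained from Lemma~\ref{lem:sec31_one_step_stay}. Fix $x\in\Xi_{\mathrm M}$ and $r$ small. Choose $x_n\in V(\Xi^n)$ with $x_n\to x$, and let $m$ be such that $r\,\diam(\Xi^n)\asymp\rho_{\min}(\mathcal D)^{\,n-m'}$ with $m'=n-m$; this means the graph radius corresponds to $m'$ levels of substitution below level $n$. The target is the discrete estimate
\[
\sup_{y\in B_{\Xi^n}(x_n,2R_n)}\mathbb E_y\bigl[\tau^{\Xi^n}_{B_{\Xi^n}(x_n,2R_n)^{\mathrm c}}\bigr]\le C(\rho(\mathbf M)\rho(\Psi))^{m'}\asymp C\,a_n r^{\dimwalk(\Xi)},
\]
where $R_n:=r\,\diam(\Xi^n)$. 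The constant $C$ should depend only on the EIGS, and $n$ should be large.

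The discrete estimate will come from the cell exit bound $T_s\le C_T(\rho(\mathbf M)\rho(\Psi))^{s}$ established inside the proof of Lemma~\ref{lem:sec31_one_step_stay}, together with a covering argument. From any $y$ in the ball, the walk first reaches the terminals of its level-$(n-s)$ cell, where $s=m'-c$; this takes expected time at most $T_s$. From a skeleton vertex $w$ of level $n-s$, it then moves to the boundary of the star at depth $s$, in expected time at most $C_\star(\rho(\mathbf M)\rho(\Psi))^{s}$ by Lemma~\ref{lem:star_domain_exit_time_scale}.

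The key point is that each skeleton move has probability bounded below of carrying the walk a macroscopic distance outward. I would instead use the resistance commute-time bound $\mathbb E_y[\tau_{B^{\mathrm c}}]\le \Reff(y,B^{\mathrm c})\cdot\sum_{z\in B}\deg(z)$. Here $\sum_{z\in B}\deg(z)\lesssim \deg_{\Xi^{n-m'}}$-weighted cell counts. Lemma~\ref{lem:ball_volume_k_born} gives this as $\rho(\mathbf N)^{m-k}\rho(\mathbf M)^{m'}$ if $x_n$ is $k$-born. The resistance $\Reff(y,B^{\mathrm c})$ to the complement of the enlarged ball is at most $\Reff(y,w)+\Reff(w,B^{\mathrm c})$, where $w$ is a nearby terminal of a cell containing $y$. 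The term $\Reff(w,B^{\mathrm c})$ carries the factor $\rho(\Psi)^{m'}/\rho(\mathbf N)^{m-k}$ by Lemma~\ref{lem:mesoscopic_Reff_k_born}, and the term $\Reff(y,w)$ is handled as in Lemma~\ref{lem:sec32_point_boundary} but in unit resistances. The degree factors cancel exactly as in Theorem~\ref{thm:exit_time_scale}.

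The main obstacle is the term $\Reff(y,w)$ when $\rho(\Psi)\le1$. In that case the geometric series of Lemma~\ref{lem:sec32_point_boundary} no longer converges. The plan there is to avoid resistance and use only the iterated $T_s$ recursion, which is already uniform in the start point. The walk reaches level-$(n-s)$ terminals in expected time at most $T_s$. From every such terminal inside the ball, the probability of exiting before the next $O((\rho(\mathbf M)\rho(\Psi))^{s})$ time units is bounded below uniformly, because the normalised traced conductances lie in a compact set (Corollary~\ref{cor:Psi_strong_convergence}), and there are only finitely many moves across the coarse skeleton within the ball. A geometric trials argument then gives the bound.

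In the limit step, the exit times are lower semicontinuous along the convergent paths on the Skorokhod coupling. Fatou's lemma transfers the uniform discrete bound, divided by $a_n$, to $\mathbb E_y[\tau_{B(x,r)}]$ for $y\in B(x,r)$. To do this I approximate $y$ by $y_n$ and use $B(x,r)\subset B(x_n,2R_n/\diam)$ for large $n$.
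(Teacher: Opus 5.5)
Your passage to the limit (Skorokhod coupling, lower semicontinuity of exit times, Fatou, dividing by $a_n$) matches the paper. The discrete estimate, however, has a genuine gap in the scale-free case $\rho(\mathbf N)>1$, which is exactly where the lemma has content.

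The bound $\mathbb E_y[\tau_{B^{\mathrm c}}]\le \Reff(y,B^{\mathrm c})\sum_{z\in B}\deg(z)$ comes from $g_B(y,z)\le g_B(y,y)$. That step discards precisely the degree factor you want to cancel. Suppose $B=B_{\Xi^n}(x_n,2R_n)$ contains a $k$-born hub. Then the degree sum is $\asymp\rho(\mathbf N)^{m-k}\rho(\mathbf M)^{m'}$. The compensating factor $\rho(\mathbf N)^{-(m-k)}$ appears in the resistance only when the walk starts at the hub, which is the situation of Lemma~\ref{lem:star_domain_exit_time_scale} and Theorem~\ref{thm:exit_time_scale}.

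Take instead $y$ born at level $n-m'+c$ inside a branch, at distance about $R_n$ from the hub. Its incident depth-$(m'-c)$ cells lie in $B$, and its degree at that level is bounded. Hence $\Reff(y,B^{\mathrm c})\gtrsim\rho(\Psi)^{m'}$, and your product is $\gtrsim(\rho(\mathbf M)\rho(\Psi))^{m'}\rho(\mathbf N)^{m-k}$. After dividing by $a_n$ this is $r^{\dimwalk(\Xi)}$ times a factor of order $r^{-\log\rho(\mathbf N)/\log\rho_{\min}(\mathcal D)}$, which is the wrong exponent. This already happens for the $(2,3)$-flower, where $\rho(\Psi)=3/2>1$ and $\rho(\mathbf N)=2$.

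The triangle inequality through a terminal $w$ does not rescue this. The bound $\Reff(y,w)\lesssim\rho(\Psi)^s$ falls below $\rho(\Psi)^{m'}\rho(\mathbf N)^{-(m-k)}$ only if $w$ terminates a cell far smaller than the ball. But then $w$ is late-born, and Lemma~\ref{lem:mesoscopic_Reff_k_born} gives $\Reff(w,B^{\mathrm c})$ without any degree gain. Relatedly, writing the volume through the birth level of $x_n$ makes no sense for $x\in\mathscr V^\infty$: there $x_n$ is born near level $n$, and the relevant hub changes with $r$.

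The missing idea is to apply the strong Markov property at the exit from $y$'s own branch, instead of bounding the Green function globally. This is what the paper does. Work at depth $s=m'+c$, so the cells are larger than the ball rather than smaller. Let $w_n$ be a terminal of the depth-$s$ cell containing $y_n$ whose depth-$s$ star contains the ball, with the star boundary outside it. The terminal nearer to $y_n$ works: the other terminal is then at distance at least half the terminal separation.

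From any $z$ in that star, the walk either leaves its branch through the far terminal, and so has already left the star, or it reaches $w_n$. Hence $\mathbb E_z[\tau]\le T_s+C_\star(\rho(\mathbf M)\rho(\Psi))^s$, by the cell bound in the proof of Lemma~\ref{lem:sec31_one_step_stay} together with Lemma~\ref{lem:star_domain_exit_time_scale}. This needs no resistance estimate and no case split on $\rho(\Psi)$.

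Your fallback geometric-trials argument is also regime-independent and could be made to work, but as stated its key claim fails at hubs. A single skeleton transition out of a vertex of degree $\rho(\mathbf N)^{m-k}$ has probability of order $\rho(\mathbf N)^{-(m-k)}$. So the uniform lower bound on the exit probability must come from symmetry over all branches of the hub's star, not from the number of moves being finite.
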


\begin{proof}
Fix $x\in \Xi_{\mathrm M}$ and $r>0$ small.
If $y\in B_{\Xi_{\mathrm M}}(x,r)$, then $\tau_{B(x,r)}\le \tau_{B(y,2r)}$, so it is enough to bound $\mathbb E_y[\tau_{B(y,2r)}]$ uniformly in $y$.
Choose $y_n\in V(\Xi^n)$ with $y_n\to y$ and let $Y^{(n)}$ be the Poissonised walk started from $y_n$.
By Theorem~\ref{thm:sec31_ghps} and the Skorokhod representation theorem \cite{billingsley1999convergence}, we may couple $Y^{(n)}$ with $W$ started from $y$ so that $Y^{(n)}\to W$ almost surely in $D([0,T],\Xi_{\mathrm M})$ for every fixed $T$.

Fix $\eta>0$.
On the coupling event that $d_{\Xi_{\mathrm M}}(y_n,y)<\eta$ and $\sup_{0\le s\le T}d_{\Xi_{\mathrm M}}(Y_s^{(n)},W_s)<\eta$, every path segment of $W$ staying inside $B(y,2r)$ up to time $t\le T$ is approximated by a path segment of $Y^{(n)}$ staying inside $B(y_n,2r+2\eta)$ up to time $t$.
Hence
\[
\tau_{B(y,2r)}(W)\wedge T
\le
\liminf_{n\to\infty}\tau_{B(y_n,2r+2\eta)}(Y^{(n)})\wedge T
\]
almost surely.
Fatou's lemma therefore gives
\[
\mathbb E_y[\tau_{B(y,2r)}\wedge T]
\le
\liminf_{n\to\infty}
\mathbb E_{y_n}[\tau_{B(y_n,2r+2\eta)}(Y^{(n)})].
\]

The continuous-time exit time of $Y^{(n)}$ is the discrete exit time divided by $a_n$.
Let $w_n$ be a terminal of the cell containing $y_n$ at the level for which the star of $w_n$ at the corresponding depth contains $B(y_n,2r+2\eta)$ with its boundary outside it.
The uniform cell and star bounds established in the proof of Lemma~\ref{lem:sec31_one_step_stay}, together with Theorem~\ref{thm:exit_time_scale}, then yield
\[
\mathbb E_{y_n}[\tau_{B(y_n,2r+2\eta)}(Y^{(n)})]
\lesssim
(2r+2\eta)^{\dimwalk(\Xi)},
\]
uniformly in $n$ and $y_n$.
Letting $n\to\infty$, then $T\to\infty$, and finally $\eta\downarrow 0$, we obtain $\mathbb E_y[\tau_{B(y,2r)}]\lesssim r^{\dimwalk(\Xi)}$.
\end{proof}

\begin{proposition}\label{prop:heat-kernel}
There is a jointly continuous heat kernel $p_t^\Xi(x,y)$ of $W$ on $(0,\infty)\times \Xi_{\mathrm M}\times \Xi_{\mathrm M}$.
Moreover, there are constants $C,t_0>0$ such that
\[
0<p_t^\Xi(x,y)\le C t^{-\frac{\dimbox(\Xi)}{\dimwalk(\Xi)}}
\]
for all $x,y\in \Xi_{\mathrm M}$ and all $t\in(0,t_0]$.
\end{proposition}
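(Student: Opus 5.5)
The plan is to prove the upper bound through a Nash inequality built from cell averages, then to obtain the jointly continuous kernel from an eigenfunction expansion, and finally to prove strict positivity separately. Throughout, write $\lambda:=\rho(\mathbf M)\rho(\Psi)$, which satisfies $\lambda>1$ by Proposition~\ref{prop:MPsiD}. Note that $\dimbox(\Xi)/\dimwalk(\Xi)=\log\rho(\mathbf M)/\log\lambda$ by Theorem~\ref{thm:Einstein}.

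\textbf{Step 1: cellwise Poincar\'e inequality.} For a level-$m$ cell $\Lambda$, let $\langle u\rangle_\Lambda$ be the $\mu_{\Xi_{\mathrm M}}$-average of $u$ on $\Lambda$. I aim to show that for every $u$ in the core of harmonic splines,
\[
\int_\Lambda |u-\langle u\rangle_\Lambda|^2\,d\mu_{\Xi_{\mathrm M}}\le C\lambda^{-m}\,\mathcal E_W|_\Lambda(u,u).
\]
I would localise the proof of Lemma~\ref{lem:sec31_projection_tail_continuous} to $\Lambda$: write $u$ on $\Lambda$ as its level-$m$ harmonic part plus increments on deeper cells. The level-$m$ part is harmonic inside $\Lambda$ and determined by the two terminal values. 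Since the conductances on $\Lambda$ are $\rho(\Psi)^m/[\mathbf v]_i$, its oscillation squared is at most $C\rho(\Psi)^{-m}\mathcal E_W|_\Lambda(u,u)$. Its mass is $\asymp\rho(\mathbf M)^{-m}$ by Lemma~\ref{lem:GHP_convergence}. The deeper increments are summed exactly as in that lemma.

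Summing over the $\asymp\rho(\mathbf M)^m$ level-$m$ cells, and using $\mu_{\Xi_{\mathrm M}}(\Lambda)^{-1}\lesssim\rho(\mathbf M)^m$, gives
\[
\|u\|_2^2\le C\lambda^{-m}\mathcal E_W(u,u)+C\rho(\mathbf M)^m\|u\|_1^2 .
\]
Optimising over $m$, with the additive term $\|u\|_2^2$ absorbing the restriction $m\ge0$, yields a Nash inequality of dimension $\nu=2\dimbox(\Xi)/\dimwalk(\Xi)$ for $\mathcal E_W+\|\cdot\|_2^2$.

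\textbf{Step 2: ultracontractivity.} Carlen--Kusuoka--Stroock then gives $\|P_t\|_{1\to\infty}\le Ct^{-\dimbox(\Xi)/\dimwalk(\Xi)}$ for $t\le t_0$. The form is conservative by Theorem~\ref{thm:sec31_limit_dirichlet_form}, so the extra $\|u\|_2^2$ term only costs a factor $e^{t}$, which is harmless for $t\le t_0$. Ultracontractivity on the compact space gives compact resolvent and a discrete spectrum $0=\lambda_0<\lambda_1\le\cdots$. This produces the expansion $p_t(x,y)=\sum_k e^{-\lambda_kt}\varphi_k(x)\varphi_k(y)$, with bounded eigenfunctions $\|\varphi_k\|_\infty\le Ce^{\lambda_k t/2}t^{-\nu/4}$. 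The series converges absolutely and locally uniformly once each $\varphi_k$ is continuous, which also transfers the diagonal bound to the off-diagonal bound by Cauchy--Schwarz.

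\textbf{Step 3 (main obstacle): continuity of eigenfunctions.} When $\rho(\Psi)>1$, continuity is immediate from the resistance form of Theorem~\ref{thm:sec32_resistance_metric}. The critical and subcritical regimes need a different argument. There I would use an oscillation recursion. On a cell $\Lambda$, split $\varphi_k$ into its harmonic part $h$ and the Green potential of $\lambda_k\varphi_k$ with Dirichlet data on $\partial\Lambda$. The harmonic part satisfies $\operatorname{osc}_{\Lambda'}h\le\theta\operatorname{osc}_\Lambda h$ on each child cell $\Lambda'$, by the uniform contraction of Lemma~\ref{lem:sec31_continuous_harmonics}. The Green potential is bounded by $\lambda_k\|\varphi_k\|_\infty\sup_y\mathbb E_y[\tau_\Lambda]\lesssim\lambda_k\|\varphi_k\|_\infty\lambda^{-m}$, using Lemma~\ref{lem:heat-exit} together with the cell exit bound $T_s$ from the proof of Lemma~\ref{lem:sec31_one_step_stay}. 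The recursion $\operatorname{osc}_{\Lambda'}\varphi_k\le\theta\operatorname{osc}_\Lambda\varphi_k+C\lambda^{-m}$ forces the oscillation over level-$m$ cells to decay geometrically.

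At finite-born points the neighbourhoods are stars with unboundedly many cells, so the argument must be run on stars. The star version of the exit bound is exactly Lemma~\ref{lem:star_domain_exit_time_scale}. The contraction on stars still holds because each branch meets the others only at the centre.

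\textbf{Step 4: strict positivity.} Write $p_t(x,y)\ge\int p_{t/2}(x,z)p_{t/2}(z,y)\,d\mu_{\Xi_{\mathrm M}}(z)$ and chain along a finite sequence of small balls joining $x$ to $y$. Each link uses the staying probability of Corollary~\ref{cor:sec31_continuous_stay} to get $P_s\mathbf 1_{B}>0$ near the diagonal. Irreducibility of the conservative form, with $\Xi_{\mathrm M}$ connected, then gives $p_t>0$ everywhere.
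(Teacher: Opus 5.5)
Your Steps 1--2 follow the paper's route, made explicit. The paper only invokes the Hambly--Kumagai multiscale Nash argument, using cell masses $\asymp\rho(\mathbf M)^{-n}$ and the $\rho(\Psi)^n$ renormalisation of cell energies. You derive the Nash inequality from a cellwise Poincar\'e inequality, obtained by localising Lemma~\ref{lem:sec31_projection_tail_continuous}, and then apply Carlen--Kusuoka--Stroock. This works in all three regimes, since only $\lambda=\rho(\mathbf M)\rho(\Psi)>1$ is used.

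The genuine divergence is joint continuity. The paper shows that the resolvent $U_\lambda$ maps $L^\infty(\mu_{\Xi_{\mathrm M}})$ into $C(\Xi_{\mathrm M})$: it applies the strong Markov property at the exit time of a ball, controls the error with Lemma~\ref{lem:heat-exit}, and asserts that harmonic functions on balls are continuous. It then appeals to standard theory for a continuous kernel. You instead expand $p_t$ in eigenfunctions and prove each eigenfunction continuous by a harmonic-plus-Green-potential recursion on cells.

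Your route buys three things. Joint continuity and the off-diagonal bound come directly from the uniformly convergent series. The resistance form gives a shortcut when $\rho(\Psi)>1$. Working on cells rather than balls reduces the harmonic part to a combination of the two-terminal $0$--$1$ profile, where Lemma~\ref{lem:sec31_continuous_harmonics} applies directly; the paper's claim that harmonic functions on arbitrary balls are continuous is less transparent.

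The price is that you need point values of $\varphi_k$ at the $\mu_{\Xi_{\mathrm M}}$-null terminals. Concretely, you need a pointwise identity $\varphi_k(y)=\mathbb E_y[\varphi_k(W_\sigma)]+\lambda_k\mathbb E_y[\int_0^\sigma\varphi_k(W_s)\,\mathrm ds]$, with $\sigma$ the hitting time of the terminals. The paper's resolvent, defined pointwise through the Hunt process, has this for free.

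Your separate star argument is unnecessary. The recursion is uniform over all level-$m$ cells, and every level-$n$ star of a finite-born point $x$ is a neighbourhood of $x$ made of cells containing $x$. So $|\varphi_k(y)-\varphi_k(x)|$ is already bounded there by the level-$n$ cell oscillation.

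Step 4 does not work as written (the paper's proof does not address positivity at all). Staying estimates such as Corollary~\ref{cor:sec31_continuous_stay} only keep the process near its starting point. They never move mass from a neighbourhood of $x$ to one of $y$, so chaining them cannot give $p_t(x,y)>0$. Also, connectedness of $\Xi_{\mathrm M}$ does not by itself make a Dirichlet form irreducible.

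The ingredients you need are already in Step 1:
\begin{itemize}
\item Taking $m=0$ gives a global Poincar\'e inequality, so $\mathcal E_W(u,u)=0$ forces $u$ to be constant.
\item An invariant set $A$ of the conservative form satisfies $\mathbf 1_A\in\mathcal F_W$ and $\mathcal E_W(\mathbf 1_A,\mathbf 1_A)=0$, so the form is irreducible.
\item Symmetric Markov semigroups are holomorphic on $L^2$, and a positive, irreducible, holomorphic semigroup is positivity improving for every $t>0$. Hence $p_{t/2}(\cdot,y)=P_{t/4}\,p_{t/4}(\cdot,y)>0$ $\mu_{\Xi_{\mathrm M}}$-a.e.
\item Since $p_{t/2}(x,\cdot)\ge 0$ has total mass one, Chapman--Kolmogorov gives $p_t(x,y)=\int p_{t/2}(x,z)\,p_{t/2}(z,y)\,\mu_{\Xi_{\mathrm M}}(\mathrm dz)>0$ for every $x,y$.
\end{itemize}
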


\begin{proof}
By Lemma~\ref{lem:GHP_convergence}, every level-$n$ cell has $\mu_{\Xi_{\mathrm M}}$-mass $\asymp \rho(\mathbf M)^{-n}$.
By Theorem~\ref{thm:sec31_limit_dirichlet_form}, the trace form on such a cell is renormalised by the factor $\rho(\Psi)^n$.
Thus the time scale of a level-$n$ cell is $(\rho(\mathbf M)\rho(\Psi))^{-n}$.
The multiscale Nash argument of Hambly and Kumagai therefore gives the ultracontractive estimate
\[
\|P_t\|_{L^1(\mu_{\Xi_{\mathrm M}})\to L^\infty(\mu_{\Xi_{\mathrm M}})}
\le
C t^{-\frac{\dimbox(\Xi)}{\dimwalk(\Xi)}}
\]
for small $t$; see \cite{hambly1999transition,hambly2010diffusion,kumagai2004heat}.

It remains to prove continuity.
Fix $\lambda>0$ and $f\in L^\infty(\mu_{\Xi_{\mathrm M}})$, and set
\[
U_\lambda f(x):=\mathbb E_x\Bigl[\int_0^\infty e^{-\lambda t}f(W_t)\,\mathrm dt\Bigr].
\]
Fix $x_0\in \Xi_{\mathrm M}$ and $r>0$ small.
Applying the strong Markov property at the exit time from $B(x_0,r)$ and using Lemma~\ref{lem:heat-exit}, we see that the oscillation of $U_\lambda f$ on $B(x_0,r/2)$ is bounded by the oscillation of a bounded harmonic function on $B(x_0,r)$ plus an error of order $r^{\dimwalk(\Xi)}\|f\|_\infty$.
Since harmonic functions are uniform limits of harmonic splines, they are continuous.
Hence $U_\lambda$ maps $L^\infty(\mu_{\Xi_{\mathrm M}})$ to $C(\Xi_{\mathrm M})$.

Standard Dirichlet-form theory on compact spaces now yields a jointly continuous heat kernel, and the same ultracontractive estimate gives the stated bound; see \cite{fukushima1994dirichlet,kumagai2004heat}.
\end{proof}

\begin{theorem}\label{thm:general-lower}
For every $x\in \Xi_{\mathrm M}$ there are constants $c_x,C_x,t_0(x)>0$ such that
\[
p_t^\Xi(x,x)
\ge
\frac{c_x}{\mu_{\Xi_{\mathrm M}}(B_{\Xi_{\mathrm M}}(x,C_x t^{1/\dimwalk(\Xi)}))}
\]
for all $t\in(0,t_0(x))$.
\end{theorem}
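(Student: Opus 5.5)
The plan is the standard on-diagonal lower bound argument: the continuous staying estimate of Corollary~\ref{cor:sec31_continuous_stay}, combined with Cauchy--Schwarz and the semigroup property. Fix $x\in\Xi_{\mathrm M}$ and let $c_x,p_x,r_x$ be the constants from Corollary~\ref{cor:sec31_continuous_stay}. Given small $t$, define the radius $r:=(t/c_x)^{1/\dimwalk(\Xi)}$. This is where the constant $C_x:=c_x^{-1/\dimwalk(\Xi)}$ comes from, and $t_0(x)$ is chosen so that $r<r_x$ for $t<t_0(x)$. Write $B:=B_{\Xi_{\mathrm M}}(x,r)$. The corollary gives
\[
\mathbb P_x(W_t\in B)\ge \mathbb P_x\bigl(\tau_B>t\bigr)\ge p_x .
\]

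Next, express this probability through the jointly continuous kernel from Proposition~\ref{prop:heat-kernel}. Using $\mathbb P_x(W_t\in B)=\int_B p_t^\Xi(x,y)\,\mu_{\Xi_{\mathrm M}}(\mathrm dy)$ and Cauchy--Schwarz, we get
\[
p_x^2\le \mu_{\Xi_{\mathrm M}}(B)\int_{\Xi_{\mathrm M}} p_t^\Xi(x,y)^2\,\mu_{\Xi_{\mathrm M}}(\mathrm dy)=\mu_{\Xi_{\mathrm M}}(B)\,p_{2t}^\Xi(x,x),
\]
where the last equality is the Chapman--Kolmogorov identity together with symmetry of the kernel. This yields $p_{2t}^\Xi(x,x)\ge p_x^2/\mu_{\Xi_{\mathrm M}}(B(x,(t/c_x)^{1/\dimwalk(\Xi)}))$. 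Replacing $t$ by $t/2$ then gives the claim, with the constant $C_x$ adjusted by the factor $2^{-1/\dimwalk(\Xi)}$ and with $c_x$ renamed to $p_x^2$.

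The main obstacle is justifying the pointwise identity $\mathbb P_x(W_t\in\cdot)=p_t^\Xi(x,\cdot)\,\mu_{\Xi_{\mathrm M}}$ for \emph{every} $x$, rather than only quasi-everywhere, since $x\in\mathscr V$ may be a special point. I would handle this through joint continuity of $p_t^\Xi$ and the fact, established in the proof of Proposition~\ref{prop:heat-kernel}, that the resolvent maps $L^\infty$ into $C(\Xi_{\mathrm M})$. This makes the transition function of $W$ strong Feller, so it is given by the continuous kernel for all starting points. A second point to check is that Corollary~\ref{cor:sec31_continuous_stay} is stated with the same exit-time convention: $\tau_B>t$ implies $W_t\in B$ by right-continuity of paths. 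Finally, a short check that $\mu_{\Xi_{\mathrm M}}(B)>0$, which holds by Lemma~\ref{lem:heat-volume} or by cells having positive mass, completes the argument.
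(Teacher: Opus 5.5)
Your proposal is correct and follows essentially the same route as the paper. It uses the staying estimate of Corollary~\ref{cor:sec31_continuous_stay}, then Cauchy--Schwarz with Chapman--Kolmogorov to get $p_x^2\le\mu_{\Xi_{\mathrm M}}(B(x,r))\,p^\Xi_{2t}(x,x)$, and finally replaces $t$ by $t/2$ with $r\asymp t^{1/\dimwalk(\Xi)}$. Your extra check that the kernel represents the transition law at every starting point (via joint continuity and the strong Feller property from the proof of Proposition~\ref{prop:heat-kernel}) addresses a point the paper leaves implicit.
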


\begin{proof}
Fix $x\in \Xi_{\mathrm M}$.
By Corollary~\ref{cor:sec31_continuous_stay}, there are $a_x,p_x,r_x>0$ such that
\[
\mathbb P_x\bigl(\tau_{B_{\Xi_{\mathrm M}}(x,r)}>a_xr^{\dimwalk(\Xi)}\bigr)\ge p_x
\]
for all $r\in(0,r_x)$.
If $t\le a_xr^{\dimwalk(\Xi)}$, then $\mathbb P_x(W_t\in B(x,r))\ge p_x$.
Using the heat kernel and Cauchy--Schwarz, we obtain
\[
p_x^2
\le
\mu_{\Xi_{\mathrm M}}(B(x,r))\,p_{2t}^\Xi(x,x).
\]
Applying this with $t$ replaced by $t/2$ and choosing $r$ comparable to $(t/2)^{1/\dimwalk(\Xi)}$ proves the result, after enlarging $C_x$.
\end{proof}

\begin{theorem}\label{thm:diffusion-heat}
For every $x\in \mathscr V$ there are constants $c_x,C_x,t_0(x)>0$ such that
\[
c_x t^{-\frac{\dimbox(\Xi)\left(1-\frac{1}{\dimdeg(\Xi)}\right)}{\dimwalk(\Xi)}}
\le
p_t^\Xi(x,x)
\le
C_x t^{-\frac{\dimbox(\Xi)\left(1-\frac{1}{\dimdeg(\Xi)}\right)}{\dimwalk(\Xi)}}
\]
for all $t\in(0,t_0(x))$.
For every $\varepsilon>0$ and for $\mu_{\Xi_{\mathrm M}}$-almost every $x\in \mathscr V^\infty$ there are constants $c_{x,\varepsilon},C_x,t_0(x)>0$ such that
\[
c_{x,\varepsilon} |\log t|^{-\frac{1}{\dimdeg(\Xi)-1}-\varepsilon} t^{-\frac{\dimbox(\Xi)}{\dimwalk(\Xi)}}
\le
p_t^\Xi(x,x)
\le
C_x t^{-\frac{\dimbox(\Xi)}{\dimwalk(\Xi)}}
\]
for all $t\in(0,t_0(x))$.
\end{theorem}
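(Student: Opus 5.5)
The plan is to derive both pairs of bounds from two inputs: the general lower bound of Theorem~\ref{thm:general-lower}, and volume estimates from Lemma~\ref{lem:heat-volume}. For the upper bounds I would add an on-diagonal upper bound obtained from a local Nash/Faber--Krahn argument or, more simply, the global ultracontractive bound of Proposition~\ref{prop:heat-kernel}. Throughout write $\beta:=\dimwalk(\Xi)$, $D:=\dimbox(\Xi)$ and $D_x:=D(1-1/\dimdeg(\Xi))$.

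\textbf{Lower bounds.} Fix $x$. Theorem~\ref{thm:general-lower} gives $p_t^\Xi(x,x)\ge c_x/\mu_{\Xi_{\mathrm M}}(B(x,C_xt^{1/\beta}))$. Take $r=C_xt^{1/\beta}$, which is below $r_0(x)$ for small $t$.

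For $x\in\mathscr V$, the upper volume bound of Lemma~\ref{lem:heat-volume} gives $\mu(B)\lesssim r^{D_x}\asymp t^{D_x/\beta}$. This yields the claimed lower bound.

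For $\mu$-a.e.\ $x\in\mathscr V^\infty$, the same lemma gives $\mu(B)\lesssim|\log r|^{1/(\dimdeg-1)+\varepsilon}r^{D}$. Since $|\log r|\asymp|\log t|$, this produces exactly the stated logarithmic correction.

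\textbf{Upper bound at typical points.} This follows directly from Proposition~\ref{prop:heat-kernel}, because $p_t^\Xi(x,x)\le Ct^{-D/\beta}$ holds for every $x$.

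\textbf{Upper bound at finite-born points.} This is the main obstacle. The required exponent $D_x/\beta$ is smaller than $D/\beta$, so the uniform Nash bound is too weak, and a local argument is needed.

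I would first establish a near-diagonal regularity estimate of the form
\[
|p_t(x,y)-p_t(x,x)|^2\le \Reff\text{-type}(x,y)\,\mathcal E(p_{t/2}(x,\cdot))\lesssim \Reff\text{-type}(x,y)\,p_t(x,x)/t .
\]
In the Brownian regime this follows from the resistance estimate of Lemma~\ref{lem:sec32_point_boundary}. In general I would use the local exit-time control of Lemma~\ref{lem:heat-exit} together with the harmonic oscillation contraction of Lemma~\ref{lem:sec31_continuous_harmonics}, applied on the star of $x$.

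Next I would integrate over the ball $B=B(x,r)$ with $r\asymp t^{1/\beta}$. Since $1\ge\int_B p_t(x,y)\,\mu(dy)$, we get
\[
1\ge \mu(B)\bigl(p_t(x,x)-\text{osc}\bigr).
\]
The oscillation term is at most $\tfrac12 p_t(x,x)$ once the constant in $r$ is tuned, using the Einstein scaling: time scale $(\rho(\mathbf M)\rho(\Psi))^{-m}$ per level-$m$ star, which Theorem~\ref{thm:exit_time_scale} shows is independent of the birth level.

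Finally, the lower volume bound $\mu(B(x,r))\gtrsim r^{D_x}$ from Lemma~\ref{lem:heat-volume} gives $p_t(x,x)\lesssim t^{-D_x/\beta}$.

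\textbf{Obstruction and fallback.} The delicate point is the oscillation control on the star of a finite-born vertex, where the degree grows like $\rho(\mathbf N)^{m-k}$. I expect the cancellation between degree amplification and local resistance, as in Lemma~\ref{lem:mesoscopic_Reff_k_born}, to make the relevant resistance from $x$ to the star boundary scale like $r^{\beta}/\mu(B(x,r))$. That is exactly what the argument needs.

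If this fails, the fallback is a local Nash inequality on the star, with Dirichlet boundary condition. The star consists of about $\rho(\mathbf N)^{m-k}$ cells, each of mass about $\rho(\mathbf M)^{-m}$. This gives a Faber--Krahn bound $\lambda_1\gtrsim(\rho(\mathbf M)\rho(\Psi))^{m}$. Combining it with the exit-time bound and the standard $p_t(x,x)\le C/\mu(B)$ argument completes the upper bound.
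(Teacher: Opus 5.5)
Your lower bounds and the upper bound at $\mu_{\Xi_{\mathrm M}}$-typical points are the paper's proof (Theorem~\ref{thm:general-lower} with Lemma~\ref{lem:heat-volume}, and Proposition~\ref{prop:heat-kernel}). The gap is the upper bound at a finite-born point $x$. Specifically, the claim that the oscillation term can be made at most $\tfrac12 p_t(x,x)$ by tuning the constant in $r$ is false.

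\textbf{Why the oscillation step fails.} The estimate $|f(x)-f(y)|^2\le R(x,y)\,\mathcal E(f,f)$ uses the \emph{two-point} resistance. For points $y$ in the middle of the level-$m$ cells incident to $x$, this is of order $\rho(\Psi)^{-m}$; Lemma~\ref{lem:sec32_point_boundary} gives nothing better. It is not the point-to-set value $\rho(\Psi)^{-m}/d_m$, where $d_m\asymp\rho(\mathbf N)^{m-k}$ is the number of such cells.
\begin{itemize}
\item Take $t\asymp T_m:=(\rho(\mathbf M)\rho(\Psi))^{-m}$, i.e.\ $r\asymp t^{1/\beta}$. Use $\mathcal E(p_t(x,\cdot),p_t(x,\cdot))\le p_t(x,x)/(et)$; note it is $p_t$, not $p_{t/2}$, that is controlled this way.
\item Then the oscillation is $\lesssim(\rho(\mathbf M)^m p_t(x,x))^{1/2}$. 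This is below $\tfrac12 p_t(x,x)$ only if $p_t(x,x)\gtrsim\rho(\mathbf M)^m\asymp t^{-D/\beta}$, so the argument only reproduces the typical bound.
\item Equivalently, the squared $L^2$ error over the star is bounded only by $\asymp d_m\,p_t(x,x)$, because every point is charged the full energy. You lose exactly the unbounded factor $d_m$, which no choice of constants absorbs.
\item Run self-consistently at resistance radius $\asymp t\,p_t(x,x)$, as in the usual resistance-form argument, this route yields $t^{-\gamma/(1+\gamma)}$ with $\gamma=\log(\rho(\mathbf M)/\rho(\mathbf N))/\log\rho(\Psi)$. That is strictly weaker than $t^{-D_x/\beta}$ whenever $\rho(\mathbf N)>1$; for the $(u,v)$-flower with $u<v$ the exponent is $\log v/(2\log v-\log u)$ instead of $\tfrac12$.
\item For $\rho(\Psi)\le 1$ there is no resistance metric at all. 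This includes the DHL, which the theorem covers. The oscillation contraction of Lemma~\ref{lem:sec31_continuous_harmonics} applies to harmonic functions, not to $p_t(x,\cdot)$.
\end{itemize}
You are right that the resistance from $x$ to the star boundary scales like $r^\beta/\mu(B(x,r))$. But that point-to-set quantity does not control $|f(x)-f(y)|$ pointwise.

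\textbf{The fallback does not repair this.} The bound $\lambda_1(\mathrm{star})\gtrsim T_m^{-1}$ is true but degree-blind: a single cell satisfies the same bound, so it cannot produce the factor $d_m$. The ``standard $p_t(x,x)\le C/\mu(B)$ argument'' is exactly what is unavailable here. It needs either the pointwise resistance regularity above, or volume doubling, which fails.

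\textbf{What is needed.} You need a point-evaluation inequality at $x$ for nonnegative continuous $u\in\mathcal F_W$:
\[
d_m\rho(\mathbf M)^{-m}\,u(x)^2\lesssim \|u\|_{L^2(\mu_{\Xi_{\mathrm M}})}^2+T_m\,\mathcal E_W(u,u).
\]
The paper obtains it as follows.
\begin{itemize}
\item On each cell $\Lambda_{m,j}$ incident to $x$, replace $u$ by its harmonic interpolant $h$ with the same values at $x$ and at the far terminal $z_{m,j}$.
\item Since $u-h$ vanishes at both terminals, a Dirichlet--Poincar\'e inequality on the finitely many prototype cells gives $\|u-h\|_{L^2(\Lambda_{m,j})}^2\lesssim T_m\,\mathcal E_{W,\Lambda_{m,j}}(u,u)$. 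Summing over $j$ charges the energy only once.
\item Since $u\ge 0$ and each harmonic profile has $L^2$ mass $\asymp\rho(\mathbf M)^{-m}$, you get $\|h\|_2^2\gtrsim d_m\rho(\mathbf M)^{-m}u(x)^2$.
\item Apply this to $u=p_s(x,\cdot)$, with $s\in[t/2,t]$ chosen so that $\mathcal E_W(u,u)\le p_t(x,x)/t$, and $m$ chosen with $T_m\le t<T_{m-1}$. This gives $p_t(x,x)\lesssim\rho(\mathbf M)^m/d_m\asymp t^{-D_x/\beta}$.
\end{itemize}
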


\begin{proof}
The lower bounds follow immediately from Theorem~\ref{thm:general-lower} and Lemma~\ref{lem:heat-volume}.
The upper bound at $\mu_{\Xi_{\mathrm M}}$-almost every infinite-born point is the global estimate from Proposition~\ref{prop:heat-kernel}.

It remains to prove the upper bound at a fixed finite-born point $x$.
Let $k$ be the birth level of $x$.
For each $n\ge k$, let $\Lambda_{n,1},\dots,\Lambda_{n,d_n}$ be the level-$n$ cells incident to $x$, and let $z_{n,j}$ be the other terminal of $\Lambda_{n,j}$.
By the degree-growth estimate from Section~\ref{sec:random_walks},
\[
d_n\asymp \rho(\mathbf N)^{n-k}.
\]
Each $\Lambda_{n,j}$ has $\mu_{\Xi_{\mathrm M}}$-mass $\asymp \rho(\mathbf M)^{-n}$, and the natural time scale is
\[
T_n:=(\rho(\mathbf M)\rho(\Psi))^{-n}.
\]

Because only finitely many prototype cells occur, the Dirichlet Laplacian on a prototype cell with zero boundary values at its two terminals has strictly positive first eigenvalue.
After rescaling, there is a constant $C_1$ such that every $f\in \mathcal F_W\cap C(\Lambda_{n,j})$ with $f(x)=f(z_{n,j})=0$ satisfies
\[
\int_{\Lambda_{n,j}}f^2\,\mathrm d\mu_{\Xi_{\mathrm M}}
\le
C_1T_n\mathcal E_{W,\Lambda_{n,j}}(f,f).
\]
Likewise, if $\phi_{n,j}$ is the harmonic function on $\Lambda_{n,j}$ with $\phi_{n,j}(x)=1$ and $\phi_{n,j}(z_{n,j})=0$, then
\[
\int_{\Lambda_{n,j}}\phi_{n,j}^2\,\mathrm d\mu_{\Xi_{\mathrm M}}
\asymp
\rho(\mathbf M)^{-n}.
\]

Now let $u_s(y):=p_s^\Xi(x,y)$ and $q(t):=p_t^\Xi(x,x)$.
Replacing $u_s$ on each $\Lambda_{n,j}$ by its harmonic interpolant with the same boundary values at $x$ and $z_{n,j}$ gives a function $h_{n,s}$ with
\[
\|u_s-h_{n,s}\|_2^2
\lesssim
T_n\mathcal E_W(u_s,u_s)
\]
and
\[
\|h_{n,s}\|_2^2
\gtrsim
d_n\rho(\mathbf M)^{-n}u_s(x)^2.
\]
Since $\|h_{n,s}\|_2^2\le 2\|u_s\|_2^2+2\|u_s-h_{n,s}\|_2^2$, we obtain
\[
q(s)^2
\lesssim
\frac{\rho(\mathbf M)^n}{d_n}\bigl(q(2s)+T_n\mathcal E_W(u_s,u_s)\bigr).
\]
Choose $s\in[t/2,t]$ so that $\mathcal E_W(u_s,u_s)\le q(t)/t$, and then choose $n=n(t)$ with $T_n\le t<T_{n-1}$.
This gives
\[
q(t)\lesssim \frac{\rho(\mathbf M)^n}{d_n}\lesssim \Bigl(\frac{\rho(\mathbf M)}{\rho(\mathbf N)}\Bigr)^n.
\]
Since $t\asymp (\rho(\mathbf M)\rho(\Psi))^{-n}$, this yields exactly the finite-born exponent.
\end{proof}

It is worth comparing the logarithmic correction in Theorem~\ref{thm:diffusion-heat} with the argument of Hambly and Kumagai.
In \cite{hambly2010diffusion}, the radius is taken strictly below the cell scale, so that the ball avoids the whole skeleton and lies in a single cell; the price is a radius deficit raised to the power $\dimbox(\Xi)$, and the optimised outcome of that route is the logarithmic exponent $\frac{\dimdeg(\Xi)}{\dimdeg(\Xi)-1}$, which equals $2$ for the DHL.
The proof of Lemma~\ref{lem:heat-volume} instead keeps the radius at the cell scale and covers the ball by the star of a terminal, so the same fluctuation enters only linearly, through the degree $\rho(\mathbf N)^{L_m}$.
The resulting exponent $\frac{1}{\dimdeg(\Xi)-1}$ is intrinsic: it is the large-deviation rate of the age of the nearest terminal under $\mu_{\Xi_{\mathrm M}}$.
A renewal argument, which we do not pursue here, shows that for the DHL infinitely many scales satisfy a matching lower bound $\mu_{\Xi_{\mathrm M}}(B(x,r))\ge c|\log r|^{1-\varepsilon}r^{2}$ at $\mu_{\Xi_{\mathrm M}}$-almost every point of $\mathscr V^\infty$, because the age of the retained terminal performs a geometric renewal; hence the exponent cannot be improved below $\frac{1}{\dimdeg(\Xi)-1}$.

We define the local diffusion spectral dimension, whenever the limit exists, by
\[
\dimspec^{(\mathrm N)}(\Xi_{\mathrm M}:x)
:=
-2 \lim_{t \downarrow 0} \frac{\log p_t^\Xi(x,x)}{\log t}.
\]

\begin{theorem}\label{thm:diffusion-spectral}
With the convention that $\dimdeg(\Xi)=\infty$ in the non-scale-free case, we have
\[
\dimspec^{(\mathrm N)}(\Xi_{\mathrm M}:x)
=
\begin{cases}
\displaystyle
\frac{2\dimbox(\Xi)\left(1-\frac{1}{\dimdeg(\Xi)}\right)}{\dimwalk(\Xi)},
&
x \in \mathscr V,
\\[2mm]
\displaystyle
\frac{2\dimbox(\Xi)}{\dimwalk(\Xi)},
&
x \in \mathscr V^\infty
\quad
(\mu_{\Xi_{\mathrm M}}\text{-almost everywhere}).
\end{cases}
\]
\end{theorem}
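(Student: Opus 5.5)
The plan is to read the spectral dimension off the two-sided on-diagonal bounds of Theorem~\ref{thm:diffusion-heat} by taking logarithms and dividing by $\log t$ as $t\downarrow 0$. The only real work is the non-scale-free case, where $\dimdeg(\Xi)=\infty$ by convention, and checking that the logarithmic corrections disappear in the limit.

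\textbf{Finite-born points.} For $x\in\mathscr V$, Theorem~\ref{thm:diffusion-heat} gives
\[
\log p_t^\Xi(x,x)=-\frac{\dimbox(\Xi)\bigl(1-\frac{1}{\dimdeg(\Xi)}\bigr)}{\dimwalk(\Xi)}\log t+O(1).
\]
Multiplying by $-2/\log t$ and letting $t\downarrow0$ yields the first case. Since $\log t\to-\infty$, the $O(1)$ term vanishes after division.

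\textbf{Infinite-born points.} For $\mu_{\Xi_{\mathrm M}}$-a.e.\ $x\in\mathscr V^\infty$, fix $\varepsilon=1$. The lower bound in Theorem~\ref{thm:diffusion-heat} contributes the extra term
\[
-\Bigl(\tfrac{1}{\dimdeg(\Xi)-1}+1\Bigr)\log|\log t|,
\]
which is $o(|\log t|)$. The upper bound has no correction. The limit therefore exists and equals $2\dimbox(\Xi)/\dimwalk(\Xi)$. The exceptional null set is taken from the single choice $\varepsilon=1$, so no countable union is even needed.

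\textbf{Non-scale-free case.} Here $\rho(\mathbf N)\le1$, so degrees of finite-born vertices stay bounded (Lemma~\ref{lem:basic_prop}). I will check that both required inputs hold with the exponent $\dimbox(\Xi)$.

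First, in the proof of Lemma~\ref{lem:heat-volume} the finite-born volume estimate becomes $\mu(B(x,r))\asymp\rho(\mathbf M)^{-m}\asymp r^{\dimbox(\Xi)}$, because the factor $\rho(\mathbf N)^{m-k}$ is bounded above and below by constants depending on $x$. This matches the formula with $1/\dimdeg(\Xi)=0$. The same boundedness makes the Borel--Cantelli step unnecessary: the bound $\mu(B(x,r))\lesssim r^{\dimbox(\Xi)}$ holds at every point, so no logarithmic correction appears.

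Second, for the finite-born upper bound on the heat kernel, the argument in the proof of Theorem~\ref{thm:diffusion-heat} with $d_n$ bounded gives $q(t)\lesssim\rho(\mathbf M)^n$. Alternatively, Proposition~\ref{prop:heat-kernel} gives directly
\[
p_t^\Xi(x,x)\le Ct^{-\dimbox(\Xi)/\dimwalk(\Xi)}.
\]
Combined with the lower bound from Theorem~\ref{thm:general-lower}, this gives exponent $2\dimbox(\Xi)/\dimwalk(\Xi)$ at every point, in agreement with the convention.

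The main obstacle is this degenerate case. I expect it to be fully handled by the observation that $\rho(\mathbf N)^{m-k}$ is bounded, with the bounds depending only on $x$.
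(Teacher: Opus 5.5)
Your proposal is correct and is essentially the paper's proof, which simply reads the exponent off the two-sided bounds of Theorem~\ref{thm:diffusion-heat}; as you note, the $\log|\log t|$ correction at typical points is $o(|\log t|)$ and so disappears in the limit. Your explicit check of the non-scale-free convention, which the paper leaves implicit, is also sound, with one clarification: this case means $\rho(\mathbf N)=1$ exactly, since degrees are at least $1$ and so Lemma~\ref{lem:basic_prop} excludes $\rho(\mathbf N)<1$, and this is what makes $\rho(\mathbf N)^{m-k}$ bounded below as well as above.
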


\begin{proof}
This is immediate from Theorem~\ref{thm:diffusion-heat}.
\end{proof}

\begin{proposition}\label{prop:final_recurrent}
Simple random walk on $\Xi$ is recurrent if and only if $\dimspec^{(\mathrm N)}(\Xi_{\mathrm M}:x)<2$ for $\mu_{\Xi_{\mathrm M}}$-almost every $x\in \mathscr V^\infty$.
\end{proposition}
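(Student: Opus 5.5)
The statement is an equivalence between two quantities that both have closed forms. We reduce it to an inequality between spectral radii, using Proposition~\ref{prop:asymp_res_recurrence_rhoPsi} on one side and Theorem~\ref{thm:diffusion-spectral} together with the Einstein relation (Theorem~\ref{thm:Einstein}) on the other. Recurrence of the combinatorial limit $\Xi$ (Definition~\ref{def:asymp_res_recurrence}) holds if and only if $\rho(\Psi)>1$. For $\mu_{\Xi_{\mathrm M}}$-almost every $x\in\mathscr V^\infty$ we have
\[
\dimspec^{(\mathrm N)}(\Xi_{\mathrm M}:x)=\frac{2\dimbox(\Xi)}{\dimwalk(\Xi)}
=\frac{2\log\rho(\mathbf M)}{\log\rho(\mathbf M)+\log\rho(\Psi)}.
\]
This common almost-sure value is a single deterministic number, so the condition \emph{``$<2$ for a.e.\ $x\in\mathscr V^\infty$''} is a condition on that number alone. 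Here we use that $\mu_{\Xi_{\mathrm M}}(\mathscr V^\infty)=1$, since $\mathscr V$ is countable and every point of it is null by Lemma~\ref{lem:GHP_convergence}. As a result, the a.e.\ quantifier causes no ambiguity: there is no vacuous case to worry about.

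Next we rewrite the inequality. By Proposition~\ref{prop:MPsiD}, the denominator satisfies $\log(\rho(\mathbf M)\rho(\Psi))\ge 2\log\rho_{\min}(\mathcal D)>0$. Hence
\[
\frac{2\log\rho(\mathbf M)}{\log\rho(\mathbf M)+\log\rho(\Psi)}<2
\iff \log\rho(\mathbf M)<\log\rho(\mathbf M)+\log\rho(\Psi)
\iff \rho(\Psi)>1.
\]
Combining this with Proposition~\ref{prop:asymp_res_recurrence_rhoPsi} gives both directions at once. Equivalently, the condition reads $\dimresis(\Xi)>0$, by Theorem~\ref{thm:resistance_dimension_formula} and the fact that $\rho_{\min}(\mathcal D)>1$.

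The only point needing care is that Theorem~\ref{thm:diffusion-spectral} gives the limit (not merely bounds) at a.e.\ infinite-born point. This requires the logarithmic factor $|\log t|^{-1/(\dimdeg(\Xi)-1)-\varepsilon}$ in Theorem~\ref{thm:diffusion-heat} to be negligible against the power of $t$, which it is after taking $\log$ and dividing by $\log t$. We also note that Theorem~\ref{thm:diffusion-spectral} is applied in all three regimes of $\rho(\Psi)$, including $\rho(\Psi)\le 1$, where the limiting diffusion comes from Theorem~\ref{thm:sec31_ghps} rather than from a resistance form. This is legitimate because Section~\ref{subsec:heat_kernel_diffusion} works with $(\mathcal E_W,\mathcal F_W)$ without assuming $\dimresis(\Xi)>0$. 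In the non-scale-free case the same computation goes through unchanged, with $\dimdeg(\Xi)=\infty$.
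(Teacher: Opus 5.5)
Your proposal is correct and takes essentially the same route as the paper. You combine Theorem~\ref{thm:diffusion-spectral} with the Einstein relation (Theorem~\ref{thm:Einstein}) to get the value $2\dimbox(\Xi)/(\dimbox(\Xi)+\dimresis(\Xi))$ at $\mu_{\Xi_{\mathrm M}}$-a.e.\ point of $\mathscr V^\infty$. This value is $<2$ exactly when $\dimresis(\Xi)>0$, i.e.\ $\rho(\Psi)>1$, and Proposition~\ref{prop:asymp_res_recurrence_rhoPsi} concludes. Your extra checks are correct details the paper leaves implicit: positivity of the denominator via Proposition~\ref{prop:MPsiD}, and $\mu_{\Xi_{\mathrm M}}(\mathscr V^\infty)=1$ so the almost-everywhere condition is not vacuous.
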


\begin{proof}
By Theorem~\ref{thm:Einstein}, for $\mu_{\Xi_{\mathrm M}}$-almost every $x\in \mathscr V^\infty$ we have
\[
\dimspec^{(\mathrm N)}(\Xi_{\mathrm M}:x)
=
\frac{2\dimbox(\Xi)}{\dimbox(\Xi)+\dimresis(\Xi)}.
\]
This quantity is less than $2$ exactly when $\dimresis(\Xi)>0$.
By Proposition~\ref{prop:asymp_res_recurrence_rhoPsi}, this is exactly the recurrence regime of the simple random walk on $\Xi$.
\end{proof}

\begin{example}
Let $\Xi$ be the one-colour EIGS generating the diamond hierarchical lattice.
Then $\dimbox(\Xi)=2$, $\dimdeg(\Xi)=2$, $\dimresis(\Xi)=0$, and $\dimwalk(\Xi)=2$.
Hence Theorem~\ref{thm:diffusion-heat} gives the following on-diagonal estimates.

For every $x\in\mathscr V$, there are $c_x,C_x>0$ and $t_0(x)>0$ such that
\[
c_x t^{-1/2}\le p_t^\Xi(x,x)\le C_x t^{-1/2}
\]
for all $t\in(0,t_0(x))$.

For $\mu_{\Xi_{\mathrm M}}$-almost every $x\in\mathscr V^\infty$, there are $c'_{x,\varepsilon},C'_x>0$ and $t_0(x)>0$ such that
\[
c'_{x,\varepsilon}|\log t|^{-1-\varepsilon}t^{-1}
\le
p_t^\Xi(x,x)
\le
C'_x t^{-1}
\]
for all $t\in(0,t_0(x))$.

This agrees with the heat-kernel estimates of Hambly and Kumagai in \cite{hambly2010diffusion} at polynomial order, and sharpens the logarithmic correction in their typical-point lower bound from $|\log t|^{-2-\varepsilon}$ to $|\log t|^{-1-\varepsilon}$.
\end{example}


\bigskip 

Assume $\dimresis(\Xi)>0$.
We write $q_t^\Xi(x,y)$ for the heat kernel of the Brownian motion associated with $(\mathcal E_R,\mathcal F_R)$, and define
\[
\dimspec^{(\mathrm R)}(\Xi_{\mathrm M}:x)
:=
-2 \lim_{t \downarrow 0} \frac{\log q_t^\Xi(x,x)}{\log t},
\]
whenever the limit exists.

\begin{corollary}\label{cor:brownian-heat}
Assume $\dimresis(\Xi)>0$.
For every $x\in \mathscr V$ there are constants $c_x,C_x,t_0(x)>0$ such that
\[
c_x t^{-\frac{\dimbox(\Xi)\left(1-\frac{1}{\dimdeg(\Xi)}\right)}{\dimwalk(\Xi)}}
\le
q_t^\Xi(x,x)
\le
C_x t^{-\frac{\dimbox(\Xi)\left(1-\frac{1}{\dimdeg(\Xi)}\right)}{\dimwalk(\Xi)}}
\]
for all $t\in(0,t_0(x))$.
For every $\varepsilon>0$ and for $\mu_{\Xi_{\mathrm M}}$-almost every $x\in \mathscr V^\infty$ there are constants $c_{x,\varepsilon},C_x,t_0(x)>0$ such that
\[
c_{x,\varepsilon} |\log t|^{-\frac{1}{\dimdeg(\Xi)-1}-\varepsilon} t^{-\frac{\dimbox(\Xi)}{\dimwalk(\Xi)}}
\le
q_t^\Xi(x,x)
\le
C_x t^{-\frac{\dimbox(\Xi)}{\dimwalk(\Xi)}}
\]
for all $t\in(0,t_0(x))$.
\end{corollary}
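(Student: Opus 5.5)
The plan is to reduce Corollary~\ref{cor:brownian-heat} to Theorem~\ref{thm:diffusion-heat} through the identification of the two processes, so that no new heat-kernel analysis is required. Assume $\dimresis(\Xi)>0$, equivalently $\rho(\Psi)>1$ by Theorem~\ref{thm:resistance_dimension_formula}. Under this assumption Theorem~\ref{thm:sec32_resistance_metric} supplies the resistance form $(\mathcal E_R,\mathcal F_R)$, and Theorem~\ref{thm:sec32_brownian_motion} shows that its $L^2(\Xi_{\mathrm M},\mu_{\Xi_{\mathrm M}})$ realisation coincides with $(\mathcal E_W,\mathcal F_W)$. Two closed symmetric forms on the same $L^2$ space that agree generate the same strongly continuous semigroup, so $P_t^R=P_t^W$ as operators on $L^2(\mu_{\Xi_{\mathrm M}})$ for every $t>0$.

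The next step is to pass from equality of semigroups to pointwise equality of the on-diagonal kernels. Proposition~\ref{prop:heat-kernel} gives a jointly continuous kernel $p_t^\Xi$ for $P_t^W$. For the resistance-form side, compactness of $\Xi_{\mathrm M}$ in the resistance topology (which is the natural topology by Theorem~\ref{thm:sec32_resistance_metric}) lets one use standard Kigami theory to obtain a jointly continuous kernel $q_t^\Xi$ for $P_t^R$. Because the two semigroups agree, $q_t^\Xi(x,\cdot)=p_t^\Xi(x,\cdot)$ holds $\mu_{\Xi_{\mathrm M}}\otimes\mu_{\Xi_{\mathrm M}}$-almost everywhere. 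Two continuous functions that agree almost everywhere agree everywhere on the support of the measure, so it remains to know that $\mu_{\Xi_{\mathrm M}}$ has full support.

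Full support follows from Lemma~\ref{lem:GHP_convergence}: every level-$m$ cell has positive mass, and every open set contains a cell. Hence $q_t^\Xi(x,y)=p_t^\Xi(x,y)$ for all $t>0$ and all $x,y\in\Xi_{\mathrm M}$, including the diagonal values at $x\in\mathscr V$, even though such points carry no mass.

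The two-sided bounds of Corollary~\ref{cor:brownian-heat}, at every finite-born point and at $\mu_{\Xi_{\mathrm M}}$-almost every infinite-born point, are then exactly those of Theorem~\ref{thm:diffusion-heat}, with the same constants and the same exceptional null set. The only point needing care is the continuity of $q_t^\Xi$, because pointwise statements at the null set $\mathscr V$ depend on it. If the resistance-form route to a continuous kernel is unavailable, a simpler option is to \emph{define} $q_t^\Xi$ to be the continuous version $p_t^\Xi$, which the paper's remark after Theorem~\ref{thm:sec32_brownian_motion} already does implicitly.
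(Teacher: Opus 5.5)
Your proposal is correct and follows the paper's proof. Theorem~\ref{thm:sec32_brownian_motion} identifies the two forms, which gives $q_t^\Xi=p_t^\Xi$, and the bounds are then read off from Theorem~\ref{thm:diffusion-heat}. Your additional step (joint continuity of both kernels, together with full support of $\mu_{\Xi_{\mathrm M}}$ from Lemma~\ref{lem:GHP_convergence}) simply makes explicit the ``jointly continuous versions'' convention the paper relies on to justify the identity pointwise at the $\mu_{\Xi_{\mathrm M}}$-null points of $\mathscr V$.
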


\begin{proof}
By Theorem~\ref{thm:sec32_brownian_motion}, the Brownian motion of $(\mathcal E_R,\mathcal F_R)$ is the same $L^2(\Xi_{\mathrm M},\mu_{\Xi_{\mathrm M}})$ process as the diffusion $W$.
Hence $q_t^\Xi(x,y)=p_t^\Xi(x,y)$ for all $t>0$ and all $x,y\in \Xi_{\mathrm M}$.
The result is therefore exactly Theorem~\ref{thm:diffusion-heat}.
\end{proof}

\begin{theorem}\label{thm:brownian-spectral}
Assume $\dimresis(\Xi)>0$.
For every $x\in \mathscr V$,
\[
\dimspec^{(\mathrm R)}(\Xi_{\mathrm M}:x)
=
\dimspec^{(\mathrm N)}(\Xi_{\mathrm M}:x)
=
\frac{2\dimbox(\Xi)\left(1-\frac{1}{\dimdeg(\Xi)}\right)}{\dimwalk(\Xi)}.
\]
For $\mu_{\Xi_{\mathrm M}}$-almost every $x\in \mathscr V^\infty$,
\[
\dimspec^{(\mathrm R)}(\Xi_{\mathrm M}:x)
=
\dimspec^{(\mathrm N)}(\Xi_{\mathrm M}:x)
=
\frac{2\dimbox(\Xi)}{\dimwalk(\Xi)}
=
\frac{2\frac{\dimbox(\Xi)}{\dimresis(\Xi)}}{1+\frac{\dimbox(\Xi)}{\dimresis(\Xi)}}.
\]
\end{theorem}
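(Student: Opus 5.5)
The plan is to deduce Theorem~\ref{thm:brownian-spectral} directly from results already in place, with no new analysis. The first step is the identification of heat kernels. By Theorem~\ref{thm:sec32_brownian_motion}, the $L^2(\Xi_{\mathrm M},\mu_{\Xi_{\mathrm M}})$ realisation of $(\mathcal E_R,\mathcal F_R)$ coincides with $(\mathcal E_W,\mathcal F_W)$. Hence the two semigroups agree. Since both kernels are taken in their jointly continuous versions (Proposition~\ref{prop:heat-kernel}), we get $q_t^\Xi(x,y)=p_t^\Xi(x,y)$ for all $t>0$ and all $x,y$, not merely almost everywhere. This pointwise identity matters, because the statement concerns the diagonal at specific points, including finite-born points, which are $\mu_{\Xi_{\mathrm M}}$-null by Lemma~\ref{lem:GHP_convergence}. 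Consequently $\dimspec^{(\mathrm R)}(\Xi_{\mathrm M}:x)$ exists exactly when $\dimspec^{(\mathrm N)}(\Xi_{\mathrm M}:x)$ does, and the two are equal.

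The second step computes the common value from the two-sided on-diagonal bounds of Corollary~\ref{cor:brownian-heat} (equivalently Theorem~\ref{thm:diffusion-heat}).

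\textbf{Finite-born points.} For $x\in\mathscr V$, take $\log$ of $c_x t^{-\beta}\le q_t^\Xi(x,x)\le C_x t^{-\beta}$ with $\beta=\dimbox(\Xi)(1-1/\dimdeg(\Xi))/\dimwalk(\Xi)$. Dividing by $\log t\to-\infty$, the constants contribute $O(1/|\log t|)$, so $-2\log q_t/\log t\to 2\beta$.

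\textbf{Infinite-born points.} For $\mu_{\Xi_{\mathrm M}}$-a.e.\ $x\in\mathscr V^\infty$, the extra lower-order factor $|\log t|^{-1/(\dimdeg(\Xi)-1)-\varepsilon}$ contributes $O(\log|\log t|/|\log t|)\to 0$. The limit is therefore $2\dimbox(\Xi)/\dimwalk(\Xi)$. It suffices to fix a single $\varepsilon$, say $\varepsilon=1$, so the exceptional null set does not depend on a varying parameter.

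The last identity is algebraic. By Theorem~\ref{thm:Einstein}, $\dimwalk(\Xi)=\dimbox(\Xi)+\dimresis(\Xi)$. Since $\dimresis(\Xi)>0$, dividing numerator and denominator of $2\dimbox(\Xi)/(\dimbox(\Xi)+\dimresis(\Xi))$ by $\dimresis(\Xi)$ gives the stated form.

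I do not expect a real obstacle, since the estimates are inherited. The one step needing care is the first: passing from equality of Dirichlet forms to pointwise equality of kernels at every point, in particular at finite-born points. I would handle this by citing uniqueness of the jointly continuous version of the kernel of a single semigroup. I would also note the degenerate case $\dimdeg(\Xi)=\infty$: under the stated convention the finite-born formula reduces to the typical one, and the logarithmic factor is read as absent.
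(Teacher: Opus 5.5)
Your proposal is correct and follows the paper's own proof. Both identify $q_t^\Xi=p_t^\Xi$ via Theorem~\ref{thm:sec32_brownian_motion}, read off the limits from the two-sided on-diagonal bounds (exactly what Theorem~\ref{thm:diffusion-spectral} packages), and finish with the Einstein relation. Your extra care about pointwise equality of the jointly continuous kernels at the $\mu_{\Xi_{\mathrm M}}$-null finite-born points, and about fixing a single $\varepsilon$, is sound; the uniqueness of the continuous version implicitly uses that $\mu_{\Xi_{\mathrm M}}$ has full support, which holds because every cell has positive mass.
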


\begin{proof}
By Theorem~\ref{thm:sec32_brownian_motion}, we have $q_t^\Xi(x,y)=p_t^\Xi(x,y)$.
Hence the two spectral dimensions agree wherever the limit exists.
The formulas then follow from Theorem~\ref{thm:diffusion-spectral}.
The final identity is just the Einstein relation $\dimwalk(\Xi)=\dimbox(\Xi)+\dimresis(\Xi)$ from Section~\ref{sec:random_walks}.
\end{proof}

\begin{remark}\label{rem:three_resistance_regimes}
The sign of $\dimresis(\Xi)$ separates EIGS into three qualitatively different regimes, matching the trichotomy of Subsection~\ref{subsec:main_results}.

If $\dimresis(\Xi)=0$ (the Hambly--Kumagai regime), the effective resistance across a macroscopic cell stays of order one when every microscopic edge has unit resistance.
This is the regime represented by the diamond hierarchical lattice, and Hambly and Kumagai construct the limiting diffusion directly from the renormalised Dirichlet energy \cite{hambly2010diffusion}.

If $\dimresis(\Xi)<0$ (the superconducting regime), the effective resistance across a macroscopic cell tends to zero in the unit-resistance network.
After renormalisation, one keeps the macroscopic resistance of order one, but the resistance of a single microscopic edge diverges.
This is incompatible with the resistance-form construction used above.
The name superconducting refers to the unit-resistance network: every edge has resistance one, and the vanishing of the two-terminal resistance is emergent in the limit; it is unrelated to conductor--superconductor mixtures in the percolation literature, where some bonds carry literally zero resistance.

If $\dimresis(\Xi)>0$ (the Brownian regime), the effective resistance across a macroscopic cell diverges in the unit-resistance network.
After renormalisation, each microscopic edge has vanishing resistance, while each macroscopic cell has resistance of order one.
This is the standard resistance-form regime.
Theorem~\ref{thm:sec32_resistance_metric} shows that the resistance and natural metrics induce the same topology on $\Xi_{\mathrm M}$, so it is natural that the diffusion and Brownian spectral dimensions agree in this regime.
\end{remark}

\begin{figure}[bt]
    \centering
    \begin{minipage}[t]{0.32\linewidth}
        \includegraphics[width=\linewidth]{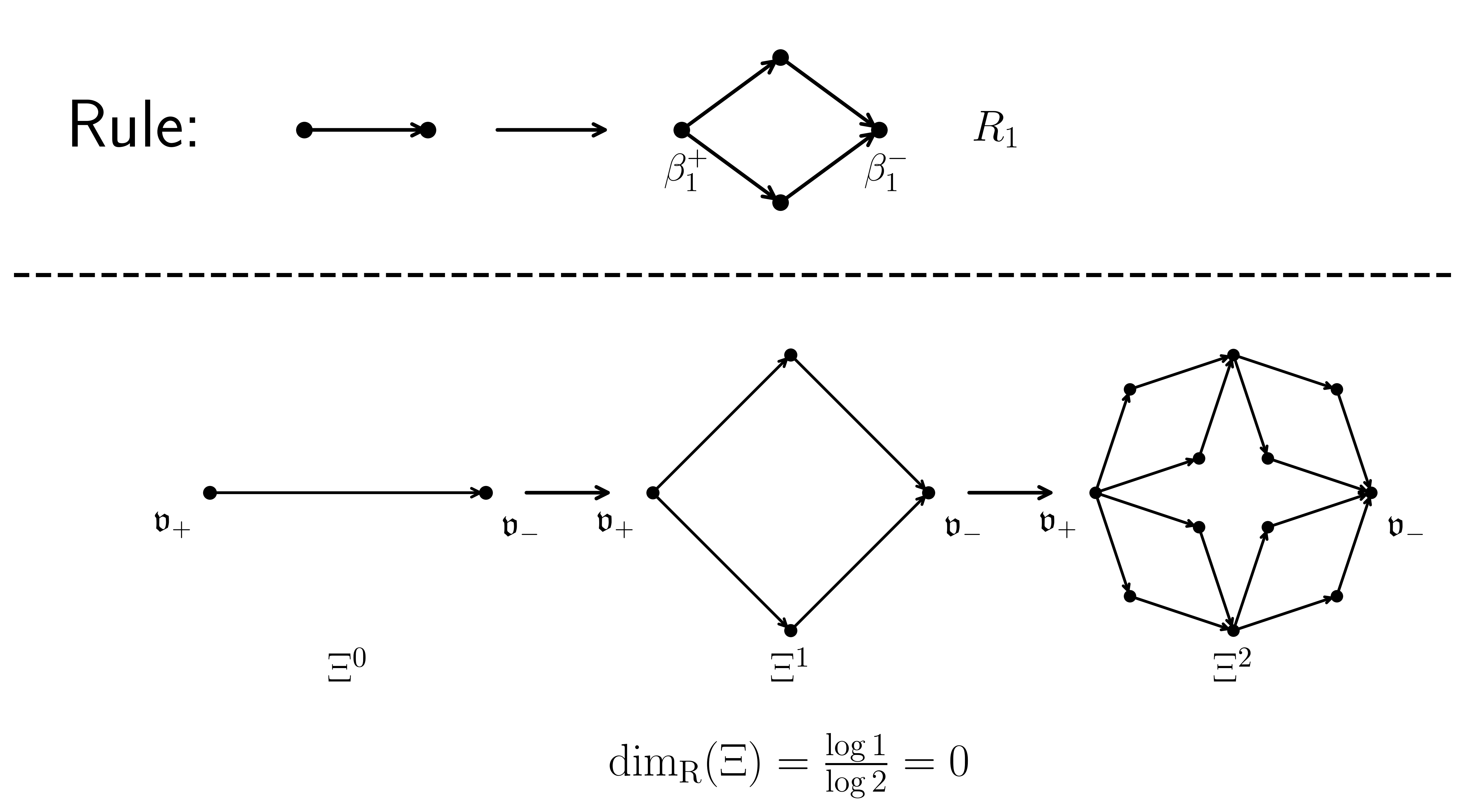}
        \caption{$(2,2)$-flower}
        \label{fig:2-2flower}
    \end{minipage}
\hfill
    \begin{minipage}[t]{0.32\linewidth}
        \includegraphics[width=\linewidth]{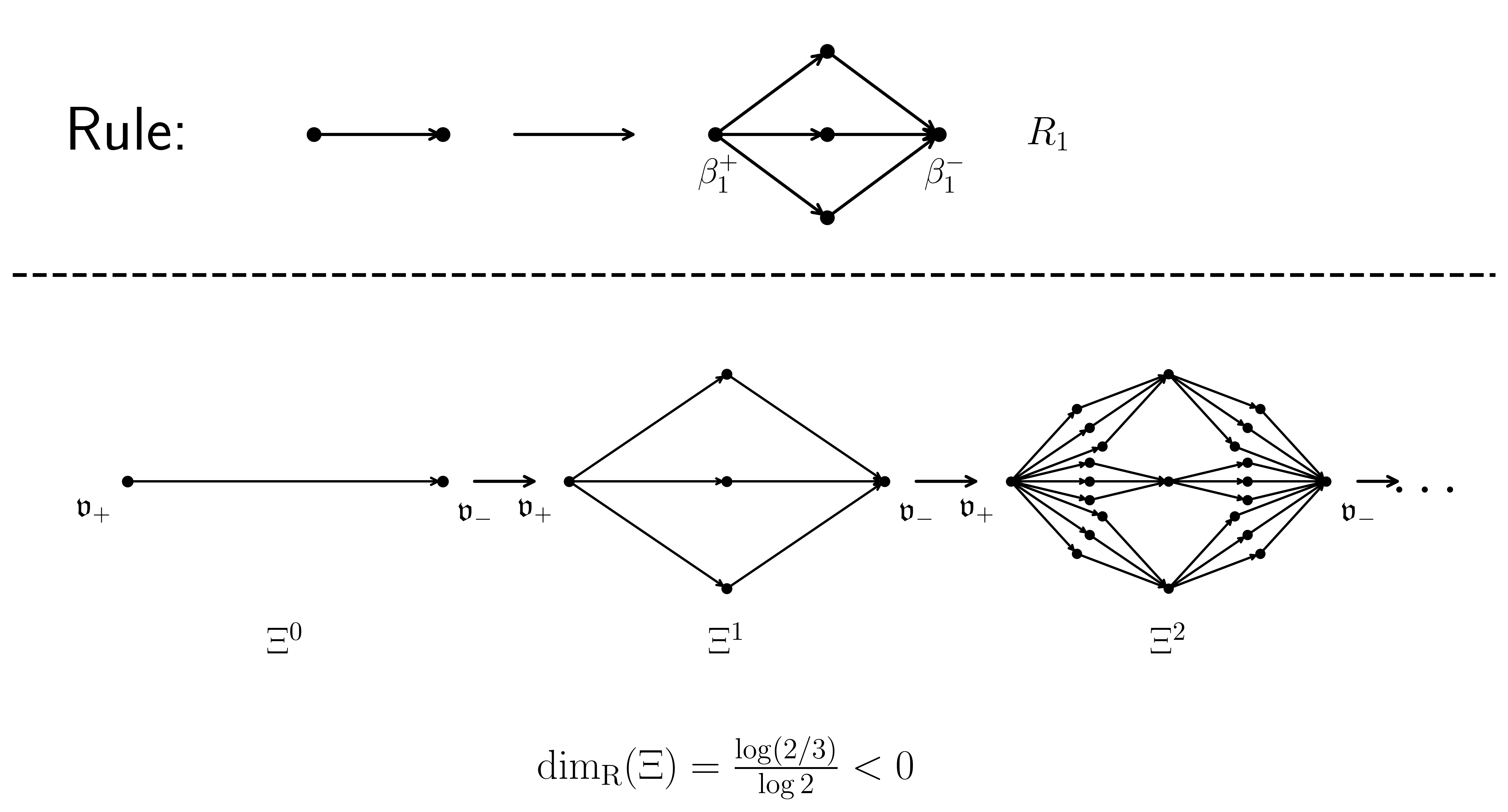}
        \caption{$(3,2)$-flower}
        \label{fig:3-2flower}
    \end{minipage}
\hfill
    \begin{minipage}[t]{0.32\linewidth}
        \includegraphics[width=\linewidth]{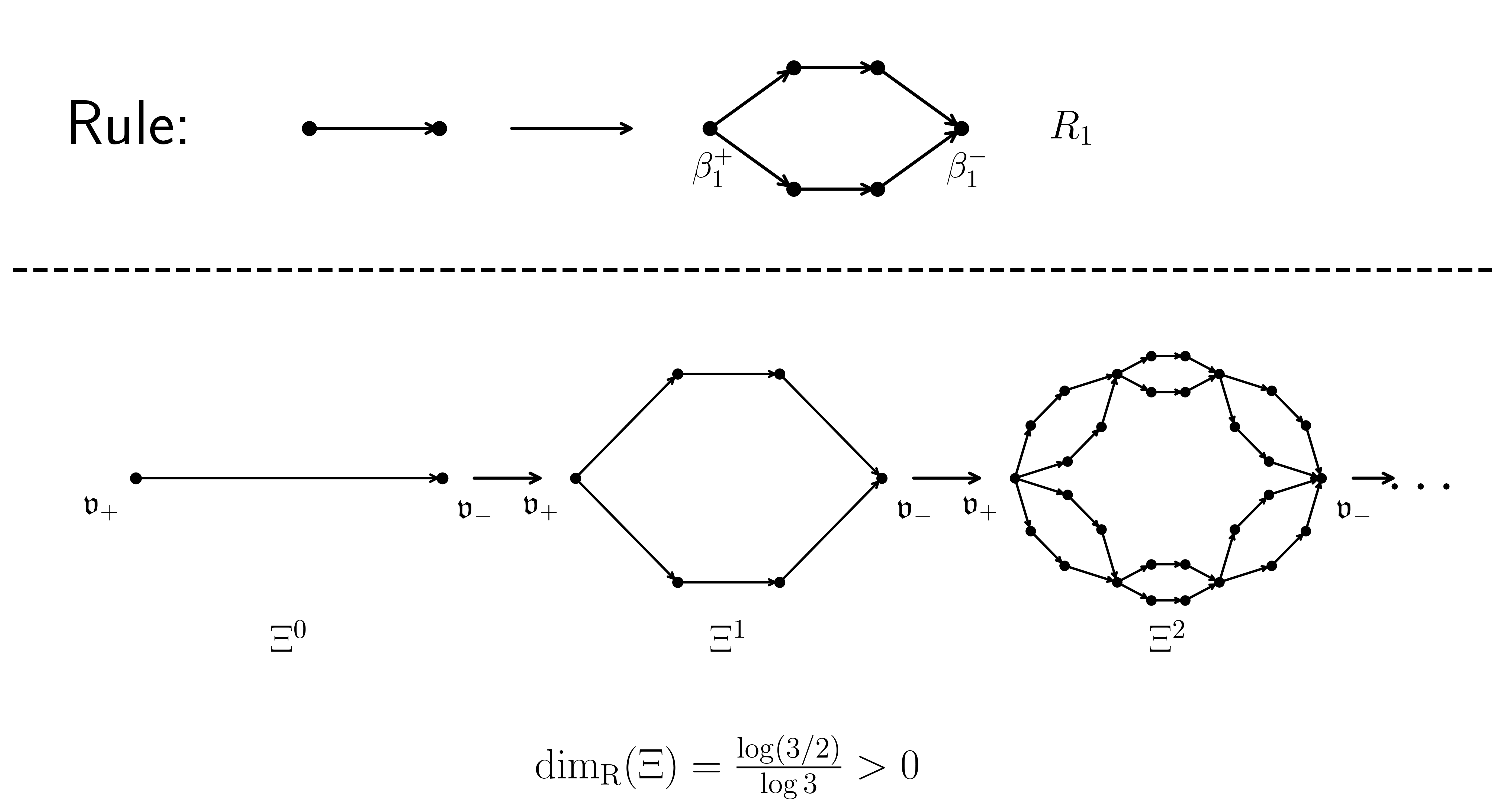}
        \caption{$(2,3)$-flower}
        \label{fig:2-3flower}
    \end{minipage}
\end{figure}

\begin{example}\label{ex:two-colour-spectral}
Consider again the two-coloured EIGS of Figure~\ref{fig:two_colour}.
From Example~\ref{example:two_colour_1} we have known $\dimbox$, $\dimdeg$, $\dimresis$ and $\dimwalk$.
Applying Theorem~\ref{thm:diffusion-spectral}, we obtain that for every \(x\in \mathscr V\),
\begin{align*}
\dimspec^{(\mathrm R)}(\Xi_{\mathrm M}:x)
&=
\dimspec^{(\mathrm N)}(\Xi_{\mathrm M}:x)
= \\
&\frac{2\dimbox(\Xi)\left(1-\frac{1}{\dimdeg(\Xi)}\right)}{\dimwalk(\Xi)}
=
\frac{2\log((3+\sqrt6)/2)}{\log((3+\sqrt6)\rho(\Psi))}
\approx 1.1160.
\end{align*}
Also, for \(\mu_{\Xi_{\mathrm M}}\)-almost every \(x\in \mathscr V^\infty\),
\[
\dimspec^{(\mathrm R)}(\Xi_{\mathrm M}:x)
=
\dimspec^{(\mathrm N)}(\Xi_{\mathrm M}:x)
=
\frac{2\dimbox(\Xi)}{\dimwalk(\Xi)}
=
\frac{2\log(3+\sqrt6)}{\log((3+\sqrt6)\rho(\Psi))}
\approx 1.8877.
\]

Therefore, the two-coloured EIGS has diffusion spectral dimension approximately \(1.1160\) at finite-born points and approximately \(1.8877\) at \(\mu_{\Xi_{\mathrm M}}\)-almost every point of \(\mathscr V^\infty\).
\end{example}

\section{Random EIGS and percolation cluster}\label{sec:percolation}

\begin{figure}[tb]
    \centering
    \includegraphics[width=0.9\linewidth]{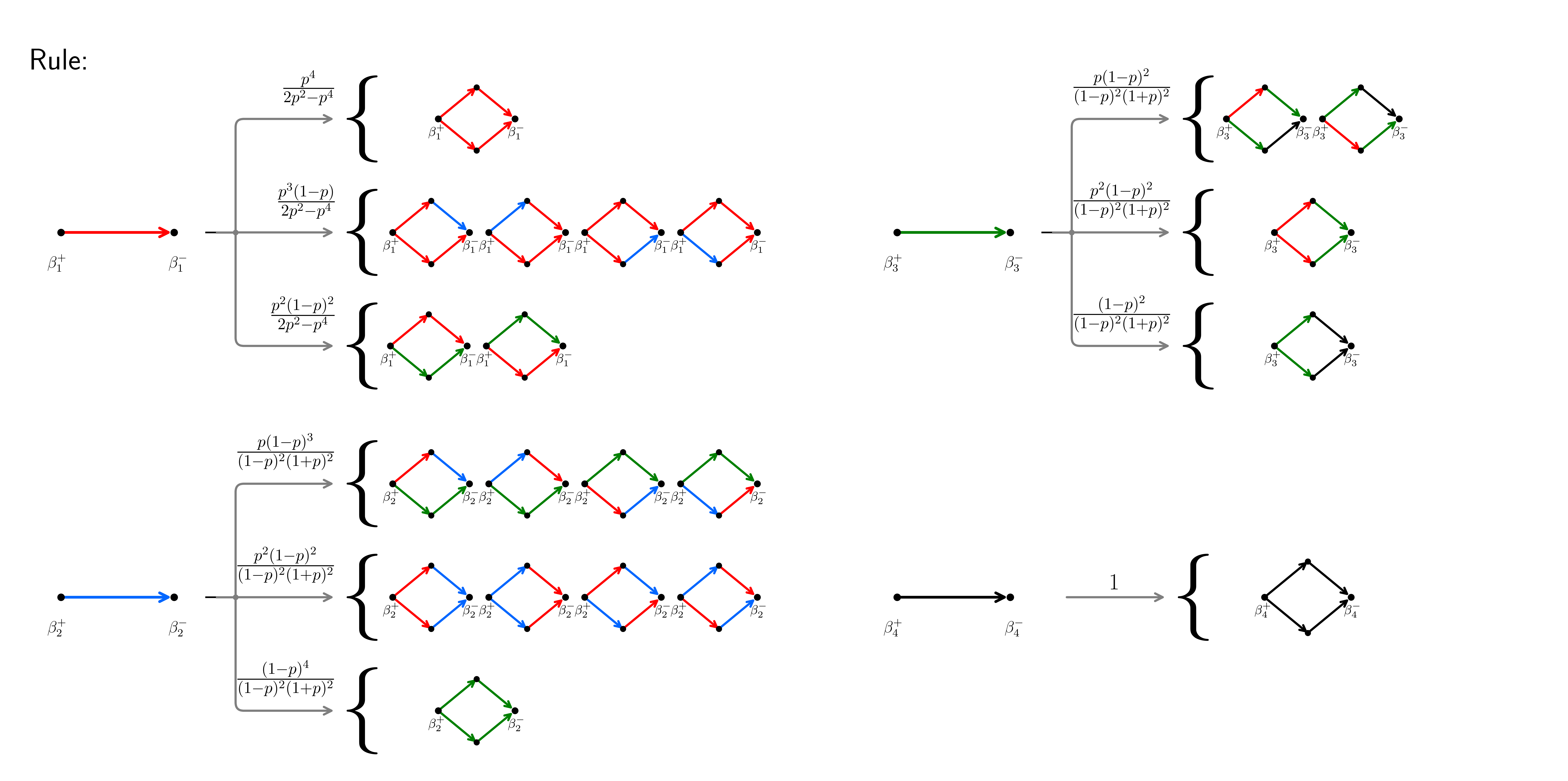}
    \caption{The random EIGS associated with percolation}
    \label{fig:random EIGS DHL}
\end{figure}

In \cite{hambly2010diffusion}, Hambly and Kumagai studied critical bond percolation on the diamond hierarchical lattice under the two-terminal conditioning that the two boundary vertices are connected.
They showed that the scaling limit of the cluster is a graph-directed random recursive fractal.
In the language of the present paper, the discrete model is naturally encoded by a four-colour random EIGS.

For the DHL application, the relevant parameter is
\[
p_c=\frac{\sqrt5-1}{2},
\qquad
1-p_c=p_c^2.
\]
We start from a single red edge $\Xi_1^0$ joining the boundary vertices $\mathfrak v_+$ and $\mathfrak v_-$.
At each substitution step, every coloured edge is replaced independently according to the rule in Figure~\ref{fig:random EIGS DHL}.
The colour records the two-terminal state of the corresponding cell.
Red means that the two terminals are connected inside the cell.
Blue means that they are disconnected, but both terminals remain active.
Green means that they are disconnected, but exactly one terminal remains active.
Black is a cemetery state: the terminals are disconnected and neither remains active.
Here a terminal is called active if it remains attached to the cluster outside the current cell.

For each $n\ge 0$, let $\Xi_1^n$ be the coloured graph after $n$ substitution steps, and let $\langle \Xi_1^n\rangle_1$ be its colour-$1$ subgraph.
Then $\langle \Xi_1^n\rangle_1$ has the same law as the level-$n$ critical two-terminal cluster on the DHL.
Thus the red subgraph is the cluster itself, while the blue, green and black edges are auxiliary states introduced only to keep the recursion closed.

\begin{figure}[b]
    \centering
    \includegraphics[width=\linewidth]{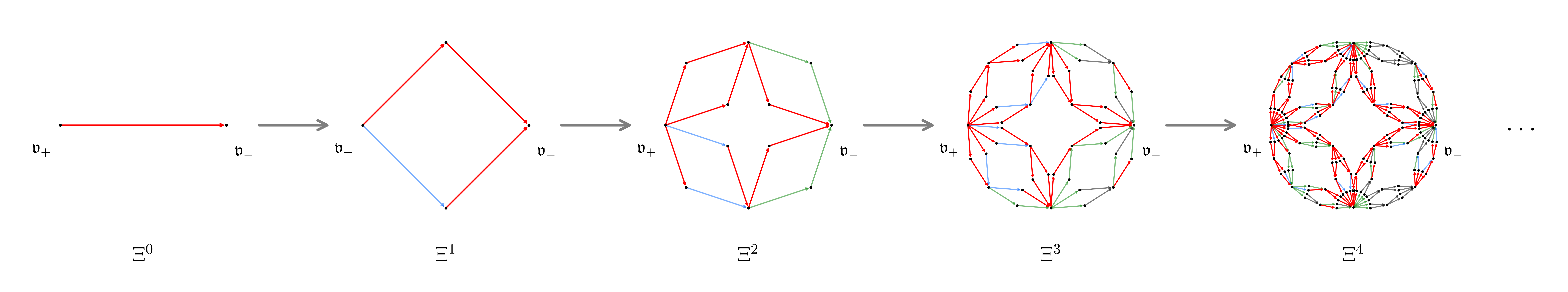}
    \caption{A critical percolation cluster (red subgraph) on DHL generated by random EIGS.}
    \label{fig:DHL percolation}
\end{figure}

The diffusion constructed in \cite{hambly2010diffusion} lives in a resistance-form setting in which the edge weights are renormalised so that the total resistance across the cluster stays equal to $1$.
This is different from the unit-resistance setting considered in the present paper, where every edge has resistance $1$.
For this reason, the almost-sure exponential growth rate of the two-terminal effective resistance was left open.
More precisely, the problem is:
\begin{conjecture}
There exists a deterministic constant $\alpha$ such that, almost surely,
\[
\lim_{n\to\infty}\frac{1}{n}\log \Reff_{\langle \Xi_1^n\rangle_1}(\mathfrak v_+,\mathfrak v_-)=\alpha.
\]
\end{conjecture}

The key point is that, for this observable, the full four-colour bookkeeping is more than we need.
After deleting red components attached to only one terminal, the two-terminal resistance recursion closes on a monochromatic random EIGS built from independent series and diamond replacements.
Subsection~\ref{subsec:Quenched unit-resistance exponent} makes this reduction precise and proves the existence of the quenched unit-resistance exponent.
Subsection~\ref{subsec:dhl_cluster} then returns to the full four-colour system and discusses the geometric and Brownian exponents encoded by the critical cluster.

\subsection{Solution to the open problem in \texorpdfstring{\cite{hambly2010diffusion}}{}}\label{subsec:Quenched unit-resistance exponent}

\begin{figure}[bt]
    \centering
    \includegraphics[width=0.9\linewidth]{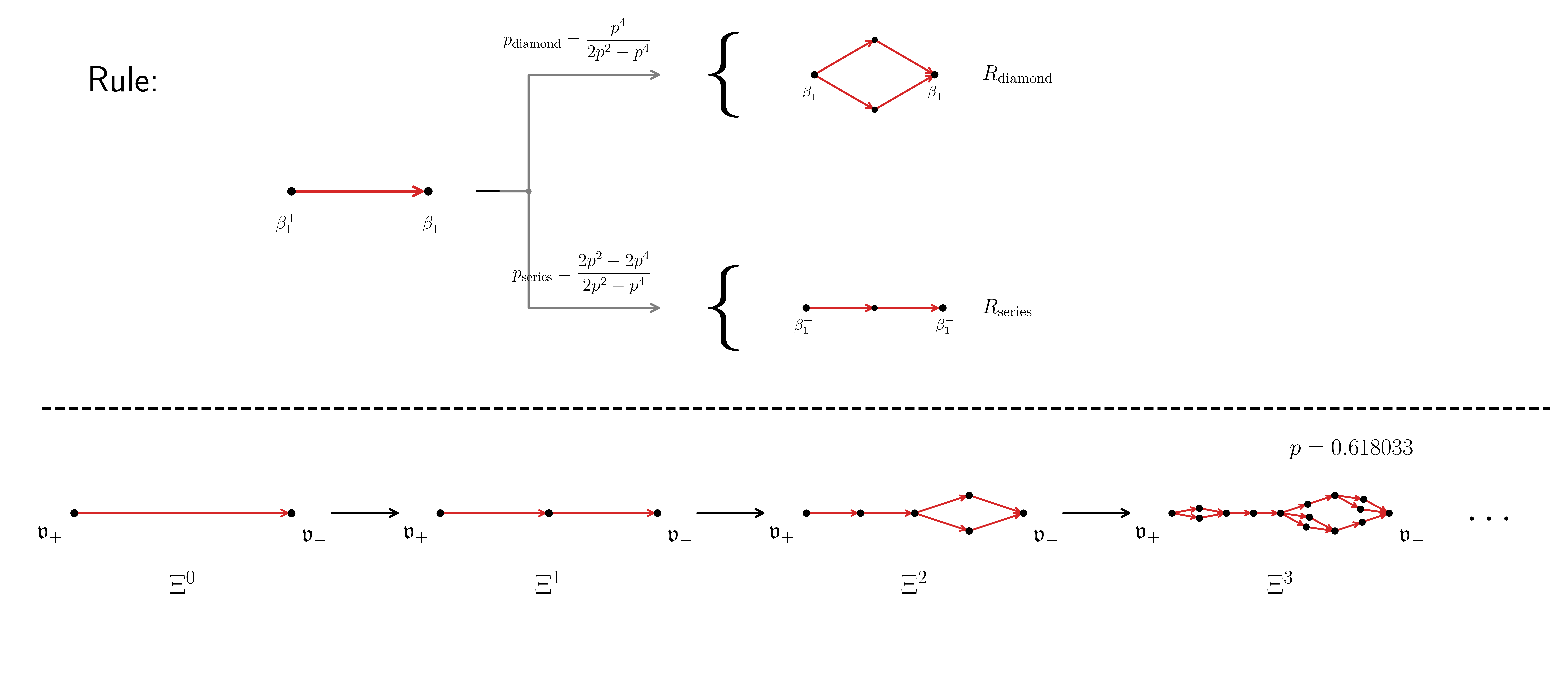}
    \caption{Reduction of the two-terminal resistance problem to a random EIGS.}
    \label{fig:dhl_reduction}
\end{figure}

We now isolate the observable that matters for the open problem, namely the effective resistance between the two boundary vertices.
Although the critical cluster is encoded by the four-colour system $\Xi_1^n$, the full colour information is not needed for this quantity.

Indeed, fix a red cell and look one scale below.
Only the red pieces that lie on a connection from the left terminal to the right terminal can influence the effective resistance across the cell.
Any red component attached to only one terminal is a dangling piece, and deleting it does not change the two-terminal effective resistance.
After this reduction, there are only two possible reduced geometries, shown in Figure~\ref{fig:dhl_reduction}.
If both branches connect the two terminals, the reduced cell is a diamond.
If exactly one branch connects the two terminals, the reduced cell is a two-edge series.

For the critical DHL cluster this reduction is applied at $p=p_c$.
However, the resulting two-terminal recursion is naturally defined for every $p\in(0,1)$, and the argument below works in this slightly greater generality.
Accordingly, for $p\in(0,1)$, define
\[
p_{\mathrm{diamond}}:=\frac{p^4}{2p^2-p^4},
\qquad
p_{\mathrm{series}}:=\frac{2p^2-2p^4}{2p^2-p^4}.
\]
These are the conditional probabilities that, given a red parent cell, both branches survive or exactly one branch survives.

Let $G^0$ be the single edge between two vertices $a$ and $b$.
For each $n\ge 0$, construct $G^{n+1}$ from $G^n$ by replacing every edge independently by a diamond with probability $p_{\mathrm{diamond}}$ and by a two-edge series with probability $p_{\mathrm{series}}$, as in Figure~\ref{fig:dhl_reduction}.
All new edges have unit resistance.
Write
\[
R_n:=\Reff_{G^n}(a,b)
\]
for the effective resistance between the boundary vertices of $G^n$.

At the critical parameter, this monochromatic process captures exactly the law of the two-terminal effective resistance of the red cluster.

\begin{proposition}\label{prop:dhl_reduction}
At $p=p_c$,
\[
\bigl(\Reff_{\langle \Xi_1^n\rangle_1}(\mathfrak v_+,\mathfrak v_-)\bigr)_{n\ge 0}
\stackrel{d}{=}
(R_n)_{n\ge 0}.
\]
\end{proposition}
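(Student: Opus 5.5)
We will prove the identity of joint laws by a coupling: on one probability space we build the four-colour process $(\Xi_1^n)_{n\ge0}$ and, from it, a process with the law of $(G^n)_{n\ge0}$ such that $\Reff_{\langle \Xi_1^n\rangle_1}(\mathfrak v_+,\mathfrak v_-)=\Reff_{G^n}(a,b)$ for every $n$ simultaneously. The central object is the \emph{backbone} of a red cell: the union of red edges of the cell lying on some simple red path between the two terminals. Write $B^n$ for the backbone of $\langle \Xi_1^n\rangle_1$ with respect to $(\mathfrak v_+,\mathfrak v_-)$.

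\textbf{Step 1 (dangling pieces are electrically irrelevant).} In any finite network, deleting the edges that lie on no simple path between the terminals leaves the effective resistance unchanged. Such edges form subgraphs attached to the rest of the network at a single cut vertex, or not attached at all; the unit current flow vanishes on them, and Thomson's principle then gives equality. Consequently $\Reff_{\langle \Xi_1^n\rangle_1}(\mathfrak v_+,\mathfrak v_-)=\Reff_{B^n}(\mathfrak v_+,\mathfrak v_-)$.

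\textbf{Step 2 (one-step structure of the backbone).} Condition on a red parent cell and consider its first-level decomposition into two branches, each a series pair of child cells. We will check from the substitution rule in Figure~\ref{fig:random EIGS DHL}, using $1-p_c=p_c^2$, that a branch contributes to the backbone exactly when both of its children are red. The branch-survival events are then independent with probability $p^2$ each, conditioned on at least one surviving, which has probability $2p^2-p^4$. This yields a diamond of red children with probability $p^4/(2p^2-p^4)=p_{\mathrm{diamond}}$, and a two-edge series of red children with probability $p_{\mathrm{series}}$.

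The delicate part is that the red cluster's law is described through the colour states. We must verify that the colour of a red child, together with the red edges it contributes, does not depend on whether the parent's other branch survives. Equivalently, the law of the sub-backbone inside a red child, conditioned on that child being red, must be the same as for a fresh red cell, and independent across children. We also need that blue, green and black children contribute nothing to the backbone. Blue children in a surviving branch would break that branch; green and black children are disconnected. So only the red-red series pairs matter.

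\textbf{Step 3 (induction).} Using the conditional independence of the $n$-step substitution copies given their colours (Definition~\ref{definition:EIGS}), we show by induction on $n$ that the sequence of backbones $(B^n)_{n\ge0}$, as a nested sequence of graphs, has the law of $(G^n)_{n\ge0}$. At each step, every backbone edge is a red cell and is independently replaced by a diamond or a series with the probabilities above. The claim in Step 2 that deeper red cells in the backbone are the backbones of red sub-cells is exactly the statement that backbone of backbone equals backbone. This holds because a simple path through a series or diamond arrangement of cells crosses each cell it uses as a simple path between that cell's terminals. Combining with Step 1 gives equality of the joint laws of the full resistance sequences.

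The main obstacle is Step 2: carefully reading off from the four-colour rule that the conditioning on "parent red" factorises into independent red children with the stated probabilities. In particular, conditioning on the active-terminal states must not bias the internal law of a red child. I would first verify this by tabulating the rule's outcomes for a red parent and checking that the marginal law of each red child's future is the red-edge rule itself.
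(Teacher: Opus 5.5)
Your proposal is correct and follows essentially the same route as the paper. Both delete the red pieces attached to only one terminal (your Step 1), observe that a red cell reduces to a diamond or a two-edge series with probabilities $p_{\mathrm{diamond}}$ and $p_{\mathrm{series}}$ while its red descendants evolve independently under the red law, and conclude that the reduced networks obey the same independent replacement rule as $(G^n)_{n\ge0}$. Your backbone-of-backbone lifting argument and the pathwise coupling simply make explicit the nesting and joint-law equality that the paper asserts, and the context-independence you flag as delicate in Step 2 is automatic here, because in the random EIGS each red edge is replaced by the red rule independently of everything else.
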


\begin{proof}
 
After deleting the red pieces attached to only one terminal, the reduced red network satisfies exactly the same recursive replacement rule as $G^n$.

The claim is clear at level $0$.
Now fix a red parent cell and look one scale below.
If exactly one of its two first-generation branches connects the boundary vertices, then every red component in the other branch is attached to only one terminal.
Deleting these dangling pieces does not change the two-terminal effective resistance, and the reduced cell becomes a two-edge series.
If both branches connect the boundary vertices, then the reduced cell becomes a diamond.

These two cases occur with probabilities $p_{\mathrm{series}}$ and $p_{\mathrm{diamond}}$, respectively.
Moreover, conditioned on either case, the red descendant cells evolve independently and each carries the same reduced law as the parent cell.
Thus the sequence of reduced two-terminal networks issued from the root cell has the same initial condition and the same independent replacement rule as $(G^n)_{n\ge 0}$.
Taking effective resistances gives the claimed equality in law.
\end{proof}

We therefore first prove the existence of the quenched exponent for the reduced process $(R_n)_{n\ge 0}$.
The critical DHL statement then follows immediately from Proposition~\ref{prop:dhl_reduction}.

By the standard two-terminal reduction, replacing each edge of a fixed finite graph by a two-terminal network changes the effective resistance between two marked vertices by replacing that edge with its two-terminal effective resistance.
In our one-step replacement, every edge is replaced by a two-terminal network whose effective resistance is either $1$ in the diamond case or $2$ in the series case.
Therefore, for every realisation and every $n\ge 0$,
\[
R_{n+1}\ge R_n,
\qquad
R_{n+1}\le 2R_n,
\]
and hence
\[
1\le R_n\le 2^n.
\]

\begin{lemma}\label{lem:annealed_exponent}
For every $p\in(0,1)$, the limit
\[
\alpha(p)=\lim_{n\to\infty}\frac{1}{n}\log \mathbb E[R_n]
\]
exists.
\end{lemma}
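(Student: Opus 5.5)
The plan is to prove that the sequence $a_n:=\log\mathbb E[R_n]$ is superadditive, $a_{n+m}\ge a_n+a_m$. Fekete's lemma then gives existence of $\lim a_n/n=\sup_n a_n/n$. The a priori bounds $1\le R_n\le 2^n$ give $0\le a_n\le n\log 2$, so the limit is finite and lies in $[0,\log 2]$, consistent with the range announced in Theorem~\ref{thm:reduced_quenched_exponent}.

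\textbf{Decomposition.} Condition on the first $n$ levels, i.e.\ on $G^n$. By the construction, $G^{n+m}$ is obtained from $G^n$ by replacing each edge $e\in E(G^n)$ by an independent copy $H_e$ of $G^m$, attached only at the endpoints of $e$. By the super-edge reduction used in Lemma~\ref{lem:renorm},
\[
R_{n+m}=\psi_{G^n:(a,b)}\bigl((R_m^{(e)})_{e\in E(G^n)}\bigr),
\]
where $\psi_{G^n:(a,b)}$ is now the effective resistance with an individual resistance per edge. The $R_m^{(e)}$ are i.i.d.\ copies of $R_m$, independent of $G^n$.

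\textbf{Jensen step.} By Thomson's principle, $\psi_{G^n:(a,b)}$ is an infimum of linear functions of the edge-resistance vector, hence concave. Concavity runs the wrong way for the upper bound, but it is exactly what the lower bound needs: conditional Jensen for concave functions points the other way, so I instead use the following.
\[
\psi\bigl((R^{(e)}_m)_e\bigr)=\inf_\theta\sum_e R^{(e)}_m\theta(e)^2 .
\]
Let $\theta^*$ be the unit-resistance optimal flow on $G^n$. This gives only an upper bound on $R_{n+m}$, so it is also the wrong direction.

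So the correct route is to go through Rayleigh monotonicity and superadditivity. For fixed $G^n$, the map $\mathbf x\mapsto\psi_{G^n}(\mathbf x)$ is concave and $1$-homogeneous. For a concave function, $\mathbb E\psi(X)\le\psi(\mathbb E X)=\mathbb E[R_m]\,R_n$, which yields \emph{sub}multiplicativity, $\mathbb E[R_{n+m}]\le\mathbb E[R_n]\,\mathbb E[R_m]$. Hence $a_n$ is subadditive, and Fekete again gives existence of $\lim a_n/n=\inf_n a_n/n$, which is finite and $\ge 0$ since $a_n\ge 0$. This is the argument I commit to.

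\textbf{Details and main obstacle.} The steps are:
\begin{enumerate}
\item Measurability of $R_n$; this is trivial since $G^n$ takes finitely many values.
\item Independence of the $R_m^{(e)}$ from $G^n$, which follows because the replacements are performed independently edge by edge.
\item Conditional Jensen applied to the concave, coordinatewise finite function $\psi_{G^n}$ on $(0,\infty)^{E(G^n)}$.
\item Homogeneity, giving $\psi_{G^n}(\mathbb E[R_m]\mathbf 1)=\mathbb E[R_m]R_n$.
\item Taking expectation over $G^n$.
\end{enumerate}
The main point to check carefully is that the two-terminal reduction holds with heterogeneous, random edge resistances and that the copies $H_e$ are genuinely $G^m$-distributed with the same boundary labelling. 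The lemma does not require $p=p_c$, so the argument is uniform in $p\in(0,1)$.
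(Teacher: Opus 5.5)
Your committed argument is correct and is exactly the paper's proof. You condition on $G^n$, write $R_{n+m}$ as the effective resistance of $G^n$ with i.i.d.\ edge resistances distributed as $R_m$, and apply Jensen to this concave, $1$-homogeneous function to get $\mathbb E[R_{n+m}]\le\mathbb E[R_n]\,\mathbb E[R_m]$, then conclude with Fekete's lemma for the subadditive sequence $\log\mathbb E[R_n]\ge 0$. Delete the opening superadditivity claim and the confused ``Jensen step'' paragraph: concavity gives exactly the upper bound you need, and superadditivity is neither established nor required.
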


\begin{proof}
 
Condition on the graph at time $n$, replace each edge by an independent copy of $G^m$, and use concavity of the effective resistance as a function of the edge resistances.

Fix $n,m\ge 0$ and condition on the realisation of $G^n$.
Build $G^{n+m}$ by replacing each edge of $G^n$ by an independent copy of $G^m$.
For each edge $e$ of $G^n$, let $r_e$ be the effective resistance between the endpoints of the copy of $G^m$ attached to $e$.
Conditional on $G^n$, the random variables $(r_e)$ are independent and identically distributed with law $R_m$, and satisfy $\mathbb E[r_e\mid G^n]=\mathbb E[R_m]$.

For a fixed finite graph $H$ with marked vertices $a,b$, the Thomson principle gives
\[
\Reff_H(a,b;r)=\inf_{\theta\in\mathcal F_{a\to b}}\sum_{e\in E(H)} r_e\theta_e^2,
\]
so the map $r\mapsto \Reff_H(a,b;r)$ is concave and $1$-homogeneous in $r$.
By the two-terminal reduction,
\[
R_{n+m}=\Reff_{G^n}(a,b;r).
\]
Therefore, by Jensen's inequality for concave functions and by $1$-homogeneity,
\[
\mathbb E[R_{n+m}\mid G^n]
=
\mathbb E\bigl[\Reff_{G^n}(a,b;r)\mid G^n\bigr]
\le
\Reff_{G^n}\bigl(a,b;\mathbb E[r\mid G^n]\bigr)
=
\mathbb E[R_m]R_n.
\]
Taking expectations gives
\[
\mathbb E[R_{n+m}]\le \mathbb E[R_n]\mathbb E[R_m].
\]
Apply Fekete's lemma to $\log \mathbb E[R_n]$.
\end{proof}

\begin{lemma}\label{lem:logistic_product}
For every $n\ge 1$,
\begin{align*}
&\mathbb E[R_{n+1}]\,\mathbb E\Big[\frac{1}{R_{n+1}}\Big]
\le \\
&\frac{(2-p_{\mathrm{diamond}})(1+p_{\mathrm{diamond}})}{2}\,
\mathbb E[R_n]\mathbb E\Big[\frac{1}{R_n}\Big]
-
\frac{(2-p_{\mathrm{diamond}})(1+p_{\mathrm{diamond}})}{2}\,
\frac{\big(\mathbb E[R_n]\mathbb E[\frac{1}{R_n}]-1\big)^2}
{\frac{\mathbb E[R_n^2]\mathbb E[\frac{1}{R_n}]}{\mathbb E[R_n]}-1}.
\end{align*}
\end{lemma}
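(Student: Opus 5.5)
The plan is to condition on the first substitution step at the root rather than the last one. Given the root's type, $G^{n+1}$ is built from four (diamond) or two (series) independent copies of $G^n$. Let $X_1,\dots,X_4$ be i.i.d.\ with the law of $R_n$, independent of the root's type, and write $q:=p_{\mathrm{diamond}}$, $a:=\mathbb E[R_n]$, $b:=\mathbb E[1/R_n]$, $s:=\mathbb E[R_n^2]$. The two-terminal reduction used before Lemma~\ref{lem:annealed_exponent} gives
\[
R_{n+1}=\frac{(X_1+X_2)(X_3+X_4)}{X_1+X_2+X_3+X_4}\ \text{(diamond)},\qquad R_{n+1}=X_1+X_2\ \text{(series)}.
\]
In both cases $1/R_{n+1}$ is a sum of terms of the form $1/(X_i+X_j)$ over disjoint pairs. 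In the diamond case it equals $\frac{1}{X_1+X_2}+\frac{1}{X_3+X_4}$; in the series case it is the single term $\frac{1}{X_1+X_2}$.

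\textbf{Step 1 (bound on $\mathbb E[R_{n+1}]$).} In the diamond case, the map $(x_1,\dots,x_4)\mapsto\frac{(x_1+x_2)(x_3+x_4)}{x_1+\dots+x_4}$ is concave and $1$-homogeneous, as in the proof of Lemma~\ref{lem:annealed_exponent}. Jensen's inequality then bounds its mean by its value at $(a,a,a,a)$, namely $a$. The series case gives exactly $2a$. Hence $\mathbb E[R_{n+1}]\le qa+(1-q)2a=(2-q)a$.

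\textbf{Step 2 (bound on $\mathbb E[1/R_{n+1}]$).} The decomposition above gives $\mathbb E[1/R_{n+1}]=(2q+(1-q))\,\mathbb E\big[\frac{1}{X+Y}\big]=(1+q)\,\mathbb E\big[\frac{1}{X+Y}\big]$, where $X,Y$ are i.i.d.\ copies of $R_n$. So everything reduces to the key estimate
\[
\mathbb E\Big[\frac{1}{X+Y}\Big]\le \frac b2-\frac{(ab-1)^2}{2(sb-a)}.
\]
Multiplying Steps 1 and 2 gives exactly the claimed inequality. Indeed, $(2-q)a\cdot(1+q)\big[\tfrac b2-\tfrac{(ab-1)^2}{2(sb-a)}\big]$ equals $\frac{(2-q)(1+q)}{2}\big[ab-\frac{(ab-1)^2}{sb/a-1}\big]$.

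\textbf{Step 3 (the key estimate; this is the main step).} Start from the identity $\frac1x+\frac1y-\frac{4}{x+y}=\frac{(x-y)^2}{xy(x+y)}$. It gives $\mathbb E[1/(X+Y)]=\frac b2-\frac14 D$ with $D:=\mathbb E\big[\frac{(X-Y)^2}{XY(X+Y)}\big]$, so it suffices to show $D\ge 2(ab-1)^2/(sb-a)$. For this, I would apply Cauchy--Schwarz to the factorisation
\[
\frac{(X-Y)^2}{XY}=\frac{|X-Y|}{\sqrt{XY(X+Y)}}\cdot\frac{|X-Y|\sqrt{X+Y}}{\sqrt{XY}}.
\]
The left side has mean $\mathbb E[X/Y+Y/X-2]=2(ab-1)$. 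The square of the second factor simplifies to $\frac{X^2}{Y}+\frac{Y^2}{X}-X-Y$, which has mean $2(sb-a)$. Cauchy--Schwarz therefore yields $4(ab-1)^2\le D\cdot 2(sb-a)$, which is the required bound.

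Finally, I would dispose of degenerate cases. The quantity $sb-a$ is positive unless $R_n$ is almost surely constant; this is where the hypothesis $n\ge1$ enters, since for $n\ge 1$ the law of $R_n$ is non-degenerate when $p\in(0,1)$. In any case, $ab-1\ge0$ and $sb\ge a$ follow from Cauchy--Schwarz, so all quantities in the statement are well defined.
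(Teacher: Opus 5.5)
Your proof is correct and follows essentially the same argument as the paper: the same first-step decomposition, the identity $\mathbb E[1/R_{n+1}]=(1+p_{\mathrm{diamond}})\,\mathbb E[1/(X+Y)]$, and the same Cauchy--Schwarz pairing of $(X-Y)^2/(XY(X+Y))$ with $(X-Y)^2(X+Y)/(XY)$. The differences are cosmetic. You bound $\mathbb E[R_{n+1}]\le(2-p_{\mathrm{diamond}})\mathbb E[R_n]$ by Jensen and multiply two upper bounds; the paper instead expands the product of exact identities and shows the mixed term is non-positive using $C_n\ge 0$ and $\Delta_n\le 2\mathbb E[1/R_n]$, which is equivalent. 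You also treat the degenerate denominator explicitly, which the paper does not.
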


\begin{proof}
 
Write exact one-step identities for $\mathbb E[R_{n+1}]$ and $\mathbb E[1/R_{n+1}]$, and then use Cauchy--Schwarz to estimate the correction term from below.

Condition on the first step of the construction.
If the first step is series, then $G^{n+1}$ is a series of two independent copies of $G^n$, so $R_{n+1}=X+Y$, where $X,Y$ are independent copies of $R_n$.
If the first step is diamond, then $G^{n+1}$ is a diamond whose four edges are independent copies of $G^n$, so
\[
R_{n+1}=\frac{AB}{A+B},
\qquad
A=X_1+X_2,
\qquad
B=X_3+X_4,
\]
where $X_1,X_2,X_3,X_4$ are independent copies of $R_n$.

Define
$
C_n:=\mathbb E\Big[\frac{(A-B)^2}{A+B}\Big].
$
Using
$
\frac{AB}{A+B}=\frac{A+B}{4}-\frac{(A-B)^2}{4(A+B)},
$
we obtain
\[
\mathbb E[R_{n+1}]
=
2p_{\mathrm{series}}\mathbb E[R_n]
+
p_{\mathrm{diamond}}\mathbb E\Big[\frac{AB}{A+B}\Big]
=
(2-p_{\mathrm{diamond}})\mathbb E[R_n]-\frac{p_{\mathrm{diamond}}}{4}C_n.
\]

Define
\[
\Delta_n:=\mathbb E\Big[\frac{(X-Y)^2}{XY(X+Y)}\Big].
\]
For a diamond cell,
$
\frac{1}{AB/(A+B)}=\frac{1}{A}+\frac{1}{B},
$
so
\[
\mathbb E\Big[\frac{1}{R_{n+1}}\Big]
=
p_{\mathrm{series}}\mathbb E\Big[\frac{1}{X+Y}\Big]
+
p_{\mathrm{diamond}}\mathbb E\Big[\frac{1}{A}+\frac{1}{B}\Big]
=
(1+p_{\mathrm{diamond}})\mathbb E\Big[\frac{1}{X+Y}\Big].
\]
Using
$
\frac{1}{X+Y}
=
\frac{1}{4}\Big(\frac{1}{X}+\frac{1}{Y}\Big)
-
\frac{1}{4}\frac{(X-Y)^2}{XY(X+Y)},
$
we get
$
\mathbb E\Big[\frac{1}{X+Y}\Big]
=
\frac{1}{2}\mathbb E\Big[\frac{1}{R_n}\Big]-\frac{1}{4}\Delta_n,
$
and hence
\[
\mathbb E\Big[\frac{1}{R_{n+1}}\Big]
=
\frac{1+p_{\mathrm{diamond}}}{2}\mathbb E\Big[\frac{1}{R_n}\Big]
-\frac{1+p_{\mathrm{diamond}}}{4}\Delta_n.
\]
Moreover,
\[
\Delta_n
=
\mathbb E\Big[\frac{1}{X}+\frac{1}{Y}-\frac{4}{X+Y}\Big]
\le
2\mathbb E\Big[\frac{1}{R_n}\Big].
\]

Multiply the identities for $\mathbb E[R_{n+1}]$ and $\mathbb E[1/R_{n+1}]$ and expand.
Using $\Delta_n\le 2\mathbb E[1/R_n]$ and $C_n\ge 0$ shows that the mixed correction term is non-positive, and yields
\begin{align*}
&\mathbb E[R_{n+1}]\,\mathbb E\Big[\frac{1}{R_{n+1}}\Big]
\le \\
&\frac{(2-p_{\mathrm{diamond}})(1+p_{\mathrm{diamond}})}{2}\,
\mathbb E[R_n]\mathbb E\Big[\frac{1}{R_n}\Big]
-
\frac{(2-p_{\mathrm{diamond}})(1+p_{\mathrm{diamond}})}{4}\,
\mathbb E[R_n]\Delta_n.
\end{align*}

Set
$
W:=\frac{(X-Y)^2}{XY}.
$
Then $\Delta_n=\mathbb E\big[\frac{W}{X+Y}\big]$.
We compute
\[
\mathbb E[W]
=
\mathbb E\Big[\frac{X}{Y}+\frac{Y}{X}-2\Big]
=
2\mathbb E[R_n]\mathbb E\Big[\frac{1}{R_n}\Big]-2
=
2\Big(\mathbb E[R_n]\mathbb E\Big[\frac{1}{R_n}\Big]-1\Big),
\]
and
$
W(X+Y)=\frac{X^2}{Y}+\frac{Y^2}{X}-(X+Y),
$
so by independence
\[
\mathbb E[W(X+Y)]
=
2\mathbb E[R_n^2]\mathbb E\Big[\frac{1}{R_n}\Big]-2\mathbb E[R_n]
=
2\mathbb E[R_n]\Big(\frac{\mathbb E[R_n^2]\mathbb E[\frac{1}{R_n}]}{\mathbb E[R_n]}-1\Big).
\]
By Cauchy--Schwarz,
\[
\mathbb E[W]^2
\le
\mathbb E\Big[\frac{W}{X+Y}\Big]\mathbb E[W(X+Y)]
=
\Delta_n\,\mathbb E[W(X+Y)].
\]
Therefore, for $n\ge 1$,
\[
\mathbb E[R_n]\Delta_n
\ge
\frac{2\big(\mathbb E[R_n]\mathbb E[\frac{1}{R_n}]-1\big)^2}
{\frac{\mathbb E[R_n^2]\mathbb E[\frac{1}{R_n}]}{\mathbb E[R_n]}-1}.
\]
Substituting this bound into the previous inequality gives the claim.
\end{proof}

\begin{lemma}\label{lem:uniform_product}
For every $p\in(0,1)$, there exists a constant $M(p)<\infty$ such that for all $n\ge 1$,
\[
\mathbb E[R_n]\mathbb E\Big[\frac{1}{R_n}\Big]\le M(p).
\]
\end{lemma}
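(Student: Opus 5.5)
The plan is to turn Lemma~\ref{lem:logistic_product} into a closed scalar recursion for $P_n:=\mathbb E[R_n]\,\mathbb E[1/R_n]$ and then show that this recursion cannot escape to infinity. Write $q:=p_{\mathrm{diamond}}\in(0,1)$ and $\kappa:=\tfrac12(2-q)(1+q)$. Since $(2-q)(1+q)=2+q-q^2\in(2,\tfrac94]$, we have $1<\kappa\le\tfrac98$. Without the correction term, Lemma~\ref{lem:logistic_product} would only give geometric growth $P_{n+1}\le\kappa P_n$, so everything hinges on the subtracted term. Its denominator is
\[
D_n:=\frac{\mathbb E[R_n^2]\,\mathbb E[1/R_n]}{\mathbb E[R_n]}-1=V_n' P_n-1,\qquad V_n':=\frac{\mathbb E[R_n^2]}{\mathbb E[R_n]^2}.
\]
The first step is therefore to bound the normalised second moment $V_n'$ uniformly in $n$.

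For that I would imitate the first-step decomposition in the proof of Lemma~\ref{lem:logistic_product}, now applied to second moments. In the series case, $R_{n+1}=X+Y$ with $X,Y$ independent copies of $R_n$, so the relative variance is halved. In the diamond case, $R_{n+1}\le (A+B)/4$ with $A=X_1+X_2$ and $B=X_3+X_4$, and the second moment of $(A+B)/4$ is again controlled by a relative variance divided by $4$, up to a bounded multiple of $\mathbb E[R_n]^2$. The mixing of the two scales $2\mathbb E[R_n]$ and roughly $\mathbb E[R_n]$ contributes only a bounded additive term to the normalised second moment. I would use the exact identity $\mathbb E[R_{n+1}]=(2-q)\mathbb E[R_n]-\tfrac q4C_n$ from the proof of Lemma~\ref{lem:logistic_product} together with $C_n\le\mathbb E[A+B]$; this keeps $\mathbb E[R_{n+1}]\ge c\,\mathbb E[R_n]$ for a $c>0$ depending only on $q$. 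The target is a recursion of the form $V_{n+1}\le\tfrac12V_n+c'$ for $V_n:=\operatorname{Var}(R_n)/\mathbb E[R_n]^2$, which gives $V_n'\le B(p)<\infty$ for all $n$.

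With this bound in hand, Lemma~\ref{lem:logistic_product} yields
\[
P_{n+1}\le f(P_n),\qquad f(P):=\kappa P-\kappa\,\frac{(P-1)^2}{BP-1}.
\]
Since $\Delta_n\ge 0$, we also have $P_{n+1}\le\kappa P_n$, so $P$ can grow by at most a factor $\kappa$ per step. The aim is to show that $f(P)<P$ once $P$ exceeds some threshold $P^*$. Combined with the one-step bound, this traps $P_n$ below $\max(P_1,\kappa P^*)$, which is the required $M(p)$.

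The main obstacle is this last step. As $P\to\infty$ we have $f(P)\sim\kappa(1-1/B)P$, so $f(P)<P$ for large $P$ requires $B<\kappa/(\kappa-1)$, that is $B<9$ in the worst case $\kappa=\tfrac98$. A crude bound $B(p)$ will not be that small. I would first try to sharpen the second-moment recursion by running it from a later starting level, where the relative variance has already contracted to roughly $2c'$, so that the effective $B$ is close to $1+2c'$. If that is still insufficient, I would drop the factorised form of $D_n$ and go back to the Cauchy--Schwarz step in the proof of Lemma~\ref{lem:logistic_product}. There I would instead bound $\Delta_n$ below by restricting to the event $\{X\ge 2Y\}\cup\{Y\ge 2X\}$, on which $W/(X+Y)\gtrsim 1/\min(X,Y)$. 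The point is that a large $P_n$ forces $\mathbb E[1/R_n]$ to be dominated by atypically small values of $R_n$, so that $\mathbb E[R_n]\Delta_n\gtrsim P_n$. This would give $P_{n+1}\le\kappa(1-c)P_n+O(1)$ with $\kappa(1-c)<1$, which is the desired bound.
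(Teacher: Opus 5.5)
Your architecture is the paper's: a uniform bound $V_n'\le B$ fed into Lemma~\ref{lem:logistic_product}. You also correctly isolate the binding constraint $B<\kappa/(\kappa-1)=1+\frac{2}{q(1-q)}$, which is tightest ($=9$) at $q=\tfrac12$. The gap is that nothing you propose produces such a $B$ near $q=\tfrac12$.

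\textbf{The second-moment recursion does not contract near $q=\tfrac12$.} The recursion $V_{n+1}\le\tfrac12V_n+c'$ is unjustified. The series step halves the relative variance with respect to $(2\mathbb E[R_n])^2$, but you must normalise by $\mathbb E[R_{n+1}]^2$. The bound $C_n\le\mathbb E[A+B]$ only gives $\mathbb E[R_{n+1}]\ge 2(1-q)\mathbb E[R_n]$, i.e.\ it simply discards the diamond contribution. The only other a priori bound is $\mathbb E[R_{n+1}]\ge\mathbb E[R_n]$, from $R_{n+1}\ge R_n$.
\begin{itemize}
\item Even with the sharper diamond estimate $D^2\le AB/4$, so that $\mathbb E[D^2]\le\mathbb E[R_n]^2$, you get $V_{n+1}'\le\lambda V_n'+\mathrm{const}$ with $\lambda=\min\{2(1-q),(2(1-q))^{-1}\}$. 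This equals $1$ at $q=\tfrac12$.
\item For $q$ in a window around $\tfrac12$ (about $0.43\le q\le 1/\sqrt3$), the resulting bound is infinite or at least $\kappa/(\kappa-1)$. For $q>\tfrac12$ it is $(2-q)/(2q-1)$.
\item Starting from a later level changes nothing, since $V_0'=1$ already lies below the fixed point.
\end{itemize}

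\textbf{The fallback on $\Delta_n$ meets the same obstruction.} Knowing only $V_n'\le B$, no lower bound on $\Delta_n$ asymptotically beats the Cauchy--Schwarz one. Take a law putting mass $1/B$ on $b$ and the rest on $a$, with $a/b\to0$. It has $V'\approx B$ and $\mathbb E[R]\Delta\approx 2P/B$. So you are returned to the condition $\kappa(1-1/B)<1$, and asserting $\kappa(1-c)<1$ is exactly the unproved point.

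\textbf{The missing idea is to couple the two bounds.} Control of $P_n$ feeds back into the second-moment recursion in two ways.
\begin{itemize}
\item By Jensen, $\mathbb E[D]\ge(2\mathbb E[1/A])^{-1}\ge\mathbb E[R_n]/P_n$, which improves the growth of the mean.
\item By Cauchy--Schwarz, $K_n:=\mathbb E[AB(A-B)^2/(A+B)^2]\ge C_n^2/(2P_n-2)$, which sharpens $\mathbb E[D^2]=\mathbb E[R_n]^2-\tfrac14K_n$.
\end{itemize}

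\textbf{How the paper uses this.} It splits into three ranges.
\begin{itemize}
\item For $q\le\tfrac12$, the bound $\mathbb E[D]\ge 2\mathbb E[R_n]/(V_n'+1)$ already closes $V_n'\le 2$.
\item For $\tfrac12\le q\le 1/\sqrt3$, it runs a joint induction on $V_n'\le 3$ and $P_n\le\tfrac{11}{5}$ using the two coupled estimates. An invariant region thus replaces your two-stage ``first $B$, then trap $P_n$'' scheme.
\item For $q>1/\sqrt3$, monotonicity gives $V_n'\le(2-q)/(2q-1)$, which is below $\kappa/(\kappa-1)$ precisely in that range.
\end{itemize}
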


\begin{proof}
 
We first control the relative second moment $\mathbb E[R_n^2]/\mathbb E[R_n]^2$, and then feed this bound into Lemma~\ref{lem:logistic_product}.

We start with a coarse second-moment estimate.
On the series event, $R_{n+1}=X+Y$, and hence
\[
\mathbb E[(X+Y)^2]=2\mathbb E[R_n^2]+2\mathbb E[R_n]^2.
\]
On the diamond event, $R_{n+1}=AB/(A+B)$ with $A=X_1+X_2$ and $B=X_3+X_4$, and
$
\frac{AB}{A+B}\le \frac{\sqrt{AB}}{2}.
$
Taking expectations and using independence gives
\[
\mathbb E\Big[\Big(\frac{AB}{A+B}\Big)^2\Big]\le \mathbb E[R_n]^2.
\]
Therefore
\[
\mathbb E[R_{n+1}^2]
\le
2p_{\mathrm{series}}\mathbb E[R_n^2]+(2-p_{\mathrm{diamond}})\mathbb E[R_n]^2.
\]

\textbf{Case 1: $p_{\mathrm{diamond}}\le \frac{1}{2}$.}
We first show that
$
\frac{\mathbb E[R_n^2]}{\mathbb E[R_n]^2}\le 2
$
for all $n\ge 0$.
Indeed, let $A=X_1+X_2$, $B=X_3+X_4$, and $D=AB/(A+B)$, where $X_1,\dots,X_4$ are independent copies of $R_n$.
By Cauchy--Schwarz, $\mathbb E[AB]^2\le \mathbb E[D]\mathbb E[AB(A+B)]$.
Since $\mathbb E[AB]=4\mathbb E[R_n]^2$ and $\mathbb E[AB(A+B)]=8\mathbb E[R_n](\mathbb E[R_n^2]+\mathbb E[R_n]^2)$, we get
\[
\mathbb E[D]\ge \frac{2\mathbb E[R_n]}{\frac{\mathbb E[R_n^2]}{\mathbb E[R_n]^2}+1}.
\]
Hence
\[
\frac{\mathbb E[R_{n+1}]}{\mathbb E[R_n]}
\ge
2p_{\mathrm{series}}+\frac{2p_{\mathrm{diamond}}}{\frac{\mathbb E[R_n^2]}{\mathbb E[R_n]^2}+1}.
\]
Combining this with the coarse second-moment inequality yields
\[
\frac{\mathbb E[R_{n+1}^2]}{\mathbb E[R_{n+1}]^2}
\le
\frac{
2p_{\mathrm{series}}\frac{\mathbb E[R_n^2]}{\mathbb E[R_n]^2}+(2-p_{\mathrm{diamond}})
}{
\left(
2p_{\mathrm{series}}+\frac{2p_{\mathrm{diamond}}}{\frac{\mathbb E[R_n^2]}{\mathbb E[R_n]^2}+1}
\right)^2
}.
\]
A direct check shows that the right-hand side is at most $2$ whenever $p_{\mathrm{diamond}}\in[0,\frac{1}{2}]$ and $\frac{\mathbb E[R_n^2]}{\mathbb E[R_n]^2}\le 2$.
Since $\frac{\mathbb E[R_0^2]}{\mathbb E[R_0]^2}=1$, induction gives $\frac{\mathbb E[R_n^2]}{\mathbb E[R_n]^2}\le 2$ for all $n\ge 0$.

Lemma~\ref{lem:logistic_product} now implies
\[
\mathbb E[R_{n+1}]\mathbb E\Big[\frac{1}{R_{n+1}}\Big]
\le
\frac{(2-p_{\mathrm{diamond}})(1+p_{\mathrm{diamond}})}{2}
\left(
\mathbb E[R_n]\mathbb E\Big[\frac{1}{R_n}\Big]
-
\frac{\big(\mathbb E[R_n]\mathbb E[\frac{1}{R_n}]-1\big)^2}{2\mathbb E[R_n]\mathbb E[\frac{1}{R_n}]-1}
\right).
\]
The map $x\mapsto x-\frac{(x-1)^2}{2x-1}$ is increasing on $[1,\infty)$, and $p_{\mathrm{diamond}}\le \frac{1}{2}$ implies $\frac{(2-p_{\mathrm{diamond}})(1+p_{\mathrm{diamond}})}{2}\le \frac{9}{8}$.
Therefore
\[
\mathbb E[R_{n+1}]\mathbb E\Big[\frac{1}{R_{n+1}}\Big]
\le
\frac{9}{8}\left(\frac{11}{5}-\frac{(6/5)^2}{2(11/5)-1}\right)
=
\frac{9}{8}\cdot\frac{151}{85}
<
\frac{11}{5}
\]
whenever $\mathbb E[R_n]\mathbb E[1/R_n]\le \frac{11}{5}$.
Since
\[
\mathbb E[R_1]\mathbb E\Big[\frac{1}{R_1}\Big]
=
\frac{(2-p_{\mathrm{diamond}})(1+p_{\mathrm{diamond}})}{2}
\le
\frac{9}{8}
<
\frac{11}{5},
\]
we conclude that
$
\mathbb E[R_n]\mathbb E\Big[\frac{1}{R_n}\Big]\le \frac{11}{5}
$
for all $n\ge 1$ in this case.

\textbf{Case 2: $\frac{1}{2}\le p_{\mathrm{diamond}}\le \frac{1}{\sqrt 3}$.}
We prove by induction on $n\ge 1$ that
\[
\frac{\mathbb E[R_n^2]}{\mathbb E[R_n]^2}\le 3
\quad\text{and}\quad
\mathbb E[R_n]\mathbb E\Big[\frac{1}{R_n}\Big]\le \frac{11}{5}.
\]

Assume that these two bounds hold at level $n$.
Lemma~\ref{lem:logistic_product} then gives
\[
\mathbb E[R_{n+1}]\mathbb E\Big[\frac{1}{R_{n+1}}\Big]
\le
\frac{(2-p_{\mathrm{diamond}})(1+p_{\mathrm{diamond}})}{2}
\left(
\frac{11}{5}-\frac{(6/5)^2}{3(11/5)-1}
\right)
<
\frac{11}{5},
\]
because $x\mapsto x-\frac{(x-1)^2}{3x-1}$ is increasing on $[1,\infty)$ and $\frac{(2-p_{\mathrm{diamond}})(1+p_{\mathrm{diamond}})}{2}\le \frac{9}{8}$.

Let $X,Y$ be independent copies of $R_n$.
From the identity in the proof of Lemma~\ref{lem:logistic_product}, we have $\mathbb E[1/(X+Y)]\le \frac{1}{2}\mathbb E[1/R_n]$, and hence
\[
\mathbb E[X+Y]\mathbb E\Big[\frac{1}{X+Y}\Big]\le \frac{11}{5}.
\]

Now let $A=X_1+X_2$, $B=X_3+X_4$, and $D=AB/(A+B)$, where $X_1,\dots,X_4$ are independent copies of $R_n$.
Since $D=(1/A+1/B)^{-1}$ and $x\mapsto 1/x$ is convex, Jensen gives $\mathbb E[D]\ge 1/(2\mathbb E[1/A])$.
Using the previous bound,
\[
\mathbb E[D]
\ge
\frac{\mathbb E[R_n]}{\mathbb E[X+Y]\mathbb E\big[\frac{1}{X+Y}\big]}
\ge
\frac{5}{11}\mathbb E[R_n].
\]
Therefore
\[
\frac{\mathbb E[R_{n+1}]}{\mathbb E[R_n]}
\ge
2(1-p_{\mathrm{diamond}})+\frac{5}{11}p_{\mathrm{diamond}}
=
2-\frac{17}{11}p_{\mathrm{diamond}}.
\]

Define $C_n=\mathbb E\big[\frac{(A-B)^2}{A+B}\big]$ and $K_n=\mathbb E\big[\frac{AB(A-B)^2}{(A+B)^2}\big]$.
As in the previous argument, Cauchy--Schwarz gives $K_n\ge \frac{5}{12}C_n^2$.
Combining this with the identities for $\mathbb E[R_{n+1}]$ and $\mathbb E[R_{n+1}^2]$, and using $\frac{\mathbb E[R_n^2]}{\mathbb E[R_n]^2}\le 3$, we obtain
\[
\frac{\mathbb E[R_{n+1}^2]}{\mathbb E[R_{n+1}]^2}
\le
\frac{
8-7p_{\mathrm{diamond}}
-
\frac{5}{3p_{\mathrm{diamond}}}
\left(
2-p_{\mathrm{diamond}}-\frac{\mathbb E[R_{n+1}]}{\mathbb E[R_n]}
\right)^2
}{
\left(\frac{\mathbb E[R_{n+1}]}{\mathbb E[R_n]}\right)^2
}.
\]
The right-hand side is decreasing in $\frac{\mathbb E[R_{n+1}]}{\mathbb E[R_n]}$ on the relevant range, so the previous lower bound yields
\[
\frac{\mathbb E[R_{n+1}^2]}{\mathbb E[R_{n+1}]^2}
\le
\frac{968-907p_{\mathrm{diamond}}}{(22-17p_{\mathrm{diamond}})^2}
\le 3
\]
for $p_{\mathrm{diamond}}\in[1/2,1/\sqrt 3]$.
This closes the induction step.

For $n=1$, since $R_1\in\{1,2\}$,
\[
\frac{\mathbb E[R_1^2]}{\mathbb E[R_1]^2}
=
\frac{4-3p_{\mathrm{diamond}}}{(2-p_{\mathrm{diamond}})^2}
<3
\]
and
\[
\mathbb E[R_1]\mathbb E\Big[\frac{1}{R_1}\Big]
=
\frac{(2-p_{\mathrm{diamond}})(1+p_{\mathrm{diamond}})}{2}
\le \frac{9}{8}
<
\frac{11}{5}.
\]
Therefore the claim holds for all $n\ge 1$.

\textbf{Case 3: $p_{\mathrm{diamond}}>\frac{1}{\sqrt 3}$.}
Since $R_{n+1}\ge R_n$ almost surely, we have $\mathbb E[R_{n+1}]^2\ge \mathbb E[R_n]^2$.
Combining this with the coarse second-moment bound gives
\[
\frac{\mathbb E[R_{n+1}^2]}{\mathbb E[R_{n+1}]^2}
\le
2p_{\mathrm{series}}\frac{\mathbb E[R_n^2]}{\mathbb E[R_n]^2}
+
(2-p_{\mathrm{diamond}}).
\]
As $p_{\mathrm{diamond}}>\frac{1}{2}$, this recursion and the bound
\[
\frac{\mathbb E[R_1^2]}{\mathbb E[R_1]^2}
=
\frac{4-3p_{\mathrm{diamond}}}{(2-p_{\mathrm{diamond}})^2}
\le
\frac{2-p_{\mathrm{diamond}}}{2p_{\mathrm{diamond}}-1}
\]
imply that
\[
\frac{\mathbb E[R_n^2]}{\mathbb E[R_n]^2}
\le
\frac{2-p_{\mathrm{diamond}}}{2p_{\mathrm{diamond}}-1}
\]
for all $n\ge 1$.

Applying Lemma~\ref{lem:logistic_product} and using the elementary inequality
$
\frac{(x-1)^2}{\frac{2-p_{\mathrm{diamond}}}{2p_{\mathrm{diamond}}-1}x-1}
\ge
\frac{x-2}{\frac{2-p_{\mathrm{diamond}}}{2p_{\mathrm{diamond}}-1}}
$
for $x\ge 1$, we get
\[
\mathbb E[R_{n+1}]\mathbb E\Big[\frac{1}{R_{n+1}}\Big]
\le
\frac{3}{2}(1-p_{\mathrm{diamond}}^2)\,
\mathbb E[R_n]\mathbb E\Big[\frac{1}{R_n}\Big]
+
(1+p_{\mathrm{diamond}})(2p_{\mathrm{diamond}}-1).
\]
Since $p_{\mathrm{diamond}}>\frac{1}{\sqrt 3}$ implies $\frac{3}{2}(1-p_{\mathrm{diamond}}^2)<1$, iteration yields a finite uniform bound on $\mathbb E[R_n]\mathbb E[1/R_n]$.

Combining the three cases completes the proof.
\end{proof}

\begin{theorem}\label{thm:reduced_quenched_exponent}
For every $p\in(0,1)$, there exists a constant $\alpha(p)\in[0,\log 2]$ such that
\[
\frac{1}{n}\log R_n\to \alpha(p)
\]
almost surely as $n\to\infty$.
Moreover,
\[
\lim_{n\to\infty}\frac{1}{n}\mathbb E[\log R_n]
=
\lim_{n\to\infty}\frac{1}{n}\log \mathbb E[R_n]
=
\alpha(p).
\]
\end{theorem}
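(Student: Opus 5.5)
The plan is to combine the annealed exponent $\alpha(p)$ from Lemma~\ref{lem:annealed_exponent} with a matching almost-sure lower bound. Two ingredients drive this: the branching structure (the first step splits $G^{n+1}$ into independent copies of $G^n$), and the uniform control $\mathbb E[R_n]\mathbb E[1/R_n]\le M(p)$ from Lemma~\ref{lem:uniform_product}.

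First, from $1\le R_n\le 2^n$ we get $\alpha(p)\in[0,\log 2]$. Next comes the almost-sure upper bound. By Markov's inequality,
\[
\mathbb P\bigl(R_n\ge e^{n(\alpha+\varepsilon)}\bigr)\le \mathbb E[R_n]e^{-n(\alpha+\varepsilon)}.
\]
Fekete's lemma gives $\mathbb E[R_n]\ge e^{n\alpha}$ for all $n$ (submultiplicativity implies $\alpha=\inf_n \frac1n\log\mathbb E[R_n]$), but that inequality points the wrong way. Instead, we use that $\frac1n\log\mathbb E[R_n]\to\alpha$, so $\mathbb E[R_n]\le e^{n(\alpha+\varepsilon/2)}$ for large $n$. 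Borel--Cantelli then gives $\limsup_n\frac1n\log R_n\le\alpha$ almost surely.

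For the lower bound, fix $m$ and decompose $G^{n+m}$ at level $n$. By the two-terminal reduction, $R_{n+m}=\Reff_{G^n}(a,b;r)$, where $(r_e)$ are i.i.d.\ copies of $R_m$. Applying Rayleigh monotonicity together with $R_n\ge 1$ is too weak, so we argue in the reverse direction. Write $G^{n+m}$ as $G^m$ (the top $m$ levels) with each edge replaced by an independent copy of $G^n$. Then
\[
R_{n+m}\ge \min_{e}r'_e\cdot R_m^{\mathrm{top}}\ge \min_e r'_e,
\]
where the minimum runs over at most $4^m$ i.i.d.\ copies $r'_e$ of $R_n$. We control the lower tail of $R_n$ with Markov's inequality applied to $1/R_n$:
\[
\mathbb P\bigl(R_n\le \delta\,\mathbb E[R_n]\bigr)\le \delta\,\mathbb E[R_n]\mathbb E[1/R_n]\le \delta M(p).
\]
This shows $R_n$ is of order $\mathbb E[R_n]$ with non-vanishing probability, but a minimum over $4^m$ copies needs an exponentially small tail, so this single bound does not suffice.

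The main step, and the expected obstacle, is upgrading this to a superlinear lower bound almost surely. My plan is a block/percolation argument on the top levels. Choose $\delta$ with $\delta M(p)\le\eta$, where $\eta$ is small. Call a level-$n$ copy \emph{good} if $r'_e\ge\delta\,\mathbb E[R_n]$. Shorting the bad copies to zero resistance, $R_{n+m}$ is at least $\delta\,\mathbb E[R_n]$ times the resistance of $G^m$ with each edge kept at resistance $1$ independently with probability $\ge1-\eta$ and set to $0$ otherwise. Each DHL step has a series structure: a cut of the diamond is crossed only through both halves. A first-step analysis then shows that, for small $\eta$, this mixed network has resistance bounded below by a positive constant with probability tending to one as $m$ grows. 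Even better, the probability that its resistance is below $c$ decays like $\exp(-c' m)$ or faster, because a zero-resistance crossing requires a recursive chain of bad events.

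Iterating over scales $n_j=j\,n_0$ and combining with the submultiplicative bound, we obtain
\[
\mathbb P\bigl(R_{N}\le e^{N(\alpha-\varepsilon)}\bigr)
\]
summable in $N$ along blocks. Monotonicity $R_{N+1}\ge R_N$ and $R_{N+1}\le 2R_N$ fill the gaps between blocks, and Borel--Cantelli yields $\liminf_N\frac1N\log R_N\ge\alpha$.

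Finally, the identity $\lim\frac1n\mathbb E[\log R_n]=\alpha$ follows from the almost-sure limit by dominated convergence, since $0\le\frac1n\log R_n\le\log 2$.
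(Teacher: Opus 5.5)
Your upper bound (Markov plus Borel--Cantelli) and your last step (bounded convergence, using $0\le\frac1n\log R_n\le\log 2$) are fine. The gap is in the almost-sure lower bound, which the block/percolation argument never actually establishes.

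\textbf{Why the block argument is incomplete.} Let $Q_m$ be the resistance of the $0/1$ network on $G^m$. The heuristic that ``a zero-resistance crossing requires a recursive chain of bad events'' only controls $\mathbb P(Q_m=0)$. It says nothing about $\mathbb P(Q_m<c)$ for a fixed $c>0$. The natural first-step bound
\[
\mathbb P(Q_{m+1}<c)\le p_{\mathrm{series}}\,\mathbb P(Q_m<c)^2+2p_{\mathrm{diamond}}\,\mathbb P(Q_m<2c)^2
\]
doubles the threshold at every diamond, so it does not close at a fixed $c$. The assembly step (``iterating over scales $n_j=jn_0$'', ``summable along blocks'') is also never specified. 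With $n_0$ fixed and $m=N-n_0$, you have $R_N\ge\delta\,\mathbb E[R_{n_0}]\,Q_{N-n_0}$ with $\mathbb E[R_{n_0}]$ a constant. You would then need $Q_{N-n_0}$ itself to grow at rate close to $\alpha$, which is the original problem again.

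\textbf{The missing idea.} You already wrote down the estimate that finishes the proof, but fixed $\delta$ when you should have let it depend on $n$. Take $\delta=e^{-\varepsilon n}$ in your Markov bound on $1/R_n$:
\[
\mathbb P\bigl(R_n\le e^{-\varepsilon n}\mathbb E[R_n]\bigr)\le e^{-\varepsilon n}\,\mathbb E[R_n]\,\mathbb E[1/R_n]\le M(p)e^{-\varepsilon n}.
\]
This is summable in $n$. Combined with $\mathbb E[R_n]\ge e^{n\alpha}$ from Fekete, Borel--Cantelli gives $\liminf_n\frac1n\log R_n\ge\alpha-\varepsilon$ almost surely. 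No decomposition of $G^{n+m}$ is needed.

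This is essentially the paper's proof. The paper phrases it through Jensen as $\mathbb E\bigl[e^{\pm(\log R_n-\mathbb E[\log R_n])}\bigr]\le M(p)$. That yields two-sided exponential concentration of $\log R_n$ around its mean, and also $0\le\log\mathbb E[R_n]-\mathbb E[\log R_n]\le\log M(p)$ directly.

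\textbf{If you want to keep the block argument,} it can be closed as follows.
\begin{itemize}
\item Take $m=\lfloor\sqrt N\rfloor$ and $n=N-m$, so $m\to\infty$ but $m=o(N)$.
\item By induction (series gives at least the max, diamond at least $\min(A,B)/2$), any \emph{positive} value of $Q_m$ is at least $2^{-m}$.
\item $z_m:=\mathbb P(Q_m=0)$ satisfies $z_{m+1}\le(1+p_{\mathrm{diamond}})z_m^2$. It is therefore doubly exponentially small once $(1+p_{\mathrm{diamond}})\eta<1$.
\end{itemize}
Then $R_N\ge\delta e^{n\alpha}2^{-m}$ off an event of probability $z_m$, which is summable in $N$. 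This works, but it is a detour around a one-line argument.
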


\begin{proof}
 
Define $\alpha(p)$ by Lemma~\ref{lem:annealed_exponent}, prove exponential concentration of $\log R_n$ around its mean using Lemma~\ref{lem:uniform_product}, and then compare $\mathbb E[\log R_n]$ with $\log \mathbb E[R_n]$.
By Lemma~\ref{lem:annealed_exponent}, the limit
$
\alpha(p)=\lim_{n\to\infty}\frac{1}{n}\log \mathbb E[R_n]
$
exists.
Since
$
1\le R_n\le 2^n,
$
we have
$
1\le \mathbb E[R_n]\le 2^n,
$
and hence
$
\alpha(p)\in[0,\log 2].
$

By Lemma~\ref{lem:uniform_product}, there exists $M(p)<\infty$ such that for all $n\ge 1$,
$
\mathbb E[R_n]\mathbb E\Big[\frac{1}{R_n}\Big]\le M(p).
$

By Jensen's inequality,
$
\exp\big(-\mathbb E[\log R_n]\big)\le \mathbb E\Big[\frac{1}{R_n}\Big].
$
Therefore
\[
\mathbb E\Big[\exp\big(\log R_n-\mathbb E[\log R_n]\big)\Big]
=
\mathbb E[R_n]\exp\big(-\mathbb E[\log R_n]\big)
\le
M(p).
\]
Applying the same argument to $1/R_n$ gives
$
\mathbb E\Big[\exp\big(-(\log R_n-\mathbb E[\log R_n])\big)\Big]\le M(p).
$
Hence, for every $\varepsilon>0$ and every $n\ge 1$,
\[
\mathbb P\Big(\big|\log R_n-\mathbb E[\log R_n]\big|\ge \varepsilon n\Big)
\le
2M(p)e^{-\varepsilon n}.
\]
The sum over $n$ is finite, so Borel--Cantelli gives
$
\frac{\log R_n-\mathbb E[\log R_n]}{n}\to 0
$
almost surely.

By Jensen's inequality,
$
\mathbb E[\log R_n]\le \log \mathbb E[R_n].
$
Applying Jensen's inequality to $-\log R_n$ gives
$
-\mathbb E[\log R_n]\le \log \mathbb E\Big[\frac{1}{R_n}\Big].
$
Therefore
\[
0\le \log \mathbb E[R_n]-\mathbb E[\log R_n]
\le
\log\Big(\mathbb E[R_n]\mathbb E\Big[\frac{1}{R_n}\Big]\Big)
\le
\log M(p).
\]
Dividing by $n$ and letting $n\to\infty$ yields
$
\frac{1}{n}\mathbb E[\log R_n]-\frac{1}{n}\log \mathbb E[R_n]\to 0.
$
Since
$
\frac{1}{n}\log \mathbb E[R_n]\to \alpha(p),
$
we also have
$
\frac{1}{n}\mathbb E[\log R_n]\to \alpha(p).
$
Combining this with the almost sure convergence of
$
\frac{\log R_n-\mathbb E[\log R_n]}{n}
$
proves that
\[
\frac{1}{n}\log R_n\to \alpha(p)
\]
almost surely.
\end{proof}

\begin{proposition}\label{prop:alpha_pc_positive}
At the critical parameter \(p=p_c\),
\[
\alpha(p_c)\ge \log\frac{14-4\sqrt5}{3}>0.
\]
\end{proposition}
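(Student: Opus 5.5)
The plan is to bound the one-step growth ratio $\mathbb E[R_{n+1}]/\mathbb E[R_n]$ from below uniformly in $n$, using the estimates from Case~1 in the proof of Lemma~\ref{lem:uniform_product}. Lemma~\ref{lem:annealed_exponent} defines $\alpha(p)$ through $\frac1n\log\mathbb E[R_n]$, so such a uniform bound on the ratio passes directly to $\alpha(p_c)$.

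First I compute the branching probabilities at criticality. From $1-p_c=p_c^2$ we get
\[
p_{\mathrm{diamond}}=\frac{p_c^2}{2-p_c^2}.
\]
Substituting $p_c^2=(3-\sqrt5)/2$ gives
\[
p_{\mathrm{diamond}}=\frac{3-\sqrt5}{1+\sqrt5}=\sqrt5-2\approx 0.236 .
\]
In particular $p_{\mathrm{diamond}}\le \frac12$, so we are in Case~1 of the proof of Lemma~\ref{lem:uniform_product}. That case shows by induction that $\mathbb E[R_n^2]/\mathbb E[R_n]^2\le 2$ for every $n\ge 0$. I will reuse that bound as it stands.

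Second, I reuse the Cauchy--Schwarz estimate for the diamond resistance $D=AB/(A+B)$ from the same case:
\[
\mathbb E[D]\ge \frac{2\mathbb E[R_n]}{\mathbb E[R_n^2]/\mathbb E[R_n]^2+1}\ge \tfrac23\,\mathbb E[R_n].
\]
Conditioning on the first substitution step, exactly as in Lemma~\ref{lem:logistic_product}, gives
\[
\mathbb E[R_{n+1}]=2p_{\mathrm{series}}\mathbb E[R_n]+p_{\mathrm{diamond}}\mathbb E[D].
\]
Combining the two displays yields
\[
\frac{\mathbb E[R_{n+1}]}{\mathbb E[R_n]}\ge 2(1-p_{\mathrm{diamond}})+\tfrac23 p_{\mathrm{diamond}}=2-\tfrac43 p_{\mathrm{diamond}}=\frac{14-4\sqrt5}{3},
\]
where the last equality uses $p_{\mathrm{diamond}}=\sqrt5-2$.

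Third, since $R_0=1$, iterating gives $\mathbb E[R_n]\ge\bigl(\frac{14-4\sqrt5}{3}\bigr)^n$ for every $n$. Taking $\frac1n\log$ and applying Lemma~\ref{lem:annealed_exponent} gives $\alpha(p_c)\ge\log\frac{14-4\sqrt5}{3}$. Positivity holds because $14-4\sqrt5>3$ is equivalent to $4\sqrt5<11$, that is, $80<121$.

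The main obstacle is not an estimate but bookkeeping. I need to confirm that the Case~1 induction bound $\mathbb E[R_n^2]/\mathbb E[R_n]^2\le2$ applies at $p_{\mathrm{diamond}}=\sqrt5-2$. I also need to check that the identity $\mathbb E[AB(A+B)]=8\mathbb E[R_n]\bigl(\mathbb E[R_n^2]+\mathbb E[R_n]^2\bigr)$ is what enters the bound on $\mathbb E[D]$. If the cited ``direct check'' in Case~1 turned out to need more care, I would verify it directly at this single value of $p_{\mathrm{diamond}}$. That reduces to checking that the explicit rational function of the ratio appearing there is at most $2$ on $[1,2]$.
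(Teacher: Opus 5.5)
Your proposal is correct and is essentially the paper's own proof. It reuses the Case~1 bound $\mathbb E[R_n^2]/\mathbb E[R_n]^2\le 2$ at $p_{\mathrm{diamond}}=\sqrt5-2<\frac12$ and the Cauchy--Schwarz bound $\mathbb E[D]\ge\frac23\mathbb E[R_n]$, then uses the one-step identity to get the ratio $2-\frac43p_{\mathrm{diamond}}=\frac{14-4\sqrt5}{3}$ and iterates from $R_0=1$. The bookkeeping point you flag also checks out: the Case~1 right-hand side is increasing in the ratio, and at ratio $2$ the requirement that it be at most $2$ reduces to $32q^2-51q+18\ge0$ with $q=p_{\mathrm{diamond}}$, which holds for all $q\le\frac12$.
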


\begin{proof}
At \(p=p_c\), we have \(p_{\mathrm{diamond}}=\sqrt5-2<\frac{1}{2}\).
By Case 1 in the proof of Lemma~\ref{lem:uniform_product},
$
\frac{\mathbb E[R_n^2]}{\mathbb E[R_n]^2}\le 2
$
for all \(n\ge 0\).

Let \(D_n\) denote the effective resistance in the diamond case at level \(n+1\).
The lower bound obtained there gives
$
\mathbb E[D_n]
\ge
\frac{2\mathbb E[R_n]}{\frac{\mathbb E[R_n^2]}{\mathbb E[R_n]^2}+1}
\ge
\frac{2}{3}\mathbb E[R_n].
$
Therefore
$
\mathbb E[R_{n+1}]
=
2(1-p_{\mathrm{diamond}})\mathbb E[R_n]
+
p_{\mathrm{diamond}}\mathbb E[D_n]
\ge
\left(2-\frac{4}{3}p_{\mathrm{diamond}}\right)\mathbb E[R_n].
$
Substituting \(p_{\mathrm{diamond}}=\sqrt5-2\) and iterating, we obtain
$
\mathbb E[R_n]\ge \left(\frac{14-4\sqrt5}{3}\right)^n.
$
Hence, by the definition of \(\alpha(p_c)\),
$
\displaystyle
\alpha(p_c)
=
\lim_{n\to\infty}\frac{1}{n}\log \mathbb E[R_n]
\ge
\log\frac{14-4\sqrt5}{3}
>
0.
$
\end{proof}

Moreover, the Hambly--Kumagai first open problem also asks ``If there is exponential growth, is there a limit distribution such that $R_n \lambda^n \to c$ as $n \to \infty$".
We now rule out convergence of $R_n \lambda^n$ to a non-zero deterministic constant, giving a partial negative answer to this part.

\begin{proposition}\label{prop:no_deterministic_limit}
Assume that \(p_{\mathrm{series}},p_{\mathrm{diamond}}\in(0,1)\).
For \(\lambda>0\), set \(Z_n=\lambda^n R_n\).
If \(Z_n\) converges in distribution to a deterministic constant \(c\ge 0\), then \(c=0\).
\end{proposition}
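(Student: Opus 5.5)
We argue by contradiction, using the first-step decomposition already used in the proof of Lemma~\ref{lem:logistic_product}. Suppose $Z_n=\lambda^nR_n$ converges in distribution to a deterministic constant $c>0$.

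Condition on the first substitution step. This step is independent of the subsequent evolution of the new edges, and each new edge then evolves as an independent copy of $G^n$. Hence
\[
R_{n+1}\stackrel{d}{=}\mathbf 1_{S}\,(X+Y)+\mathbf 1_{S^{\mathrm c}}\,\frac{(X_1+X_2)(X_3+X_4)}{X_1+X_2+X_3+X_4},
\]
where $S$ is the series event, with $\mathbb P(S)=p_{\mathrm{series}}$, and $X,Y,X_1,\dots,X_4$ are independent copies of $R_n$, independent of $S$. Both maps $(x,y)\mapsto x+y$ and $(x_1,\dots,x_4)\mapsto (x_1+x_2)(x_3+x_4)/(x_1+\dots+x_4)$ are positively $1$-homogeneous. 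Multiplying by $\lambda^{n+1}$ therefore gives
\[
Z_{n+1}\stackrel{d}{=}\lambda\Bigl[\mathbf 1_S\,(Z'+Z'')+\mathbf 1_{S^{\mathrm c}}\,\frac{(Z_1+Z_2)(Z_3+Z_4)}{Z_1+Z_2+Z_3+Z_4}\Bigr],
\]
where $Z',Z'',Z_1,\dots,Z_4$ are independent copies of $Z_n$.

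Next pass to the limit. Independent coordinates that each converge in distribution to a constant converge jointly. So $(Z',Z'')\Rightarrow(c,c)$ and $(Z_1,\dots,Z_4)\Rightarrow(c,c,c,c)$, still independently of $\mathbf 1_S$. Both maps above are continuous at these points because $c>0$, so the continuous mapping theorem applies. It gives
\[
Z_{n+1}\Rightarrow p_{\mathrm{series}}\,\delta_{2\lambda c}+p_{\mathrm{diamond}}\,\delta_{\lambda c}.
\]

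On the other hand, $Z_{n+1}\Rightarrow\delta_c$ by hypothesis. Since $p_{\mathrm{series}},p_{\mathrm{diamond}}\in(0,1)$ and $2\lambda c\neq\lambda c$ (because $\lambda c>0$), the mixture charges two distinct points with positive mass, so it is not a Dirac mass. This contradicts uniqueness of weak limits. Hence $c=0$.

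The only step needing care is the distributional identity for $Z_{n+1}$. It rests on the independence of the first-step type from the subsequent independent subtrees, exactly as in Lemma~\ref{lem:logistic_product}. The continuity and joint-convergence steps are routine, and $c>0$ is used precisely to keep the parallel map continuous and the two atoms distinct.
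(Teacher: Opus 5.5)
Your proof is correct and follows essentially the same route as the paper. Both arguments use the first-step decomposition into the series/diamond mixture, joint convergence of independent copies to $(c,\dots,c)$, and the continuous mapping theorem to reach the limit law $p_{\mathrm{series}}\delta_{2\lambda c}+p_{\mathrm{diamond}}\delta_{\lambda c}$, which contradicts $\delta_c$. The paper gets the joint convergence via convergence in probability, which is valid because the limit is deterministic; you get it via weak convergence of product laws and spell out the homogeneity rescaling. These are only cosmetic differences.
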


\begin{proof}
Assume for contradiction that \(c>0\).
Since the limit is deterministic, \(Z_n\to c\) in probability.
Let \(Z_n^{(1)},\dots,Z_n^{(4)}\) be independent copies of \(Z_n\).
Then
\[
\mathbb P\Big(\max_{1\le i\le 4}|Z_n^{(i)}-c|>\varepsilon\Big)\le 4\mathbb P(|Z_n-c|>\varepsilon)\to 0,
\]
so \((Z_n^{(1)},\dots,Z_n^{(4)})\to (c,c,c,c)\) in probability.
Hence
$
\lambda(Z_n^{(1)}+Z_n^{(2)})\to 2\lambda c
$
and
\[
\lambda\frac{(Z_n^{(1)}+Z_n^{(2)})(Z_n^{(3)}+Z_n^{(4)})}{Z_n^{(1)}+Z_n^{(2)}+Z_n^{(3)}+Z_n^{(4)}}\to \lambda c
\]
in probability.
By the one-step recursion,
\[
Z_{n+1}\stackrel d=
\begin{cases}
\lambda(Z_n^{(1)}+Z_n^{(2)}), & \text{with probability } p_{\mathrm{series}},\\
\lambda\frac{(Z_n^{(1)}+Z_n^{(2)})(Z_n^{(3)}+Z_n^{(4)})}{Z_n^{(1)}+Z_n^{(2)}+Z_n^{(3)}+Z_n^{(4)}}, & \text{with probability } p_{\mathrm{diamond}}.
\end{cases}
\]
Therefore \(Z_{n+1}\) converges in distribution to
$
p_{\mathrm{series}}\delta_{2\lambda c}+p_{\mathrm{diamond}}\delta_{\lambda c}.
$
On the other hand, \(Z_{n+1}\to c\) in distribution, so the limit law is \(\delta_c\).
Thus
$
\delta_c=p_{\mathrm{series}}\delta_{2\lambda c}+p_{\mathrm{diamond}}\delta_{\lambda c}.
$
Since \(p_{\mathrm{series}},p_{\mathrm{diamond}}>0\), this is impossible unless \(2\lambda c=\lambda c=c\), hence impossible for \(c>0\).
Therefore \(c=0\).
\end{proof}

\textbf{Conclusion.} 
Let $\alpha=\alpha(p_c)$ and $p=p_c$ at the critical parameter.
Proposition~\ref{prop:dhl_reduction} and Theorem~\ref{thm:reduced_quenched_exponent} indicate that, almost surely, \[ \lim_{n\to\infty}\frac{1}{n}\log \Reff_{\langle \Xi_1^n \rangle_1}(\mathfrak v_+,\mathfrak v_-) = \lim_{n\to\infty}\frac{1}{n}\log \mathbb E\big[\Reff_{\langle \Xi_1^n \rangle_1}(\mathfrak v_+,\mathfrak v_-)\big] = \alpha. \] 
In particular, the quenched exponent and the annealed exponent for $\Reff_{\langle \Xi_1^n \rangle_1}(\mathfrak v_+,\mathfrak v_-)$ coincide. We estimated $\alpha$ numerically using a log-domain population-dynamics simulation in Python with $p=(\sqrt 5-1)/2$. 
At $n=10000$, both $\frac{1}{n}\mathbb E[\log \Reff_{\langle \Xi_1^n \rangle_1}(\mathfrak v_+,\mathfrak v_-)]$ and $\frac{1}{n}\log \mathbb E[\Reff_{\langle \Xi_1^n \rangle_1}(\mathfrak v_+,\mathfrak v_-)]$ stabilise near $0.5631$, and the observed gap is about $7.3\times 10^{-6}$.
Overall, we obtain \[ \alpha\approx 0.5631 \] so $\exp(\alpha)\approx 1.7561$, meaning that the typical growth scale of the two-terminal resistance is $(1.7561\cdots)^n$. 
This is consistent with the deterministic bounds $1\le \Reff_{\langle \Xi_1^n \rangle_1}(\mathfrak v_+,\mathfrak v_-)\le 2^n$. 


\subsection{Outlook: Brownian spectral dimensions for the critical percolation cluster of the DHL}\label{subsec:dhl_cluster}

We now return to the full four-colour coding $\Xi_1^n$.
In Subsection~\ref{subsec:Quenched unit-resistance exponent}, only the reduced monochromatic process $G^n$ was needed in order to study the two-terminal effective resistance.
To discuss the geometric, diffusion and Brownian exponents of the critical cluster, however, we must go back to the full coloured branching structure.

The four-colour random EIGS in Figure~\ref{fig:random EIGS DHL} should be viewed as a finite-state coding of the critical percolation cluster on the diamond hierarchical lattice, rather than as the cluster itself.
At the critical parameter
\[
p=p_c=\frac{\sqrt5-1}{2},
\qquad
1-p=p^2,
\]
we start from the single red edge $\Xi_1^0$ joining the two boundary vertices.
At each substitution step, the colour records the two-terminal state of the corresponding cell.
Red means that the two terminals are connected inside the cell.
Blue means that they are disconnected, but both terminals remain active.
Green means that they are disconnected, but exactly one terminal remains active.
Black is a cemetery bookkeeping state.
It is retained only in order to keep the coloured substitution rule closed.
Once a cell turns black, it no longer contributes to the growth of the critical cluster, and we therefore assign zero offspring and zero boundary growth to this type in the mean matrices below.

For each $n\ge 0$, let $\Xi_1^n$ be the full coloured graph after $n$ substitution steps, and let $\langle \Xi_1^n\rangle_1$ be its colour-$1$ subgraph.
Then $\langle \Xi_1^n\rangle_1$ has the same law as the level-$n$ critical two-terminal cluster on the DHL in the sense of \cite{hambly2010diffusion}.
We write
\[
\langle \Xi_1\rangle_1:=\bigcup_{n\ge 0}\langle \Xi_1^n\rangle_1
\]
for the combinatorial limit of the red subgraphs, and $\mathfrak C$ for the corresponding Gromov--Hausdorff--Prokhorov scaling limit of the critical percolation cluster on the DHL.
We also write $\mu_{\mathfrak C}$ for the natural probability measure on $\mathfrak C$, $\mathscr V_{\mathfrak C}$ for the embedded finite-born points, and
\[
\mathscr V_{\mathfrak C}^{\infty}:=\mathfrak C\setminus \mathscr V_{\mathfrak C}
\]
for the points added by the metric completion.
Thus the red subgraph is the actual cluster, while the blue, green and black edges are auxiliary states introduced only to keep the recursion closed.

This coding allows the geometric observables of the critical cluster to be read from the coloured branching process.
The size of the cluster is determined by the number of red edges, and hence also by the number of vertices up to comparable order.
The degree of a finite-born vertex is obtained by counting only its red incidences in the final graph, although the recursive evolution of the boundary data must still keep track of the blue and green states, since they may later produce red descendants.
In the present DHL model these coloured counts form a finite-type branching process, so the relevant exponential growth constants are governed by the Perron roots of the corresponding mean matrices.
Thus the mass growth is encoded by the mean offspring matrix $\mathbb E(\mathbf M)$ of the live colours, while degree growth is encoded by the corresponding mean boundary matrix $\mathbb E(\mathbf N)$.
For notational convenience we keep the black type in the indexing, but only as a zero placeholder.
Equivalently, the relevant information is contained in the upper-left live red-blue-green blocks.

The four-colour system is reducible, since colour $4$ is a cemetery state.
Accordingly, the quantities relevant to the critical cluster are not the full spectral radii of arbitrary four-type matrices, but the eigenvalues governing the growth of the colour-$1$ component.
With the present convention, these are simply the Perron roots of the live red-blue-green blocks, which we still denote by
\[
\lambda\bigl(\mathbb E(\mathbf M)\bigr)
\qquad
\text{and}
\qquad
\lambda\bigl(\mathbb E(\mathbf N)\bigr).
\]

With the colour order
\[
(1,2,3,4)=(\mathrm{red},\mathrm{blue},\mathrm{green},\mathrm{black}),
\]
the relevant offspring matrix is
\[
\mathbb E(\mathbf M)
=
\begin{pmatrix}
8p^2 & 4p^4 & 4p^5 & 0\\
4p^2 & 4p^2 & 4p^3 & 0\\
2p^2 & 0 & 2 & 0\\
0 & 0 & 0 & 0
\end{pmatrix}.
\]

For $\mathbb E(\mathbf N)$, we keep the same convention and write
\[
\mathbb E(\mathbf N)
=
\begin{pmatrix}
4p^2&0&2p^4&0&2p^5&0&0&0\\
0&4p^2&0&2p^4&0&2p^5&0&0\\
2p^2&0&2p^2&0&2p^3&0&0&0\\
0&2p^2&0&2p^2&0&2p^3&0&0\\
2p&0&0&0&2p^2&0&0&0\\
0&0&0&0&0&2p&0&0\\
0&0&0&0&0&0&0&0\\
0&0&0&0&0&0&0&0
\end{pmatrix}.
\]

Evaluated at $p=p_c$, the relevant live-block eigenvalues are
\[
\lambda\bigl(\mathbb E(\mathbf M)\bigr)\approx 3.7303,
\qquad
\lambda\bigl(\mathbb E(\mathbf N)\bigr)=2.
\]
Since the underlying DHL distance scale is $2$, by \cite{li2024on,neroli2024fractal,li2025reducible} this suggests the  values
\[
\dimbox\big(\langle \Xi_1\rangle_1\big)
=
\frac{\log \lambda\bigl(\mathbb E(\mathbf M)\bigr)}{\log 2}
\approx 1.8993,
\qquad
\dimdeg\big(\langle \Xi_1\rangle_1\big)
=
\frac{\log \lambda\bigl(\mathbb E(\mathbf M)\bigr)}{\log \lambda\bigl(\mathbb E(\mathbf N)\bigr)}
\approx 1.8993.
\]

By the conclusion of Subsection~\ref{subsec:Quenched unit-resistance exponent}, the quenched resistance exponent exists almost surely.
Accordingly, the same formal scaling suggests
\[
\dimresis\big(\langle \Xi_1\rangle_1\big)
=
\frac{\alpha}{\log 2}
\approx 0.8124,
\]
and the conjectural walk dimension
\[
\dimwalk\big(\langle \Xi_1\rangle_1\big)
=
\dimbox\big(\langle \Xi_1\rangle_1\big)+\dimresis\big(\langle \Xi_1\rangle_1\big)
\approx 2.7117.
\]

Taken together, these values suggest that the critical cluster should lie in the Brownian regime of Remark~\ref{rem:three_resistance_regimes}.
In particular, unlike the deterministic DHL itself, which lies in the Hambly--Kumagai regime $\dimresis=0$, we conjecture that the critical cluster satisfies
\[
\dimresis\big(\langle \Xi_1\rangle_1\big)>0.
\]
Accordingly, we further conjecture that the renormalised resistance defines a genuine resistance form on the limiting random cluster, and that the natural metric and the resistance metric induce the same topology.
From the point of view of Section~\ref{sec:diffusion}, this is exactly the regime in which the natural-metric diffusion and the resistance-form Brownian motion should describe the same limiting process.

At this point it is important to emphasise the gap between the rigorous theory developed in Section~\ref{sec:diffusion} and the present random setting.
The issue is not only that the four-colour system is reducible.
More importantly, Section~\ref{sec:diffusion} concerns deterministic graph and metric-space environments, together with the associated simple random walks and Brownian motions on those fixed spaces.
Here, by contrast, both the discrete critical clusters and the limiting space $\mathfrak C$ are random, so the relevant simple random walk and Brownian motion live in a random environment.
A full extension of Section~\ref{sec:diffusion} to the present model would therefore require a quenched reducible random EIGS theory, including almost-sure control of the geometric exponents, a passage from the random discrete clusters to the random Gromov--Hausdorff--Prokhorov limit, and a corresponding transfer from random-walk exponents to diffusion and Brownian spectral dimensions on the limiting random space.
We do not develop that theory here.

Nevertheless, the branching-process structure strongly suggests that the exponent relations from Subsection~\ref{subsec:heat_kernel_diffusion} should continue to hold almost surely for the colour-$1$ cluster.
This leads us to formulate the following open conjecture.
We write $\dimspec^{(\mathrm N)}(\mathfrak C:x)$ and $\dimspec^{(\mathrm R)}(\mathfrak C:x)$ for the conjectural diffusion and Brownian spectral dimensions at $x$.
Since $\dimresis\big(\langle \Xi_1\rangle_1\big)>0$, we conjecture that these two quantities agree almost surely.
More precisely, we conjecture that finite-born points in the limit carry the same degree correction as in the scale-free EIGS setting, whereas $\mu_{\mathfrak C}$-almost every point of $\mathscr V_{\mathfrak C}^{\infty}$ does not.

Accordingly, we conjecture that, almost surely,
\[
\dimspec^{(\mathrm N)}(\mathfrak C:x)
=
\dimspec^{(\mathrm R)}(\mathfrak C:x)
=
\begin{cases}
\displaystyle
2\frac{\dimbox\big(\langle \Xi_1\rangle_1\big)\bigl(1-\frac{1}{\dimdeg\big(\langle \Xi_1\rangle_1\big)}\bigr)}{\dimwalk\big(\langle \Xi_1\rangle_1\big)}
\approx 0.6633,
\quad
x\in \mathscr V_{\mathfrak C},
\\[3mm]
\displaystyle
2\frac{\dimbox\big(\langle \Xi_1\rangle_1\big)}{\dimwalk\big(\langle \Xi_1\rangle_1\big)}
\approx 1.4008,
\quad\qquad
x\in \mathscr V_{\mathfrak C}^{\infty}
\quad
(\mu_{\mathfrak C}\text{-a.e.}).
\end{cases}
\]

We look forward to a systematic theory of effective resistance and diffusion limits for general random EIGS.
Such a theory would upgrade the conjectures of this subsection into theorems, and would thereby establish the failure of the Alexander--Orbach-type conjecture for the critical percolation cluster on the DHL and even a broad family of EIGS.
Though building this theory will be far from straightforward, for the moment the heuristics already tell the story: the cluster should carry two spectral dimensions, $0.6633$ at finite-born points and $1.4008$ at typical points, neither of which equals the Alexander--Orbach value $\frac43$.
In this sense, we have given a partial answer to the question with which the paper began.

\section*{Acknowledgement}
The author is grateful for helpful comments on an earlier version of this manuscript.
The Lean formalisation was produced with Aristotle by Harmonic; the author thanks the Harmonic team for making Aristotle available.
This work was supported by the Additional Funding Programme for Mathematical Sciences, delivered by EPSRC (EP/V521917/1) and the Heilbronn Institute for Mathematical Research,
and also by the EPSRC Centre for Doctoral Training in Mathematics of Random Systems: Analysis, Modelling and Simulation (EP/S023925/1).

\bibliographystyle{amsplain}
\bibliography{references}

\end{document}